\documentclass[11pt,reqno]{amsart}
\usepackage[T1]{fontenc}
\usepackage{amsaddr}
\usepackage{lmodern}
\usepackage{mathtools,amssymb,amsthm}
\usepackage[margin=1in]{geometry}
\usepackage[expansion=false]{microtype}
\usepackage{enumitem,booktabs,array}
\usepackage[hidelinks]{hyperref}
\hypersetup{pdftitle={Low-degree filtrations in ordinary and Lagrangian Grassmannian cohomology},pdfauthor={},pdfsubject={Grassmannian cohomology, subalgebra filtrations, and prescribed bases}}
\numberwithin{equation}{section}
\newtheorem{theorem}{Theorem}[section]
\newtheorem{lemma}[theorem]{Lemma}
\newtheorem{proposition}[theorem]{Proposition}
\newtheorem{corollary}[theorem]{Corollary}
\theoremstyle{definition}

\newtheorem{example}[theorem]{Example}
\theoremstyle{remark}
\newtheorem{remark}[theorem]{Remark}
\newcommand{\Q}{\mathbb Q}
\newcommand{\C}{\mathbb C}

\newcommand{\Hilb}{\operatorname{Hilb}}
\newcommand{\Span}{\operatorname{span}}
\newcommand{\im}{\operatorname{im}}
\newcommand{\Ann}{\operatorname{Ann}}

\newcommand{\Ext}{\operatorname{Ext}}
\newcommand{\Fitt}{\operatorname{Fitt}}
\newcommand{\gb}[2]{\genfrac{[}{]}{0pt}{}{#1}{#2}_{q}}

\newcommand{\Gr}{\operatorname{Gr}}
\newcommand{\LG}{\operatorname{LG}}
\newcommand{\length}{\ell}
\newcommand{\Pf}{\operatorname{Pf}}
\newcommand{\qb}{\mathsf q}
\newcommand{\J}{\mathcal J}
\newcommand{\B}{\mathcal B}
\newcommand{\aideal}{\mathfrak a}
\newcommand{\bideal}{\mathfrak b}

\newcommand{\Lf}{\mathcal L}
\title[Grassmannian filtrations, cancellation, and tangency]
{Low-degree filtrations in ordinary and Lagrangian Grassmannian cohomology}

\author[Huan Xiong]{Huan Xiong}

\address{%
\normalfont\small
Institute for Advanced Study in Mathematics\\
Harbin Institute of Technology\\
Heilongjiang 150001, P. R. China\\
Email:
\href{mailto:huan.xiong.math@gmail.com}
{\texttt{huan.xiong.math@gmail.com}}%
}

\subjclass[2020]{Primary 05E05, 14M15; Secondary 13C40, 13D40, 14N15}
\keywords{Grassmannian cohomology, subalgebra filtration, Reiner--Tudose
conjecture, Schur Q-function, supersymmetric polynomial, complete
intersection, colon ideal, Fitting ideal, k-Schur function}
\begin{document}
\begin{abstract}
We prove the Hilbert-series formulas conjectured by Reiner--Tudose and by the Polymath Jr. Grassmannian group for the subalgebras of ordinary and Lagrangian Grassmannian cohomology generated in bounded degree. We determine the finite coefficient images underlying the proofs, together with the kernels of the coefficient and cancellation maps. We also construct bases adapted to the auxiliary ideal filtrations and identify the cancellation layers, as modules, with shifts of finite complete-intersection algebras. First-order deformations give the derivative inclusions needed to recover the filtration increments.

We disprove the two ordinary basis conjectures of the Polymath group. For every $r\ge2$, the span of each prescribed family has codimension one in weight $3r+2$ of $H^{2*}(\Gr(2,3r+1);\Q)$; replacing one index gives a basis in each of these components. More generally, each prescribed two-row family has codimension at most one in every weight. Weights are half the cohomological degrees, and all coefficients are rational.
\end{abstract}
\maketitle

\section{Introduction}
\label{intro:section}

The ordinary Grassmannian $\Gr(k,N)$ parametrizes the $k$-dimensional
subspaces of $\C^N$. The Lagrangian Grassmannian $\LG(n,\C^{2n})$
parametrizes the $n$-dimensional subspaces on which a fixed
nondegenerate alternating form vanishes. We study their rational
cohomology rings, with each cohomological degree divided by two.
In either ring, let the $m$th filtered subalgebra be generated by
all homogeneous classes of weight at most $m$. The presentation by
characteristic classes gives the Hilbert series of the full ring,
but the smaller subalgebras require further calculation: relations
involving higher generators can impose relations among the lower ones.

Reiner and Tudose conjectured the ordinary filtration formula
in~\cite[Conjecture~1]{RT}. They also showed how it would imply
high-degree generation statements related to Hoffman's
endomorphism theorem~\cite{Hoffman}. The problem appears as
OPAC-008~\cite{OPAC}. The 2020 Polymath Jr. Grassmannian group
interpreted the conjectured formula using $k$-conjugation,
proposed two bases, and formulated a Lagrangian analogue
\cite[Conjectures~1.2--1.4]{Polymath}. We prove both filtration
formulas. We also show that both prescribed ordinary bases fail,
and determine several two-row ranges in which they do form bases.

The longer report~\cite{REU} obtains a conditional Hilbert-series
bound from its proposed complete-function basis
\cite[Theorem~5.13 and Conjecture~5.12(1a)]{REU}. Its
Section~4.4 suggests combining coefficientwise bounds with the
known total Hilbert series. Our proofs follow that strategy, but
obtain the bounds from finite coefficient maps and differentiated
relations. In particular, they do not depend on either basis
conjecture. The counterexamples to the prescribed vectors below
are different from the index-set counterexample in
\cite[Section~5.5]{REU}.

The characteristic-class presentations originate in Borel's description
of homogeneous-space cohomology~\cite{Borel}; we use the Schubert and
symmetric-function conventions of~\cite{Fulton,Macdonald,KT}.
The $k$-Schur family in the basis conjectures is the one in~\cite{LM}.
Positive formulas for related graded $k$-Schur and Catalan functions
are obtained in~\cite{BMPS,BMP}; they do not imply independence of
this varying-level family after passage to the finite cohomology ring.
The Catalan moments used below are scalar moments, rather than these
Catalan symmetric functions.
We use the cancellation identities of~\cite{Stembridge,BR,Pragacz}.
The finite coefficient images, rather than the universal
cancellation identities, are the main algebraic calculation
in the proofs.

\subsection{Conventions and the ordinary filtration}
A partition $\lambda$ is a finite weakly decreasing sequence of positive
integers. Its size is $|\lambda|=\sum_i\lambda_i$, its length is
$\ell(\lambda)$, and its Young diagram has $\lambda_i$ cells in row $i$.
The transpose is denoted by $\lambda'$. We write
$\lambda\subseteq(b^a)$ when the diagram has at most $a$ rows and at
most $b$ columns, and $h_\lambda=\prod_i h_{\lambda_i}$.
A partition is strict if its positive parts are distinct. Missing parts
are read as zero, and we append zero parts when needed.
For the empty partition $\varnothing$, put $|\varnothing|=\ell(\varnothing)=0$
and read its largest part as zero; empty products are $1$.

We use ordinary commutative graded rings; in particular,
a variable of odd \emph{weight} commutes with every other variable.
For a graded vector space $V=\bigoplus_{t\in\mathbb Z}V_t$ with finite-dimensional components,
$\Hilb(V;q)=\sum_t\dim_\Q(V_t)q^t$, where $q$ records the weight. Our shift convention is
$M(a)_t=M_{t+a}$. A filtration quotient below means a quotient of graded
vector spaces, and is not assumed to be a quotient ring.

Fix integers $1\leq k\leq\ell$. Give the polynomial ring
$P=\Q[c_1,\ldots,c_k]$ the grading $\deg c_i=i$, and write
\[
 E(z)=1+c_1z+\cdots+c_kz^k,
 \qquad H(z)=E(z)^{-1}=\sum_{n\geq0}h_nz^n.
\]
Here $e_i$ denotes the $i$th elementary symmetric function, so
$c_i=(-1)^ie_i$. The $h_i$ are the complete symmetric functions.
Put
\[
 I=(h_{\ell+1},\ldots,h_{\ell+k}),\qquad R=P/I,
 \qquad P_m=\Q[c_1,\ldots,c_m],\qquad A_m=\im(P_m\longrightarrow R).
\]
The notation $P_m$ is used for $0\leq m\leq k$.
The standard presentation identifies $R\simeq H^{2*}(\Gr(k,k+\ell);\Q)$,
with the indicated weight grading; see~\cite{Fulton,Macdonald}.
Here $P_0=A_0=\Q$. For $m\geq k$, set $A_m=R$.
The algebra $A_m$ is exactly the subalgebra generated by all
homogeneous elements of weight at most $m$: a monomial of such
weight uses only coefficients $c_i$ with $i\leq m$.
Thus $R$ and $A_m$ are $R^{k,\ell}$ and $R^{k,\ell,m}$,
respectively, in the notation of Reiner and Tudose~\cite{RT}.
The identity $EH=1$ gives, for $n\leq k$,
$h_n=-c_n-\sum_{i=1}^{n-1}c_i h_{n-i}$. Consequently $A_m$ is also
generated by $h_1,\ldots,h_m$; the elementary and complete generators
define the same filtration, although their prescribed monomial bases
pose different questions.
Although $A_{m-1}$ need not be an ideal of $A_m$, the vector-space
quotient $A_m/A_{m-1}$ is naturally an $A_{m-1}$-module.
We use this structure in Corollary~\ref{conseq:module-sequence}.
Define
\[
 \gb nr=\prod_{i=1}^{r}\frac{1-q^{n-r+i}}{1-q^i}
 \quad(0\leq r\leq n),
\]
and set this expression to zero outside the indicated range.
It is the generating polynomial for partitions in an
$r$ by $n-r$ rectangle and satisfies the Gaussian Pascal identity
\[
 \gb nr=\gb{n-1}{r-1}+q^r\gb{n-1}{r}.
\]

In the next theorem, we prove the increment form of the formula in
\cite[Conjecture~1]{RT}. The cumulative endpoint cases $m=1$ and
$m=k$ were established in~\cite[Section~3]{RT}.

\begin{theorem}[The Reiner--Tudose filtration conjecture \cite{RT}]\label{main:theorem}
For $1\leq m\leq k\leq\ell$,
\begin{equation}\label{main:formula}
 \Hilb(A_m/A_{m-1};q)
 =q^m\gb\ell m
   \sum_{j=0}^{k-m}q^{j(\ell-m+1)}\gb{m+j-1}{j}.
\end{equation}
For $k<m\leq\ell$, both sides are zero, with the sum on the right
interpreted as empty.
\end{theorem}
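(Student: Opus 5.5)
Since the right-hand side of \eqref{main:formula} telescopes, I will first reduce the theorem to a cumulative statement for each $A_m$. Summing \eqref{main:formula} over $m'\le m$ and using the Gaussian Pascal identity repeatedly gives a closed form $F_m(q)$ for $\Hilb(A_m;q)$. At $m=k$ this sum must equal $\Hilb(R;q)=\gb{k+\ell}{k}$, and this consistency check is pure $q$-binomial bookkeeping. The values at $m=1$ and $m=k$ are the endpoint cases already established in \cite[Section~3]{RT}. For $k<m\le\ell$ we have $A_m=A_{m-1}=R$, so both sides vanish and there is nothing to prove. The strategy, as the introduction announces, is to show the coefficientwise inequality $\Hilb(A_m/A_{m-1};q)\le G_m(q)$, where $G_m$ denotes the right-hand side of \eqref{main:formula}, for every $m$. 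Summing over $m$ then gives $\Hilb(R)\le\sum_m G_m=\gb{k+\ell}{k}=\Hilb(R)$, which forces equality in each increment.

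To get the upper bound, I will write $A_m$ as the image of $P_{m-1}[c_m]$. The quotient $A_m/A_{m-1}$ is then spanned over $A_{m-1}$ by the classes of $c_m^s$ with $s\ge1$, and more precisely it is filtered by the powers $c_m^s$. The $s$th layer is a cyclic $A_{m-1}$-module $c_m^sA_{m-1}$ modulo lower layers, so it is a quotient of $A_{m-1}/\aideal_s$ shifted by $ms$, where $\aideal_s$ is the colon ideal of those $a$ with $ac_m^s\in\sum_{t<s}c_m^tA_{m-1}$. The factor $q^m\gb\ell m$ should be identified with the Hilbert series of the cohomology of $\Gr(m,\ell)$, i.e. $\Q[c_1,\dots,c_m]/(h_{\ell-m+1},\dots,h_{\ell})$ with suitable shifts. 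Each summand $q^{j(\ell-m+1)}\gb{m+j-1}{j}$ should count monomials in an auxiliary variable of weight $\ell-m+1$ with multiplicities, coming from the relations $h_{\ell+1},\dots,h_{\ell+k}$ rewritten as polynomials in $c_m$ with coefficients in $P_{m-1}$.

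Concretely, I will eliminate $c_{m+1},\dots,c_k$. To do this I expand $h_{\ell+i}$ by writing $H=E_{\le m}^{-1}(1+E_{\le m}^{-1}E_{>m})^{-1}$. Each relation then becomes a finite coefficient map $P_m^{\oplus(k-m)}\to$ (relations among the $h^{(m)}_{n}$ for $n>\ell$), where $h^{(m)}$ are the complete functions of the truncated series $E_{\le m}$. The image of $I\cap P_m$ is generated by coefficient combinations of $h^{(m)}_{\ell+1},\dots,h^{(m)}_{\ell+k}$ and their products with the eliminated variables. After differentiating the relations in $c_m$ (first-order deformations), the leading contributions give an ideal containing $h^{(m)}_{\ell-m+1+\cdot}$-type elements. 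Its quotient has Hilbert series at most $G_m/(\text{shift})$: the complete-intersection part yields $\gb\ell m$, and the $j$-fold derivative layers yield $\gb{m+j-1}{j}$, since they count degree-$j$ monomials in $m$ variables times $q^{j(\ell-m+1)}$.

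The main obstacle is this middle step: showing that enough elements lie in the colon ideals $\aideal_s$ to bound each layer by the predicted series. This means proving that the coefficients of the relations, viewed as polynomials in $c_m$ over $P_{m-1}$, together with their $c_m$-derivatives, generate relations spanning the required space. I would first attempt it by induction on $k-m$, using the Gaussian Pascal identity to match the recursion $G_m^{(k,\ell)}$ versus $G_m^{(k-1,\ell)}$. The inductive step would be realized by the exact sequence relating $R^{k,\ell}$ and $R^{k-1,\ell}$ via setting $c_k=0$, with the kernel being $c_kR^{k,\ell}\cong R^{k,\ell-1}(-k)$. After that, the upper bound together with the total count closes the argument.
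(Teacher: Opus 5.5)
\textbf{There is a genuine gap: the proposal never supplies the bounds it needs.} Your closing logic (termwise bounds plus the known total $\Hilb(R;q)-1$) is sound. However, the direction you chose is the hard one.

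Write $\Delta_m=\Hilb(A_m/A_{m-1};q)$. An upper bound $\Delta_m\le G_m$ requires exhibiting enough relations in $I\cap P_m$, i.e.\ showing that your leading-coefficient colon ideals $\aideal_s$ are large. You leave exactly this as ``the main obstacle.'' The surrounding identifications have no construction behind them: the factor $q^m\gb\ell m$ as a copy of $H^{2*}(\Gr(m,\ell);\Q)$, and the summands as ``derivative layers.'' Differentiating relations in $c_m$ works against you here. It tells you where derivatives of relations must land, not that they span anything.

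The induction through $c_k=0$ does not reduce the problem either. $A_m$ surjects onto its $(k-1,\ell)$ counterpart, but the kernel is $A_m\cap c_kR$. This is a subspace of $c_kR\simeq R^{k,\ell-1}(-k)$ that is not a filtered piece of the smaller ring, and its Hilbert series is as inaccessible as the original one. Note also that the nonnegative discrepancy announced in the introduction is not $G_m-\Delta_m$.

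\textbf{The missing idea is that no relations need to be produced.} The paper's argument has four ingredients.
\begin{itemize}
\item It introduces the Jacobian-tail ideal $\mathcal J_{k-m+1}$, the ideal of maximal minors of $(a_{L+i+j})$ with $a_v=[z^v]H(z)^2$.
\item It computes $\mathcal H_{k-m+1}=\Hilb(R/\mathcal J_{k-m+1};q)$ exactly. A $\theta_2$-positivity upper bound is matched against a lower bound from the finite parameterization $E=G\mathcal D$, $H=B\mathcal D$, using the resultant colon ideal and root cancellation.
\item It proves only the easy containment $\partial_{c_m}(I\cap P_m)\subseteq P_m\cap\pi^{-1}(\mathcal J_{k-m+1})$. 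This is witnessed by the first-order deformation $E_\varepsilon=G(\mathcal D+\varepsilon z^m)$, $H_\varepsilon=B(\mathcal D-\varepsilon z^m)$.
\item Differentiation in $c_m$ identifies $A_m/A_{m-1}$ with $\bigl(P_m/\partial_{c_m}(I\cap P_m)\bigr)(-m)$. Hence the containment yields the \emph{lower} bounds $\epsilon_m=\Delta_m-q^m\bigl(\mathcal H_{k-m+1}-\sum_{i>m}\Delta_i\bigr)\ge0$. These are triangular rather than termwise.
\end{itemize}
The Gaussian recursion $G_m=q^m\bigl(\mathcal H_{k-m+1}-\sum_{i>m}G_i\bigr)$ and the total identity then give $0=\sum_m\epsilon_m\prod_{i<m}(1-q^i)$. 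A lowest-exponent argument forces every $\epsilon_m=0$. A plain summation, as in your final step, does not handle bounds of this shape.
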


Summing~\eqref{main:formula} gives the equivalent cumulative form
\begin{equation}\label{intro:cumulative}
 \Hilb(A_m;q)=1+\sum_{i=1}^m q^i\gb\ell i
       \sum_{j=0}^{k-i}q^{j(\ell-i+1)}\gb{i+j-1}{j},
 \qquad 0\leq m\leq k.
\end{equation}

\begin{example}\label{intro:ordinary-example}
For $\Gr(3,7)$, the full ring has dimension $35$, whereas
\[
 \dim A_0=1,\qquad \dim A_1=13,\qquad
 \dim A_2=31,\qquad \dim A_3=35.
\]
The first step is spanned by the powers of the hyperplane class
$h_1=-c_1$.
The last step has series
$\Hilb(A_3/A_2;q)=q^3+q^4+q^5+q^6$.
Thus the first two special classes generate every class of weight at
least seven, while four dimensions remain missing in lower weights.
\end{example}

The proof also identifies the quotients by relative Jacobian ideals
with finite coefficient images. A transformed Schur basis is adapted
to these ideals, meaning that each is spanned by a subset of the basis;
their nonterminal layers are shifted complete
intersections with specified $R$-module structures. Differentiation
relates the ideals to the filtration increments.

\subsection{The Lagrangian filtration}
Put
\begin{equation}\label{lg:eq:ring-intro}
 R_n=\Q[e_1,\ldots,e_n]\big/
 \left(e_i^2+2\sum_{a=1}^{n-i}(-1)^a e_{i+a}e_{i-a}
                 :1\le i\le n\right),\qquad \deg e_i=i,
\end{equation}
where $e_0=1$ and $e_j=0$ outside $0\le j\le n$.
This is the cohomology ring of $\LG(n,\C^{2n})$; it is the classical
specialization of the presentation in Kresch--Tamvakis~\cite[Theorem~1]{KT}.
Equivalently, for $E(z)=\sum_{i=0}^n e_i z^i$, the relations say
$E(z)E(-z)=1$. Define
\[
 \Lf_m=\Q[e_1,\ldots,e_m]\subseteq R_n\quad(0\le m\le n),
 \qquad \Lf_0=\Q.
\]
An even generator is a polynomial in the preceding generators, so
$\Lf_{2r}=\Lf_{2r-1}$ whenever $2r\le n$
\cite[discussion following Conjecture~1.4]{Polymath}.
We prove the following Polymath Lagrangian filtration conjecture \cite{Polymath} in the next result.

\begin{theorem}[The Polymath Lagrangian filtration conjecture \cite{Polymath}]\label{lg:thm:main}
For every $n\ge1$ and $0\le m\le n$,
\begin{equation}\label{lg:eq:main}
 \Hilb(\Lf_m;q)
 =1+\sum_{\substack{1\le i\le m\\i\text{ odd}}}
 q^i\sum_{j=0}^{n-i}q^{\binom{j+1}{2}}\gb{i+j}{i}.
\end{equation}
Consequently, for odd $m$,
\begin{equation}\label{lg:eq:increment-intro}
 \Hilb(\Lf_m/\Lf_{m-1};q)
 =q^m\sum_{j=0}^{n-m}q^{\binom{j+1}{2}}\gb{m+j}{m},
\end{equation}
and for positive even $m$ this increment is zero. Here all filtration quotients
are quotients of graded vector spaces, not quotient rings.
\end{theorem}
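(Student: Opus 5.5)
The plan is to argue by a dimension squeeze. I would first establish a coefficientwise upper bound for each odd increment. Then I would check that these upper bounds add up to the known Hilbert series of all of $R_n$. Since the increments themselves add up to $\Hilb(R_n;q)$, equality is then forced in every weight. The even increments vanish because $\Lf_{2r}=\Lf_{2r-1}$. Once \eqref{lg:eq:increment-intro} holds for odd $m$, the cumulative formula \eqref{lg:eq:main} follows by telescoping over $i\le m$.

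\emph{Step 1: the total series.} I expect $R_n$ to be a complete intersection in the odd generators. After the substitution $q_i=2e_i$ of Schur $Q$-type classes, the even generators become polynomials in the odd ones, by the relation $E(z)E(-z)=1$ solved degree by degree. So $R_n$ should be $\Q[e_1,e_3,\ldots]$, with odd indices at most $n$, modulo the images of $e_j$ for the odd $j$ with $n<j\le 2n$. Counting degrees then gives
\[
\Hilb(R_n;q)=\prod_{i=1}^n(1+q^i).
\]
I would then prove the identity
\[
1+\sum_{i\le n,\ i\text{ odd}} q^i\sum_{j=0}^{n-i}q^{\binom{j+1}{2}}\gb{i+j}{i}=\prod_{i=1}^n(1+q^i)
\]
by induction on $n$. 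The tools are the Gaussian Pascal identity and the classical expansion $\prod_{i=1}^n(1+q^i)=\sum_j q^{\binom{j+1}{2}}\gb{n}{j}$, which counts strict partitions with parts at most $n$. If this route stalls, a bijective proof via strict partitions inside the staircase, classified by a suitable odd statistic $i$, would also do.

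\emph{Step 2: the upper bound for odd $m$.} This is the main obstacle. Write $S_m=\Q[e_1,e_3,\ldots,e_m]$, with odd indices only. The increment $\Lf_m/\Lf_{m-1}$ is spanned by the images of $e_m\cdot S_m$, so it is a quotient of $e_m S_m$ by everything that lands in $\Lf_{m-1}$. I would follow the ordinary case. Relations of $R_n$ involving higher odd generators $e_{m+2},\ldots$ are eliminated to produce relations among $e_1,\ldots,e_m$. Taking their first-order part in $e_m$, that is, differentiating with respect to $e_m$, shows that a certain relative Jacobian-type ideal $\J_m\subseteq S_m$ satisfies $e_m\J_m\mapsto\Lf_{m-1}$ modulo lower terms. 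The increment is then bounded by $\Hilb\bigl(q^m\, S_m/\J_m\bigr)$. The target is to show that $S_m/\J_m$ is a finite coefficient image with Hilbert series $\sum_{j=0}^{n-m}q^{\binom{j+1}{2}}\gb{m+j}{m}$. Here the summand indexed by $j$ should be the $j$th cancellation layer: a shift by $\binom{j+1}{2}$ of a finite complete intersection with Hilbert series $\gb{m+j}{m}$, that is, of $\Q[y_1,\ldots,y_m]$ modulo a regular sequence in degrees $j+1,\ldots,j+m$. I would build these layers from the Schur $Q$-function cancellation identities (Stembridge, Pragacz). They express $Q$-functions of length greater than $n-j$ modulo the ideal, and a Pfaffian/Fitting-ideal description should produce the regular sequence in each layer.

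Step 2 is where I expect real difficulty. The delicate points are to identify exactly which relations descend to $S_m$ and to prove that the layer filtration gives upper bounds that are genuinely sharp, rather than just some spanning set. If the Jacobian approach does not close, I would fall back on exhibiting an explicit spanning set for $\Lf_m/\Lf_{m-1}$ of the required size. The candidate is $e_m$ times products of Schur $P$-classes indexed by strict partitions with a prescribed odd statistic, where straightening modulo $\Lf_{m-1}$ is controlled by the same cancellation identities. Either way, Step 1 converts the upper bound into the exact formula.
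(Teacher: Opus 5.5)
Your Step~2 carries the whole theorem, and as planned it cannot be completed, because the inequality you need is the inaccessible one. Let $K_m$ be the kernel of $S_m\to R_n$ and put $W_m=\partial_{e_m}K_m$. Differentiation in $e_m$ induces an isomorphism of graded vector spaces $\Lf_m/\Lf_{m-1}\cong(S_m/W_m)(-m)$. An upper bound $\Delta_m\le q^m\Hilb(S_m/\J_m)$ therefore requires $W_m\supseteq\J_m$. That is, for every element of $\J_m$ you must produce an element of $K_m$ with that $e_m$-derivative. This is exactly the elimination of $e_{m+2},e_{m+4},\ldots$ that you leave undone, and $W_m$ need not be an ideal a priori (compare Example~\ref{common:nonideal-derivative}).

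The provable inclusion is the reverse one. Deform a finite parameterization $E=GD$ of $R_n/\J_{n,n-m}$ to $G(D+\varepsilon z^m)$ over dual numbers; this keeps $E(z)E(-z)=1$ because $D$ is even and $m$ is odd. It gives $W_m\subseteq L_m$, the preimage of $\J_{n,n-m}$, and hence only a \emph{lower} bound $\Delta_m\ge q^m\Hilb(\Lf_m/(\Lf_m\cap\J_{n,n-m}))$.

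Your target $F_m=\sum_jq^{\binom{j+1}2}\gb{m+j}{m}$ is also not the series of anything you can compute independently. The natural candidate $S_m/L_m\cong\Lf_m/(\Lf_m\cap\J_{n,n-m})$ involves the unknown $\Lf_m$. The computable object is $R_n/\J_{n,n-m}$, with series $H_{n,n-m}=\sum_{a\le n-m}q^{\binom{a+1}2}\gb na$. Its complete-intersection layers are $q^{\binom d2}\gb{n+1}{d}$, coming from cancellation of an opposite root pair, $d\to d-2$. $F_m$ is tied to it only through the triangular identity $F_m=H_{n,n-m}-\sum_{i>m,\ i\text{ odd}}q^iF_i$. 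Nothing in your plan produces the layering of $F_m$ you propose.

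Three ingredients are missing.
\begin{enumerate}
\item An exact computation $\Hilb(R_n/\J_{n,d})=H_{n,d}$, where $\J_{n,d}$ is generated by the classes $\pi_n(\Theta Q_\lambda)$ with $\ell(\lambda)>d$ and $\Theta$ doubles odd power sums. Positive compression of the Schur $Q$ basis gives $\le$. The finite parameter ring $T_{n,d}$, the colon identity $\aideal_{M,d}:\rho_d=\bideal_{M,d}$ (via a transition determinant), and opposite-pair cancellation give $\ge$.
\item The dual-number tangency $W_m\subseteq L_m$ described above.
\item A rigidity argument in place of your squeeze. The defects $\epsilon_m=\Delta_m-q^m(H_{n,n-m}-\sum_{i>m}\Delta_i)$ equal $q^m$ times $\Hilb(L_m/W_m)+\Hilb(\J_{n,n-m}/(\J_{n,n-m}\cap\Lf_m))$, so they are nonnegative. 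The triangular target identity and the total then give $0=\sum_m\epsilon_m\prod_{i<m}(1-q^i)$, with sums and products over odd indices. Comparing lowest-degree terms forces every $\epsilon_m=0$.
\end{enumerate}
These bounds are triangular and point the other way, so the theorem does not come from adding per-increment upper bounds.

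Your spanning-set fallback is likewise unsupported. It presupposes the shifted basis theory asked for in Question~5.3 of the Polymath report, and the analogous ordinary prescribed bases are false.

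Finally, in Step~1 the relations of the odd-generator presentation are the even-index classes $e_j$ with $n<j\le2n$. The odd $e_j$ with $j>n$ are free generators and impose no relation.
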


The endpoint cases and the full-ring summation identity appear
in~\cite[Sections~5.1--5.2 and Proposition~5.2]{Polymath}.
Corollary~\ref{lg:cor:structural} identifies transformed Schur
$Q$-ideals contained in the filtered pieces.

\subsection{The auxiliary quotients and the proofs}
For the ordinary proof, retain the convention $1\le k\le\ell$.
For $1\le s\le k$, the ordinary proof uses the ideals $\mathcal J_s$ generated by the
maximal minors of
\begin{equation}\label{main:hankel}
 (a_{L+i+j})_{0\le i<k,\ 0\le j<s},\qquad
 a_v=[z^v]H(z)^2,\quad L=\ell-k+1.
\end{equation}
Here $[z^v]$ means coefficient extraction, and the maximal minors
are the determinants of its $s$ by $s$ submatrices.
These are the ideals of the last $s$ Jacobian columns
(Proposition~\ref{gp:fitting}). For $d=s-1$, we prove
\begin{equation}\label{intro:polar-summary}
 \Hilb(R/\mathcal J_s;q)
 =\sum_{a=0}^d q^{a(a+\ell-k)}\gb{k}{a}\gb{\ell}{k-a}.
\end{equation}
In the Lagrangian proof, the corresponding ideals
$\mathcal J_{n,d}$ are generated by doubled-power-sum transforms
of Schur $Q$-functions indexed by strict partitions of length
larger than $d$; their exact definition is
\eqref{lg:eq:Q-ideal}. For $0\le d\le n$, we prove
\begin{equation}\label{intro:lg-quotient}
 \Hilb(R_n/\mathcal J_{n,d};q)
 =H_{n,d}(q)=\sum_{a=0}^d q^{\binom{a+1}{2}}\gb{n}{a}.
\end{equation}
Theorems~\ref{gp:theorem} and~\ref{lg:thm:parameter-image}
identify these quotients with finite coefficient images.

The two constructions are summarized in Table~\ref{intro:comparison}.
In its ordinary column, $e=d+\ell-k$ is the number of variables in
the second coefficient group. The layer weights are those in the
cancellation sequences of Theorems~\ref{gp:layer-sequence}
and~\ref{lg:thm:layers}.

\begin{table}[ht]
\centering
\small
\caption{The two finite coefficient constructions}
\label{intro:comparison}
\begin{tabular}{@{}>{\raggedright\arraybackslash}p{.23\linewidth}
>{\raggedright\arraybackslash}p{.34\linewidth}
>{\raggedright\arraybackslash}p{.34\linewidth}@{}}
\toprule
 & Ordinary & Lagrangian\\
\midrule
Auxiliary ideals & Relative Jacobian minors &
Doubled-power-sum transforms of Schur $Q$-functions\\
Cancellation & One common root, $d\to d-1$ &
One opposite pair, $d\to d-2$\\
Layer weight & $de$ & $\binom d2$\\
Tangent generators & All $c_m$, $1\le m\le k$ &
Odd $e_m$, $1\le m\le n$\\
\bottomrule
\end{tabular}
\end{table}

The upper bound comes from the positive operator that doubles
power sums in the symmetric-function algebra. For the lower
bound, we compute transition determinants between two finite
regular sequences and apply the determinant--colon comparison
of~\cite[Proposition~1.2]{SS}. A cancellation map reduces the
number of coefficient variables, and the two bounds agree.
The finite parameter rings can contain nilpotent elements, meaning
elements for which some positive power vanishes. We therefore
perform cancellation in a free extension obtained by adjoining a
root of a monic polynomial; no division by the cancellation product
or assumption of reducedness is used.

First-order deformations then give a derivative inclusion for the
relations in the retained generators. Theorem~\ref{common:rigidity}
expresses the discrepancy from each proposed increment as a sum
of Hilbert series with nonnegative coefficients. The known total
Hilbert series forces every discrepancy to vanish. The equality
case gives the differentiation sequences and the containment of
the auxiliary ideals in the filtered subalgebras.

\subsection{Contributions and earlier work}
\label{intro:attribution}
The Hilbert-series formulas were conjectured in~\cite{RT,Polymath},
and the strategy of matching bounds with the total Hilbert series
appears in~\cite[Section~4.4]{REU}. Hall and Schur $Q$ orthogonality,
Schur cancellation, and the determinant--colon lemma are standard
\cite{Macdonald,Stembridge,BR,Pragacz,SS}. Our contribution is the finite
coefficient calculation: we determine the images and their defining kernels,
identify the cancellation kernels, and establish the derivative inclusions
needed for the filtration formulas. These descriptions specify the
cohomology-ring actions on the complete-intersection layers and give
bases adapted to the auxiliary ideal filtrations. We also prove an
infinite family of prescribed-basis failures and the two-row basis
ranges stated below.

In the ordinary case, the filtration formula also gives the sharp
high-degree generation bound and the further generation assertions
of Reiner--Tudose
(Corollaries~\ref{conseq:stabilization}
and~\ref{conseq:RT-generation}).

The ordinary universal common-factor equations are given by
Pragacz~\cite[Theorem~1.5]{Pragacz96}. Their universal injection
does not by itself identify the contraction after the coefficient
window is imposed. The proof below establishes this finite
contraction by matching independent Hilbert-series bounds. This identifies
the actual kernel of the finite coefficient map and hence the quotient
algebra, including its nilpotents. Nonreduced quotients occur in the deformed symmetric-function
setting~\cite[Section~7, Theorem~10]{SV}.
Generalized power-sum and supersymmetric coefficient algebras are
studied in~\cite{ER,Sergeev}. The coefficient windows and Schur
truncations imposed here require a separate finite contraction calculation.

The auxiliary ideal bases are different from the prescribed bases
of the filtered subalgebras. Question~5.3 of~\cite{Polymath} asks for
a shifted analogue of their combinatorial Theorem~1.1 and a possible
$k$-Schur theory for Schur $Q$-functions. Our auxiliary ideal bases
do not provide such a construction.
Also, a layer's Gaussian Hilbert series does not identify its
coefficient algebra with a Grassmannian cohomology ring
(Examples~\ref{gp:not-grassmannian-product} and
\ref{lg:ex:non-Grassmannian-coefficients}). The finite conductor
statements include their endpoint exceptions. The Lagrangian
ideal calculation holds for every $d$, while the deformation
argument uses odd $m$.

\subsection{The prescribed ordinary bases}
We recall the $k$-conjugation convention of~\cite{LM,Polymath}.
The hook length of a cell is one plus the number of cells directly
to its right and directly below it. A partition is a $t$-core if
none of its hook lengths is divisible by $t$, and it is $k$-bounded
if its largest part is at most $k$. The standard bijection from
$(k+1)$-cores to $k$-bounded partitions counts, in each row,
the cells whose hook lengths are at most $k$. If $c_k(\lambda)$
is the core corresponding to $\lambda$, define
$\lambda^{\omega(k)}$ by applying these row counts to
$c_k(\lambda)'$. This is an involution on $k$-bounded partitions and preserves size,
since core transposition preserves the multiset of hook lengths.

For a positive integer $K$, we use the $K$-Schur functions
$s_\lambda^{(K)}$ of~\cite{LM}. They form a basis of
$\Q[h_1,\ldots,h_K]$, indexed by $K$-bounded partitions, with
$s_\varnothing^{(K)}=1$ and the $K$-Pieri rule
\[
 h_a s_\lambda^{(K)}
 =\sum_{\substack{\mu\text{ $K$-bounded}\\
                  \mu/\lambda\text{ horizontal }a\text{-strip}\\
                  \mu^{\omega(K)}/\lambda^{\omega(K)}
                        \text{ vertical }a\text{-strip}}}
                    s_\mu^{(K)}\qquad(1\le a\le K).
\]
Here a horizontal strip has at most one cell in each column, and
a vertical strip has at most one cell in each row; both strips
in the sum have $a$ cells. This is the convention in~\cite[Theorem~29]{LM}.

Let $\Lambda$ denote the ring of rational symmetric functions.
For the basis questions, let $1\le\ell\le k$ and $0\le m\le\ell$,
and use the parameter order of~\cite{Polymath}:
\[
 R^{\ell,k}=\Lambda/(e_i:i>\ell,\ h_j:j>k),\qquad
 R^{\ell,k,m}=\Q[h_1,\ldots,h_m]\subseteq R^{\ell,k}.
\]
Thus $R^{\ell,k}$ is the cohomology ring of $\Gr(\ell,k+\ell)$,
with $\ell$ rows and width $k$; this reverses the parameter names
used above. The indexing set proposed in~\cite{Polymath} is
\begin{equation}\label{lg:eq:candidate-index}
 \mathcal P^{\ell,k,m}
 =\{\lambda:\lambda_1\le m,\
               \lambda^{\omega(k)}\subseteq(k^\ell)\}.
\end{equation}
Its two proposed families are $h_\lambda$ and
$s_\lambda^{(\lambda_1)}$. The second family uses the level
$\lambda_1$ of each index. The empty index contributes $1$ in either
family; it does not require a level-zero Schur function. We obtain the following result.

\begin{theorem}[An infinite family of prescribed-basis failures]
\label{lg:thm:counter-intro}
For every integer $r\ge2$, put $k=3r-1$, $N=3r+2$, and
$t=\lfloor N/2\rfloor$. In weight $N$ of
$R^{2,k,2}=R^{2,k}=H^{2*}(\Gr(2,3r+1);\Q)$, each family prescribed
by Conjectures~1.2 and~1.3 of~\cite{Polymath} has $t-2$ candidates,
but their spans have dimension $t-3$. In particular, each misses
precisely one dimension of this component.

For $r=2$, the respective candidates are, in both cases,
$h_1^8$ and $h_2^4$, and in $R^{2,5}$ one has
\begin{equation}\label{lg:eq:counter-intro}
 3h_1^8-14h_2^4=0.
\end{equation}
Their span is one-dimensional in the two-dimensional weight-eight
component of $H^{2*}(\Gr(2,7);\Q)$.
\end{theorem}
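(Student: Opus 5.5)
The plan is to make everything explicit in the two-row Schur basis of $R^{2,k}$, and then reduce the claim to a rank computation for a $(t-2)\times(t-2)$ matrix.

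\emph{Ambient space and candidate count.} For $\ell=2$ and $k=3r-1$, the weight-$N$ component of $R^{2,k}$ has basis $s_{(p,N-p)}$ with $\lceil N/2\rceil\le p\le k=3r-1$. Since $N=3r+2$, this means $3\le N-p\le t$, so the dimension is $t-2$. The candidates of weight $N$ with $\lambda_1\le 2$ are among the partitions $(2^a,1^{N-2a})$, $0\le a\le t$. I will compute $(2^a,1^{N-2a})^{\omega(k)}$ from the $(k+1)$-core by the row-count rule. Because $N\le k+3$ is only slightly larger than $k$, the core is obtained by sliding at most a few rows. I expect $\lambda^{\omega(k)}\subseteq(k^2)$ to fail for exactly three values of $a$, which gives $t-2$ candidates. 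For $r=2$, these should be $a=0$ and $a=4$, giving $h_1^8$ and $h_2^4$, as the statement says.

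\emph{Identify the two families.} For the $h$-family, write $h_2^ah_1^{b}$, $b=N-2a$, in the Schur basis by iterated Pieri. Its coefficient on $s_{(p,q)}$ is the number of two-row semistandard-type fillings of content $(2^a,1^b)$; two-row Kostka numbers have an explicit ballot-number formula, and truncating to $p\le k$ gives the image in $R^{2,k}$. For the $k$-Schur family, note that $s^{(1)}_{1^N}=h_1^N$ and that $s^{(2)}_\lambda$ for $2$-bounded $\lambda$ is given by the $2$-Pieri rule. I will show, or check from~\cite{LM}, that on the surviving indices $s^{(2)}_{(2^a,1^b)}$ is unitriangular in the $h_2^{a'}h_1^{b'}$ with $a'\ge a$, \emph{modulo} classes killed in $R^{2,k}$. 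If that holds, it suffices to prove the rank statement for one explicit family. If the triangularity fails, I will run the same computation separately for the second family.

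\emph{Rank deficiency.} I will exhibit an explicit linear functional $\phi$ on $R^{2,k}_N$ that kills every candidate. Concretely, $\phi$ is pairing with a specific class of complementary weight $2k-N=3r-4$, computed with the Chern-root integration formula $\int f=\operatorname{Res}\,(x-y)^2f/\ldots$, equivalently $\int s_\mu s_\nu=\delta$ for complementary shapes. Guided by the $r=2$ identity $3h_1^8-14h_2^4=0$, I will take $\phi$ to be the functional whose values on $h_2^ah_1^b$ are truncated ballot sums. I then show these sums vanish exactly on the surviving $a$, using a binomial/Catalan-moment identity of Vandermonde type. Finally, I will check that $t-3$ of the candidates are independent. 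I expect this to follow by selecting $t-3$ of them whose expansion is triangular with nonzero diagonal against $s_{(p,N-p)}$, $N-p\ge 4$: the leading Schur term of $h_2^ah_1^b$ is $s_{(a+b,a)}$, truncated when $a+b>k$.

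For $r=2$, the statement can be verified directly. In $\Gr(2,7)$ the weight-$8$ basis is $s_{53},s_{44}$. Expanding gives $h_1^8\mapsto (f^{53}_{\mathrm{tr}},f^{44})$ and $h_2^4\mapsto(1\text{-type counts})$, and these are proportional with ratio $14/3$, which is the asserted relation.

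The main obstacle is the uniform-in-$r$ vanishing of $\phi$ on the surviving candidates. This is a nontrivial truncated ballot-sum identity, and the exact set of three excluded indices has to mesh with it. I will first find $\phi$ numerically for $r=2,3,4$ and guess its closed form, likely a signed binomial in $p$. I will then prove the identity by generating functions in the roots $x,y$ subject to $h_{k+1}=h_{k+2}=0$.
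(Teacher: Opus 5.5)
Your index count and your comparison of the two families are fine in outline. The $2$-rectangle rule gives an exact unitriangular change $s^{(2)}_{(2^a,1^{N-2a})}=u^{\epsilon}v^a(u^2-v)^{t-a}$, $\epsilon=N\bmod 2$, already in $\Q[u,v]$ on the upward-closed range $4\le a\le t$, so no reduction modulo the ideal is needed. The excluded indices $a=1,2,3$ are most easily found by computing $\omega(k)$ on the shapes $(N-a,a)$, $3\le a\le t$, and using that it is an involution.

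\textbf{The main gap.} The heart of the theorem, the uniform rank defect, is not proved. Finding $\phi$ numerically for $r=2,3,4$, guessing a closed form in truncated ballot sums, and hoping for a Vandermonde-type identity is a search strategy, not an argument. Schur coordinates also make the needed identity needlessly hard.

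The missing idea is to work in $\Q[u,v]$, with $u=h_1$ and $v=h_2$, before quotienting. There the candidates span $V_N=\Span\{u^N,\,u^{N-2a}v^a:4\le a\le t\}$. This is exactly the kernel of the projection onto the three coordinates $u^{N-2a}v^a$, $a=1,2,3$. Meanwhile $I_N$, for $I=(h_{3r},h_{3r+1})$, is three-dimensional with basis $u^2h_{3r},vh_{3r},uh_{3r+1}$. So everything reduces to the rank of a $3\times3$ matrix in the numbers $c_{n,a}=[u^{n-2a}v^a]h_n$ with $n\in\{3r,3r+1\}$ and $a\le3$. These have closed forms from $c_{n,a}=[z^{n-2a}](1+z)^{a+1}(1+z^3)^{-a-1}$, which follows from $\sum_n h_n(1,w)z^n=(1-z+(1-w)z^2)^{-1}$ and $1-z+z^2=(1+z^3)/(1+z)$.

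The paper finds the kernel vector $P_r=\bigl(2(2r+1)u^2+r(3r+1)v\bigr)h_{3r}-(5r+3)uh_{3r+1}\in I_N\cap V_N$, with $[u^N]P_r=(-1)^{r+1}(r+1)\neq0$. It also finds a nonzero $2\times2$ minor equal to $r(5r+3)/2$. Hence $\dim(I_N\cap V_N)=1$. Dually, the functional you are looking for is simply $\eta_r=\tfrac{r(r-1)}{2}[u^{N-2}v]+(r-1)[u^{N-4}v^2]+[u^{N-6}v^3]$. It kills the three generators of $I_N$, so it descends to $R^{2,k}_N$. On the classes $h_2^ah_1^{N-2a}$ it vanishes exactly for $a\notin\{1,2,3\}$, as you predicted.

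\textbf{The independence step.} This step fails as stated. The complete monomial $h_2^ah_1^{N-2a}$ is not triangular in the two-row Schur basis with leading term $s_{(N-a,a)}$. That is the behaviour of $e_2^ae_1^{N-2a}$, whose expansion is $s_{(N-a,a)}$ plus shapes it dominates. For complete monomials, all two-row shapes occur:
\begin{itemize}
\item Already $h_2^4=6s_{(5,3)}+3s_{(4,4)}$ in $R^{2,5}$.
\item For $r=3$, both off-diagonal entries of your proposed $2\times2$ matrix are nonzero. The shape $s_{(6,5)}$ occurs in $h_2^4h_1^3$, via the tableau with rows $1,1,2,2,3,3$ and $4,4,5,6,7$. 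The shape $s_{(7,4)}$ occurs in $h_2^5h_1$, via rows $1,1,2,2,3,3,4$ and $4,5,5,6$.
\end{itemize}
So nonsingularity of that matrix would need a separate argument. None is needed once you have the rank computation. The nonzero minor gives $\dim(I_N\cap V_N)\le1$, so the image of $V_N$ in $R^{2,k}_N$ has dimension at least $(t-2)-1$. Then $P_r\neq0$ makes it exactly $t-3$.
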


The relation in Theorem~\ref{lg:thm:counter-intro} disproves
\cite[Conjectures~1.2 and~1.3]{Polymath} for the specified vectors.
Section~\ref{lg:sec:counter} gives a polynomial certificate for the
whole family and computes its exact rank defect. Replacing the
index $(1^N)$ by $(2,1^{N-2})$ gives bases in these components
(Corollary~\ref{counter:uniform-repair}). These replacements are
componentwise and are not asserted to give a basis in general.

For a two-row rectangle of width $k$, we call $d=N-k$ the excess
of weight $N$. Both original families are bases at every nonnegative
even excess $d\le k$ (see Proposition~\ref{counter:even-offset}). A coefficient
matrix gives the defect at arbitrary positive excess
(see Lemma~\ref{counter:projection-criterion}); it yields the answer at
excesses one and three (see Proposition~\ref{counter:odd-small}) and at
excess five (see Proposition~\ref{counter:excess-five}). For every excess,
any failure has codimension one (see Proposition~\ref{counter:defect-bound}).
In the six highest weights, both families are bases except in
$(k,N)=(5,8)$ and $(8,11)$ (see Proposition~\ref{counter:near-top}).
The proof for even excess
uses a Catalan moment determinant. Its positivity follows from
classical determinant integration and nonnegativity at real arguments of Schur polynomials indexed by
partitions with even parts
\cite{Forrester,HM}.

\subsection{Organization}
Section~\ref{common:section} gives the algebraic lemmas shared by the
two proofs. Sections~\ref{sf:section}--\ref{conseq:section} establish
the ordinary filtration and its consequences.
Sections~\ref{lg:sec:q}--\ref{lg:sec:consequences} treat the Lagrangian
filtration. Section~\ref{lg:sec:counter} then concerns the prescribed
two-row bases. Appendix~\ref{ca:section} records the standard
complete-intersection facts; Appendix~\ref{counter:appendix} contains
the longer two-row determinant calculations, and
Appendix~\ref{comp:section} describes the computational supplement.
Appendix~\ref{lg:appendix:coefficients} gives the coefficient-algebra
comparison used in Example~\ref{lg:ex:non-Grassmannian-coefficients}.

\section{Algebraic tools and a common filtration argument}
\label{common:section}

The polynomial rings and algebras below are nonnegatively graded,
with positive integer weights on polynomial variables. Graded modules
and their shifts may have integer degrees. For ideals $I,J$ of a ring $S$, write
$I:J=\{u\in S:uJ\subseteq I\}$ and $I:f=I:(f)$.
An Artinian ring has no infinite strictly descending chain of ideals;
the parameter rings used here are finite dimensional over $\Q$.
A ring is reduced when its only nilpotent element is zero.
An ideal has \emph{finite colength} if its quotient is finite dimensional
 over $\Q$. A sequence $f_1,\ldots,f_h$ is \emph{regular} if its ideal
is proper and multiplication by $f_i$ is injective modulo the preceding
terms for every $i$. A quotient of a polynomial ring by a regular sequence is a
\emph{complete intersection}. The empty sequence is regular.
For a formal series $F(z)$, write $F_j=[z^j]F(z)$ for its coefficient of $z^j$.
Rational powers of a series with constant term $1$ are understood through
the formal binomial expansion; polynomial degree in $z$ is written $\deg_z$.

\subsection{Positive compression}
\begin{lemma}[Positive compression]\label{common:positive}
Let $V$ be a finite-dimensional real inner-product space with an
orthogonal basis $(v_\lambda)_{\lambda\in\mathcal I}$. Let $U$ be spanned
by the basis vectors with indices in $\mathcal I_0\subseteq\mathcal I$,
and let $p:V\to U$ be the orthogonal projection. If $T:V\to V$ is
self-adjoint and positive definite, then the vectors
\[
 pT(v_\lambda),\qquad \lambda\in\mathcal I_0,
\]
form a basis of $U$. Every subfamily is therefore linearly independent.
The assertion applies degree by degree to a graded vector space with
finite-dimensional homogeneous components. Our later matrices are rational,
so their ranks are the same over $\Q$ and over $\mathbb R$.
\end{lemma}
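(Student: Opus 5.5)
The plan is to show that the linear map $\phi:U\to U$, $\phi(u)=pT(u)$, is injective. Since $\phi$ is an endomorphism of the finite-dimensional space $U$, injectivity makes it an isomorphism. Then $\phi$ carries the basis $(v_\lambda)_{\lambda\in\mathcal I_0}$ of $U$ to a basis of $U$, and that image is exactly the family $pT(v_\lambda)$, $\lambda\in\mathcal I_0$.

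The key step is the identity $\langle pT u,u\rangle=\langle Tu,u\rangle$ for $u\in U$. It holds because $p$ is an orthogonal projection, so it is self-adjoint and fixes $U$; hence $\langle pTu,u\rangle=\langle Tu,pu\rangle=\langle Tu,u\rangle$. Now suppose $u\in U$ and $pTu=0$. Then $\langle Tu,u\rangle=0$, and positive definiteness of $T$ forces $u=0$. So $\phi$ is injective, hence bijective. Equivalently, the Gram-type matrix $\bigl(\langle Tv_\mu,v_\lambda\rangle\bigr)_{\lambda,\mu\in\mathcal I_0}$ is a principal submatrix of a positive definite matrix, so it is positive definite and invertible. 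Self-adjointness of $T$ is not needed for this implication; only positivity of the quadratic form is used. Orthogonality of the basis is what makes $p$ the coordinate truncation onto $U$.

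The remaining assertions are routine. A subfamily of a basis is linearly independent. For a graded space with finite-dimensional components, assume that $T$ preserves degree and that the inner product makes distinct components orthogonal. Then the argument applies separately in each homogeneous component $V_t$, with the basis vectors of degree $t$. Finally, a rational matrix has the same rank over $\Q$ and over $\mathbb R$, because rank is the size of the largest nonvanishing minor, and a minor with rational entries is nonzero over $\Q$ exactly when it is nonzero over $\mathbb R$. I do not expect a genuine obstacle. The only point needing care is to state explicitly that $p$ is self-adjoint and idempotent with image $U$, which is where the orthogonality of the basis is used.
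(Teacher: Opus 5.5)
Your argument is correct and is essentially the paper's proof: the identity $\langle pTu,u\rangle=\langle Tu,u\rangle>0$ for $0\ne u\in U$ shows that $pT|_U$ is injective, hence an automorphism of $U$ carrying the basis $(v_\lambda)_{\lambda\in\mathcal I_0}$ to the asserted family. One small correction: $p$ is self-adjoint and idempotent with image $U$ for any subspace $U$, so orthogonality of the basis is not used there; it only identifies $p$ with coordinate truncation, which is what the later applications need.
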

\begin{proof}
For $0\ne u\in U$,
$\langle pT(u),u\rangle=\langle T(u),u\rangle>0$.
Consequently $pT|_U$ has zero kernel, and is an automorphism of the
finite-dimensional vector space $U$. Its values on the indicated basis
are precisely the asserted vectors.
\end{proof}

The Schur and Schur $Q$ quotients below identify with the spans of the
retained basis vectors. Their quotient maps are coefficient projections;
the positive operator need not preserve the quotient ideal.

\subsection{A finite complete-intersection comparison}
We use the finite graded determinant--colon comparison
of Simon--Strooker~\cite[Proposition~1.2]{SS}.

\begin{lemma}[Determinant--colon comparison]\label{common:ci-comparison}
Let $S=\Q[y_1,\ldots,y_h]$ be positively graded. Suppose
$\mathfrak a=(f_1,\ldots,f_h)$ and
$\mathfrak b=(j_1,\ldots,j_h)$ are homogeneous ideals of finite
colength. For $h>0$, assume that the $f_i$ and $j_i$ are homogeneous
of positive degrees. Put $\boldsymbol f=(f_1,\ldots,f_h)^{\mathsf T}$ and
$\boldsymbol j=(j_1,\ldots,j_h)^{\mathsf T}$. Suppose
$\boldsymbol f=M\boldsymbol j$, where
$M_{ab}$ is homogeneous of weight $\deg f_a-\deg j_b$ whenever it is
nonzero. Then
\begin{equation}\label{common:colon}
 \mathfrak a:\det M=\mathfrak b.
\end{equation}
Thus multiplication by $\det M$ embeds $S/\mathfrak b$ into
$S/\mathfrak a$, with the corresponding grading shift when the
determinant is homogeneous.
\end{lemma}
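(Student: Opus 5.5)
The plan is to identify multiplication by $\det M$ with the dual of the natural surjection $S/\mathfrak a\to S/\mathfrak b$. First record the easy facts. Since $S$ has $h$ variables and $\mathfrak a,\mathfrak b$ are homogeneous, generated by $h$ elements of positive degree, and of finite colength, each of $f_1,\ldots,f_h$ and $j_1,\ldots,j_h$ is a homogeneous system of parameters. Hence each is a regular sequence, and $S/\mathfrak a$, $S/\mathfrak b$ are graded Artinian complete intersections; the standard facts are collected in Appendix~\ref{ca:section}. The relation $\boldsymbol f=M\boldsymbol j$ gives $\mathfrak a\subseteq\mathfrak b$. Multiplying it by the adjugate gives $\operatorname{adj}(M)\boldsymbol f=\det M\cdot\boldsymbol j$ (Cramer). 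So $\det M\cdot j_b\in\mathfrak a$ for every $b$, that is, $\mathfrak b\subseteq\mathfrak a:\det M$. The weight condition on $M_{ab}$ makes $\det M$ homogeneous of weight $\sum_a\deg f_a-\sum_b\deg j_b$ whenever it is nonzero. The case $h=0$ is trivial, since then $\mathfrak a=\mathfrak b=0$ and $\det M=1$.

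The reverse inclusion is the real content. It is equivalent to injectivity of the map $S/\mathfrak b\to S/\mathfrak a$, $u\mapsto u\det M$, which is well defined by the previous step. The matrix $M$ induces a morphism of Koszul complexes $K(\boldsymbol j)\to K(\boldsymbol f)$ by taking $\bigwedge^i$ of the transpose appropriately. In homological degree $0$ this map is the identity on $S$, and in top degree $h$ it is multiplication by $\det M$. Both complexes are free resolutions, of $S/\mathfrak b$ and $S/\mathfrak a$ respectively. The morphism therefore lifts the natural surjection $\pi\colon S/\mathfrak a\to S/\mathfrak b$, or more precisely the dual direction, and I will fix the orientation so that it covers $\pi$.

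Now apply $\operatorname{Hom}_S(-,S)$ and take $H^h$. Self-duality of the Koszul complex gives graded isomorphisms $\operatorname{Ext}^h_S(S/\mathfrak a,S)\cong S/\mathfrak a$ and $\operatorname{Ext}^h_S(S/\mathfrak b,S)\cong S/\mathfrak b$, up to shifts. Under these identifications, $\operatorname{Ext}^h(\pi,S)$ is induced by the top component of the chain map, namely multiplication by $\det M$ from $S/\mathfrak b$ to $S/\mathfrak a$. On finite-length graded $S$-modules, $\operatorname{Ext}^h_S(-,S)$ is an exact contravariant duality: it is graded Matlis/local duality, since $\operatorname{Ext}^i_S(N,S)=0$ for $i\neq h$ when $N$ has finite length. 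It therefore carries the surjection $\pi$ to an injection, so multiplication by $\det M$ is injective on $S/\mathfrak b$. This gives $\mathfrak a:\det M\subseteq\mathfrak b$, hence equality, and the stated graded embedding with the shift by $\deg\det M$.

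The main obstacle is the bookkeeping in the duality step: checking that the top component of the comparison map really induces $\operatorname{Ext}^h(\pi,S)$ under the Koszul self-duality isomorphisms, with the correct orientation and grading shifts. If this becomes awkward, the fallback is a socle argument. Because $S/\mathfrak b$ is Gorenstein, every nonzero ideal of it contains the one-dimensional socle, which sits in degree $\sum_b\deg j_b-\sum_i\deg y_i$. Injectivity then reduces to showing that $\det M$ times a socle generator of $S/\mathfrak b$ is nonzero in $S/\mathfrak a$. By the degree count, the product lands in the socle degree of $S/\mathfrak a$. One would then verify that it is a nonzero socle element there, which is exactly the classical Scheja--Storch/Wiebe transition-determinant statement of \cite[Proposition~1.2]{SS}, and that result may simply be cited.
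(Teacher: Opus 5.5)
Your proposal is correct and follows essentially the paper's argument. You lift $S/\mathfrak a\to S/\mathfrak b$ to a Koszul comparison map with top component $\det M$, identify $\Ext^h$ of it with multiplication by $\det M$ from $S/\mathfrak b$ to $S/\mathfrak a$, and get injectivity from the vanishing of $\Ext^{h-1}_S(\mathfrak b/\mathfrak a,S)$ for the finite-length kernel; the adjugate identity gives well-definedness and identifies the kernel with $(\mathfrak a:\det M)/\mathfrak b$. One small fix: the comparison map runs $K(\boldsymbol f)\to K(\boldsymbol j)$, sending $\epsilon_a\mapsto\sum_b M_{ab}\eta_b$, not the other way, and the ``bookkeeping'' you worried about is immediate, because $H^h$ of each dual Koszul complex is simply the cokernel of its last dual differential, namely the corresponding quotient ring.
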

\begin{proof}
For $h=0$, the assertion is the identity $(0):1=(0)$ in $\Q$.
For $h>0$, both sequences are regular by Lemma~\ref{ca:ci}.
The matrix $M$ lifts the surjection $S/\mathfrak a\to S/\mathfrak b$
to a map of Koszul resolutions whose top component is $\det M$.
Applying internally graded $\operatorname{Hom}_S(-,S)$ therefore gives
multiplication by $\det M$ on the correspondingly shifted quotient rings,
\[
 \Ext^h_S(S/\mathfrak b,S)\longrightarrow
 \Ext^h_S(S/\mathfrak a,S).
\]
This map is injective: the exact sequence
$0\to\mathfrak b/\mathfrak a\to S/\mathfrak a\to S/\mathfrak b\to0$
and finite-length duality~\eqref{ca:finite-duality} give
$\Ext^{h-1}_S(\mathfrak b/\mathfrak a,S)=0$.
The adjugate identity makes the multiplication map well defined, and
its kernel is $(\mathfrak a:\det M)/\mathfrak b$. This proves the claim.
\end{proof}

\subsection{Tangency and the defect identity}
We use the nonnegative-error strategy of~\cite[Section~4.4]{REU}.
The following criterion expresses each defect, after a grading shift,
as the sum of two Hilbert series.

Let $R$ be a finite-dimensional graded commutative $\Q$-algebra with
$R_0=\Q$, generated by homogeneous elements $a_1,\ldots,a_r$
of positive weights $w_1,\ldots,w_r$.
Set
\[
 S_j=\Q[x_1,\ldots,x_j],\quad \deg x_i=w_i,\quad
 A_j=\Q[a_1,\ldots,a_j]\subseteq R,\quad
 K_j=\ker(S_j\longrightarrow A_j),
\]
with $S_0=A_0=\Q$. In particular $A_r=R$.
For each $1\le j\le r$, let $J_j$ be a homogeneous ideal of $R$,
and put
\[
 Q_j=\{f\in S_j:f(a_1,\ldots,a_j)\in J_j\},\quad
 W_j=\partial_{x_j}K_j,\quad H_j(q)=\Hilb(R/J_j;q).
\]
The notation $W_j$ denotes a vector-space image. It is not assumed to
be an ideal. In target weight $u$, its source is $(K_j)_{u+w_j}$,
including weights above the top weight of $R$; no truncation of $K_j$
is understood.

\begin{theorem}[Tangency--rigidity principle]\label{common:rigidity}
Suppose that $W_j\subseteq Q_j$ for every $j$. Let polynomials
$T_1(q),\ldots,T_r(q)$ satisfy
\begin{equation}\label{common:targets}
 T_j=q^{w_j}\left(H_j-\sum_{i>j}T_i\right),\qquad
 \sum_{j=1}^rT_j=\Hilb(R;q)-1.
\end{equation}
Then, for every $j$,
\begin{equation}\label{common:conclusions}
 \Hilb(A_j/A_{j-1};q)=T_j(q),\qquad
 W_j=Q_j,\qquad J_j\subseteq A_j.
\end{equation}
In particular $W_j$ is an ideal. Differentiation gives an exact sequence
of graded $A_{j-1}$-modules
\begin{equation}\label{common:module-sequence}
 0\longrightarrow A_{j-1}\longrightarrow A_j
 \xrightarrow{\ D_j\ }(A_j/J_j)(-w_j)\longrightarrow0,
 \qquad D_j([f])=[\partial_{x_j}f].
\end{equation}
No flatness assumption or ring structure on $A_j/A_{j-1}$ is needed.
\end{theorem}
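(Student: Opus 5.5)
The plan is to prove, for each $j$, a coefficientwise inequality $\Hilb(A_j/A_{j-1};q)\le T_j(q)$, with the discrepancy equal to a sum of Hilbert series of nonnegative vector spaces. Summing over $j$ then gives $\Hilb(R;q)-1=\sum_j\Hilb(A_j/A_{j-1};q)$, which equals $\sum_j T_j$ by \eqref{common:targets}. Hence every discrepancy vanishes, and the equality cases yield $W_j=Q_j$ and $J_j\subseteq A_j$.

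\textbf{The differentiation map.} Consider $D_j:S_j\to R(-w_j)$, $f\mapsto \partial_{x_j}f(a_1,\ldots,a_j)$. For $f\in K_j$ we have $\partial_{x_j}f\in W_j\subseteq Q_j$, so $D_j(f)$ lands in $J_j$. Hence $D_j$ induces a well-defined map
\[
A_j\simeq S_j/K_j\longrightarrow (R/J_j)(-w_j).
\]
The main obstacle is to see that its kernel is controlled by $A_{j-1}$. Write $f\in S_j$ as a polynomial in $x_j$ with coefficients in $S_{j-1}$, and suppose $\partial_{x_j}f\in Q_j$.

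I would argue by descending induction on weight, or equivalently on $x_j$-degree. Filter $S_j$ by $x_j$-degree; the graded pieces are $x_j^p S_{j-1}$. This gives a bound
\[
\Hilb(A_j/A_{j-1})\le q^{w_j}\Hilb\bigl(\text{image of } D_j \text{ modulo } J_j\bigr).
\]
The image lies in $\Q[a_1,\ldots,a_j]$ modulo $J_j$, which sits inside $R/J_j$.

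A cleaner route is to compare directly with $q^{w_j}\bigl(H_j-\sum_{i>j}\Hilb(A_i/A_{i-1})\bigr)$. The subspace of $R/J_j$ not reached by $D_j$ contains the contributions of the later generators. Precisely, I would show by descending induction on $j$ that
\[
\Hilb(A_j/A_{j-1})\le q^{w_j}\Bigl(H_j-\sum_{i>j}\Hilb(A_i/A_{i-1})\Bigr).
\]
This bound would follow from an injection of $A_j/A_{j-1}$ into $(A_j/(J_j\cap A_j))(-w_j)$, together with the estimate
\[
\dim\bigl(A_j/(J_j\cap A_j)\bigr)\le \dim R/J_j-\dim R/A_j \quad\text{in each weight.}
\]
That is, $J_j+A_j=R$ is what one must ultimately get. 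Establishing exactly this compatibility is where I expect difficulty. I would try first to use only the injection and the trivial bound $\Hilb(A_j/J_j\cap A_j)\le H_j$. I would then combine the resulting inequalities telescopically with \eqref{common:targets}: comparing $\sum_j\Hilb(A_j/A_{j-1})$ with $\sum_j T_j$ via descending induction in $j$, the defects $T_j-\Hilb(A_j/A_{j-1})$ become nonnegative combinations of $\Hilb\bigl((J_j+A_j)/A_j\bigr)$-type terms and $\Hilb\bigl(Q_j/W_j\bigr)$-type terms.

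\textbf{Injectivity of the induced map $A_j/A_{j-1}\to (A_j/J_j\cap A_j)(-w_j)$.} If $\partial_{x_j}f\in Q_j$, I would integrate in $x_j$ to produce a candidate $g$ with $f-g\in S_{j-1}+K_j$. Equality $W_j=Q_j$ is exactly what makes this work; without it, the defect is measured by $Q_j/W_j$.

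\textbf{Conclusion.} Once the total identity forces all defects to vanish, we obtain $\Hilb(A_j/A_{j-1})=T_j$, $W_j=Q_j$ (so $W_j$ is an ideal), and $J_j\subseteq A_j$. The exact sequence \eqref{common:module-sequence} then follows from the bijectivity of the induced $D_j$. This map is $A_{j-1}$-linear by the Leibniz rule, since $\partial_{x_j}$ kills $S_{j-1}$.
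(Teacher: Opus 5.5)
Your proposal has a genuine gap. Its basic inequality points the wrong way, and the argument you give for it assumes the conclusions of the theorem.

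Write $\Delta_j=\Hilb(A_j/A_{j-1};q)$. A priori, $\partial_{x_j}$ only induces a \emph{surjection} $A_j/A_{j-1}\to(S_j/Q_j)(-w_j)\cong\bigl(A_j/(A_j\cap J_j)\bigr)(-w_j)$, by integration in $x_j$. Its injectivity is equivalent to $W_j=Q_j$. Likewise, since $A_j/(A_j\cap J_j)\cong(A_j+J_j)/J_j$, your estimate $\dim A_j/(J_j\cap A_j)\le\dim R/J_j-\dim R/A_j$ is equivalent to $J_j\subseteq A_j$. It is not equivalent to $J_j+A_j=R$, which is false in general. These are two of the conclusions, so the upper bound $\Delta_j\le q^{w_j}(H_j-\sum_{i>j}\Delta_i)$ is circular.

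What holds unconditionally is the reverse inequality. Take $S_j/W_j$ rather than $S_j/Q_j$ as the target. Differentiation then induces a bijection $A_j/A_{j-1}\to(S_j/W_j)(-w_j)$. Surjectivity again comes from integration. Injectivity holds because $\partial_{x_j}f=\partial_{x_j}g$ with $g\in K_j$ forces $f-g\in S_{j-1}$. Splitting along $W_j\subseteq Q_j\subseteq S_j$ gives the exact identity
\[
 q^{-w_j}\epsilon_j=\Hilb(Q_j/W_j;q)+\Hilb\bigl(J_j/(J_j\cap A_j);q\bigr),
 \qquad
 \epsilon_j=\Delta_j-q^{w_j}\Bigl(H_j-\sum_{i>j}\Delta_i\Bigr).
\]
You identified the right two error spaces, but they bound $\Delta_j$ \emph{below} by the running target, not above.

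Even with the correct sign, the conclusion does not follow by summing. Each $\epsilon_j$ compares $\Delta_j$ with a target built from the actual later increments, not with $T_j$. Descending induction does not turn this into a termwise comparison. Already $\Delta_{r-1}-T_{r-1}=\epsilon_{r-1}-q^{w_{r-1}}\epsilon_r$ has no definite sign, so $T_j-\Delta_j$ is not a nonnegative combination of the two quotient series.

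The missing step is as follows.
\begin{itemize}
\item Put $S^{\mathrm{err}}_j=\sum_{i\ge j}(\Delta_i-T_i)$. The recursion in \eqref{common:targets} gives $S^{\mathrm{err}}_j=\epsilon_j+(1-q^{w_j})S^{\mathrm{err}}_{j+1}$.
\item The total-sum identity gives $S^{\mathrm{err}}_1=0$, and hence $0=\sum_j\epsilon_j\prod_{i<j}(1-q^{w_i})$.
\item The factors $1-q^{w_i}$ have negative coefficients, so nonnegativity of the $\epsilon_j$ is not enough by itself. You need the lowest-degree observation: each product has constant term $1$. So the coefficient of the smallest exponent occurring in any nonzero $\epsilon_j$ is a positive sum, a contradiction.
\end{itemize}
Only then does every $\epsilon_j$ vanish. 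The triangular recursion gives $\Delta_j=T_j$, and both quotient spaces vanish, giving $W_j=Q_j$ and $J_j\subseteq A_j$. Your final argument for the exact sequence is fine once these equalities are available.
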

\begin{proof}
Write $\Delta_j=\Hilb(A_j/A_{j-1};q)$.
Differentiation induces a well-defined linear map of degree $-w_j$
from $A_j=S_j/K_j$ to $S_j/W_j$. It is surjective because every
polynomial over $\Q$ has a polynomial antiderivative with respect
to $x_j$. Its kernel is
$A_{j-1}$: if $\partial_{x_j}f\in W_j$, there is $g\in K_j$ with
$\partial_{x_j}f=\partial_{x_j}g$, so $f-g\in S_{j-1}$.
The converse follows by choosing a representative independent of $x_j$.
Consequently
\begin{equation}\label{common:derivative}
 q^{-w_j}\Delta_j=\Hilb(S_j/W_j;q).
\end{equation}
Thus $S_j/W_j$ is finite-dimensional.

Define
\[
 \epsilon_j=\Delta_j-q^{w_j}
                 \left(H_j-\sum_{i>j}\Delta_i\right).
\]
Since $W_j\subseteq Q_j$, the natural vector-space sequence gives
\[
 \Hilb(S_j/W_j)=\Hilb(Q_j/W_j)+\Hilb(S_j/Q_j).
\]
The last quotient identifies with $A_j/(A_j\cap J_j)$.
Using $\Hilb(A_j)=\Hilb(R)-\sum_{i>j}\Delta_i$ and
$\Hilb(J_j)=\Hilb(R)-H_j$ therefore proves the exact identity
\begin{equation}\label{common:defect}
 q^{-w_j}\epsilon_j
  =\Hilb(Q_j/W_j;q)
   +\Hilb\bigl(J_j/(J_j\cap A_j);q\bigr).
\end{equation}
Both terms have nonnegative integer coefficients, so each
$\epsilon_j$ is a polynomial with nonnegative coefficients.

Let $\delta_j=\Delta_j-T_j$ and
$S^{\mathrm{err}}_j=\sum_{i\ge j}\delta_i$, with
$S^{\mathrm{err}}_{r+1}=0$. Subtracting~\eqref{common:targets}
from the definition of $\epsilon_j$ gives
\[
 S^{\mathrm{err}}_j=\epsilon_j+(1-q^{w_j})S^{\mathrm{err}}_{j+1}.
\]
The total-sum identity in~\eqref{common:targets}, together with
$A_r=R$ and $A_0=\Q$, gives $S^{\mathrm{err}}_1=0$.
Iteration yields
\begin{equation}\label{common:positive-identity}
 0=\sum_{j=1}^r\epsilon_j(q)\prod_{i<j}(1-q^{w_i}).
\end{equation}
Suppose that some $\epsilon_j$ is nonzero, and let $D$ be the
smallest exponent occurring in any nonzero $\epsilon_j$. Each factor
$\prod_{i<j}(1-q^{w_i})$ has constant term $1$, and all its other
terms have positive degree. Since no $\epsilon_j$ has a term of
smaller degree than $D$, the coefficient of $q^D$ on the right
of~\eqref{common:positive-identity} is
$\sum_j[q^D]\epsilon_j(q)>0$, a contradiction. Thus every $\epsilon_j$ is zero. The triangular recurrence gives
$\delta_j=0$ for all $j$, and~\eqref{common:defect} forces both
quotient spaces to vanish. Hence $W_j=Q_j$ and $J_j\subseteq A_j$;
in particular, $W_j$ is an ideal.

The same differentiation map now takes values in
$S_j/Q_j=A_j/J_j$. Changing a representative by an element of $K_j$
changes its derivative by an element of $Q_j$, so it is well defined.
Every element of $A_{j-1}$ has a representative independent of $x_j$;
the product rule thus gives $A_{j-1}$-linearity. The integration and
kernel arguments already given prove exactness. Under the convention
$M(a)_n=M_{n+a}$, the shift $(-w_j)$ makes the degree-$-w_j$
derivative homogeneous of degree zero in~\eqref{common:module-sequence}.
\end{proof}

\begin{example}[A derivative image that is not an ideal]\label{common:nonideal-derivative}
Let $S=\Q[x,y]$, with $\deg x=1$ and $\deg y=2$, and put
\[
 f=y^2+x^2y+x^4,\qquad K=(f,x^6),\qquad W=\partial_yK.
\]
Since $f$ is monic in $y$, the quotient $S/K$ is free on $1,y$
over $\Q[x]/(x^6)$; in particular it is an Artinian complete
intersection. In this example, $W_t$ denotes the weight-$t$
component of $W$. The equalities
\[
 K_4=\Q f,\qquad K_6=\Span\{x^2f,yf,x^6\}
\]
give
\[
 W_2=\Span\{x^2+2y\},\qquad
 W_4=\Span\{x^4+2x^2y,y^2\}.
\]
Since $y(x^2+2y)\notin W_4$, the space $W$ is not an ideal.
Thus the vector-space formulation in Theorem~\ref{common:rigidity}
is necessary even for Artinian complete intersections.
\end{example}

The derivative inclusion can be verified over the commuting dual numbers
$\mathcal T[\varepsilon]/(\varepsilon^2)$. Every element has a unique
expression $a+\varepsilon b$, with $a,b\in\mathcal T$, so its
$\varepsilon$-coefficient is defined even when $\mathcal T$ is nonreduced.

\begin{lemma}[A first-order witness for tangency]\label{common:witness}
In the preceding notation, suppose $\phi_j:R/J_j\hookrightarrow \mathcal T_j$
is an injective graded algebra map. Suppose further that there is a
homomorphism
$\widetilde\phi_j:R\to \mathcal T_j[\varepsilon]/(\varepsilon^2)$,
where $\deg\varepsilon=w_j$, reducing modulo $\varepsilon$ to the
quotient map followed by $\phi_j$, such that
\[
 \widetilde\phi_j(a_i)=\phi_j(\overline a_i)\ (i<j),\qquad
 \widetilde\phi_j(a_j)=\phi_j(\overline a_j)+\varepsilon.
\]
Then $\partial_{x_j}K_j\subseteq Q_j$.
\end{lemma}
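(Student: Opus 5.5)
Let $g\in K_j$ be given. We must show $\partial_{x_j}g\in Q_j$, which amounts to showing that $(\partial_{x_j}g)(a_1,\ldots,a_j)$ lies in $J_j$. Since $\phi_j$ is injective on $R/J_j$, this is equivalent to
\[
\phi_j\bigl(\overline{(\partial_{x_j}g)(a_1,\ldots,a_j)}\bigr)=0
\quad\text{in }\mathcal T_j .
\]
The plan is to obtain this vanishing as the $\varepsilon$-coefficient of $\widetilde\phi_j$ applied to the relation $g(a_1,\ldots,a_j)=0$ in $R$.

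First apply the ring homomorphism $\widetilde\phi_j$ to the identity $g(a_1,\ldots,a_j)=0$. Because $g$ is a polynomial with rational coefficients and $\widetilde\phi_j$ is a $\Q$-algebra map, this gives
\[
0=g\bigl(\widetilde\phi_j(a_1),\ldots,\widetilde\phi_j(a_j)\bigr)
 =g(b_1,\ldots,b_{j-1},b_j+\varepsilon),
\qquad b_i=\phi_j(\overline a_i).
\]
Next, expand using the first-order Taylor formula over the commutative ring $\mathcal T_j[\varepsilon]/(\varepsilon^2)$. For any polynomial $g$ we have
\[
g(b_1,\ldots,b_j+\varepsilon)=g(b)+\varepsilon\,(\partial_{x_j}g)(b),
\]
which holds monomial by monomial via $(b_j+\varepsilon)^n=b_j^n+n\varepsilon b_j^{n-1}$. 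This needs only commutativity and $\varepsilon^2=0$, and no reducedness of $\mathcal T_j$.

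The expression $a+\varepsilon b$ is unique, so the $\varepsilon$-coefficient is well defined. It follows that $(\partial_{x_j}g)(b)=0$ in $\mathcal T_j$. Since $\phi_j$ is an algebra map,
\[
(\partial_{x_j}g)(b)=\phi_j\bigl(\overline{(\partial_{x_j}g)(a)}\bigr),
\]
and injectivity of $\phi_j$ then yields $(\partial_{x_j}g)(a)\in J_j$, which is exactly $\partial_{x_j}g\in Q_j$. The constant-term component $g(b)=0$ is automatic and is consistent with the hypothesis that $\widetilde\phi_j$ reduces modulo $\varepsilon$ to the quotient map followed by $\phi_j$.

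Gradings play no essential role. With $\deg\varepsilon=w_j$, homogeneous $g$ stays homogeneous, but the argument is purely elementwise, so we may take $g$ arbitrary in $K_j$. The only step that needs care is the uniqueness of the $\varepsilon$-coefficient. Here we use that $\mathcal T_j[\varepsilon]/(\varepsilon^2)$ is free over $\mathcal T_j$ on $1,\varepsilon$, as noted just before the lemma, so there is no real obstacle.
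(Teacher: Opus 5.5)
Your proposal is correct and follows the paper's own proof: both apply $\widetilde\phi_j$ to the relation $g(a_1,\ldots,a_j)=0$, read off the $\varepsilon$-coefficient as $\phi_j\bigl(\overline{\partial_{x_j}g(a_1,\ldots,a_j)}\bigr)$, and conclude by injectivity of $\phi_j$. Your explicit first-order Taylor expansion and your remark that only commutativity, $\varepsilon^2=0$, and freeness on $1,\varepsilon$ are used (so no reducedness is needed) simply spell out what the paper states in one line.
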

\begin{proof}
For $f\in K_j$, the coefficient of $\varepsilon$ in
$\widetilde\phi_j(f(a_1,\ldots,a_j))=0$ is
$\phi_j(\overline{\partial_{x_j}f(a_1,\ldots,a_j)})$.
Injectivity gives the assertion. This argument is valid over an
arbitrary, possibly nonreduced, coefficient ring $\mathcal T_j$.
\end{proof}
\section{Symmetric functions and a positive pairing}
\label{sf:section}
We recall the conventions of Macdonald~\cite[Chapter~I]{Macdonald}.
Let $\Lambda=\Q[p_1,p_2,\ldots]$, with $\deg p_r=r$. For a partition
$\lambda$, write $p_\lambda=\prod_i p_{\lambda_i}$. The complete and
elementary symmetric functions are defined by
\[
 H(z)=\sum_{n\ge0}h_nz^n
 =\exp\left(\sum_{r\ge1}\frac{p_rz^r}{r}\right),\qquad
 \sum_{n\ge0}e_nz^n=H(-z)^{-1}.
\]
Here $h_0=e_0=1$, and $h_n=e_n=0$ for $n<0$.
For $\mu\subseteq\lambda$, the skew Schur function is
\[
 s_{\lambda/\mu}=\det(h_{\lambda_i-\mu_j-i+j})_{1\le i,j\le N},
 \qquad N\ge\length(\lambda).
\]
We append zero parts as needed and put $s_\lambda=s_{\lambda/\varnothing}$.
Thus the definition is independent of $N$.
An \emph{alphabet} is a finite list $X=(x_1,\ldots,x_N)$; specialization
at $X$ means $p_r\mapsto\sum_i x_i^r$. For $\length(\lambda)\le N$, the alternant formula is
\begin{equation}\label{sf:alternant}
 s_\lambda(X)=
 \frac{\det(x_j^{\lambda_i+N-i})_{1\le i,j\le N}}
      {\det(x_j^{N-i})_{1\le i,j\le N}}.
\end{equation}
If $\length(\lambda)>N$, then $s_\lambda(X)=0$. Otherwise the
quotient is a symmetric polynomial and extends to repeated values
of the $x_i$. The Schur functions form a homogeneous
basis of $\Lambda$; in $N$ variables, those with at most $N$ parts
form a basis of the symmetric polynomial ring.

\begin{lemma}[Hall pairing]\label{sf:hall}
The Hall pairing on $\Lambda$ is given by
\[
 \langle p_\lambda,p_\mu\rangle=\delta_{\lambda\mu}z_\lambda,
 \qquad z_\lambda=\prod_{r\ge1}r^{m_r(\lambda)}m_r(\lambda)!,
\]
where $m_r(\lambda)$ is the multiplicity of $r$ and
$\delta_{\lambda\mu}$ is the Kronecker delta. The Schur basis is
orthonormal. The graded algebra endomorphism $\theta_2:\Lambda\to\Lambda$
defined by $\theta_2(p_r)=2p_r$ is self-adjoint and
positive definite on every homogeneous component.
\end{lemma}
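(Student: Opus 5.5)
The plan is to treat the three assertions of Lemma~\ref{sf:hall} in the order stated: first that the pairing defined by $\langle p_\lambda,p_\mu\rangle=\delta_{\lambda\mu}z_\lambda$ makes the Schur basis orthonormal, and then the self-adjointness and positive definiteness of $\theta_2$. The pairing is defined on the basis $(p_\lambda)$ of $\Lambda=\Q[p_1,p_2,\ldots]$, so it is a symmetric bilinear form, and distinct homogeneous components are orthogonal. For orthonormality of Schur functions I will use the standard Cauchy-kernel argument of Macdonald~\cite[Chapter~I]{Macdonald}. Expanding $\exp\bigl(\sum_r p_r(x)p_r(y)/r\bigr)$ gives
\[
 \prod_{i,j}(1-x_iy_j)^{-1}=\sum_\lambda z_\lambda^{-1}p_\lambda(x)p_\lambda(y).
\]
A pair of bases $(u_\lambda),(v_\lambda)$ of a homogeneous component is dual for the pairing exactly when the kernel equals $\sum_\lambda u_\lambda(x)v_\lambda(y)$. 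It then remains to establish the Cauchy identity $\prod(1-x_iy_j)^{-1}=\sum_\lambda s_\lambda(x)s_\lambda(y)$. I will prove it in $N$ variables by multiplying by the Vandermonde determinants and applying the alternant formula~\eqref{sf:alternant} together with the Cauchy determinant $\det\bigl((1-x_iy_j)^{-1}\bigr)$. Letting $N\to\infty$ then gives the identity in $\Lambda$, because the Schur functions with at most $N$ parts form a basis in $N$ variables.

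This Cauchy step is the main obstacle, and since it is textbook material I will simply cite it from~\cite[I.(4.3), I.(4.8)]{Macdonald} rather than reprove it.

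The statement about $\theta_2$ is then immediate in the power-sum basis. The map $\theta_2$ is an algebra endomorphism, so $\theta_2(p_\lambda)=2^{\ell(\lambda)}p_\lambda$, and hence $\theta_2$ is diagonal in an orthogonal basis with positive eigenvalues. This gives
\[
 \langle\theta_2p_\lambda,p_\mu\rangle=\delta_{\lambda\mu}2^{\ell(\lambda)}z_\lambda
 =\langle p_\lambda,\theta_2p_\mu\rangle.
\]
Extending bilinearly shows that $\theta_2$ is self-adjoint. For $f=\sum a_\lambda p_\lambda\ne0$ in a homogeneous component with real coefficients,
\[
 \langle\theta_2f,f\rangle=\sum_\lambda 2^{\ell(\lambda)}z_\lambda a_\lambda^2>0,
\]
since each $z_\lambda>0$. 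This is exactly the positivity input that Lemma~\ref{common:positive} needs. I will also note that $\theta_2$ preserves degree, so it acts on each finite-dimensional component of $\Lambda\otimes\mathbb R$. The form itself is positive definite there because $z_\lambda>0$, so these components are genuine inner-product spaces.
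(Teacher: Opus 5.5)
Your proposal is correct and follows essentially the same route as the paper: you cite Macdonald's Cauchy-kernel argument for Schur orthonormality, and you diagonalize $\theta_2$ in the orthogonal power-sum basis with eigenvalues $2^{\ell(\lambda)}>0$. Your added remark that the form is positive definite on each real homogeneous component, since $z_\lambda>0$, is a useful explicit check of the inner-product hypothesis in Lemma~\ref{common:positive}.
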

\begin{proof}
Schur orthonormality is the Hall pairing identity
\cite[Chapter~I, Section~4]{Macdonald}.
In the orthogonal power-sum basis, $\theta_2$ has eigenvalues
$2^{\length(\lambda)}>0$.
\end{proof}

For two alphabets $X,Y$, the notation $X-Y$ denotes the specialization
$p_r\mapsto p_r(X)-p_r(Y)$, rather than a set difference. Thus
\[
 h_n(X-Y)=[z^n]\frac{\prod_j(1-y_jz)}{\prod_i(1-x_i z)}.
\]
The skew Cauchy and dual Jacobi--Trudi identities give
\begin{align}
 s_\lambda(X-Y)
 &=\sum_{\nu\subseteq\lambda}(-1)^{|\lambda|-|\nu|}
       s_\nu(X)s_{\lambda'/\nu'}(Y),
       \label{sf:skew-expansion}\\
 s_\lambda(X-Y)&=(-1)^{|\lambda|}s_{\lambda'}(Y-X).
       \label{sf:transpose}
\end{align}
These are polynomial identities, including when an alphabet is empty;
see~\cite[Chapter~I, Sections~3 and~5]{Macdonald}.

\begin{lemma}[Rectangular Schur projection]\label{sf:rectangle}
The images of $s_\lambda$ for $\lambda\subseteq(\ell^k)$ form a graded
basis of $R$; every other Schur function maps to zero. In particular,
\begin{equation}\label{sf:total}
 \Hilb(R;q)=\gb{k+\ell}{k}.
\end{equation}
Under the identification of $R$ with the rectangular Schur span,
the quotient map from $\Lambda$ is orthogonal projection for the Hall pairing.
\end{lemma}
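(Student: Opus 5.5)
The plan is to identify the kernel of $\Lambda\to R$ with the span of the Schur functions outside the rectangle, and then to use orthonormality. Recall that $R=P/I$ with $P=\Q[c_1,\ldots,c_k]$ and $c_i=(-1)^ie_i$. Hence $P$ is the image of $\Lambda$ under $e_i\mapsto0$ for $i>k$, and $R=\Lambda/\mathfrak K$ where
\[
\mathfrak K=(e_i:i>k)+(h_j:j>\ell),
\]
since $E(z)=\sum_i c_iz^i$ specializes exactly so that $H(z)$ becomes the complete series. Thus the rectangular statement reduces to the standard claim that $\mathfrak K$ equals $\Span\{s_\lambda:\lambda\not\subseteq(\ell^k)\}$.

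First, each $s_\lambda$ with $\lambda\not\subseteq(\ell^k)$ lies in $\mathfrak K$. If $\ell(\lambda)>k$, the specialization $\Lambda\to P$ is evaluation at an alphabet of $k$ variables, so $s_\lambda$ maps to zero there by the remark after \eqref{sf:alternant}. Alternatively, expand the dual Jacobi--Trudi determinant in $e$'s along the first row, whose entries $e_{\lambda'_1+j-1}$ all have index larger than $k$. If $\lambda_1>\ell$, expand the Jacobi--Trudi determinant $\det(h_{\lambda_i-i+j})$ along its first row. Those entries are $h_{\lambda_1-1+j}$ with indices at least $\lambda_1>\ell$, so they lie in $\mathfrak K$ and the whole determinant does too.

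Second, a dimension count finishes the identification. The images of the $s_\lambda$ with $\lambda\subseteq(\ell^k)$ span $R$, because all Schur functions span $\Lambda$ and the others vanish. I would bound $\dim R$ from above by $\binom{k+\ell}{k}$. Since $h_{\ell+1},\ldots,h_{\ell+k}$ form a regular sequence in $P$ (this is the homogeneous finite-colength situation covered by Lemma~\ref{ca:ci}), the Hilbert series is
\[
\prod_{i=1}^k\frac{1-q^{\ell+i}}{1-q^i}=\gb{k+\ell}{k}.
\]
Its value at $q=1$ is $\binom{k+\ell}{k}$, which is the number of partitions in the rectangle. A spanning set of that size in a space of that dimension is a basis. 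This simultaneously yields \eqref{sf:total}, because the generating polynomial of rectangular partitions by size is $\gb{k+\ell}{k}$. The finite colength of $I$ needs an argument: the relations $h_{\ell+1}=\cdots=h_{\ell+k}=0$ together with the recursion from $EH=1$ force $h_n=0$ for all $n>\ell$. Hence $H(z)$ is a polynomial with $E(z)H(z)=1$, which forces $c=0$ at the level of the radical. I expect this regularity/finite-colength step to be the main point needing care. If Lemma~\ref{ca:ci} is not available in this form, I would instead cite \cite{Fulton,Macdonald} for the identification with $H^{2*}(\Gr(k,k+\ell))$, whose Schubert basis has exactly this count.

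Finally, the kernel is $\Span\{s_\lambda:\lambda\not\subseteq(\ell^k)\}$. By Hall orthonormality (Lemma~\ref{sf:hall}), this kernel is the orthogonal complement of $\Span\{s_\lambda:\lambda\subseteq(\ell^k)\}$ in each degree. Therefore the quotient map, read through the identification of $R$ with the rectangular span, sends $s_\lambda$ to itself inside the rectangle and to $0$ outside it. That is exactly orthogonal projection.
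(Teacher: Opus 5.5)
Your proof is correct. Its first half matches the paper's: you kill the Schur functions outside the rectangle by specializing to $k$ variables, and by first-row Jacobi--Trudi expansion once $h_j\in I$ for all $j>\ell$.

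The routes differ in the independence step. The paper cites the classical fact that the retained classes are the Schubert basis of $H^{2*}(\Gr(k,k+\ell);\Q)$, then reads off \eqref{sf:total} by counting their weights. You reverse the order:
\begin{itemize}
\item You first compute $\Hilb(R;q)=\prod_{i=1}^k(1-q^{\ell+i})/(1-q^i)=\gb{k+\ell}{k}$ from the complete-intersection formula of Lemma~\ref{ca:ci}.
\item Finite colength comes from the argument the paper later uses for its parameter rings: at a common complex zero, $H$ becomes a polynomial, and $EH=1$ in $\C[z]$ forces $E=1$.
\item You then conclude that the rectangular spanning set is a basis because its size equals the dimension.
\end{itemize}
Lemma~\ref{ca:ci} is proved in the appendix independently of this lemma, so there is no circularity.

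Your route keeps the lemma within the paper's own commutative-algebra toolkit and effectively reproves the Schubert basis statement for this presentation, without Schubert calculus or the geometric identification. The paper's route is shorter but relies on an external theorem.

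One small remark: since the two Hilbert series agree, you can conclude degree by degree. The total count at $q=1$ already suffices for a homogeneous spanning set, however.
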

\begin{proof}
Specialization to $k$ variables kills partitions with more than $k$
parts. The recurrence $EH=1$ puts $h_j$ in $I$ for all $j>\ell$;
Jacobi--Trudi then kills every partition with first part greater than
$\ell$. The retained classes form the Schubert basis of $R$
\cite{Fulton,Macdonald}. Counting their weights gives~\eqref{sf:total},
and Hall orthonormality identifies the quotient map with orthogonal
projection.
\end{proof}

\section{Supersymmetric cancellation and a resultant colon ideal}
\label{res:section}
We recall the cancellation sequence from supersymmetric polynomial
and hook Schur theory~\cite{Stembridge,BR}, then compute the colon
ideal for the finite coefficient ring. All coefficient identities
are formal-series identities over $\Q$-algebras.

\subsection{Supersymmetric cancellation and its conductor}

For this subsection take $d,e\geq1$, with alphabets
$X=(x_1,\ldots,x_d)$ and $Y=(y_1,\ldots,y_e)$, and write
\[
 G(z)=\prod_{i=1}^d(1-x_i z),\qquad
 B(z)=\prod_{j=1}^e(1-y_j z).
\]
Let
\begin{gather*}
 \mathsf P=\Q[X]^{\mathfrak S_d}
              \otimes\Q[Y]^{\mathfrak S_e},
 \qquad h_n(X-Y)=[z^n]B(z)/G(z),\\
 \mathsf C_{d,e}=\Q[h_n(X-Y):n\geq1],
 \qquad
 \rho=\prod_{i=1}^d\prod_{j=1}^e(x_i-y_j).
\end{gather*}
For a subalgebra $C\subseteq T$, its \emph{conductor} in $T$ is
$\operatorname{Cond}(C,T)=\{f\in T:fT\subseteq C\}$, the largest ideal
of $T$ contained in $C$.
The notation $\mathfrak S_d$ denotes the symmetric group acting by
permutation of the $d$ variables. We call $\mathsf C_{d,e}$ the
\emph{supersymmetric coefficient algebra}.
The weighted degree of $\rho$ is $de$.
The cancellation sequence and conductor formula follow from
\cite{Stembridge,BR}.

\begin{lemma}[Supersymmetric cancellation and conductor]\label{res:conductor}
The cancellation map obtained by setting $x_d=y_e=t$ gives an exact
sequence
\begin{equation}\label{gp:universal-conductor}
 0\longrightarrow\rho\mathsf P
   \longrightarrow\mathsf C_{d,e}
   \longrightarrow\mathsf C_{d-1,e-1}\longrightarrow0.
\end{equation}
Its image is independent of $t$. In addition,
\[
 \{f\in\mathsf P:f\mathsf P\subseteq\mathsf C_{d,e}\}
       =\rho\mathsf P.
\]
If a variable group is empty, its symmetric polynomial ring is
$\Q$, and the same conventions apply to the smaller algebra.
\end{lemma}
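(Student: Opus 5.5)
The plan is to work with the basis of hook Schur functions. By the Berele--Regev and Stembridge theory, $\mathsf C_{d,e}$ is spanned over $\Q$ by the supersymmetric Schur functions $s_\lambda(X-Y)$. The ones indexed by partitions in the $(d,e)$-hook form a basis, and every other $s_\lambda(X-Y)$ vanishes, by the skew expansion~\eqref{sf:skew-expansion}. We first check that the cancellation map lands in $\mathsf C_{d-1,e-1}$ and does not depend on $t$. Setting $x_d=y_e=t$ cancels the factor $(1-tz)$ in $B(z)/G(z)$, so each generator $h_n(X-Y)$ goes to $h_n(X'-Y')$, where $X'$ and $Y'$ are the shortened alphabets. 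This proves independence of $t$ and surjectivity onto $\mathsf C_{d-1,e-1}$, since the images of the generators are the generators. The same argument applies when $d=1$ or $e=1$, with the conventions for empty groups.

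For the kernel, we first show $\rho\mathsf P\subseteq\mathsf C_{d,e}$, and then that $\ker\cap\,\mathsf C_{d,e}=\rho\mathsf P$. The key input is the factorization identity for rectangular hooks (Berele--Regev):
\[
 s_{((e^d))+(\alpha,\beta')}(X-Y)=\pm\rho\, s_\alpha(X)s_\beta(Y),
\]
for $\alpha$ with at most $d$ parts, $\beta$ with at most $e$ parts, and the fat hook shape $(e+\alpha_1,\ldots,e+\alpha_d,\beta'_1,\beta'_2,\ldots)$. These products span $\rho\mathsf P$, so $\rho\mathsf P\subseteq\mathsf C_{d,e}$. Each factor $x_i-y_j$ vanishes under the cancellation, so $\rho\mathsf P$ lies in the kernel.

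For the reverse inclusion, we take $f\in\mathsf C_{d,e}$ in the kernel and expand it in the hook Schur basis. The hook partitions that contain the $d\times e$ rectangle are exactly the fat hook indices above. The remaining ones fit in the $(d-1,e-1)$-hook, and their images under cancellation are $s_\lambda(X'-Y')$, which are linearly independent there. So the coefficients of the non-fat indices vanish and $f\in\rho\mathsf P$. The step I expect to need the most care is the claim that cancellation sends $s_\lambda(X-Y)$ to $s_\lambda(X'-Y')$ with the right vanishing pattern. This is immediate from generator compatibility and Jacobi--Trudi, but the hook combinatorics that count the basis must be cited precisely from~\cite{BR}.

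For the conductor, the inclusion $\rho\mathsf P\subseteq\operatorname{Cond}$ is immediate because $\rho\mathsf P$ is an ideal of $\mathsf P$ contained in $\mathsf C_{d,e}$. Conversely, suppose $f\mathsf P\subseteq\mathsf C_{d,e}$. Apply the cancellation map to the elements $fg$ with $g\in\mathsf P$. Since $\mathsf P$ surjects onto $\Q[X',t]^{\ldots}\otimes\Q[Y',t]$-type symmetric polynomials, if $\bar f\ne0$ we can choose $g$ so that $\overline{fg}$ depends on $t$, for instance $g=x_1+\cdots+x_d$ or a power sum separating $t$. This contradicts $t$-independence of the image of $\mathsf C_{d,e}$. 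The delicate point is arranging this when $\bar f$ might be a zero divisor. Here the target is a polynomial ring in $t$ over a domain, so $\bar f\cdot p_1(X)\mapsto \bar f\,(p_1(X')+t)$ has a $t$-dependent part $\bar f t\neq0$. Hence $\bar f=0$, $f$ lies in the kernel of cancellation on $\mathsf P$, and $f\in\mathsf C_{d,e}$ gives $f\in\rho\mathsf P$ by the exact sequence.
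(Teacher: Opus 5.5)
Your proof is correct. It differs from the paper's only in the reverse kernel inclusion.

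For that step the paper never uses a basis of $\mathsf C_{d,e}$. It shows that cancellation on all of $\mathsf P$ has kernel $\rho\mathsf P$: a separately symmetric polynomial vanishing on $x_d=y_e$ is divisible by $x_d-y_e$, and symmetry gives divisibility by every cross factor $x_i-y_j$. These linear forms are pairwise coprime, and the quotient by $\rho$ is again separately symmetric. Intersecting with $\mathsf C_{d,e}$ and using $\rho\mathsf P\subseteq\mathsf C_{d,e}$ then gives the kernel in the sequence.

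You instead expand $f$ in hook Schur functions. You observe that a partition in the $(d,e)$-hook fails to contain $(e^d)$ exactly when $\lambda_d\le e-1$, i.e.\ exactly when it lies in the $(d-1,e-1)$-hook. You combine this with the Berele--Regev linear independence of the $s_\lambda(X'-Y')$ over that smaller hook.

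The paper's route needs only the factorization identity plus elementary divisibility, and it yields the stronger fact that $\rho\mathsf P$ is the kernel on the whole of $\mathsf P$. Yours imports the independence theorem for hook Schur functions as an extra cited input. In exchange it shows the sequence is compatible with the hook Schur bases: the fat-hook functions are exactly $\pm\rho\,s_\alpha(X)s_{\beta'}(Y)$ and span the kernel, while the remaining ones map bijectively onto the hook basis of $\mathsf C_{d-1,e-1}$.

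Your conductor step is the paper's argument with $p_1(X)$ in place of $g_1=-e_1(X)$. The zero-divisor worry you raise is moot: $f\in\mathsf C_{d,e}$ makes $\bar f$ free of $t$, so the $t$-coefficient of $\bar f\,(p_1(X')+t)$ is $\bar f$ itself.
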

\begin{proof}
We use the classical hook factorization~\cite{BR,Pragacz}. If
$\length(\alpha)\le d$ and $\beta_1\le e$, pad $\alpha$ with zeros
and put $\lambda=(e+\alpha_1,\ldots,e+\alpha_d,\beta_1,\beta_2,\ldots)$.
Then
\begin{equation}\label{gp:rectangle-factorization}
 s_\lambda(X-Y)=(-1)^{|\beta|}\rho\,s_\alpha(X)s_{\beta'}(Y).
\end{equation}
The same hook Schur identities give $s_\nu(X-Y)=0$ if
$\nu_{d+1}>e$; missing parts are read as zero. Since the products
$s_\alpha(X)s_{\beta'}(Y)$ form a basis of $\mathsf P$,
\eqref{gp:rectangle-factorization} gives $\rho\mathsf P\subseteq\mathsf C_{d,e}$.

A separately symmetric polynomial vanishing at $x_d=y_e$ is divisible
by $x_d-y_e$, and symmetry gives divisibility by every cross factor.
These distinct linear factors are relatively prime; their product is
$\rho$, and the quotient remains separately symmetric. Thus cancellation
on $\mathsf P$ has kernel $\rho\mathsf P$. On $\mathsf C_{d,e}$ its image
is $\mathsf C_{d-1,e-1}$, since the common factor cancels in $B/G$.
This proves~\eqref{gp:universal-conductor}.

If $f\mathsf P\subseteq\mathsf C_{d,e}$, its cancellation image is a
polynomial $c$ independent of $t$. Put $g_1=[z]G(z)$ and
$g_1^\circ=[z]\prod_{i<d}(1-x_i z)$. The image of $fg_1$ is
$c(g_1^\circ-t)$ and is also independent of $t$, so $c=0$.
Hence $f\in\rho\mathsf P$. The reverse inclusion follows because
$\rho\mathsf P$ is a $\mathsf P$-ideal contained in $\mathsf C_{d,e}$.
\end{proof}

\subsection{An explicit resultant colon ideal}

Let $d,e,m\geq1$, put $h=d+e$, and work in the polynomial ring
\[
 \mathsf P=\Q[g_1,\ldots,g_d,b_1,\ldots,b_e],
 \qquad \deg g_i=i,\quad\deg b_j=j.
\]
Set
\[
 G(z)=1+\sum_{i=1}^d g_i z^i,\qquad
 B(z)=1+\sum_{j=1}^e b_j z^j,\qquad
 D(z)=(G(z)B(z))^{-1/2}=\sum_{n\geq0}D_nz^n,
\]
\[
 U(z)=\frac{zG'(z)D(z)}{G(z)},\qquad
 V(z)=\frac{zB'(z)D(z)}{B(z)}.
\]
Primes denote differentiation with respect to $z$. Define
\[
 \mathfrak I=(D_{m+1},\ldots,D_{m+h}),\qquad
 \mathfrak J=(U_{m+1},\ldots,U_{m+d},
                     V_{m+1},\ldots,V_{m+e}).
\]
The coefficient index is the weight of every displayed generator.
Let $\rho=\prod_{i,j}(x_i-y_j)$, with $G=\prod_i(1-x_i z)$
and $B=\prod_j(1-y_j z)$ as above.

\begin{lemma}[The resultant colon ideal]\label{res:colon}
Both displayed sequences are regular, and
\begin{equation}\label{gp:resultant-colon}
 \mathfrak I:\rho=\mathfrak J,
 \qquad
 \Hilb\bigl(\mathsf P/(\mathfrak I:\rho);q\bigr)
       =\gb{m+d}{d}\gb{m+e}{e}.
\end{equation}

\end{lemma}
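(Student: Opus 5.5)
The plan is to apply the determinant--colon comparison, Lemma~\ref{common:ci-comparison}, with $S=\mathsf P$, $\mathfrak a=\mathfrak I$ and $\mathfrak b=\mathfrak J$. This requires three things: both ideals have finite colength; there is a homogeneous matrix $M$ with $\boldsymbol D=M\boldsymbol j$, where $\boldsymbol D=(D_{m+1},\ldots,D_{m+h})^{\mathsf T}$ and $\boldsymbol j$ lists the generators of $\mathfrak J$; and $\det M$ is a nonzero rational multiple of $\rho$, read as a polynomial in the $g_i,b_j$. The Hilbert series then follows from the complete-intersection formula (Lemma~\ref{ca:ci}). The generators of $\mathfrak J$ have weights $m+1,\ldots,m+d$ and $m+1,\ldots,m+e$, and the variables have weights $1,\ldots,d$ and $1,\ldots,e$. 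Hence
\[
\Hilb(\mathsf P/\mathfrak J;q)=\prod_{i=1}^d\frac{1-q^{m+i}}{1-q^i}\prod_{j=1}^e\frac{1-q^{m+j}}{1-q^j}=\gb{m+d}{d}\gb{m+e}{e}.
\]

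\emph{Finite colength.} For $\mathfrak I$ I would use the graded Nullstellensatz: it suffices that the only common complex zero is $g=b=0$. At a zero, $D(z)\equiv D^{\le m}(z)$ modulo $z^{m+h+1}$, where $D^{\le m}$ has degree at most $m$. Then $GB\cdot (D^{\le m})^2\equiv 1$ modulo $z^{m+h+1}$, and the left side is a polynomial of degree at most $2m+h$. I would combine this with the Padé-type relation $D^{-2}=GB$ of degree $h$, using uniqueness of Padé approximants, to force $GB=1$. An alternative route is the weighted scaling $g_i\mapsto\tau^ig_i$, which reduces the question to showing that the affine zero set is a cone point. For $\mathfrak J$ I would argue in the same way on the factored form $U=-\sum_i\frac{x_iz}{1-x_iz}D$, $V=-\sum_j\frac{y_jz}{1-y_jz}D$. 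A zero of $\mathfrak J$ forces the $d$ consecutive coefficients of $U$, and the $e$ consecutive coefficients of $V$, to vanish. I would then show that this forces all $x_i$ and $y_j$ to vanish, using Vandermonde-type independence of the sequences $n\mapsto x_i^n$ twisted by $D$.

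\emph{The transition matrix.} Logarithmic differentiation of $D=(GB)^{-1/2}$ gives $zD'=-\tfrac12(U+V)$, that is,
\[
nD_n=-\tfrac12(U_n+V_n).
\]
For $m<n\le m+\min(d,e)$ this already expresses $D_n$ in $\mathfrak J$. For larger $n$ one needs $U_n$ with $n>m+d$, or $V_n$ with $n>m+e$. Here I would use the identity $G\,U=zG'D$, or the comparison between $G\,U$ and $zG'\,D$ after eliminating $D$ through $D=-2\int(U+V)/z$, to write $U_n$ for $n>m+d$ as $-\sum_{i\ge1}g_iU_{n-i}$ plus terms that are multiples of $D$-coefficients in the window. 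Inverting this triangular system, possibly after first writing $\mathfrak I\subseteq\mathfrak J$ in the reverse direction, i.e.\ $\boldsymbol j$ in terms of $\boldsymbol D$ with lower coefficients, produces a square homogeneous $M$. Its entries are polynomials in the $g_i,b_j$ together with $D$-coefficients of weight at most $m$.

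\emph{The determinant.} This is the step I expect to be the main obstacle. $\det M$ is homogeneous of weight $\sum(\deg D_n)-\sum(\deg j_b)=de$, which is the weight of $\rho$. I would show that $\det M$ vanishes whenever $x_d=y_e$. Under that specialization a common factor $1-tz$ cancels between $G$ and $B$. This produces a nonzero combination of the $\mathfrak J$-generators lying in the span of fewer $D$'s, in the spirit of the cancellation of Lemma~\ref{res:conductor}, so $M$ drops rank. By symmetry $\det M$ is then divisible by every cross factor $x_i-y_j$, hence by $\rho$, and weight comparison gives $\det M=c\rho$. To show $c\ne0$ I would either specialize to a convenient point, for instance $B=1+b_ez^e$ with $G$ a single power, where $M$ becomes triangular, or deduce $c\neq0$ from the finite colength of $\mathfrak I$: if $\det M=0$, the comparison lemma would give $\mathfrak I:0=\mathsf P=\mathfrak J$, contradicting that $\mathfrak J$ is proper. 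With $\det M=c\rho$, Lemma~\ref{common:ci-comparison} yields $\mathfrak I:\rho=\mathfrak J$, and the Hilbert series computed above completes the argument.
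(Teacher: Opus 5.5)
Your architecture is the paper's. You apply Lemma~\ref{common:ci-comparison} to $\mathfrak I\subseteq\mathfrak J$, build $M$ from $GU=zG'D$, $BV=zB'D$ and $nD_n=-\tfrac12(U_n+V_n)$, and get $\det M=c\rho$ from vanishing, symmetry and weight. Note that these recursions never leave the window $m+1,\ldots,m+h$, so no inversion is needed and no low $D$-coefficients enter. Your deduction of $c\neq0$ from the comparison lemma is valid: $\det M=0$ would give $\mathfrak J=\mathfrak I:0=\mathsf P$. Finite colength of $\mathfrak J$ needs no Vandermonde argument, since $\mathfrak I\subseteq\mathfrak J$ follows from $\boldsymbol D=M\boldsymbol j$.

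The genuine gap is the step you flagged, the vanishing of $\det M$ on $x_d=y_e$. The cancellation heuristic does not show that $M$ is singular. If the specialized $M$ were invertible over the fraction field, every combination of the $j_b$ would still be a combination of the $D_n$. So a combination of $\mathfrak J$-generators lying in the span of fewer $D$'s contradicts nothing. Moreover, $M$ is determined by the polynomials $\boldsymbol D,\boldsymbol j$ only up to adding Koszul syzygies of $\boldsymbol j$ to its rows. Relations among the actual specialized polynomials therefore cannot control $\det M$; singularity has to be read off the formal recursion that defines your $M$.

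The missing idea is to treat the seeds $u_1,\ldots,u_d,v_1,\ldots,v_e$ as independent, and $M$ as the linear map to $\delta_t=-(u_t+v_t)/(2(m+t))$ for $1\le t\le h$. A kernel vector forces $v_t=-u_t$ for all $t\le h$. Then $(u_1,\ldots,u_h)$ must satisfy $u_t+\sum_i g_iu_{t-i}=0$ for $d<t\le h$ and $u_t+\sum_i b_iu_{t-i}=0$ for $e<t\le h$. This is exactly the $h\times h$ Sylvester system, and at $\alpha=x_d=y_e$ the vector $u_t=\alpha^{t-1}$ is a nonzero solution, also when $\alpha=0$. The paper makes this precise with the residual coordinates $\mathcal A,\mathcal B$. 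It obtains $\det M=\pm\det C/(2^h\prod_t(m+t))$ for the Sylvester matrix $C$, and then proves $\det C=(-1)^{de}\rho$.

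A second step also fails as written: finite colength of $\mathfrak I$. From $D\equiv D^{\le m}$ modulo $z^{m+h+1}$ you get $GB(D^{\le m})^2\equiv1$ only through order $m+h$. That polynomial has degree up to $2m+h$, and uniqueness of Pad\'e approximants would need the cross-multiplied difference to vanish through order $2m+h$. So it gives nothing for $m\ge1$. The missing input is the first-order equation $2GB\,D'+(GB)'D=0$. In coefficients it reads $nD_n=-\sum_{i=1}^h(n-\tfrac i2)q_iD_{n-i}$, where $GB=\sum_i q_iz^i$. This recurrence propagates the $h$ vanishing window coefficients to all $D_n$ with $n>m$. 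At a common zero, $D$ is then a polynomial with $GBD^2=1$ in $\C[z]$, which forces $G=B=1$.
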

\begin{proof}
Write $G(z)B(z)=\sum_{i=0}^h q_i z^i$, with $q_0=1$.
The identity $(GB)D'+\tfrac12(GB)'D=0$ gives
\begin{equation}\label{gp:positive-recurrence}
 nD_n=-\sum_{i=1}^h\left(n-\frac i2\right)q_iD_{n-i}
 \qquad(n\geq1),
\end{equation}
where negative-index coefficients are zero. This recurrence
propagates the window defining $\mathfrak I$ to every $D_n$
with $n>m$. Explicitly, for $n>m+h$ all indices $n-i$ on the
right satisfy $n-i>m$; induction on $n$, starting with the $h$
vanishing coefficients, gives $D_n\in\mathfrak I$. At a common complex zero, therefore, $D$ becomes
a polynomial of degree at most $m$. The polynomial identity
$GBD^2=1$ forces $G=B=D=1$. Thus the only common zero is
the origin, and Lemma~\ref{ca:ci} proves regularity of
the $h$ generators of $\mathfrak I$.

The exact series identities
\begin{equation}\label{gp:UV-identities}
 GU=zG'D,\qquad BV=zB'D,\qquad U+V=-2zD'
\end{equation}
give a matrix expressing the generators of $\mathfrak I$ in
terms of those of $\mathfrak J$. The resulting inclusion proves
that $\mathfrak J$ has finite colength; its displayed generators
will then be regular by Lemma~\ref{ca:ci}.

\medskip\noindent\emph{The transition matrix and its determinant.}

Use the following column vectors of coefficient generators
\[
 \mathbf f=(D_{m+1},\ldots,D_{m+h})^{\mathsf T},\qquad
 \mathbf j=(U_{m+1},\ldots,U_{m+d},
                 V_{m+1},\ldots,V_{m+e})^{\mathsf T},
 \qquad h=d+e.
\]
We construct $M$ such that $\mathbf f=M\mathbf j$.

Put $\nu_t=m+t$, and let $\mathbf e_1,\ldots,\mathbf e_h$ be
the standard row vectors. For $1\leq t\leq h$, construct row
vectors $\mathbf U_t,\mathbf V_t,\mathbf D_t$ in this order:
\[
 \mathbf U_t=
 \begin{cases}
  \mathbf e_t,&t\leq d,\\
  -\displaystyle\sum_{i=1}^d g_i\mathbf U_{t-i}
    +\displaystyle\sum_{i=1}^d i g_i\mathbf D_{t-i},&t>d,
 \end{cases}
\]
\[
 \mathbf V_t=
 \begin{cases}
  \mathbf e_{d+t},&t\leq e,\\
  -\displaystyle\sum_{i=1}^e b_i\mathbf V_{t-i}
    +\displaystyle\sum_{i=1}^e i b_i\mathbf D_{t-i},&t>e,
 \end{cases}
\]
\[
 \mathbf D_t=-\frac{\mathbf U_t+\mathbf V_t}{2\nu_t}.
\]
All row vectors on the right of a recurrence have index strictly
less than $t$. Moreover $t-i\geq1$ whenever a recurrence is used,
and $\nu_t>0$. Thus these formulas define polynomial row vectors
over the coefficient ring. Let $M$ have rows
$\mathbf D_1,\ldots,\mathbf D_h$.
If a seed has weight $w$, its entry in row $t$ of $M$ has
weight $m+t-w$, by induction in the displayed recurrence.
Thus $M$ respects the generator degree shifts.

The exact identities
$GU=zG'D$, $BV=zB'D$, and
$U+V=-2zD'$ show by induction that evaluating
these row vectors on $\mathbf j$ gives
$U_{m+t},V_{m+t},D_{m+t}$, respectively.
Consequently
\begin{equation}\label{gp:transition-matrix}
 \mathbf f=M\mathbf j.
\end{equation}
This proves $\mathfrak I\subseteq\mathfrak J$. Since
$\mathfrak J$ has $h$ positive-degree generators and contains
$\mathfrak I$, its common zero set is the origin.
Lemma~\ref{ca:ci} therefore proves regularity of its
displayed generators as well.

We claim
\begin{equation}\label{gp:transition-determinant}
 \det M=\pm
       \frac{1}{2^h\prod_{t=1}^{h}(m+t)}\,\rho,
 \qquad
 \rho=\prod_{i=1}^d\prod_{j=1}^e(x_i-y_j).
\end{equation}
To calculate it, introduce $2h$ independent module coordinates
$u_1,\ldots,u_h,v_1,\ldots,v_h$, and put
\[
 \delta_t=-\frac{u_t+v_t}{2\nu_t}.
\]
For $d<t\leq h$ and $e<t\leq h$, respectively, define the
recurrence residuals
\begin{gather*}
 r_t^G=u_t+\sum_{i=1}^d g_i u_{t-i}
                   -\sum_{i=1}^d i g_i\delta_{t-i},\\
 r_t^B=v_t+\sum_{i=1}^e b_i v_{t-i}
                   -\sum_{i=1}^e i b_i\delta_{t-i}.
\end{gather*}
Let $\mathcal A$ be the linear map whose outputs, in order, are
\[
 u_1,\ldots,u_d,\ v_1,\ldots,v_e,\
 r_{d+1}^G,\ldots,r_h^G,\ r_{e+1}^B,\ldots,r_h^B.
\]
Its determinant is $\pm1$. Indeed, order both inputs and their
corresponding seed or residual outputs by the time index $t$.
Every residual is its own $u_t$ or $v_t$ plus terms of strictly
earlier index, so that matrix is unit lower triangular.
Returning to the displayed ordering changes only a sign.

Let $\mathcal B$ instead have outputs
\[
 \delta_1,\ldots,\delta_h,\
 r_{d+1}^G,\ldots,r_h^G,\ r_{e+1}^B,\ldots,r_h^B.
\]
The upper left block of $\mathcal B\mathcal A^{-1}$ is $M$,
because setting all residuals to zero is precisely the seed
recursion defining $M$. The other lower blocks are zero and
the identity. Hence $\det M=\pm\det\mathcal B$.

Change coordinates from $(u,v)$ to $(\delta,u)$. Its determinant
and the inverse coordinate formula are, respectively,
\[
 \pm\frac{1}{2^h\prod_t\nu_t},\qquad
 v_t=-2\nu_t\delta_t-u_t.
\]
In the residuals, the coefficients of the $u$ coordinates are
now the matrix
\[
 \begin{pmatrix} I_e&0\\0&-I_d\end{pmatrix}C,
\]
where the $h$ by $h$ matrix $C$ is explicitly
\[
 C_{rj}=g_{d+r-j}\quad(1\leq r\leq e),\qquad
 C_{e+r,j}=b_{e+r-j}\quad(1\leq r\leq d),
 \qquad 1\leq j\leq h.
\]
Here $g_0=b_0=1$, and coefficients outside their polynomial
degrees are zero. Since the first outputs of $\mathcal B$
are the $\delta$ coordinates themselves, block elimination gives
\[
 \det\mathcal B
    =\pm\frac{1}{2^h\prod_t\nu_t}
              \det C.
\]
The matrix $C$ is the Sylvester coefficient matrix, but its
determinant can be verified without invoking a resultant formula.
Put $\chi_G(w)=\sum_{i=0}^d g_iw^{d-i}$ and
$\chi_B(w)=\sum_{i=0}^e b_iw^{e-i}$. At a common root $t$,
the vector $(1,t,\ldots,t^{h-1})^{\mathsf T}$ is in the
kernel of $C$: its $G$-row $r$ evaluates to
$t^{r-1}\chi_G(t)$, and its $B$-row $r$ to
$t^{r-1}\chi_B(t)$. Thus each cross difference $x_i-y_j$
divides $\det C$ in the root polynomial ring, and their
product $\rho$ divides it.

Assign row weight $d+r$ to $G$-row $r$, row weight $e+r$
to $B$-row $r$, and column weight $j$ to column $j$.
Every determinant term then has total weight $de$, so
$\det C$ is a scalar multiple of $\rho$. That scalar is
nonzero and determined explicitly: set all $g_i=0$ for $i>0$.
The $G$-rows select the last $e$ columns. In the remaining
first $d$ columns the $B$-block is upper triangular with
diagonal $b_e$. Hence $\det C=(-1)^{de}b_e^d$ at this
specialization. All $x_i$ are now zero, so $\rho=b_e^d$.
Consequently $\det C=(-1)^{de}\rho$ identically. This
proves~\eqref{gp:transition-determinant}.
\medskip\noindent\emph{The colon ideal.}
Both sequences are regular, so Lemma~\ref{common:ci-comparison}
applied to~\eqref{gp:transition-matrix} gives
\begin{equation}\label{gp:transition-linkage}
 \mathfrak I:\det M=\mathfrak J.
\end{equation}
The scalar in~\eqref{gp:transition-determinant} is a nonzero
rational number, so $\mathfrak I:\rho=\mathfrak J$.
The degrees of the regular sequence defining $\mathfrak J$ give
\[
 \Hilb\bigl(\mathsf P/(\mathfrak I:\rho);q\bigr)
  =\frac{\prod_{i=1}^d(1-q^{m+i})\prod_{i=1}^e(1-q^{m+i})}
          {\prod_{i=1}^d(1-q^i)\prod_{i=1}^e(1-q^i)}
  =\gb{m+d}{d}\gb{m+e}{e}.
\]
\end{proof}

\section{Jacobian-tail quotients and coefficient parameterizations}
\label{gp:section}
Return to the Grassmannian ring $R$ of Section~\ref{intro:section}.
Use $a_n=[z^n]H(z)^2$, $L=\ell-k+1$, and the Jacobian-tail
ideals $\mathcal J_s$ of~\eqref{main:hankel}. We take $h_0=a_0=1$
and set coefficients with negative indices to zero.

\subsection{Relative differentials and coefficient parameters}
A basis is \emph{adapted} to a nested sequence of ideals if each ideal
is spanned by a subset of that basis. We construct such bases for the
auxiliary ideals in this section. For $0\leq d\leq k-1$, set
\[
 m=k-d,\qquad e=\ell-m=d+L-1,\qquad s=d+1.
\]
Let $\mathcal J_s$ be the ideal in $R$ generated by the maximal
minors of
\[
 M_s=(a_{L+i+j})_{\substack{0\leq i\leq k-1\\0\leq j\leq d}}.
\]
It is exactly the ideal of maximal minors of the Jacobian columns
for $c_m,\ldots,c_k$ of the generators of $I$. Indeed,
$\partial_{c_j}h_n=-a_{n-j}$; reversing the column order gives
the displayed matrix up to column signs, which do not change the ideal.

For a ring inclusion $A\subseteq R$, the module of relative differentials
$\Omega_{R/A}$ is generated over $R$ by symbols $dr$, subject to
$d(r+s)=dr+ds$, $d(rs)=r\,ds+s\,dr$, and $da=0$ for $a\in A$.
For a presentation $R^a\xrightarrow{M}R^b\to V\to0$, the zeroth
Fitting ideal $\Fitt_0^R(V)$ is generated by the $b$ by $b$ minors of $M$;
it is independent of the presentation~\cite[Tag~07Z6]{Stacks}.
The standard presentation of relative differentials gives the following
description~\cite[Tag~00RM]{Stacks}.

\begin{proposition}[Relative differentials]\label{gp:fitting}
For $1\leq m\leq k$, there is an exact graded presentation
\begin{equation}\label{gp:differential-presentation}
 \bigoplus_{i=1}^k R(-\ell-i)
 \xrightarrow{\ J^{\mathsf T}\ }
 \bigoplus_{j=m}^k R(-j)
 \longrightarrow\Omega_{R/A_{m-1}}\longrightarrow0,
 \qquad J_{ij}=\frac{\partial h_{\ell+i}}{\partial c_j}.
\end{equation}
Consequently
$\Fitt_0^R(\Omega_{R/A_{m-1}})=\mathcal J_{k-m+1}$.
\end{proposition}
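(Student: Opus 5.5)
The plan is to apply the standard presentation of relative differentials for a quotient of a polynomial algebra over a subalgebra, and then read off the Fitting ideal.

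\emph{Step 1: a presentation of $R$ over $A_{m-1}$.} Since $A_{m-1}$ is generated by the images of $c_1,\ldots,c_{m-1}$, the algebra $R$ is a quotient of $A_{m-1}[c_m,\ldots,c_k]$. The kernel is generated by the images of $h_{\ell+1},\ldots,h_{\ell+k}$: the map $P=P_{m-1}[c_m,\ldots,c_k]\to R$ factors through this ring with kernel $I$.

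\emph{Step 2: the conormal sequence.} Apply the exact sequence $I/I^2\to \Omega_{P/P_{m-1}}\otimes_P R\to\Omega_{R/P_{m-1}}\to0$ from \cite[Tag~00RM]{Stacks}. Here $\Omega_{P/P_{m-1}}$ is free on $dc_m,\ldots,dc_k$. Each generator $h_{\ell+i}$ maps to $\sum_j \partial_{c_j}h_{\ell+i}\,dc_j$ with $j\ge m$. We only need the columns $j\ge m$, since $dc_j=0$ for $j<m$ in relative differentials.

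\emph{Step 3: identify $\Omega_{R/P_{m-1}}$ with $\Omega_{R/A_{m-1}}$.} The images of $P_{m-1}$ and of $A_{m-1}$ in $R$ coincide. The defining relations $da=0$ depend only on that image, so the two modules of relative differentials agree.

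\emph{Step 4: grading.} The grading is immediate: $dc_j$ has weight $j$ and $h_{\ell+i}$ has weight $\ell+i$, so the shifts in \eqref{gp:differential-presentation} make $J^{\mathsf T}$ homogeneous of degree zero. This gives exactness of \eqref{gp:differential-presentation}.

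\emph{Step 5: the Fitting ideal.} By definition, $\Fitt_0$ is generated by the maximal minors of the $(k-m+1)\times k$ matrix $J^{\mathsf T}$, equivalently by the $(k-m+1)$-minors of the $k\times(k-m+1)$ block of $J$ with $j\ge m$.

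\emph{Step 6: compare with $M_s$.} It remains to check that these minors generate $\mathcal J_{k-m+1}$, as noted before the proposition. Differentiating $EH=1$ with respect to $c_j$ gives $z^jH+E\,\partial_{c_j}H=0$, so $\partial_{c_j}H=-z^jH^2$. Hence $\partial_{c_j}h_n=-a_{n-j}$, and the relevant entries are $-a_{\ell+i-j}$.

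Now set $s=k-m+1$, so that $d=s-1=k-m$, and write $j=k-j'$ with $0\le j'\le d$. The entry becomes $-a_{\ell-k+i+j'}=-a_{L+(i-1)+j'}$. With row index $i-1\in\{0,\ldots,k-1\}$ this is $-M_s$ with its columns reversed. Negating the matrix and permuting its columns multiply each maximal minor only by $\pm1$, so the ideal is unchanged and equals $\mathcal J_s$.

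There is no real obstacle. The only delicate points are the index bookkeeping in Step 6 and the justification in Step 3 that differentials relative to $A_{m-1}$ coincide with those relative to $P_{m-1}$.
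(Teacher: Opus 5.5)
Your proposal is correct and matches the paper's proof: the conormal presentation of $R=P/I$, passage to differentials relative to $A_{m-1}$ (which kills $dc_1,\ldots,dc_{m-1}$), the weight check on the shifts, and the identity $\partial_{c_j}h_n=-a_{n-j}$ with column reversal to match $M_s$. Your Step 3 and the explicit index bookkeeping in Step 6 spell out details that the paper leaves implicit.
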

\begin{proof}
The conormal presentation of $R=P/I$ has generators $dc_1,\ldots,dc_k$
and relations $dh_{\ell+1},\ldots,dh_{\ell+k}$. Passing to differentials
relative to $A_{m-1}=\Q[c_1,\ldots,c_{m-1}]\subseteq R$ kills the first
$m-1$ generators, giving~\eqref{gp:differential-presentation}. The matrix
entry has weight $\ell+i-j$, as required by the displayed shifts.
Its maximal minors generate the zeroth Fitting ideal. Since
$\partial_{c_j}h_n=-a_{n-j}$, reversing the tail columns identifies
that ideal with $\mathcal J_{k-m+1}$.
\end{proof}

\begin{theorem}[Jacobian-tail quotients]\label{gp:theorem}
With the preceding notation,
\begin{equation}\label{gp:polar-hilbert}
 \Hilb(R/\mathcal J_s;q)=
 \sum_{r=0}^{d}q^{r(r+L-1)}
       \gb{k}{r}
       \gb{\ell}{k-r}.
\end{equation}
This is the generating function of partitions $\lambda$ contained in
the rectangle $\ell^k$ and satisfying $\lambda_{d+1}\leq e$.
Moreover, put
\[
 G(z)=1+\sum_{i=1}^{d}g_i z^i,\qquad
 B(z)=1+\sum_{j=1}^{e}b_j z^j,\qquad
 \deg g_i=i,\quad\deg b_j=j.
\]
With independent coefficients $g_i,b_j$, set
\[
 D(z)=(G(z)B(z))^{-1/2},\qquad
 T=\Q[g_1,\ldots,g_d,b_1,\ldots,b_e]/
           (D_{m+1},\ldots,D_{m+d+e}).
\]
For an empty coefficient list the corresponding polynomial is $1$.
If $d=e=0$, the parameter ring is $\Q$ and its relation list is empty.
The coefficient assignments
\[
 E=GD,\qquad H=BD
\]
give an injection $R/\mathcal J_s\longrightarrow T$. Its image is
the image of the algebra generated by all coefficients of $B/G$.
\end{theorem}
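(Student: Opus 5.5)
We prove the theorem by sandwiching $\dim(R/\mathcal J_s)_t$ between an upper bound from positive compression (Lemma~\ref{common:positive}) and a lower bound from a resultant colon ideal (Lemma~\ref{res:colon}) combined with the cancellation sequence of Lemma~\ref{res:conductor}.

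\emph{Step 1: the map is well defined.} First check that $E=GD$, $H=BD$ defines a ring map $P\to T$ killing $I$ and $\mathcal J_s$. The product $EH=GBD^2=1$ holds, and $E$ has degree at most $d+m\le$ the window: the coefficients $E_n$ for $n>k$ are built from $D_{m+1},\dots$, and these vanish in $T$ by the recurrence~\eqref{gp:positive-recurrence}. So $E$ is a polynomial of degree $\le k$, and the $c_i$ are its coefficients. Similarly $H=BD$ has $H_n\in(D_{m+1},\dots)$ for $n>e+m=\ell$, which kills $I$.

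For $\mathcal J_s$, use Lemma~\ref{common:witness}-style reasoning. $H^2=B/G$ and $a_v=[z^v]B/G$. The window $a_{L+i+j}$ is a Hankel matrix of the rational series $B/G$, whose denominator has degree $d$. Its columns therefore satisfy a linear recurrence of length $d$, so the $(d+1)\times(d+1)$ minors vanish. This holds for indices beyond $e-d=L-1$, which is exactly our window start. Hence the map factors through $R/\mathcal J_s$, and its image is generated by the coefficients of $B/G$. This is consistent with the final assertion, since the $a_v$ together with $E$ generate, and $E$ is recovered from $H^2=B/G$ via $H=(B/G)^{1/2}$ and $E=H^{-1}$.

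\emph{Step 2: upper bound.} Identify $R$ with the rectangular Schur span (Lemma~\ref{sf:rectangle}). Elements of $\mathcal J_s$ include the images of $\theta_2(s_\lambda)$, suitably expressed through $a_v=h_v$ evaluated at the doubled alphabet. I expect the maximal minors, via Jacobi--Trudi/Giambelli for $H^2$, to produce $\theta_2$-transforms of $s_\lambda$ with $\lambda_{d+1}>e$, or at least to span them modulo $I$. By Lemma~\ref{common:positive}, their projections are independent and span the Schur classes with $\lambda_{d+1}>e$. Hence $\dim(R/\mathcal J_s)_t$ is at most the number of $\lambda\subseteq(\ell^k)$ with $\lambda_{d+1}\le e$, and counting these by the Durfee-type decomposition (an $r\times e$-offset block) gives~\eqref{gp:polar-hilbert}.

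\emph{Step 3: lower bound, where the main difficulty lies.} We need $\Hilb(\text{image in }T)\ge$ the same series. Work over the free extension adjoining roots and induct on $d$ using the cancellation of one common root. Lemma~\ref{res:conductor} shows that the supersymmetric coefficient algebra $\mathsf C_{d,e}$ maps onto $\mathsf C_{d-1,e-1}$ with kernel $\rho\mathsf P$. Descending to the finite quotient, the kernel becomes $\rho T$, and $\rho T\simeq(\mathsf P/(\mathfrak I:\rho))(-de)$. Lemma~\ref{res:colon} gives its Hilbert series $q^{de}\gb{m+d}{d}\gb{m+e}{e}$ with $m=k-d$, which is exactly the $r=d$ term $q^{d(d+L-1)}\gb{k}{d}\gb{\ell}{k-d}$. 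The image under cancellation is the analogous image for $(d-1,e-1)$ with the same $m$, handled by induction. The base case $d=0$ is $\Q$.

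The main obstacle is making the cancellation compatible with the finite truncation. The relations $D_{m+1},\dots$ must specialize correctly: after setting $x_d=y_e$, $GB$ loses a factor $(1-tz)^2$, so $D$ gains $(1-tz)^{-1}$, and one must check that the window ideal maps into the smaller window ideal. One must also ensure the kernel is exactly $\rho T$ and not larger, without reducedness; this is where the adjoined monic root, rather than division, is needed.

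Additivity over the exact sequence then gives the lower bound, which matches the upper bound. Equality forces injectivity of $R/\mathcal J_s\to T$ and the stated image.
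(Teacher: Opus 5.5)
Your two-sided strategy is the paper's: positive compression of the classes $\theta_2 s_\lambda$ for the upper bound, and the conductor inclusion, Lemma~\ref{res:colon}, and one-root cancellation for the lower bound. However, the upper bound has a genuine gap at its only nontrivial step. You need $\theta_2(s_\lambda)\in\mathcal J_s$ for every $\lambda\subseteq(\ell^k)$ with $\lambda_{d+1}>e$, and you only say you expect it.

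This does not follow formally from Jacobi--Trudi. The ideal $\mathcal J_s$ is generated by the maximal minors of the matrix whose columns are the \emph{consecutive} windows $j,Sj,\ldots,S^dj$, with $j=(a_L,\ldots,a_\ell)^{\mathsf T}$. By contrast, Laplace expansion of $\theta_2(s_\lambda)=\det(a_{\lambda_i-i+j})_{1\le i,j\le k}$ along its first $d+1$ rows produces maximal minors of windows with shifts $r_i=\lambda_i-i+1-L$. These shifts are strictly decreasing but in general not consecutive. Since $R/\mathcal J_s$ is not a domain, you cannot turn vanishing maximal minors into a linear dependence of windows and propagate it.

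The missing idea, as the paper supplies it, runs as follows.
\begin{itemize}
\item The identity $EH^2=H$ gives $a_n+c_1a_{n-1}+\cdots+c_ka_{n-k}=0$ for $n>\ell$. Hence the companion operator $S$ satisfies $S^rj=(a_{L+r},\ldots,a_{\ell+r})^{\mathsf T}$ for all $r\ge0$.
\item Substitute commuting copies $S_1,\ldots,S_{d+1}$, acting on tensor factors, into the alternant identity $\det(X_i^{r_j})=s_\mu(X)\det(X_i^{j})$. This gives $S^{r_0}j\wedge\cdots\wedge S^{r_d}j=s_\mu(S_1,\ldots,S_{d+1})(j\wedge Sj\wedge\cdots\wedge S^dj)$.
\item Consequently every coordinate of every shifted wedge lies in $\mathcal J_s$. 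The hypothesis $\lambda_{d+1}\ge e+1$ is exactly what makes all shifts in the first $d+1$ rows nonnegative.
\end{itemize}
Also, the compressed family does not span ``the Schur classes with $\lambda_{d+1}>e$.'' It is only an independent family of the right size, which is all the count uses.

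The lower-bound bookkeeping also needs repair.
\begin{itemize}
\item \emph{Base case.} The case $d=0$ is not $\Q$ unless $k=\ell$. It is $T=\Q[b_1,\ldots,b_e]/(D_{k+1},\ldots,D_{k+e})$ with $D=B^{-1/2}$ and $e=\ell-k$. Its image is all of $T$, and the series $\gb{\ell}{k}$ requires regularity of that window.
\item \emph{Parameter shift.} Cancellation keeps $k,\ell$ fixed and sends $(d,e,m)$ to $(d-1,e-1,m+1)$, not to the same $m$. After $G=(1-tz)G^\circ$ and $B=(1-tz)B^\circ$, the series $D^\circ=(1-tz)D$ only has degree at most $m+1$. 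With the same $m$, your induction would produce $q^{(d-1)(e-1)}\gb{k-1}{d-1}\gb{\ell-1}{e-1}$ instead of the required $q^{(d-1)(e-1)}\gb{k}{d-1}\gb{\ell}{e-1}$.
\item \emph{Truncation compatibility.} The issue you flag is resolved concretely. $D_n$ maps to $f_n=\sum_{j=0}^nt^jD^\circ_{n-j}$, and $f_{n+1}=tf_n+D^\circ_{n+1}$. So $T$ maps to $T'[t]/(f_{m+1})$, which is free over $T'$ on $1,t,\ldots,t^m$ because $f_{m+1}$ is monic in $t$.
\item \emph{Kernel.} You do not need the finite kernel to be exactly $\rho T$, and this does not descend automatically from Lemma~\ref{res:conductor}. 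Surjectivity onto $\mathcal B_{d-1}$, together with $\rho T\subseteq\mathcal B_d$ (specialized from $\rho\mathsf P\subseteq\mathsf C_{d,e}$) and $\rho\mapsto0$, already gives $\Hilb(\mathcal B_d;q)\ge\Hilb(\rho T;q)+\Hilb(\mathcal B_{d-1};q)$. Kernel equality then follows from the final comparison.
\end{itemize}
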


\begin{remark}[Relation with universal elimination]
Pragacz's common-factor theorem~\cite[Theorem~1.5]{Pragacz96},
after reversing the monic polynomials to the constant-one convention,
with degrees $k,\ell$ and common-factor degree $m=k-d$,
has defining rectangle $(\ell-m+1)^{d+1}$.
The universal factorization $E=GD$, $H=BD$ gives $H/E=B/G$.
The additional relation $EH=1$ used here becomes $GBD^2=1$, so that
$D=(GB)^{-1/2}$ and $H/E=H^2$.
The rectangular Schur equations and common-factor parameterization
thus come from the universal construction. For the finite quotient
in Theorem~\ref{gp:theorem}, we prove the contraction equality for
the coefficient window by comparing graded dimensions. Injectivity
of the universal parameter map alone does not imply this equality
after specialization.
\end{remark}

We prove Theorem~\ref{gp:theorem} by matching upper and lower
bounds for the two Hilbert series.

\subsection{Shifted recurrence wedges and the Hall upper bound}

Work in $R$, with $0\leq d\leq k-1$.
The identity $EH^2=H$ gives the fixed coefficient recurrence
\[
 a_n+c_1a_{n-1}+\cdots+c_ka_{n-k}=0\qquad(n>\ell).
\]
Let $j=(a_L,\ldots,a_\ell)^{\mathsf T}\in R^k$ and let $S$ be
the companion matrix of this recurrence, explicitly
\[
 S(v_1,\ldots,v_k)^{\mathsf T}
 =\left(v_2,\ldots,v_k,-\sum_{i=1}^k c_{k+1-i}v_i\right)^{\mathsf T}.
\]
Then every shifted window is
\[
 j^{[r]}=(a_{L+r},\ldots,a_{\ell+r})^{\mathsf T}=S^rj
       \qquad(r\geq0).
\]
The new last index in $Sj$ is $\ell+1$, so all these uses of
the recurrence are within its valid range. The coordinates of
$j\wedge Sj\wedge\cdots\wedge S^dj$ generate $\mathcal J_s$.

On $(R^k)^{\otimes s}$ let $S_i$ act as $S$ in factor
$i$. These operators commute. For strictly increasing
nonnegative integers $r_0<\cdots<r_d$, alternant division is
the polynomial identity
\[
 \det(X_i^{r_j})_{\substack{1\leq i\leq s\\0\leq j\leq d}}
    =s_\mu(X_1,\ldots,X_s)
       \det(X_i^j)_{\substack{1\leq i\leq s\\0\leq j\leq d}},
\]
where
$\mu=(r_d-d,r_{d-1}-(d-1),\ldots,r_0)$.
Substitute $S_i$ for $X_i$ and apply to $j^{\otimes s}$.
A symmetric polynomial in the $S_i$ commutes with the alternating
idempotent $s!^{-1}\sum_{\pi\in\mathfrak S_s}\operatorname{sgn}(\pi)\pi$,
where $\mathfrak S_s$ permutes the tensor factors and
$\operatorname{sgn}(\pi)$ is the sign of the permutation.
Since $R$ is a $\Q$-algebra, the image of this idempotent identifies
with $\bigwedge^sR^k$. We obtain
\[
 S^{r_0}j\wedge\cdots\wedge S^{r_d}j
   =s_\mu(S_1,\ldots,S_s)
       (j\wedge Sj\wedge\cdots\wedge S^dj).
\]
Repeated exponents give zero, and changing the order gives
only a sign. Every coordinate of every shifted wedge therefore
belongs to $\mathcal J_s$.

Suppose $\lambda\subseteq\ell^k$ but $\lambda_{d+1}>e$.
Apply Jacobi--Trudi, padded to size $k$, to the sequence $a_n$.
In its first $s=d+1$ rows, every index satisfies
\[
 \lambda_i-i+j\ \geq\
       \lambda_s-s+1\ \geq\ (e+1)-(d+1)+1=L.
\]
Laplace-expand along these rows. Every selected $s$ by $s$
minor, after transposition, is a coordinate of a wedge of
shifted windows: row $i$ starts at shift
$\lambda_i-i+1-L\geq0$, and its selected columns specify
components of that window. The row shifts are strictly decreasing,
so reversing their order changes the determinant only by a sign. The preceding
polynomial wedge argument proves
\begin{equation}\label{gp:outside-schur-in-ideal}
 \theta_2(s_\lambda)\ \in\mathcal J_s,\qquad
 \theta_2(p_r)=2p_r.
\end{equation}
Here the class is taken in $R$; indeed
$\theta_2(h_n)=[z^n]H^2=a_n$.

In each weight, Lemmas~\ref{sf:hall} and~\ref{common:positive},
applied to the rectangular Schur span, show that the images of
$\theta_2s_\lambda$ for $\lambda\subseteq\ell^k$ form a basis of $R$.
Here the quotient map is the coefficient projection of
Lemma~\ref{sf:rectangle}. The outside-hook subfamily is therefore
independent and lies in $\mathcal J_s$ by
\eqref{gp:outside-schur-in-ideal}. Subtracting its size gives
\begin{equation}\label{gp:hook-upper}
 \Hilb(R/\mathcal J_s;q)
    \ \leq\ \sum_{\substack{\lambda\subseteq\ell^k\\
                              \lambda_{d+1}\leq e}}q^{|\lambda|}
\end{equation}
coefficientwise in $q$.

\subsection{The parameter image and the matching lower bound}

Let $T$ be the parameter ring in Theorem~\ref{gp:theorem}, with
$D=(GB)^{-1/2}$. The coefficient recurrence proves
that $T$ is a finite complete intersection and that $D_n=0$
in $T$ for $n>m$. Therefore $GD,BD$ have degrees at most
$m+d=k,m+e=\ell$ and their product is $1$.
This gives a map $R\longrightarrow T$.

In $T$ the ratio $(BD)^2=B/G$ obeys the $d$-step recurrence
of $G$ at every coefficient index greater than $e$. At row
$i$ of $M_s$, the last index is $L+i+d=e+1+i>e$.
Consequently the last column is a fixed linear combination
of the first $d$ columns, with its own coefficient equal to
$1$. All maximal minors vanish. For $d=0$ the single column
is zero because $B$ has degree $e$.
Thus the map factors to a surjection onto its image
\[
 R/\mathcal J_s\longrightarrow\mathcal B_d\subseteq T.
\]
The image is exactly that of $\mathsf C_{d,e}$: coefficients
of $B/G=(BD)^2$ lie in the $R$-image, while coefficients of
$(B/G)^{-1/2}=GD$ lie in the coefficient algebra of $B/G$.

When $d=0$, this coefficient algebra is the entire polynomial
ring in $B$, and
\[
 \Hilb(\mathcal B_0;q)
    =\gb{\ell}{k}.
\]
For $e>0$ this follows from the regular coefficient window of
$B^{-1/2}$ in degrees $k+1,\ldots,k+e$; the same recurrence
and the polynomial identity $BD^2=1$ prove regularity with one factor
absent. If $e=0$, the ring is simply $\Q$.

For $d\geq1$, the universal conductor inclusion puts the
entire ideal $\rho T$ inside $\mathcal B_d$. Multiplication
by $\rho$, together with~\eqref{gp:resultant-colon}, gives
\begin{equation}\label{gp:resultant-piece}
 \Hilb(\rho T;q)
   =q^{de}\gb{m+d}{d}
             \gb{m+e}{e}
   =q^{de}\gb{k}{d}
             \gb{\ell}{e}.
\end{equation}
Cancel one common root by writing
\[
 G=(1-tz)G^\circ ,\qquad B=(1-tz)B^\circ ,\qquad
 D=\frac{D^\circ }{1-tz},\qquad D^\circ =(G^\circ B^\circ )^{-1/2}.
\]
Give $t$ weight one. The next parameter ring $T'$ has factor degrees $d-1,e-1$
and common-factor degree $m+1$; it kills every $D^\circ _n$ with
$n>m+1$. There is a well-defined homomorphism
\begin{equation}\label{gp:cancellation-specialization}
 T\longrightarrow T'[t]/(f_{m+1}),\qquad
 f_{m+1}=\sum_{j=0}^{m+1}t^jD^\circ _{m+1-j}.
\end{equation}
Indeed, all pulled-back coefficients are
$f_n=\sum_{j=0}^n t^jD^\circ _{n-j}$, and
$f_{n+1}=tf_n+D^\circ _{n+1}$ kills every required coefficient
once $f_{m+1}=0$. The defining polynomial is monic of degree
$m+1$ in $t$. Polynomial division by this monic polynomial gives
a unique remainder of degree at most $m$, over the ring $T'$ itself.
Thus the target is a free $T'$-module with basis
$1,t,\ldots,t^m$, and the map from $T'$ to the target is
injective. This argument does not require $T'$ to be a domain. It
also covers the case in which both remaining factor groups
are empty.

On the supersymmetric coefficient algebra, the ratio $B/G$
cancels its common factor. Its image in
\eqref{gp:cancellation-specialization} is exactly
$\mathcal B_{d-1}$ inside the embedded $T'$, and the resultant
maps to zero. Therefore there is a graded surjection
\[
 \mathcal B_d/(\rho T)\longrightarrow\mathcal B_{d-1}.
\]
Let $\sigma_d$ denote~\eqref{gp:cancellation-specialization}.
Its restriction gives the commuting square
\begin{equation}\label{gp:finite-cancellation-square}
\begin{array}{ccc}
 \mathcal B_d&\lhook\joinrel\longrightarrow&T\\
 {\scriptstyle\delta_d}\downarrow&&\downarrow{\scriptstyle\sigma_d}\\
 \mathcal B_{d-1}&\lhook\joinrel\longrightarrow&T'[t]/(f_{m+1}).
\end{array}
\end{equation}
The lower inclusion is through the constant copy of $T'$.
Here $\delta_d$ is surjective and $\rho T\subseteq\ker\delta_d$.
The matching Hilbert series below will give
$\ker\delta_d=\rho T$, as used in Theorem~\ref{gp:layer-sequence}.

Equation~\eqref{gp:resultant-piece} and the base
case give the coefficientwise lower bound
\begin{equation}\label{gp:hook-lower}
 \Hilb(\mathcal B_d;q)\ \geq\
 \sum_{r=0}^d q^{r(r+L-1)}
       \gb{k}{r}
       \gb{\ell}{k-r}.
\end{equation}
At each induction step $k=m+d$ and $\ell=m+e$ stay fixed.

The sum in~\eqref{gp:hook-lower} counts the partitions in
\eqref{gp:hook-upper}.
For a partition in $\ell^k$, let $r$ be the largest index
such that $\lambda_r\geq r+L-1$, and put $r=0$ if no such
index exists. The condition $\lambda_{d+1}\leq d+L-1=e$
is equivalent to $r\leq d$. For fixed $r$, remove the
$r$ by $(r+L-1)$ rectangle. The part to its right is an
arbitrary partition in $r$ by $(k-r)$, and the part below is
an arbitrary partition in $(k-r)$ by $(r+L-1)$. They are
independent, and their generating functions are
$\gb{k}{r}$ and
$\gb{\ell}{k-r}$.
The removed rectangle has weight $r(r+L-1)$, proving the count.

The graded surjection $R/\mathcal J_s\longrightarrow\mathcal B_d$,
the upper bound~\eqref{gp:hook-upper}, and the equal lower
bound~\eqref{gp:hook-lower} force equality in every weight.
They prove~\eqref{gp:polar-hilbert} and show that this surjection
is an isomorphism. Composing with $\mathcal B_d\subseteq T$
proves the claimed parameter-image injection. In addition,
the classes $\theta_2s_\lambda$ for rectangular $\lambda$
outside the $(d,e)$ hook form a vector-space basis of
$\mathcal J_s$, by their independence and the equality of dimensions.

This proves Theorem~\ref{gp:theorem}.

\subsection{A simultaneous basis and exact cancellation layers}

Write $\pi_\Lambda:\Lambda\to R$ for the rectangular quotient map.
For a partition $\lambda\subseteq\ell^k$, define
\[
 b_\lambda=\pi_\Lambda(\theta_2s_\lambda),\qquad
 r(\lambda)=\max\bigl(\{0\}\cup
       \{r\in\{1,\ldots,k\}:\lambda_r\geq r+L-1\}\bigr).
\]
Thus $r(\lambda)$ is the largest $r$ for which an $r$ by
$(r+L-1)$ rectangle fits inside $\lambda$; we call this its
shifted Durfee rectangle.

\begin{theorem}[A basis adapted to all Jacobian tails]
\label{gp:adapted-basis}
The elements $b_\lambda$, for $\lambda\subseteq\ell^k$, form a
graded basis of $R$. For every $1\leq s\leq k$,
\begin{equation}\label{gp:flag-basis}
 \mathcal J_s=\Span_\Q\{b_\lambda:r(\lambda)\geq s\}.
\end{equation}
Thus the classes with $r(\lambda)<s$ form a basis of
$R/\mathcal J_s$, and
\[
 R\supseteq\mathcal J_1\supseteq\mathcal J_2
 \supseteq\cdots\supseteq\mathcal J_k=R_{k\ell}.
\]
For $1\leq s<k$, the images of the $b_\lambda$ with
$r(\lambda)=s$ form a basis of $\mathcal J_s/\mathcal J_{s+1}$.
\end{theorem}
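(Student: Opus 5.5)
Most of the work is already done in the proof of Theorem~\ref{gp:theorem}; the plan is to read its conclusions off for every $s$ and then compare consecutive values of $s$. First, the basis claim is Lemma~\ref{common:positive} applied to the Hall-orthonormal Schur basis and the positive definite operator $\theta_2$ of Lemma~\ref{sf:hall}. By Lemma~\ref{sf:rectangle}, the quotient $\pi_\Lambda$ is orthogonal projection onto the span of $s_\lambda$, $\lambda\subseteq\ell^k$, so in each weight the vectors $b_\lambda=\pi_\Lambda\theta_2 s_\lambda$ form a basis of $R$.

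Next we reindex the hook condition. With $s=d+1$ and $e=d+L-1$, the condition $\lambda_{d+1}>e$ says $\lambda_s\ge s+L-1$, which holds exactly when $r(\lambda)\ge s$. The reason is that $\lambda_r-r$ is strictly decreasing, so the set of $r$ with $\lambda_r\ge r+L-1$ is an initial segment $\{1,\dots,r(\lambda)\}$. The last paragraph of the proof of Theorem~\ref{gp:theorem} shows, for each $s$, that the $b_\lambda$ with $\lambda_{d+1}>e$ lie in $\mathcal J_s$ by \eqref{gp:outside-schur-in-ideal}. They are independent, and their number in each weight equals $\dim(\mathcal J_s)_t$ by \eqref{gp:polar-hilbert} together with the partition count. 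Hence \eqref{gp:flag-basis} holds. Since the full family is a basis of $R$, the complementary classes with $r(\lambda)<s$ map to a basis of $R/\mathcal J_s$.

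The chain of inclusions follows at once from \eqref{gp:flag-basis}, because $\{r(\lambda)\ge s+1\}\subseteq\{r(\lambda)\ge s\}$. Alternatively, a maximal minor of size $s+1$ expands along a column into $s$-minors. It remains to identify $\mathcal J_1$ and $\mathcal J_k$. For $s=k$, the condition $r(\lambda)\ge k$ means $\lambda_k\ge k+L-1=\ell$, so $\lambda=(\ell^k)$. Thus $\mathcal J_k=\Q b_{(\ell^k)}$, which sits in weight $k\ell$. Since $R_{k\ell}$ is one-dimensional, spanned by the class of $s_{(\ell^k)}$, this gives $\mathcal J_k=R_{k\ell}$. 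The inclusion $R\supseteq\mathcal J_1$ is trivial.

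Finally, for $1\le s<k$, the layer $\mathcal J_s/\mathcal J_{s+1}$ is spanned by the basis vectors with $r(\lambda)\ge s$ modulo the subspan of those with $r(\lambda)\ge s+1$. Because these are subsets of a single basis, the images of the $b_\lambda$ with $r(\lambda)=s$ form a basis of the quotient. The only step needing care is confirming that Theorem~\ref{gp:theorem} applies uniformly for all $0\le d\le k-1$, including the edge cases $d=0$ and $e=0$. The dimension-matching argument there was stated for every $d$, so this should cause no obstruction; the equality of dimensions is the crucial input, and it is already established.
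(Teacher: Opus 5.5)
Your proposal is correct and follows the paper's proof essentially step for step: positive compression for the full basis, the reindexing $\lambda_{d+1}>e \iff r(\lambda)\ge s$, and membership via \eqref{gp:outside-schur-in-ideal} combined with the dimension count of Theorem~\ref{gp:theorem}; it then takes complements and differences of the nested index sets, with $r(\lambda)\ge k$ forcing $\lambda=(\ell^k)$. Your monotonicity argument for $\lambda_r-r$ and the column-expansion proof of $\mathcal J_{s+1}\subseteq\mathcal J_s$ are small, correct details that the paper leaves implicit.
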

\begin{proof}
Lemmas~\ref{sf:hall}, \ref{sf:rectangle}, and~\ref{common:positive}
give the full basis $(b_\lambda)$ by positive compression. For $s=d+1$, the condition
$\lambda_s>e=s+L-2$ is equivalent to $r(\lambda)\ge s$.
These vectors belong to $\mathcal J_s$ by
\eqref{gp:outside-schur-in-ideal}; Theorem~\ref{gp:theorem} gives equality
of their number and the dimension of the ideal in every weight.
They are therefore a basis of $\mathcal J_s$. Complements and
differences of the nested subsets give the quotient bases.
Finally, $r(\lambda)\ge k$ holds only for the full rectangle,
so $\mathcal J_k=R_{k\ell}$.
\end{proof}

\begin{theorem}[Complete-intersection layers]\label{gp:layer-sequence}
Fix $1\leq d\leq k-1$, and put $m=k-d$ and $e=\ell-m$.
Use the coefficient polynomial ring $\mathsf P$, the sequences
$\mathfrak I,\mathfrak J$, and the resultant $\rho$ of
Lemma~\ref{res:colon}. There is an exact sequence of graded
$R$-modules
\begin{equation}\label{gp:exact-layer}
 0\longrightarrow(\mathsf P/\mathfrak J)(-de)
 \xrightarrow{\ \iota_d\ }R/\mathcal J_{d+1}
 \longrightarrow R/\mathcal J_d\longrightarrow0.
\end{equation}
The last map is the natural quotient map. Under the embedding
$R/\mathcal J_{d+1}\hookrightarrow T=\mathsf P/\mathfrak I$,
the first map is given by $\iota_d(\overline f)=\rho f$.
The $R$-action on $\mathsf P/\mathfrak J$ is induced by
$R\to T\to\mathsf P/\mathfrak J$. In particular,
\begin{equation}\label{gp:layer-module}
 \mathcal J_d/\mathcal J_{d+1}
 \simeq(\mathsf P/\mathfrak J)(-de),\qquad
 \Hilb(\mathcal J_d/\mathcal J_{d+1};q)
     =q^{de}\gb{k}{d}\gb{\ell}{e}.
\end{equation}
\end{theorem}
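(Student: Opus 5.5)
Let $s=d+1$. By Theorem~\ref{gp:theorem}, $R/\mathcal J_{d+1}$ is identified with $\mathcal B_d\subseteq T=\mathsf P/\mathfrak I$. The plan is to build $\iota_d$ from multiplication by $\rho$ on this image, show that its image is the kernel of the quotient map $R/\mathcal J_{d+1}\to R/\mathcal J_d$, and close with a dimension count.

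\emph{Step 1: the map $\iota_d$.} Multiplication by $\rho$ sends $\mathsf P$ into $\mathsf P/\mathfrak I=T$. Its kernel is the image of $\mathfrak I:\rho$, which equals $\mathfrak J$ by Lemma~\ref{res:colon}. So $\overline f\mapsto\rho f$ is an injection $(\mathsf P/\mathfrak J)(-de)\hookrightarrow T$ of degree $de$, with image $\rho T$. The universal conductor inclusion of Lemma~\ref{res:conductor} gives $\rho T\subseteq\mathcal B_d$. Hence $\iota_d$ lands in $R/\mathcal J_{d+1}\simeq\mathcal B_d$.

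$R$-linearity is immediate. $R$ acts on $\mathcal B_d$ through $R\to T$ and on $\mathsf P/\mathfrak J$ through $R\to T\to\mathsf P/\mathfrak J$. This second map is well defined because $\mathfrak I\subseteq\mathfrak J$. Multiplication by $\rho$ commutes with both actions.

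\emph{Step 2: identifying the cokernel.} Next compare $\mathcal B_d/\rho T$ with $R/\mathcal J_d$. The cancellation square~\eqref{gp:finite-cancellation-square} provides a surjection $\delta_d:\mathcal B_d\to\mathcal B_{d-1}$ with $\rho T\subseteq\ker\delta_d$. Theorem~\ref{gp:theorem} at $d-1$, where $s=d$ and $k,\ell$ are the same, gives $\mathcal B_{d-1}\simeq R/\mathcal J_d$.

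The key check is that $\delta_d$ is compatible with these identifications, i.e.\ that composing $R\to\mathcal B_d$ with $\delta_d$ yields the parameter map $R\to\mathcal B_{d-1}$. Both maps send $H/E=H^2$ to $B/G$, which becomes $B^\circ/G^\circ$ after cancellation. Since $R$ is generated by the coefficients of $H^2$ (equivalently $\theta_2h_n=a_n$ and these generate over $\Q$), or more simply since both are algebra maps agreeing on the coefficients of $E$ and $H$, the two maps coincide. Consequently $\delta_d$ is the natural quotient $R/\mathcal J_{d+1}\to R/\mathcal J_d$, and $\iota_d$ lands inside its kernel.

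\emph{Step 3: dimension count.} Exactness in the middle follows by comparing Hilbert series. Formula~\eqref{gp:polar-hilbert} gives $\Hilb(R/\mathcal J_{d+1})-\Hilb(R/\mathcal J_d)$ as the single term $r=d$, namely $q^{d(d+L-1)}\gb kd\gb\ell{k-d}$. Here $d(d+L-1)=de$ and $k-d=m$, and $\gb\ell m=\gb\ell e$. By~\eqref{gp:resultant-colon} this equals $\Hilb((\mathsf P/\mathfrak J)(-de))$. An injection into a kernel of the same dimension in every weight is onto that kernel, which gives~\eqref{gp:exact-layer}. The statement~\eqref{gp:layer-module} follows because that kernel is $\mathcal J_d/\mathcal J_{d+1}$.

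The main obstacle is Step 2: checking that $\delta_d$ really is the quotient map under the isomorphisms of Theorem~\ref{gp:theorem}, which amounts to commutativity of the parameterizations with cancellation. Everything else is bookkeeping already done in the proof of Theorem~\ref{gp:theorem}.
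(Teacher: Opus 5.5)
Your proposal is correct and follows essentially the paper's own proof. The conductor inclusion and $\mathfrak I:\rho=\mathfrak J$ identify $(\mathsf P/\mathfrak J)(-de)$ with $\rho T\subseteq\mathcal B_d$, the cancellation surjection is recognized as the natural quotient via $GD\mapsto G^\circ D^\circ$ and $BD\mapsto B^\circ D^\circ$, and the single-term Hilbert-series difference finishes the argument. The only cosmetic difference is where the count is applied: the paper uses it to show $\mathcal B_d/\rho T\to\mathcal B_{d-1}$ is an isomorphism, while you use the same numbers to show that $\rho T$ fills the kernel.
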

\begin{proof}
Let $\mathcal B_d\subseteq T$ be the parameter image from
Theorem~\ref{gp:theorem}. The universal conductor gives
$\rho T\subseteq\mathcal B_d$.
Multiplication by $\rho$ identifies
$(\mathsf P/\mathfrak J)(-de)$ with $\rho T$ because
$\mathfrak I:\rho=\mathfrak J$. This is an isomorphism of graded
$R$-modules for the stated action.

The cancellation construction
\eqref{gp:cancellation-specialization} gives a surjection
$\mathcal B_d/\rho T\to\mathcal B_{d-1}$.
By~\eqref{gp:polar-hilbert} and~\eqref{gp:resultant-piece},
its source has Hilbert series
\[
 \Hilb(\mathcal B_d;q)-\Hilb(\rho T;q)
 =\sum_{r=0}^{d-1}q^{r(r+L-1)}
             \gb{k}{r}\gb{\ell}{k-r}
 =\Hilb(\mathcal B_{d-1};q).
\]
It is therefore an isomorphism in every weight.
To identify this map with the natural quotient, observe that the
cancellation substitution gives
\[
 GD=(1-tz)G^\circ \frac{D^\circ }{1-tz}=G^\circ D^\circ ,\qquad
 BD=(1-tz)B^\circ \frac{D^\circ }{1-tz}=B^\circ D^\circ .
\]
These are exactly the coefficient assignments defining the smaller
parameter image. Hence the composite from $R$ to that image is
its original quotient map with kernel $\mathcal J_d$.
Transporting the exact sequence
$0\to\rho T\to\mathcal B_d\to\mathcal B_{d-1}\to0$
through the two parameter-image isomorphisms proves
\eqref{gp:exact-layer} and \eqref{gp:layer-module}.
\end{proof}

\begin{proposition}[The full finite conductor]\label{gp:finite-conductor}
For $0\le d\le k-1$, the parameter image of
Theorem~\ref{gp:theorem} has conductor
\begin{equation}\label{gp:finite-conductor-eq}
 \{u\in T:uT\subseteq\mathcal B_d\}=\rho T,
\end{equation}
where $\rho=1$ when $d=0$.
\end{proposition}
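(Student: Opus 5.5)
For $d=0$ we have $\rho=1$, so the claim is that $\mathcal B_0=T$. In this case $G=1$. The parameter ring $T$ is then a quotient of $\Q[b_1,\ldots,b_e]$, and its generators $b_j$ are the coefficients of $B=B/G$. These coefficients lie in the supersymmetric coefficient algebra, whose image is $\mathcal B_0$ by Theorem~\ref{gp:theorem}. Hence $\mathcal B_0=T$, and the conductor is all of $T$. (If $e=0$ as well, then $T=\Q$ and there is nothing to prove.)

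For $1\le d\le k-1$ I argue by the two inclusions. The inclusion $\rho T\subseteq\{u\in T:uT\subseteq\mathcal B_d\}$ holds because $\rho T$ is an ideal of $T$ contained in $\mathcal B_d$; this containment is the universal conductor statement of Lemma~\ref{res:conductor}, already used in Theorem~\ref{gp:layer-sequence}. For the reverse inclusion, I transport the argument from the last paragraph of the proof of Lemma~\ref{res:conductor} to the finite setting, using the cancellation square~\eqref{gp:finite-cancellation-square}. Let $u\in T$ satisfy $uT\subseteq\mathcal B_d$. Since $1\in T$, we get $u\in\mathcal B_d$. Therefore $\sigma_d(u)=\delta_d(u)$ lies in $\mathcal B_{d-1}$, which sits inside the constant copy of $T'$ in $T'[t]/(f_{m+1})$. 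Write $\sigma_d(u)=c$ with $c\in T'$.

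Next apply the hypothesis to the element $ug_1$, which also lies in $\mathcal B_d$. Under the substitution $G=(1-tz)G^\circ$, the coefficient $g_1$ maps to $g_1^\circ-t$, where $g_1^\circ=[z]G^\circ\in T'$. Hence
\[
 \sigma_d(ug_1)=c\,g_1^\circ-c\,t .
\]
By the square, this element must also lie in the constant copy of $T'$. Now $T'[t]/(f_{m+1})$ is a free $T'$-module with basis $1,t,\ldots,t^m$, and here $m=k-d\ge1$. So the element $c\,g_1^\circ-c\,t$ is already in reduced form, and its $t$-coefficient $-c$ must vanish. Thus $c=0$, that is, $\delta_d(u)=0$.

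Finally, Theorem~\ref{gp:layer-sequence} gives the exact sequence $0\to\rho T\to\mathcal B_d\to\mathcal B_{d-1}\to0$, so $\ker\delta_d=\rho T$. Therefore $u\in\rho T$, which proves~\eqref{gp:finite-conductor-eq}.

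The only delicate point is the freeness step. We must read off the $t$-coefficient without assuming that $T'$ is reduced or a domain. This works because reduction modulo the monic polynomial $f_{m+1}$ has unique remainders, and $m\ge1$ ensures that $t$ is one of the basis elements $1,t,\ldots,t^m$.
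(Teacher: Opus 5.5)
Your proposal is correct and follows essentially the same route as the paper's proof. The paper also uses $\rho T\subseteq\mathcal B_d$ for one inclusion, notes $\mathcal B_0=T$ when $d=0$, and for $d\ge1$ pushes $u$ and $ug_1$ through the cancellation map into the free $T'$-module $T'[t]/(f_{m+1})$ with $m=k-d\ge1$, reads off the vanishing $t$-coefficient, and concludes with $\ker\delta_d=\rho T$ from Theorem~\ref{gp:layer-sequence}.
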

\begin{proof}
The inclusion from right to left follows because $\rho T$ is a
$T$-ideal contained in $\mathcal B_d$. For $d=0$, the image is
all of $T$. Suppose $d\ge1$, and let
$\psi:T\to T'[t]/(f_{m+1})$ be the cancellation homomorphism
of~\eqref{gp:cancellation-specialization}. Its target is free over
$T'$ with basis $1,t,\ldots,t^m$, and $m=k-d\ge1$.
By Theorem~\ref{gp:layer-sequence}, the restriction of $\psi$ to
$\mathcal B_d$ has image $\mathcal B_{d-1}\subseteq T'$ and kernel
$\rho T$.

If $u$ belongs to the conductor, then $u,ug_1\in\mathcal B_d$.
Write $\psi(u)=u'\in T'$. The coefficient substitution gives
$\psi(g_1)=g_1^\circ-t$, with $g_1^\circ=0$ if $d=1$. Thus
$\psi(ug_1)=u'g_1^\circ-u't$ must also lie in the constant summand $T'$.
The independence of $1,t$ over $T'$ forces $u'=0$, even if $T'$
is nonreduced. Hence $u\in\rho T$, proving the reverse inclusion.
\end{proof}

\begin{example}[A noncyclic coefficient module]\label{gp:noncyclic-layer}
For $R=H^{2*}(\Gr(2,4);\Q)$ and $m=d=e=1$, the coefficient
ring in~\eqref{gp:exact-layer} is
\[
 C=\Q[g,b]/\bigl(g(3g+b),b(g+3b)\bigr),
 \qquad \deg g=\deg b=1.
\]
It has basis $1,g,b,gb$. The specified $R$-action sends $c_1$
to $a=(g-b)/2$ and $c_2$ to $a^2/2=-gb/3$.
Its image is $\Q[a]/(a^3)$, whereas
$C/R_{>0}C\cong\Q[g]/(g^2)$ has dimension two.
Thus $C$ is not cyclic as an $R$-module, and the layer
$\mathcal J_1/\mathcal J_2\simeq C(-1)$ requires generators
of weights one and two. The coefficient ring in the layer theorem
need not be an algebra quotient of $R$.
\end{example}

\begin{example}[A coefficient algebra over any characteristic-zero field]
\label{gp:not-grassmannian-product}
For $m=2$ and $d=e=1$, the coefficient ring of
Theorem~\ref{gp:layer-sequence} is
\[
 C=\Q[g,b]/(5g^3+2g^2b+gb^2,\ g^2b+2gb^2+5b^3),
 \qquad\deg g=\deg b=1.
\]
Indeed the two displayed relations are $8U_3/3$ and $8V_3/3$.
Its Hilbert series is $(1+q+q^2)^2$, but over any field extension
of $\Q$ it is not isomorphic to
$H^{2*}(\mathbb P^2\times\mathbb P^2;\Q)$ after the same extension.

To prove this, $g^2b,gb^2$ form a basis of $C_3$. The cube of
$\alpha g+\beta b$ is zero precisely when
\[
 F=-2\alpha^3+15\alpha^2\beta-\beta^3=0,\qquad
 G=-\alpha^3+15\alpha\beta^2-2\beta^3=0.
\]
The identities
\[
 \begin{aligned}
 F-G&=-(\alpha-\beta)(\alpha^2-14\alpha\beta+\beta^2),\\
 F+G&=-3(\alpha+\beta)(\alpha^2-6\alpha\beta+\beta^2)
 \end{aligned}
\]
show that these equations have only the solution $\alpha=\beta=0$:
substitution of $\alpha=\beta$ or $\alpha=-\beta$ forces both to
vanish, and otherwise subtracting the quadratic factors gives
$8\alpha\beta=0$.
In contrast, either standard degree-one generator of
$\Q[s,t]/(s^3,t^3)$ is nonzero and has cube zero.
Thus the two graded algebras are not isomorphic over any such field.
An ungraded isomorphism would preserve the maximal ideal and induce
an isomorphism of its associated graded algebras. Both rings are standard graded, meaning that their degree-zero part
is the ground field and they are generated in degree one. For such a
ring, its grading agrees with $\operatorname{gr}_{\mathfrak m}C
=\bigoplus_{j\ge0}\mathfrak m^j/\mathfrak m^{j+1}$. Thus an
ungraded isomorphism is also impossible.
\end{example}

\begin{remark}
The complete intersection in \eqref{gp:layer-module} is the explicit
quotient by the $U$- and $V$-coefficients in
Lemma~\ref{res:colon}. Its product of Gaussian Hilbert polynomials
does not by itself identify it with a tensor product of Grassmannian
rings. The conductor calculation uses the finite cancellation kernel and
the independence of $1,t$ in the monic extension, as in
Proposition~\ref{gp:finite-conductor}. The basis in
Theorem~\ref{gp:adapted-basis} describes the Jacobian ideals; it
does not give the prescribed bases of the subalgebras $A_m$.
\end{remark}

\section{Ordinary tangency and the filtration formula}
\label{tc:section}
Put $N=k\ell$. Retain $P_m$ and $A_m$ from
Section~\ref{intro:section}, and let $K_m=I\cap P_m$, so $A_m=P_m/K_m$.
Write
\begin{equation}\label{tc:actual-increments}
 \Delta_m(q)=\Hilb(A_m/A_{m-1};q)\qquad(1\le m\le k).
\end{equation}
The increments have nonnegative coefficients and satisfy
\begin{equation}\label{tc:actual-total}
 \sum_{m=1}^k\Delta_m=\Hilb(R;q)-1.
\end{equation}
For $s=k-m+1$, Theorem~\ref{gp:theorem} gives
\begin{equation}\label{tc:polar-series}
 \mathcal H_s(q):=\Hilb(R/\mathcal J_s;q)
 =\sum_{r=0}^{s-1}q^{r(r+L-1)}\gb{k}{r}\gb{\ell}{k-r},
 \qquad L=\ell-k+1.
\end{equation}
For $1\le m\le k$, let $\mathcal T_m$ be its parameter ring with
$d=k-m$ and $e=\ell-m$, and denote the corresponding series $D$ by
$\mathcal D$. We have an injection
\begin{equation}\label{tc:polar-injection}
 R/\mathcal J_s\hookrightarrow\mathcal T_m
\end{equation}
defined by
\begin{equation}\label{tc:coefficient-map}
 E=G\mathcal D,\qquad H=B\mathcal D.
\end{equation}
In $\mathcal T_m$, the series $\mathcal D$ has degree at most $m$.
The empty coefficient and relation conventions are those of
Theorem~\ref{gp:theorem}.

\subsection{The tangent inclusion}
\label{tc:tangent-subsection}

\begin{lemma}
\label{tc:tangent-inclusion}
Fix $1\le m\le k$. Let $\pi:P\to R$ be the quotient map and set
\[
 Q_m=P_m\cap\pi^{-1}(\mathcal J_{k-m+1}).
\]
Then, as graded vector spaces,
\begin{equation}
 \partial_{c_m}K_m\subseteq Q_m.
 \label{tc:derivative-inclusion}
\end{equation}
\end{lemma}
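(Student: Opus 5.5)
The plan is to apply the first-order witness Lemma~\ref{common:witness}. The data are the generators $a_i=c_i$ of $R$, the ideal $J_m=\mathcal J_{k-m+1}$, and the injection $\phi_m:R/\mathcal J_s\hookrightarrow\mathcal T_m$ of~\eqref{tc:polar-injection}. With these choices the set $Q_m$ of the lemma is exactly $P_m\cap\pi^{-1}(\mathcal J_{k-m+1})$, and $K_m=I\cap P_m$ is the kernel of $P_m\to A_m$. So everything reduces to constructing a homomorphism $\widetilde\phi_m:R\to\mathcal T_m[\varepsilon]/(\varepsilon^2)$, with $\deg\varepsilon=m$, with the following properties. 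It reduces to $\phi_m$ modulo $\varepsilon$. It fixes the images of $c_1,\ldots,c_{m-1}$. It sends $c_m$ to $\phi_m(\overline{c_m})+\varepsilon$.

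To define $\widetilde\phi_m$ on $P$, I prescribe a deformed series $\widetilde E(z)$ of degree at most $k$ with constant term $1$. To show it factors through $R=P/I$, I must check that $\widetilde H=\widetilde E^{-1}$ has vanishing coefficients in degrees $\ell+1,\ldots,\ell+k$. The candidate is to deform only the factor $\mathcal D$ in the parameterization~\eqref{tc:coefficient-map}:
\[
 \widetilde E=G(\mathcal D+\varepsilon z^m),\qquad
 \widetilde H=B(\mathcal D-\varepsilon z^m).
\]
In $\mathcal T_m$ we have $GB\mathcal D^2=1$, and $\varepsilon^2=0$, so the product is
\[
 \widetilde E\widetilde H=GB\mathcal D^2-\varepsilon^2z^{2m}GB=1 .
\]
Hence $\widetilde H$ is the inverse series of $\widetilde E$. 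The degree of $\mathcal D$ is at most $m$ in $\mathcal T_m$, so $\widetilde E$ has degree at most $m+d=k$ and $\widetilde H$ has degree at most $m+e=\ell$. Therefore $h_{\ell+1},\ldots,h_{\ell+k}$ map to zero, and $\widetilde\phi_m$ is a well-defined algebra map on $R$. Reducing modulo $\varepsilon$ recovers~\eqref{tc:coefficient-map}, that is, $\phi_m$ composed with the quotient map.

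It remains to read off the coefficients. The perturbation of $\widetilde E$ is $\varepsilon z^mG$, and $G$ has constant term $1$. So the coefficients of $z^i$ with $i<m$ are unchanged, and the coefficient of $z^m$ gains exactly $\varepsilon$. Higher coefficients $c_{m+1},\ldots,c_k$ may change, but the lemma places no condition on them. The weights are consistent: $\varepsilon z^m$ has weight zero when $\deg\varepsilon=m$, matching the grading of $E$. Lemma~\ref{common:witness} then gives $\partial_{c_m}K_m\subseteq Q_m$.

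The only real obstacle is choosing the perturbation so that both degree bounds are preserved simultaneously. Naively perturbing $E$ by $\varepsilon z^m$ alone would change $H$ by $-\varepsilon z^mH^2=-\varepsilon z^mB/G$, which is not a polynomial. Multiplying the perturbation by $G$ clears this denominator; in $\mathcal D$-language, one deforms the common factor. I would also check the degenerate cases. For $d=0$ we have $G=1$. For $e=0$ we have $B=1$. For $d=e=0$ the ring is $\mathcal T_m=\Q$. The construction applies verbatim in each case, and no reducedness of $\mathcal T_m$ is used.
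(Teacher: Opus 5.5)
Your proposal is correct and is essentially the paper's own proof. The paper uses the same dual-number deformation $E_\varepsilon=G(\mathcal D+\varepsilon z^m)$, $H_\varepsilon=B(\mathcal D-\varepsilon z^m)$ in $\mathcal T_m[\varepsilon]/(\varepsilon^2)$, checks the degree bounds $k,\ell$ and $E_\varepsilon H_\varepsilon=1$, uses $G(0)=1$ to see that the coefficients below $m$ are unchanged while the coefficient at $m$ gains $\varepsilon$, and concludes by Lemma~\ref{common:witness} with the injection~\eqref{tc:polar-injection}.
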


\begin{proof}
Give $\varepsilon$ weight $m$ and work in
$\mathcal T_m[\varepsilon]/(\varepsilon^2)$. Set
\begin{equation}\label{tc:dual-number-family}
 E_\varepsilon=G(\mathcal D+\varepsilon z^m),\qquad
 H_\varepsilon=B(\mathcal D-\varepsilon z^m).
\end{equation}
They have degrees at most $k$ and $\ell$, constant term one, and
\[
 E_\varepsilon H_\varepsilon
   =GB(\mathcal D^2-\varepsilon^2z^{2m})=1.
\]
Their coefficients thus define a homomorphism from $R$ whose reduction
is~\eqref{tc:coefficient-map}. Since $G(0)=1$, the coefficients below
$m$ are unchanged and the coefficient at $m$ increases by $\varepsilon$.
Lemma~\ref{common:witness}, with the injection
\eqref{tc:polar-injection}, now gives~\eqref{tc:derivative-inclusion}.
\end{proof}

\subsection{The ordinary defect identity}
\label{tc:defects-subsection}
In Theorem~\ref{common:rigidity}, take $w_m=m$ and
$J_m=\mathcal J_{k-m+1}$. Put $W_m=\partial_{c_m}K_m$,
using the full kernel $K_m$, and define
\begin{equation}\label{tc:defect-definition}
 \epsilon_m=\Delta_m-q^m\left(\mathcal H_{k-m+1}
                                     -\sum_{i>m}\Delta_i\right).
\end{equation}
Lemma~\ref{tc:tangent-inclusion} and~\eqref{common:defect} give
\begin{equation}\label{tc:defect-decomposition}
 q^{-m}\epsilon_m
 =\Hilb(Q_m/W_m;q)+\Hilb\bigl(
 \mathcal J_{k-m+1}/(\mathcal J_{k-m+1}\cap A_m);q\bigr).
\end{equation}
Both terms are Hilbert series of graded quotient spaces, so
$\epsilon_m$ has nonnegative coefficients. It remains to verify
\eqref{common:targets} for the proposed increments.

\subsection{The target recursion}
\label{tc:target-subsection}

Use the increment polynomials proposed by Reiner--Tudose
\cite[Conjecture~1 and Section~2]{RT}:
\begin{equation}
 T_m(q)=q^m\begin{bmatrix}\ell\\m\end{bmatrix}_q
 \sum_{j=0}^{k-m}q^{j(\ell-m+1)}
 \begin{bmatrix}m+j-1\\j\end{bmatrix}_q.
 \label{tc:target-polynomial}
\end{equation}

The total-sum identity~\eqref{tc:target-total} is
\cite[Proposition~8]{RT}. The following calculation derives it together with the triangular
recursion for the quotient series $\mathcal H_{k-m+1}$.

\begin{lemma}
\label{tc:target-recursion}
These polynomials satisfy
\begin{equation}
 T_m=q^m\left(\mathcal H_{k-m+1}-\sum_{i>m}T_i\right)
 \qquad(1\leq m\leq k),
 \label{tc:target-triangular}
\end{equation}
and
\begin{equation}
 \sum_{m=1}^kT_m=\operatorname{Hilb}R-1.
 \label{tc:target-total}
\end{equation}
\end{lemma}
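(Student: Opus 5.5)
The plan is to treat the two assertions separately. The triangular recursion~\eqref{tc:target-triangular} will be proved by downward induction on $m$, turning it into a one-parameter $q$-binomial identity. The total-sum identity~\eqref{tc:target-total} is \cite[Proposition~8]{RT}; I would also rederive it from the recursion together with the description of $\mathcal H_k$.

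Write $S_m=\sum_{i\ge m}T_i$, with $S_{k+1}=0$. Then \eqref{tc:target-triangular} reads $\mathcal H_{k-m+1}=q^{-m}T_m+S_{m+1}$.

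For the base case $m=k$, the sum \eqref{tc:polar-series} has only the term $r=0$, so $\mathcal H_1=\gb{\ell}{k}$. Also $q^{-k}T_k=\gb{\ell}{k}$, because only $j=0$ survives in \eqref{tc:target-polynomial}.

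For the inductive step, subtract the instance at $m+1$ from the instance at $m$. Put $d=k-m$, so that $d+L-1=\ell-m=e$ and $k-d=m$. By \eqref{tc:polar-series}, the left sides differ by a single term:
\[
 \mathcal H_{d+1}-\mathcal H_d=q^{de}\gb{k}{d}\gb{\ell}{m}.
\]
On the right the difference is $q^{-m}T_m-(1-q^{m+1})\,q^{-(m+1)}T_{m+1}$. The Gaussian identity $(1-q^{m+1})\gb{\ell}{m+1}=(1-q^{\ell-m})\gb{\ell}{m}$ lets me factor out $\gb{\ell}{m}$. Since $\ell-m=e$, what remains to show is
\[
 \sum_{j=0}^{d}q^{j(e+1)}\gb{m+j-1}{j}
 -(1-q^{e})\sum_{j=0}^{d-1}q^{je}\gb{m+j}{j}
 =q^{de}\gb{m+d}{d}.
\]

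I would prove this identity by induction on $d$, with $m$ and $e$ fixed as free parameters. The case $d=0$ is $1=1$. Passing from $d-1$ to $d$ adds $q^{d(e+1)}\gb{m+d-1}{d}-(1-q^e)q^{(d-1)e}\gb{m+d-1}{d-1}$ to the left side. So the step reduces to
\[
 q^{de}\gb{m+d}{d}-q^{(d-1)e}\gb{m+d-1}{d-1}
 =q^{d(e+1)}\gb{m+d-1}{d}-(1-q^e)q^{(d-1)e}\gb{m+d-1}{d-1}.
\]
After cancelling $q^{(d-1)e}$, this is $q^e$ times the Pascal identity $\gb{m+d}{d}=\gb{m+d-1}{d-1}+q^d\gb{m+d-1}{d}$, which is one of the two Pascal rules obtained from the one recorded in the introduction by the symmetry $\gb nr=\gb n{n-r}$.

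I expect this telescoping bookkeeping to be the main obstacle. It requires getting the ranges of $j$ right: the second sum stops at $d-1$ because $T_{m+1}$ has $k-m-1=d-1$ as its upper limit. It also requires care with the edge cases $d=0$ and $m=k$, where the Gaussian coefficients are read as zero outside their range. I would first check the identity numerically for small $(m,d,e)$ to confirm the signs.

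For the total sum, the recursion gives $S_m=q^m\mathcal H_{k-m+1}+(1-q^m)S_{m+1}$. In \eqref{tc:polar-series} with $s=k$, the condition $\lambda_k\le\ell-1$ excludes only the full rectangle, so $\mathcal H_k=\gb{k+\ell}{k}-q^{k\ell}$. Unwinding the recursion then expresses $S_1$ as an alternating combination of the $\mathcal H_s$. Rather than verify this combination directly, I would invoke \cite[Proposition~8]{RT}, as the text preceding the lemma indicates. As an independent check I would use the same downward induction, now in $k$ with the $\ell$-parameters fixed, together with the Pascal identity, to confirm $S_1=\gb{k+\ell}{k}-1$.
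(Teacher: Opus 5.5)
Your proof of the triangular recursion is the paper's argument. You subtract consecutive instances, factor out $\gb{\ell}{m}$ using $(1-q^{m+1})\gb{\ell}{m+1}=(1-q^{\ell-m})\gb{\ell}{m}$, and reach the same finite identity. The paper proves that identity by summation by parts from $q^j\gb{m+j-1}{j}=\gb{m+j}{j}-\gb{m+j-1}{j-1}$; your induction on $d$ is equivalent. The Pascal rule in your inductive step is literally the one recorded in the introduction, with $n=m+d$ and $r=d$, so no symmetry is needed.

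For the total sum, you neither need to unwind the recursion nor fall back on \cite[Proposition~8]{RT}. Only the $m=1$ instance of the recursion is required, because $T_1$ has a closed form:
\begin{itemize}
\item From $T_1=q(\mathcal H_k-S_2)$ you get $S_1=T_1+S_2=\mathcal H_k+(1-q^{-1})T_1$.
\item Directly from the definition, $T_1=q\gb{\ell}{1}\sum_{j=0}^{k-1}q^{j\ell}=q(1-q^{k\ell})/(1-q)$, so $(1-q^{-1})T_1=q^{k\ell}-1$.
\item Your formula $\mathcal H_k=\gb{k+\ell}{k}-q^{k\ell}$ then gives $\sum_mT_m=\gb{k+\ell}{k}-1=\Hilb(R;q)-1$.
\end{itemize}
This is how the paper obtains the total identity alongside the recursion.

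Citing Reiner--Tudose is logically acceptable, since the paper itself attributes the identity to them. As written, though, your proposal proves only the first half of the lemma. Your ``independent check'' by induction on $k$ is also not spelled out. It does not follow from the recursion; it amounts to a $q$-Vandermonde identity.
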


\begin{proof}
Write $F_m=q^{-m}T_m$.  For $m<k$, put $b=\ell-m$,
$d=k-m$, and
\[
 C_j=\begin{bmatrix}m+j\\j\end{bmatrix}_q,
 \qquad C_{-1}=0.
\]
Gaussian Pascal gives
\[
 q^j\begin{bmatrix}m+j-1\\j\end{bmatrix}_q=C_j-C_{j-1}.
\]
Finite summation by parts consequently yields
\begin{equation}
 \sum_{j=0}^d q^{j(b+1)}
 \begin{bmatrix}m+j-1\\j\end{bmatrix}_q
 =(1-q^b)\sum_{j=0}^{d-1}q^{jb}C_j+q^{db}C_d.
 \label{tc:finite-gaussian-telescope}
\end{equation}
Using
\[
 (1-q^{m+1})\begin{bmatrix}\ell\\m+1\end{bmatrix}_q
 =(1-q^{\ell-m})\begin{bmatrix}\ell\\m\end{bmatrix}_q,
\]
equation \eqref{tc:finite-gaussian-telescope} becomes
\begin{equation}
 F_m-(1-q^{m+1})F_{m+1}
 =q^{(k-m)(\ell-m)}
   \begin{bmatrix}k\\m\end{bmatrix}_q
   \begin{bmatrix}\ell\\m\end{bmatrix}_q.
 \label{tc:F-difference}
\end{equation}
The right side is exactly
$\mathcal H_{k-m+1}-\mathcal H_{k-m}$ by
\eqref{tc:polar-series}.  The boundary is
$F_k=\begin{bmatrix}\ell\\k\end{bmatrix}_q=\mathcal H_1$.
Descending induction therefore gives
\[
 \mathcal H_{k-m+1}=F_m+\sum_{i>m}T_i,
\]
which is \eqref{tc:target-triangular}.

For the total, the hook counted by $\mathcal H_k$ excludes only
the full rectangle from the Schur basis of $R$.  Thus
\[
 \mathcal H_k=\operatorname{Hilb}R-q^N.
\]
Directly from \eqref{tc:target-polynomial},
\[
 T_1=q\frac{1-q^N}{1-q}.
\]
The case $m=1$ of the triangular identity now gives
\[
 \sum_{m=1}^kT_m
 =\mathcal H_k+(1-q^{-1})T_1
 =\operatorname{Hilb}R-1.
\]
Since $q$ divides $T_1$, the last identity is an identity of
polynomials.
\end{proof}

\subsection{Completion of the ordinary proof}
\label{tc:rigidity-subsection}
\begin{proof}[Proof of Theorem~\ref{main:theorem}]
Apply Theorem~\ref{common:rigidity} with weights $w_m=m$ and
$J_m=\mathcal J_{k-m+1}$. Theorem~\ref{gp:theorem} gives the
quotient series $H_m$, Lemma~\ref{tc:tangent-inclusion} gives
$W_m\subseteq Q_m$, and Lemma~\ref{tc:target-recursion} verifies
both identities in~\eqref{common:targets}. Thus $\Delta_m=T_m$,
$W_m=Q_m$, and $\mathcal J_{k-m+1}\subseteq A_m$, proving
\eqref{main:formula}. For $m>k$, both successive subalgebras are
$R$ and the displayed sum is empty.
\end{proof}

\section{Consequences for ordinary Grassmannians}
\label{conseq:section}
The equality case of~\eqref{tc:defect-decomposition} gives ideal
containments and differentiation sequences. The dimension and
generation statements then follow from Theorem~\ref{main:theorem};
the implications among Conjectures~1--4 were given
in~\cite[Section~2]{RT}.

\begin{corollary}[Derivative ideals and Jacobian-tail containment]
\label{tc:descent-as-consequence}
For every $1\le m\le k$, the derivative image is an ideal and satisfies
\[
 \partial_{c_m}(I\cap P_m)
   =P_m\cap\pi^{-1}(\mathcal J_{k-m+1}).
\]
Moreover, as graded subspaces of $R$,
\[
 \mathcal J_{k-m+1}\subseteq A_m.
\]
\end{corollary}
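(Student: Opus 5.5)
This corollary is the equality case of Theorem~\ref{common:rigidity}, already invoked in the proof of Theorem~\ref{main:theorem}. I would set it up exactly as there. The generators are $a_i=\pi(c_i)$ with weights $w_i=i$ for $1\le i\le k$, so that $S_j=P_j$ and the kernels are $K_j=I\cap P_j$. The auxiliary ideals are $J_m=\mathcal J_{k-m+1}$. With these choices the space $Q_m$ of Theorem~\ref{common:rigidity} is $\{f\in P_m: \pi(f)\in\mathcal J_{k-m+1}\}=P_m\cap\pi^{-1}(\mathcal J_{k-m+1})$, which is the same $Q_m$ as in Lemma~\ref{tc:tangent-inclusion}. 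Likewise $W_m=\partial_{c_m}K_m=\partial_{c_m}(I\cap P_m)$, and here the full kernel is used rather than a truncation.

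Next I check the hypotheses, each of which is an earlier result. The tangent inclusion $W_m\subseteq Q_m$ is Lemma~\ref{tc:tangent-inclusion}. The quotient series $H_m=\mathcal H_{k-m+1}$ is supplied by Theorem~\ref{gp:theorem}. The target polynomials $T_m$ of~\eqref{tc:target-polynomial} satisfy both identities~\eqref{common:targets}, by Lemma~\ref{tc:target-recursion}. The theorem's own standing assumptions also hold. The ring $R$ is finite-dimensional with $R_0=\Q$, and it is generated by the $\pi(c_i)$, so $A_k=R$.

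The conclusions~\eqref{common:conclusions} then give $W_m=Q_m$, which is the displayed equality. Theorem~\ref{common:rigidity} also states that $W_m$ is an ideal of $P_m$. This holds because $Q_m$ is the preimage in $P_m$ of an ideal of $R$ under the ring map $P_m\to R$, so it is an ideal. The same conclusions give $J_m\subseteq A_m$, which is $\mathcal J_{k-m+1}\subseteq A_m$ as graded subspaces of $R$.

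There is no genuine obstacle. The only point needing care is the bookkeeping match between the ordinary notation and that of Theorem~\ref{common:rigidity}. In particular, $A_m$ denotes the same subalgebra in both settings. The edge case $m=k$ needs a brief check: there $J_k=\mathcal J_1$ and $A_k=R$, so the containment is trivial, and the equality still follows from the theorem.
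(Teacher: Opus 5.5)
Your proposal is correct and follows the paper's own proof, which records these statements as the equality-case conclusions $W_m=Q_m$ and $J_m\subseteq A_m$ of Theorem~\ref{common:rigidity} in the proof of Theorem~\ref{main:theorem}. Equivalently, both quotients in \eqref{tc:defect-decomposition} vanish, and the ideal property follows from $W_m=Q_m$ because $Q_m$ is a preimage of an ideal, as you note.
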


\begin{proof}
These are the equality-case conclusions of
Theorem~\ref{common:rigidity} applied in the proof of
Theorem~\ref{main:theorem}; equivalently, both quotients in
\eqref{tc:defect-decomposition} vanish.
\end{proof}

\begin{corollary}[Differentiation exact sequence]
\label{conseq:module-sequence}
For $1\leq m\leq k$, put $s=k-m+1$.
Differentiation induces an exact sequence of graded
$A_{m-1}$-modules
\begin{equation}\label{conseq:derivative-sequence}
 0\longrightarrow A_{m-1}\longrightarrow A_m
 \xrightarrow{\ D_m\ }(A_m/\mathcal J_s)(-m)
 \longrightarrow0,
 \qquad D_m([f])=[\partial_{c_m}f].
\end{equation}
In particular,
$A_m/A_{m-1}\simeq(A_m/\mathcal J_s)(-m)$ as graded
$A_{m-1}$-modules.
\end{corollary}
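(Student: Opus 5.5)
This corollary is the module-sequence conclusion \eqref{common:module-sequence} of Theorem~\ref{common:rigidity}, read off in the ordinary setting. The plan is to reuse the instantiation already made in the proof of Theorem~\ref{main:theorem}. There the generators are $a_i=c_i$ with weights $w_i=i$, so $S_j=P_j$ and $K_j=I\cap P_j$. The auxiliary ideals are $J_m=\mathcal J_{k-m+1}$.

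The hypotheses of Theorem~\ref{common:rigidity} are already verified.
\begin{enumerate}[label=(\roman*)]
\item The tangency inclusion $W_m\subseteq Q_m$ is Lemma~\ref{tc:tangent-inclusion}.
\item The quotient series $H_m=\mathcal H_{k-m+1}$ is supplied by Theorem~\ref{gp:theorem}.
\item Both identities in \eqref{common:targets} are Lemma~\ref{tc:target-recursion}.
\end{enumerate}

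Hence all conclusions of Theorem~\ref{common:rigidity} hold. In particular \eqref{common:module-sequence} holds with $J_j=\mathcal J_s$, $w_j=m$ and $D_m([f])=[\partial_{c_m}f]$, which is \eqref{conseq:derivative-sequence}.

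To make this transparent, I would briefly recall the mechanism rather than just cite it.
\begin{itemize}
\item \emph{Well defined.} By Corollary~\ref{tc:descent-as-consequence}, $\partial_{c_m}K_m=Q_m$. So changing a representative $f$ by an element of $K_m$ changes $\partial_{c_m}f$ by an element of $Q_m$. Moreover $P_m/Q_m\cong A_m/(A_m\cap\mathcal J_s)=A_m/\mathcal J_s$, using the containment $\mathcal J_s\subseteq A_m$ from the same corollary.
\item \emph{Surjective.} Every polynomial over $\Q$ has an antiderivative in $c_m$.
\item \emph{Kernel is $A_{m-1}$.} If $\partial_{c_m}f\in Q_m=\partial_{c_m}K_m$, choose $g\in K_m$ with the same derivative. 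Then $f-g\in P_{m-1}$, so $[f]\in A_{m-1}$. Conversely, an element of $A_{m-1}$ has a representative free of $c_m$, and its derivative is zero.
\item \emph{$A_{m-1}$-linear.} The product rule gives this, since elements of $A_{m-1}$ have $c_m$-free representatives.
\item \emph{Grading.} The shift $(-m)$ makes $D_m$ homogeneous of degree zero under the convention $M(a)_t=M_{t+a}$.
\end{itemize}

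The final isomorphism $A_m/A_{m-1}\simeq(A_m/\mathcal J_s)(-m)$ of graded $A_{m-1}$-modules then follows immediately from exactness.

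There is no real obstacle here: all the work is in Theorem~\ref{common:rigidity} and Corollary~\ref{tc:descent-as-consequence}. The only point needing care is that the target $A_m/\mathcal J_s$ makes sense as a quotient at all, i.e.\ that $\mathcal J_s$ lies inside $A_m$. This is exactly the equality-case containment, and it is what lets $S_m/Q_m$ be identified with $A_m/\mathcal J_s$ rather than with $A_m/(A_m\cap\mathcal J_s)$.
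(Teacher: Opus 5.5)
Your proposal is correct and follows essentially the same route as the paper. The paper likewise applies \eqref{common:module-sequence} from Theorem~\ref{common:rigidity} with generator weight $m$ and $J_m=\mathcal J_s$, using Corollary~\ref{tc:descent-as-consequence} for both $\mathcal J_s\subseteq A_m$ and $\partial_{c_m}K_m=Q_m$; your unpacking of well-definedness, surjectivity, kernel, linearity and shift just reproduces the final paragraph of the proof of Theorem~\ref{common:rigidity}.
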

\begin{proof}
Apply~\eqref{common:module-sequence} with generator weight $m$ and
$J_m=\mathcal J_s$. Corollary~\ref{tc:descent-as-consequence} gives
$\mathcal J_s\subseteq A_m$ and the required derivative-image equality.
The resulting map is $D_m$, with the displayed shift $(-m)$.
\end{proof}

\begin{corollary}[Total dimensions]\label{conseq:dimensions}
For $1\leq m\leq k$,
\begin{equation}\label{conseq:increment-dimension}
 \dim_{\Q}(A_m/A_{m-1})=\binom{k}{m}\binom{\ell}{m}.
\end{equation}
Consequently, for $0\leq m\leq k$,
\begin{equation}\label{conseq:subalgebra-dimension}
 \dim_{\Q}A_m=\sum_{i=0}^m\binom{k}{i}\binom{\ell}{i}.
\end{equation}
\end{corollary}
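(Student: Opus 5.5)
Since Theorem~\ref{main:theorem} gives $\Hilb(A_m/A_{m-1};q)=T_m(q)$, the plan is to set $q=1$ in~\eqref{tc:target-polynomial}. The factor $q^m$ becomes $1$, and $\gb{\ell}{m}$ specializes to $\binom{\ell}{m}$. It remains to show
\[
 \sum_{j=0}^{k-m}\binom{m+j-1}{j}=\binom{k}{m}.
\]
This is the hockey-stick identity: $\binom{m+j-1}{j}=\binom{m-1+j}{m-1}$, and $\sum_{i=m-1}^{k-1}\binom{i}{m-1}=\binom{k}{m}$. Multiplying gives~\eqref{conseq:increment-dimension}.

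The cumulative formula~\eqref{conseq:subalgebra-dimension} then follows by telescoping. We use $\dim A_m=\dim A_0+\sum_{i=1}^m\dim(A_i/A_{i-1})$ together with $\dim A_0=1=\binom{k}{0}\binom{\ell}{0}$. Equivalently, one can evaluate~\eqref{intro:cumulative} at $q=1$.

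As a consistency check, at $m=k$ Vandermonde gives $\sum_i\binom{k}{i}\binom{\ell}{k-i}=\binom{k+\ell}{k}$, which matches~\eqref{sf:total}.

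There is no real obstacle. The only care needed is with the evaluation at $q=1$: every Gaussian polynomial in $T_m$ lies in its stated range ($0\le m\le\ell$ and $0\le j\le m+j-1$, or $j=0$). In particular, the case $m=1$, $j>0$ gives $\gb{j}{j}=1$, so no term is set to zero spuriously.
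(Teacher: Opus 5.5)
Your proposal is correct and matches the paper's own proof: evaluate \eqref{main:formula} at $q=1$ (the paper phrases your hockey-stick identity as the telescoping $\binom{r}{m-1}=\binom{r+1}{m}-\binom{r}{m}$ from $r=m-1$ to $k-1$), then sum the increments starting from $\dim A_0=1$. Your range check on the Gaussian factors and the Vandermonde consistency check at $m=k$ are both correct additions.
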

\begin{proof}
A Gaussian polynomial has value $\binom{a}{b}$ at $q=1$.
Evaluating~\eqref{main:formula} therefore gives
\[
 \dim_{\Q}(A_m/A_{m-1})
 =\binom{\ell}{m}\sum_{j=0}^{k-m}\binom{m+j-1}{j}.
\]
The sum is $\binom{k}{m}$: with $r=m+j-1$, the identity
$\binom{r}{m-1}=\binom{r+1}{m}-\binom{r}{m}$ telescopes
from $r=m-1$ to $k-1$. Summing the increments and using
$\dim A_0=1$ proves~\eqref{conseq:subalgebra-dimension}.
\end{proof}

\begin{corollary}[Sharp high-degree stabilization]
\label{conseq:stabilization}
Put $N=k\ell$. For every $1\leq m\leq k$,
\[
 \deg\Hilb(A_m/A_{m-1};q)=N-m(m-1),
\]
and the coefficients of both the lowest term $q^m$ and the
highest term $q^{N-m(m-1)}$ are one. Moreover,
\begin{equation}\label{conseq:high-degree}
 (A_{m-1})_n=R_n\quad\text{for }n>N-m(m-1),
\end{equation}
whereas
\begin{equation}\label{conseq:sharpness}
 \dim_{\Q}(R/A_{m-1})_{N-m(m-1)}=1.
\end{equation}
\end{corollary}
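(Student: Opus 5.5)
The plan is to read everything off the explicit increment formula \eqref{main:formula} of Theorem~\ref{main:theorem}, combined with the cumulative sum and the symmetry of $\Hilb(R;q)$.

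\textbf{Degree and extreme coefficients.} Each Gaussian polynomial $\gb nr$ with $0\le r\le n$ has constant term $1$, leading term $q^{r(n-r)}$ with coefficient $1$, and degree $r(n-r)$. Hence the $j$th summand of \eqref{main:formula} has lowest term $q^{m+j(\ell-m+1)}$ and degree
\[
 m+m(\ell-m)+j(\ell-m+1)+j(m-1)=m+m(\ell-m)+j\ell .
\]
This degree increases strictly with $j$. The top summand is $j=k-m$, and its degree is
\[
 m+m\ell-m^2+(k-m)\ell=k\ell-m(m-1)=N-m(m-1).
\]
Its leading coefficient is $1$, and no other summand reaches this degree. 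The lowest term is the $j=0$ constant $q^m$, with coefficient $1$; all other summands start in degree at least $m+\ell-m+1>m$.

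\textbf{Stabilization.} Summing the increments for $i\ge m$ gives
\[
 \Hilb(R/A_{m-1};q)=\sum_{i=m}^k\Hilb(A_i/A_{i-1};q).
\]
By the degree computation, the $i$th term has degree $N-i(i-1)$, and this is at most $N-m(m-1)$ for $i\ge m$, with equality only when $i=m$. Hence $\Hilb(R/A_{m-1};q)$ has degree exactly $N-m(m-1)$, its top coefficient is $1$, and it comes solely from the $i=m$ term. Because the degree of $R/A_{m-1}$ is $N-m(m-1)$, we get $(A_{m-1})_n=R_n$ for all larger $n$, which is \eqref{conseq:high-degree}. The coefficient statement is \eqref{conseq:sharpness}.

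This step needs $m\ge2$ to make sense of $A_{m-1}$, but for $m=1$ the same computation is consistent: $A_0=\Q$ and $\Hilb(R/A_0)$ has degree $N$ with top coefficient $1$.

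\textbf{Main obstacle.} The only delicate point is the strict monotonicity in $j$, which keeps the top terms from cancelling or adding up. The summands are nonnegative polynomials, so cancellation is impossible anyway. Strictness reduces to $\ell>0$ and $j$ increasing, which is immediate. I will also check that the boundary cases $m=k$ (a single summand) and $\ell=m$ ($\gb \ell m=1$) cause no degenerate Gaussian values; both cases fall within the stated range.
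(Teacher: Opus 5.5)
Your proposal is correct and follows the paper's proof essentially step for step. Both arguments compute the top degree $m(\ell-m+1)+j\ell$ of the $j$th summand of \eqref{main:formula} and use its strict increase in $j$ to isolate the top term at $j=k-m$ and the unique lowest term $q^m$. Both then sum the increments for $i\ge m$, whose top degrees $N-i(i-1)$ strictly decrease. Your caveat about needing $m\ge2$ is unnecessary, since $A_0=\Q$ is defined and the same argument covers $m=1$ directly.
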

\begin{proof}
The Gaussian polynomial $\gb{a}{b}$ is monic of degree
$b(a-b)$ and has constant coefficient one, by its rectangle
interpretation. The $j$th summand in~\eqref{main:formula},
including the common factor, has largest exponent
\begin{align*}
 m+m(\ell-m)+j(\ell-m+1)+j(m-1)
   &=m(\ell-m+1)+j\ell.
\end{align*}
These exponents strictly increase with $j$. At $j=k-m$ the
exponent is $N-m(m-1)$, with coefficient one. The term $j=0$
contributes the unique lowest term $q^m$, since
$\ell-m+1\geq1$.

As graded vector spaces,
$\Hilb(R/A_{m-1};q)=\sum_{i=m}^k T_i(q)$.
The largest exponents $N-i(i-1)$ strictly decrease for $i\geq1$.
Thus this sum has degree $N-m(m-1)$ and leading coefficient one,
which proves both assertions.
\end{proof}

In particular,~\eqref{conseq:high-degree} gives the high-degree
generation bound formulated in~\cite[Conjecture~2]{RT}, and
\eqref{conseq:sharpness} shows that the bound cannot be lowered.

\begin{corollary}[The further generation assertions of Reiner--Tudose]
\label{conseq:RT-generation}
Set $N=k\ell$ and write $e_i=(-1)^ic_i$ for the elementary classes.
Then
\begin{equation}\label{conseq:RT3}
 e_m e_1^{\,N-m^2+1}\in A_{m-1}\qquad(1\leq m\leq k).
\end{equation}
For $3\leq m\leq k$ one also has
\begin{equation}\label{conseq:RT4}
 e_m e_1^{\,N-2m}\in A_{m-1}.
\end{equation}
Thus Conjectures~2, 3, and~4 of~\cite{RT} follow from
Theorem~\ref{main:theorem}.
\end{corollary}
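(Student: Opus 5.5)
Both memberships will follow from the high-degree stabilization of Corollary~\ref{conseq:stabilization}, once the weights are placed above the threshold $N-m(m-1)$, together with one extra argument for the borderline cases. Corollary~\ref{conseq:stabilization} states that $(A_{m-1})_n=R_n$ for $n>N-m(m-1)$. Since $A_{m-1}\subseteq A_m$ and stabilization is monotone, any class of weight exceeding $N-m(m-1)$ already lies in $A_{m-1}$.

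For \eqref{conseq:RT3}, the class $e_me_1^{N-m^2+1}$ has weight
\[
 m+N-m^2+1=N-m(m-1)+1>N-m(m-1),
\]
so it lies in $(A_{m-1})_{N-m(m-1)+1}=R_{N-m(m-1)+1}$. This settles \eqref{conseq:RT3} immediately for $2\le m\le k$. For $m=1$ the statement reads $e_1^{N+1}\in A_0=\Q$. This holds because $e_1^{N+1}$ has weight $N+1$, and $R$ vanishes above its top weight $N$.

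For \eqref{conseq:RT4}, the weight is $N-m$. This exceeds $N-m(m-1)$ exactly when $m(m-1)>m$, that is, when $m\ge3$, which is precisely the stated range. So the same stabilization gives $e_me_1^{N-2m}\in R_{N-m}=(A_{m-1})_{N-m}$. We also need $N-2m\ge0$, so that the exponent is legitimate. This holds because $N=k\ell\ge m^2\ge2m$ when $m\le k\le\ell$.

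The only step needing care is the sign bookkeeping: $e_i=(-1)^ic_i$, so $e_m$ is $\pm c_m$ and $e_1=-c_1$. Membership in the subspace $A_{m-1}$ is unaffected by these scalars, so no real obstacle arises; the whole content is in Theorem~\ref{main:theorem} through Corollary~\ref{conseq:stabilization}.

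For the last sentence of the corollary, we note that \cite[Section~2]{RT} identifies Conjectures~2, 3 and~4 with exactly the statements \eqref{conseq:high-degree}, \eqref{conseq:RT3} and \eqref{conseq:RT4}, after translating their parameter conventions to the $(k,\ell)$ used here. I expect the main, though minor, difficulty to be checking that the indexing in \cite{RT} matches: there $R^{k,\ell,m}$ plays the role of our $A_m$, and our $N=k\ell$ is their top degree. With that dictionary in place, the conjectures reduce to these degree comparisons.
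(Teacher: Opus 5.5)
Your proposal is correct and follows the paper's proof: the weights $N-m(m-1)+1$ and $N-m$ (the latter for $m\ge3$) lie strictly above the threshold of Corollary~\ref{conseq:stabilization}, and $N=k\ell\ge m^2$ makes both exponents nonnegative. Two small remarks: your separate treatment of $m=1$ is unnecessary, since Corollary~\ref{conseq:stabilization} with $m=1$ already gives $(A_0)_n=R_n=0$ for $n>N$; and you should also record $N-m^2+1\ge1$ for the exponent in \eqref{conseq:RT3}, not only the check for \eqref{conseq:RT4}.
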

\begin{proof}
Both exponents are nonnegative in their stated ranges because
$k\ell\geq k^2$.
The weight in \eqref{conseq:RT3} is
$N-m^2+m+1=N-m(m-1)+1$, which lies strictly above the threshold
of Corollary~\ref{conseq:stabilization}.
The weight in \eqref{conseq:RT4} is $N-m$.
For $m\geq3$, the inequality $m(m-1)>m$ again puts this weight
strictly above that threshold. Corollary~\ref{conseq:stabilization}
also gives Conjecture~2 itself, with its optimal endpoint.
\end{proof}

The application of these generation statements to Hoffman's
endomorphism theorem~\cite{Hoffman} appears
in~\cite[Sections~4--5]{RT}.

\begin{remark}[The boundary increments]\label{conseq:boundary}
At the two ends of the filtration, the formula becomes
\[
 T_1(q)=q+q^2+\cdots+q^{k\ell},
 \qquad T_k(q)=q^k\gb{\ell}{k}.
\]
For $k=1$, these are the same polynomial and
$R\cong\Q[c_1]/(c_1^{\ell+1})$.
When $k=\ell=m$, the parameter construction has $d=e=0$,
$T=\Q$, and $\mathcal H_1=1$, with empty coefficient and
relation lists.
\end{remark}

\begin{remark}[A small example]\label{conseq:example}
For $k=\ell=3$, the three increments are
\begin{align*}
 T_1(q)&=q+q^2+\cdots+q^9,\\
 T_2(q)&=q^2(1+q+q^2)(1+q^2+q^3)\\
       &=q^2+q^3+2q^4+2q^5+2q^6+q^7,\\
 T_3(q)&=q^3.
\end{align*}
Thus $(\dim A_0,\dim A_1,\dim A_2,\dim A_3)=(1,10,19,20)$,
and the whole weight-$n$ component belongs to $A_1$ for $n>7$.
At weight $7$, the quotient $R/A_1$ is one-dimensional, as
predicted by Corollary~\ref{conseq:stabilization} with $m=2$.
\end{remark}

\begin{remark}[A proper Artinian parameter image]
\label{conseq:artinian-example}
Take $k=\ell=2$ and $d=e=m=1$. Write $G=1+gz$, $B=1+bz$.
Then $\rho=b-g$ and
\begin{align*}
 D_2&=\frac{3g^2+2gb+3b^2}{8},&
 D_3&=-\frac{5g^3+3g^2b+3gb^2+5b^3}{16},\\
 U_2&=-\frac{g(3g+b)}2,&
 V_2&=-\frac{b(g+3b)}2.
\end{align*}
The transition matrix of Lemma~\ref{res:colon} is
\[
 \begin{pmatrix}D_2\\D_3\end{pmatrix}
 =\begin{pmatrix}
 -1/4&-1/4\\(5g+b)/24&(g+5b)/24
 \end{pmatrix}
 \begin{pmatrix}U_2\\V_2\end{pmatrix},
 \qquad \det M=-\frac{\rho}{24}.
\]
Consequently
\[
 (D_2,D_3):\rho=(g(3g+b),\,b(g+3b)),\qquad
 \Hilb(\mathsf P/\mathfrak J;q)=(1+q)^2.
\]
The parameter ring $T=\Q[g,b]/(D_2,D_3)$ has Hilbert polynomial
$1+2q+2q^2+q^3$. Its coefficient image satisfies
\[
 \mathcal B_1=\Q\oplus\rho T\quad\text{as graded vector spaces},
 \qquad \Hilb(\mathcal B_1;q)=1+q+2q^2+q^3.
\]
Indeed, Theorem~\ref{gp:layer-sequence} identifies its quotient by
$\rho T$ with $\mathcal B_0=\Q$.
The images of the Grassmannian generators are explicitly
$c_1=(g-b)/2$ and $c_2=(b-g)(g+3b)/8$.
Thus $R/\mathcal J_2\simeq\mathcal B_1$ has dimension five,
whereas $T$ has dimension six.
\end{remark}

\begin{remark}[Characteristic zero and distinct basis questions]
The divisions in the coefficient recurrences, Hall positivity, and
integration in the newest generator are used over $\Q$. No integral or
positive-characteristic analogue is asserted. The ordinary transformed
Schur basis is adapted to the Jacobian ideals; it does not provide the
prescribed complete or $k$-Schur bases. Section~\ref{lg:sec:counter}
shows that those two stronger assertions are false.
\end{remark}

\section{Schur \texorpdfstring{$Q$}{Q}-functions and positive projections}
\label{lg:sec:q}

Let
\[
 \Gamma=\Q[p_1,p_3,p_5,\ldots],\qquad
 \mathcal Q(z)=\sum_{r\ge0}\qb_r z^r
 =\exp\left(2\sum_{r\text{ odd}}\frac{p_rz^r}{r}\right).
\]
Thus $\qb_0=1$ and $\mathcal Q(z)\mathcal Q(-z)=1$.
The Schur $Q$-functions, indexed by strict partitions, form a homogeneous
basis of $\Gamma$. We use $Q_\varnothing=1$ and the normalization
\begin{equation}\label{lg:eq:Q-pair}
 Q_{a,b}=\qb_a\qb_b+2\sum_{j=1}^b(-1)^j\qb_{a+j}\qb_{b-j}
 \qquad(a>b\ge0),
\end{equation}
extended by $Q_{b,a}=-Q_{a,b}$ and $Q_{a,a}=0$.
For a skew-symmetric $2s$ by $2s$ matrix $A$, its \emph{Pfaffian} is
\[
 \Pf(A)=\frac{1}{2^s s!}\sum_{\sigma\in\mathfrak S_{2s}}
 \operatorname{sgn}(\sigma)\prod_{j=1}^s A_{\sigma(2j-1),\sigma(2j)},
 \qquad\Pf(\varnothing)=1.
\]
For $\lambda$ of even length, set $Q_\lambda=\Pf(Q_{\lambda_i,\lambda_j})$;
for odd length, append a zero part before taking the Pfaffian.
For an alphabet $X=(x_1,\ldots,x_d)$, specialization means
\begin{equation}\label{lg:eq:Q-specialization}
 \sum_{r\ge0}\qb_r(X)z^r
   =\prod_{i=1}^d\frac{1+x_i z}{1-x_i z}.
\end{equation}
In particular $Q_\lambda(X)=0$ when $\length(\lambda)>d$.
These identities and normalizations are given in
\cite[Chapter~III, Section~8]{Macdonald}; see also
\cite[Theorems~4.1 and~5.1]{Okada} for the Pfaffian formulas and
the Cauchy identity.

If $\alpha$ is a partition with odd parts and $m_r(\alpha)$ is the
multiplicity of $r$, put
$z_\alpha=\prod_r r^{m_r(\alpha)}m_r(\alpha)!$.
The scalar product on $\Gamma$ is
\begin{equation}\label{lg:eq:scalar}
 \langle p_\alpha,p_\beta\rangle
  =\delta_{\alpha\beta}2^{-\length(\alpha)}z_\alpha,
 \qquad
 \langle Q_\lambda,Q_\mu\rangle
  =\delta_{\lambda\mu}2^{\length(\lambda)}.
\end{equation}
The second equality follows, equivalently, by comparing the power-sum
and Schur $Q$ expansions of the Cauchy kernel
$\prod_{i,j}(1+x_i y_j)/(1-x_i y_j)$.

Define the graded endomorphism $\Theta:\Gamma\to\Gamma$ by
$\Theta(p_r)=2p_r$ for odd $r$. Then
\begin{equation}\label{lg:eq:theta}
 \Theta\mathcal Q(z)=\mathcal Q(z)^2,
 \qquad
 \langle f,\Theta f\rangle>0\quad\text{for }0\ne f\in\Gamma.
\end{equation}
In fact, if $f=\sum_\alpha a_\alpha p_\alpha$ with rational coefficients,
then $\langle f,\Theta f\rangle=\sum_\alpha z_\alpha a_\alpha^2$.
In particular, $\Theta$ is self-adjoint and positive definite on each
homogeneous component.

\begin{lemma}\label{lg:lem:ambient-basis}
There is a surjective graded homomorphism
\[
 \pi_n:\Gamma\longrightarrow R_n,
 \qquad \pi_n(\qb_i)=e_i\ (i\le n),\quad
        \pi_n(\qb_i)=0\ (i>n).
\]
The classes $\pi_n(Q_\lambda)$ with $\lambda$ strict and $\lambda_1\le n$
are a basis of $R_n$. Moreover,
\begin{equation}\label{lg:eq:ambient-Hilb}
 \Hilb(R_n;q)=\prod_{i=1}^n(1+q^i).
\end{equation}
\end{lemma}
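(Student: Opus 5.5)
We construct $\pi_n$ from universal properties, show that the indicated classes span, and then compare dimensions through the Hilbert series.

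\emph{Step 1: the homomorphism.} The ring $\Gamma$ is generated by the $\qb_r$. Its relations are exactly those coming from $\mathcal Q(z)\mathcal Q(-z)=1$, namely
\[
 \qb_i^2+2\sum_{a\ge1}(-1)^a\qb_{i+a}\qb_{i-a}=0\qquad(i\ge1),
\]
and even $\qb$'s are determined by odd ones \cite[III.8]{Macdonald}. Hence $\Gamma\cong\Q[\qb_1,\qb_2,\ldots]/(\text{these relations})$. Define $\pi_n$ by $\qb_i\mapsto e_i$ for $i\le n$ and $\qb_i\mapsto 0$ for $i>n$. Under this assignment the generating series $\mathcal Q(z)$ goes to $E(z)=\sum_{i\le n}e_iz^i$, and $E(z)E(-z)=1$ in $R_n$ by~\eqref{lg:eq:ring-intro}. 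So every defining relation of $\Gamma$ is sent to zero, and $\pi_n$ is a well-defined graded homomorphism. It is surjective because $R_n$ is generated by $e_1,\ldots,e_n$. The weights agree, since $\deg\qb_i=i$.

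\emph{Step 2: spanning.} Take a strict $\lambda$ with $\lambda_1>n$. Expand the Pfaffian $Q_\lambda=\Pf(Q_{\lambda_i,\lambda_j})$ along the row of $\lambda_1$: every term contains a factor $Q_{\lambda_1,\lambda_j}$, or $Q_{\lambda_1,0}=\qb_{\lambda_1}$ when the length is odd. By~\eqref{lg:eq:Q-pair}, $Q_{a,b}$ with $a>n$ is a sum of products $\qb_{a+j}\qb_{b-j}$ with $a+j>n$, so it lies in $\ker\pi_n$. Therefore $\pi_n(Q_\lambda)=0$. The $Q_\lambda$ span $\Gamma$ and $\pi_n$ is onto, so the classes $\pi_n(Q_\lambda)$ with $\lambda_1\le n$ span $R_n$. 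These strict partitions are exactly the subsets of $\{1,\ldots,n\}$, with generating function $\prod_{i=1}^n(1+q^i)$. This gives the coefficientwise upper bound
\[
 \Hilb(R_n;q)\le\prod_{i=1}^n(1+q^i).
\]

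\emph{Step 3: the lower bound, which is the main obstacle.} We must show that $\dim R_n\ge 2^n$ in the graded sense.

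The first option is geometric: $R_n\cong H^{2*}(\LG(n,\C^{2n});\Q)$ by \cite{KT}, and the Schubert cells give exactly $\prod_i(1+q^i)$.

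For a self-contained argument I would instead give a monomial spanning count together with a complete-intersection computation. The relation for index $i$ reads $e_i^2=-2\sum_a(-1)^ae_{i+a}e_{i-a}$. Ordering monomials lexicographically with the largest index most significant, this rewrites $e_i^2$ in terms of monomials that involve larger indices. A Gröbner-type argument then shows that the square-free monomials $e_S$, $S\subseteq\{1,\ldots,n\}$, span $R_n$. That only gives another upper bound, so a genuine lower bound is still needed. For it, note that $R_n$ is a quotient of $\Q[e_1,\ldots,e_n]$ by $n$ homogeneous relations of degrees $2i$. The only common complex zero is the origin: at such a zero $E(z)E(-z)=1$ with $E$ a polynomial forces $E=1$. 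Then Lemma~\ref{ca:ci} makes the sequence regular, and
\[
 \Hilb(R_n;q)=\prod_{i=1}^n\frac{1-q^{2i}}{1-q^i}=\prod_{i=1}^n(1+q^i).
\]
This computation alone already proves~\eqref{lg:eq:ambient-Hilb}. Equality of dimensions in every weight, combined with Step~2, shows that the spanning classes form a basis, which finishes the proof.
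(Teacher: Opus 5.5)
Your proof is correct. Steps 1 and 2 agree with the paper's construction of $\pi_n$ and with its argument that $\pi_n(Q_\lambda)=0$ for $\lambda_1>n$, because the first Pfaffian row is killed.

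The routes differ in the dimension count. The paper argues by Gr\"obner bases. It gives $e_i$ the auxiliary weight $Ci-i^2$ with $C>n$, so that $e_i^2$ is the leading monomial of the $i$th relation; each term $e_{i+a}e_{i-a}$ is lower by $2a^2$. These leading monomials are pairwise coprime, so the relations form a Gr\"obner basis and the squarefree monomials form a basis of $R_n$. The $2^n$ surviving $Q$-classes span, so they are a basis.

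You instead note that the relations are the coefficients of $E(z)E(-z)-1$, so their only common complex zero is the origin. Lemma~\ref{ca:ci} then gives a regular sequence of degrees $2i$ and the Koszul Hilbert series. This is the mechanism the paper itself uses for its parameter rings (Lemma~\ref{lg:lem:finite}), and it is shorter once the appendix is available. The paper's route avoids the Nullstellensatz and the parameter theorem. It also produces the explicit squarefree monomial basis, which the paper uses later: in Remark~\ref{lg:rem:parity-boundary}, and in the squarefree reduction of the computational supplement.

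One correction to the aside you discard. Under lex order with the largest index most significant, $e_i^2$ is not the leading term of the $i$th relation for $i<n$; the term $e_{i+a}e_{i-a}$ with the largest admissible $a$ is. So that order does not yield a Gr\"obner basis with squarefree standard monomials. The rewriting $e_i^2\mapsto$ (terms $e_{i+a}e_{i-a}$) does still terminate. It strictly increases the sum of the squares of the indices within a fixed weight, which is exactly what the paper's concave weight $Ci-i^2$ encodes. Since your proof does not use the aside, this does not affect its validity.
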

\begin{proof}
The relation $E(z)E(-z)=1$ gives the indicated map from the algebra of
coefficients of $\mathcal Q$. Equivalently, the odd coefficients of
$\log E(z)$ determine the images of the independent odd power sums.
The map is surjective because its image contains every generator $e_i$.

For a direct dimension calculation, choose an integer $C>n$ and give
the polynomial variable $e_i$ the auxiliary positive weight $Ci-i^2$.
Use any monomial order refining this weight. In the $i$th relation
in~\eqref{lg:eq:ring-intro}, the leading monomial is $e_i^2$:
the weight of $e_{i+a}e_{i-a}$ is lower by $2a^2$, also when $i-a=0$
and $e_0=1$. The leading monomials are pairwise relatively prime,
so the relations form a Gr\"obner basis: their leading monomials
generate the leading-monomial ideal of all relations. The squarefree
monomials, whose exponents are all $0$ or $1$, are therefore a basis, proving~\eqref{lg:eq:ambient-Hilb}
and $\dim R_n=2^n$.

If $\lambda_1>n$, every entry in the first row of its Pfaffian is zero
under $\pi_n$, by~\eqref{lg:eq:Q-pair}. Thus all such $Q_\lambda$ vanish.
The remaining $Q$-classes span, and there are exactly $2^n$ of them,
one for each subset of $\{1,\ldots,n\}$. They are consequently a basis.
\end{proof}

For $0\le d\le n$ define
\begin{equation}\label{lg:eq:H}
 H_{n,d}(q)=\sum_{a=0}^d q^{\binom{a+1}{2}}\gb{n}{a}.
\end{equation}
This counts strict partitions with largest part at most $n$ and length
at most $d$: subtracting $(a,a-1,\ldots,1)$ from a strict partition of
length $a$ leaves a partition inside $((n-a)^a)$.
Write
\begin{equation}\label{lg:eq:Q-ideal}
 Z_\lambda=\pi_n(\Theta Q_\lambda),\qquad
 \J_{n,d}=(Z_\lambda:\lambda\text{ strict},\ \length(\lambda)>d)
       \subseteq R_n.
\end{equation}
The displayed definition uses all strict partitions, without a bound on
the largest part. This causes no finiteness difficulty because $R_n$ is
finite-dimensional.

\begin{lemma}[Positive-projection bound]\label{lg:lem:positive}
Coefficientwise in $q$,
\begin{equation}\label{lg:eq:upper}
 \Hilb(R_n/\J_{n,d};q)\le H_{n,d}(q).
\end{equation}
More precisely, the elements $Z_\lambda$ with
$\lambda_1\le n$ and $\length(\lambda)>d$ are linearly independent.
\end{lemma}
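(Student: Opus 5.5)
The plan mirrors the Hall upper bound~\eqref{gp:hook-upper} in the ordinary case, with the Schur $Q$ pairing in place of the Hall pairing. Work one weight at a time in $\Gamma$. By Lemma~\ref{lg:lem:ambient-basis}, $\pi_n$ kills every $Q_\lambda$ with $\lambda_1>n$, and the classes $\pi_n(Q_\lambda)$ with $\lambda_1\le n$ form a basis of $R_n$. Let $U\subseteq\Gamma$ be the span of the $Q_\lambda$ with $\lambda$ strict and $\lambda_1\le n$. By the orthogonality in~\eqref{lg:eq:scalar}, $\pi_n$ restricted to $U$ is an isomorphism. Moreover, $\pi_n$ factors as the orthogonal projection $p:\Gamma\to U$ followed by this isomorphism. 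Indeed, $\ker\pi_n$ contains the span of the $Q_\mu$ with $\mu_1>n$, which is exactly $U^\perp$, and $\pi_n$ is injective on $U$.

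Next apply Lemma~\ref{common:positive} to the real inner-product space $\Gamma_t\otimes\mathbb R$ in each weight $t$. Use the orthogonal basis $(Q_\lambda)$, let $\mathcal I_0$ index the strict $\lambda$ with $\lambda_1\le n$, and take $T=\Theta$. By~\eqref{lg:eq:theta}, $\Theta$ is self-adjoint and positive definite. The lemma says the vectors $p\Theta(Q_\lambda)$, $\lambda\in\mathcal I_0$, form a basis of $U$. Applying the isomorphism $U\to R_n$, the elements $Z_\lambda=\pi_n(\Theta Q_\lambda)$ with $\lambda_1\le n$ form a basis of $R_n$. In particular, the subfamily with $\length(\lambda)>d$ is linearly independent, which is the "more precisely" assertion. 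Since the matrices are rational, independence over $\mathbb R$ gives independence over $\Q$.

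For the Hilbert-series bound, note that each such $Z_\lambda$ lies in $\J_{n,d}$ by definition~\eqref{lg:eq:Q-ideal}. So in weight $t$, $\dim(\J_{n,d})_t$ is at least the number of strict $\lambda\vdash t$ with $\lambda_1\le n$ and $\length(\lambda)>d$. Subtracting from $\dim(R_n)_t$, which counts all strict $\lambda\vdash t$ with $\lambda_1\le n$, leaves at most the number with $\length(\lambda)\le d$. By the counting remark after~\eqref{lg:eq:H}, this number is the coefficient of $q^t$ in $H_{n,d}(q)$, which proves~\eqref{lg:eq:upper}.

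The only step needing care is the identification of $\pi_n$ with orthogonal projection. This requires that $U^\perp$ be spanned by the $Q_\mu$ with $\mu_1>n$ (immediate from orthogonality of the $Q$-basis) and that all of these map to zero (Lemma~\ref{lg:lem:ambient-basis}). Once that is in place, the remainder is a direct application of positive compression, exactly as in Theorem~\ref{gp:adapted-basis}.
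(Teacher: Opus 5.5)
Your proposal is correct and follows the paper's proof essentially step for step: you apply positive compression to $\Theta$ on the orthogonal Schur $Q$ basis, identify $\pi_n$ with the coefficient projection via Lemma~\ref{lg:lem:ambient-basis}, and count the independent subfamily of length greater than $d$ inside $\J_{n,d}$. One small point of attribution: the isomorphism $U\to R_n$ comes from the basis statement of Lemma~\ref{lg:lem:ambient-basis}, not from orthogonality, although you use it correctly in the argument.
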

\begin{proof}
In each weight, apply Lemma~\ref{common:positive} to $\Theta$ and the
span of $Q_\lambda$ with $\lambda_1\le n$. The scalar product
\eqref{lg:eq:scalar} makes this basis orthogonal, and
\eqref{lg:eq:theta} gives positivity. Lemma~\ref{lg:lem:ambient-basis}
identifies the quotient map with the corresponding coefficient
projection. Thus all retained $Z_\lambda$ form a basis of $R_n$.
The subfamily of length greater than $d$ lies in $\J_{n,d}$ and is
independent. Counting it gives~\eqref{lg:eq:upper}.
\end{proof}

\begin{remark}\label{lg:rem:no-descent}
The definition $Z_\lambda=\pi_n(\Theta Q_\lambda)$ applies
$\Theta$ in $\Gamma$ before the quotient map. No endomorphism of
$R_n$ induced by $\Theta$ is used.
\end{remark}

\section{An opposite-root product and a colon identity}\label{lg:sec:colon}

We use opposite-pair cancellation to compute the finite colon ideal.

Let
\[
 S_d=\Q[g_1,\ldots,g_d],\quad \deg g_i=i,\quad
 G(z)=1+g_1z+\cdots+g_dz^d=\prod_{i=1}^d(1+x_i z).
\]
Here the last expression identifies $S_d$ with
$\Q[x_1,\ldots,x_d]^{\mathfrak S_d}$; it does not require the existence
of individual roots in any later quotient ring. Put
\begin{equation}\label{lg:eq:rho}
 \rho_d=\prod_{1\le i<j\le d}(x_i+x_j),\qquad
 \rho_0=\rho_1=1.
\end{equation}
It is homogeneous of degree $\binom d2$.
The next determinant is the staircase Schur polynomial
$s_{(d-1,d-2,\ldots,1)}$, by the dual Jacobi--Trudi identity;
its alternant gives the product $\rho_d$.
We use the normalization in
\cite[Chapter~I, Section~3]{Macdonald}.

\begin{lemma}[Hurwitz determinant; staircase Schur identity]\label{lg:lem:hurwitz}
Let $d\ge1$. With $g_0=1$ and $g_i=0$ outside $0\le i\le d$, one has
\begin{equation}\label{lg:eq:hurwitz}
 \det\big(g_{d+a-2j}\big)_{1\le a,j\le d-1}=\rho_d.
\end{equation}
For $d=1$ the determinant is the empty determinant $1$.
\end{lemma}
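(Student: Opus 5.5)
The plan is to recognize the determinant as a dual Jacobi--Trudi determinant for the staircase partition, and then evaluate that Schur polynomial with the alternant formula~\eqref{sf:alternant}. Since $G(z)=\prod_{i=1}^d(1+x_iz)$, the coefficients are the elementary symmetric polynomials, $g_i=e_i(x_1,\ldots,x_d)$. The conventions $g_0=1$ and $g_i=0$ outside $0\le i\le d$ agree with $e_i$ in $d$ variables. The case $d=1$ is the empty determinant on both sides, so assume $d\ge2$ and put $\delta=(d-1,d-2,\ldots,1)$, a partition of length $d-1$.

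\emph{Step 1: matching the matrix to dual Jacobi--Trudi.} The dual Jacobi--Trudi identity gives $s_{\lambda'}=\det(e_{\lambda_i-i+j})_{1\le i,j\le \ell(\lambda)}$. The staircase $\delta$ is self-conjugate, and $\delta_i-i+j=d-2i+j$. Thus
\[
 s_\delta=\det(e_{d-2i+j})_{1\le i,j\le d-1}.
\]
The matrix in~\eqref{lg:eq:hurwitz} has $(a,j)$ entry $g_{d+a-2j}$. Its transpose has $(j,a)$ entry $e_{d-2j+a}$, which is exactly the displayed matrix after renaming $(j,a)$ as $(i,j)$. Transposition does not change the determinant, so the left side of~\eqref{lg:eq:hurwitz} equals $s_\delta(x_1,\ldots,x_d)$. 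I will check the index bookkeeping on the smallest case $d=2$: the determinant is then $g_1=x_1+x_2$, in agreement with $\rho_2$.

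\emph{Step 2: the alternant evaluation.} Pad $\delta$ with a zero part to length $d$. The exponents in~\eqref{sf:alternant} are then $\delta_i+d-i=2(d-i)$. The numerator is therefore the Vandermonde determinant in the squares $x_j^2$:
\[
 \det\bigl(x_j^{2(d-i)}\bigr)=\prod_{i<j}(x_i^2-x_j^2).
\]
The denominator is $\prod_{i<j}(x_i-x_j)$, since both determinants have the same row ordering convention. Dividing factor by factor gives $\prod_{i<j}(x_i+x_j)=\rho_d$, with no stray sign.

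\emph{Step 3: returning to $S_d$.} Both sides are symmetric polynomials, and the identity holds in $\Q[x_1,\ldots,x_d]$. By the identification $S_d=\Q[x_1,\ldots,x_d]^{\mathfrak S_d}$, it is therefore an identity in $S_d$. The degree $\binom d2$ is consistent with $|\delta|$.

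There is no real obstacle here. The only step needing care is the index matching in Step~1, namely confirming that the transpose convention yields $e_{\delta_i-i+j}$ rather than its row-reversed version. A reversal would in any case only introduce a sign, and the $d=2$ check together with the leading-coefficient comparison in Step~2 would catch it.
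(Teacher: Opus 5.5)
Your proof is correct, but it takes a different route from the paper's. You transpose the matrix to get the dual Jacobi--Trudi matrix $(e_{d-2i+j})_{1\le i,j\le d-1}$ for the self-conjugate staircase $\delta=(d-1,\ldots,1)$. You then evaluate $s_\delta(x_1,\ldots,x_d)$ by the alternant formula~\eqref{sf:alternant}, whose numerator becomes a Vandermonde determinant in the $x_j^2$. Your index matching and your sign bookkeeping are both right.

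The paper names this route in the sentence just before the lemma, but its proof avoids both Schur identities. It first notes that every determinant term has degree $\binom d2$. At $x_1=-x_2=t$ it multiplies the matrix by $(t^2,t^4,\ldots,t^{2d-2})^{\mathsf T}$; each row gives $t^a$ times the even or odd part of $\chi(t)=\prod_i(t+x_i)$, so the product vanishes. Symmetry then yields divisibility by $\rho_d$. It fixes the constant by setting $x_d=0$, expanding along the first column, and inducting on $d$.

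Your argument is shorter and gives the exact normalization, sign included, in one step, at the cost of importing dual Jacobi--Trudi and the alternant formula. The paper's argument is self-contained at the level of determinants. It mirrors its kernel-vector treatment of the Sylvester matrix $C$ in Lemma~\ref{res:colon}, and it makes visible the opposite-pair vanishing that drives the later $Q$-cancellation. In exchange, it needs the separate inductive step to pin down the constant.
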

\begin{proof}
Each determinant term has degree $\binom d2$.
Write $\chi(w)=\sum_{i=0}^d g_iw^{d-i}=\prod_i(w+x_i)$.
If $x_1=-x_2=t\ne0$, the even and odd parts of $\chi$ both vanish at
$t$. Multiplying the matrix in~\eqref{lg:eq:hurwitz} by the nonzero vector
$(t^2,t^4,\ldots,t^{2d-2})^{\mathsf T}$ gives, in row $a$, $t^a$ times
the appropriate parity part of $\chi(t)$. Every row is therefore zero.
Polynomial continuity includes $t=0$. It follows that $x_1+x_2$
divides the determinant in the root polynomial ring. Symmetry gives the
same divisibility for every $x_i+x_j$, so their product divides it.
The degrees are equal, and the determinant is a constant multiple of
$\rho_d$.

To identify the constant and prove it is nonzero, set $x_d=0$.
Then $g_d=0$, and the first column has only its first entry
$g_{d-1}$ nonzero. Expansion in that column reduces the determinant to
$g_{d-1}$ times the determinant for $d-1$ variables. The product
$\rho_d$ has the same specialization
$(x_1\cdots x_{d-1})\rho_{d-1}=g_{d-1}\rho_{d-1}$.
Induction from $d=1$ proves~\eqref{lg:eq:hurwitz}.
\end{proof}

Let $C_d\subseteq S_d$ be the subalgebra generated by all coefficients
of $G(z)/G(-z)$. To compare this notation with the literature, put
$p_r(X)=\sum_{i=1}^d x_i^r$. The formal identity
\[
 \log\frac{G(z)}{G(-z)}
   =2\sum_{\substack{r\ge1\\r\text{ odd}}}\frac{p_r(X)}{r}z^r
\]
and the inverse exponential expansion imply, coefficient by
coefficient over $\Q$, that
\[
 C_d=\Q[p_1(X),p_3(X),p_5(X),\ldots]\subseteq S_d.
\]
Here the right-hand side denotes the subalgebra generated by the
specialized odd power sums, without any assertion of their algebraic
independence. This algebra is characterized by the
$Q$-cancellation property~\cite[Theorem~2.11(Q), p.~145]{Pragacz}:
for $d\ge2$, setting $(x_{d-1},x_d)=(t,-t)$ gives a polynomial
independent of $t$.
An odd-power-sum description is also given in
\cite[Theorem~1]{CT}. Our Schur $Q$ notation is that of
\cite[Chapter~III, Section~8]{Macdonald}.

The characterization gives the inclusion below, since
$\rho_ds$ vanishes under opposite-pair cancellation for $s\in S_d$.
The Schur $Q$ proof also determines the normalization.

\begin{lemma}[A conductor inclusion]\label{lg:lem:conductor}
For every $d\ge0$,
\begin{equation}\label{lg:eq:conductor}
 \rho_d S_d\subseteq C_d.
\end{equation}
For $d\ge2$, write $G^\circ(z)=\prod_{i=1}^{d-2}(1+x_i z)$.
Replacing $G(z)$ by $(1-t^2z^2)G^\circ(z)$ removes the pair
$t,-t$ from the coefficient ratio and sends $\rho_d$ to zero.
\end{lemma}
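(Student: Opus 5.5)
The plan is to prove \eqref{lg:eq:conductor} through a Schur $Q$ factorization, parallel to the hook factorization \eqref{gp:rectangle-factorization} used in Lemma~\ref{res:conductor}. The cases $d\le1$ are immediate: there $\rho_d=1$, and $C_d=S_d$ because $G(z)/G(-z)$ has coefficients $2g_1,\dots$ that generate $\Q[g_1]$ (for $d=1$) or trivially (for $d=0$). For $d\ge2$ the model identity is the classical factorization of Schur $Q$-functions indexed by a staircase plus a partition. Let $\delta_d=(d-1,d-2,\ldots,1,0)$. For any partition $\mu$ with $\length(\mu)\le d$, the strict partition $\lambda=\mu+\delta_d$ satisfies
\[
 Q_{\lambda}(X)=2^{d}\,\rho_d\,s_\mu(X)
\]
in exactly $d$ variables, with the convention that a trailing zero part is dropped from $\lambda$ (see \cite[Chapter~III, Section~8, Example~3]{Macdonald} or \cite{Pragacz}).

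I would not simply quote this identity. I would rederive it from the Nimmo/alternant description of $Q_\lambda$ in $d$ variables. In that formula the sum over $\mathfrak S_d$ of $x^{\lambda}\prod_{i<j}(x_i+x_j)/(x_i-x_j)$ collapses when $\length(\lambda)\in\{d-1,d\}$. The factor $\prod_{i<j}(x_i+x_j)$ is symmetric and can be pulled out, leaving the ratio of alternants \eqref{sf:alternant} with exponents $\lambda_i=\mu_i+d-i$. That ratio is $s_\mu$. The power of $2$ comes from the normalization of $Q$ versus $P$.

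Given the factorization, the inclusion follows directly. Each $Q_\lambda(X)$ is a polynomial in the coefficients $\qb_r(X)$ of $G(z)/G(-z)$, by the Pfaffian definition and \eqref{lg:eq:Q-pair}, so it lies in $C_d$. The Schur polynomials $s_\mu(X)$ with $\length(\mu)\le d$ form a $\Q$-basis of $S_d$. Hence $\rho_d s\in C_d$ for every $s\in S_d$ by linearity.

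The second assertion is a direct computation. Substituting $G=(1-t^2z^2)G^\circ$ gives
\[
 \frac{G(z)}{G(-z)}=\frac{G^\circ(z)}{G^\circ(-z)},
\]
since $1-t^2z^2$ is even in $z$. In root language this substitution sets $(x_{d-1},x_d)=(t,-t)$. The factor $x_{d-1}+x_d$ of $\rho_d$ then vanishes, so $\rho_d$ maps to zero.

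The main obstacle is to justify the factorization cleanly in the paper's Pfaffian normalization. The difficulty is linking the Pfaffian definition to the alternant (Nimmo) formula, and handling the parity padding when $\length(\lambda)=d-1$.

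If that link becomes awkward, there is a fallback that uses only the $Q$-cancellation characterization of $C_d$ \cite[Theorem~2.11(Q)]{Pragacz}, which the paper already cites. For $s\in S_d$, the element $\rho_d s$ is symmetric and becomes $0$ under $(x_{d-1},x_d)=(t,-t)$. A zero result is trivially independent of $t$. The characterization therefore places $\rho_d s$ in $C_d$, and this gives the lemma without any normalization bookkeeping.
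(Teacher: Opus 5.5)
Your proposal is correct and follows the paper's proof: the symmetrization formula with $\lambda=\mu+\delta_d$ gives $Q_\lambda(X)=2^{\length(\lambda)}\rho_d s_\mu(X)$, the $s_\mu$ span $S_d$, and the even factor $1-t^2z^2$ cancels in $G(z)/G(-z)$ while $x_{d-1}+x_d$ kills $\rho_d$. Your fallback through the $Q$-cancellation characterization is also the alternative the paper notes just before the lemma. One correction: the constant is $2^{\length(\lambda)}$, not $2^d$; for instance, with $d=2$ and $\mu=\varnothing$ one has $Q_{(1)}=\qb_1=2(x_1+x_2)=2\rho_2$. This is harmless, since any nonzero rational factor suffices for the inclusion.
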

\begin{proof}
For $d=0,1$, one has $\rho_d=1$ and $C_d=S_d$.
For $d\ge2$ let $\mu$ be any partition of length at most $d$, padded
with zeros, and put
$\lambda=\mu+(d-1,d-2,\ldots,1,0)$, dropping a final zero.
This is strict and has length $d$ or $d-1$. The symmetrization formula
for Schur $Q$-functions gives
\begin{equation}\label{lg:eq:conductor-Schur}
 Q_\lambda(X)=2^{\length(\lambda)}\rho_d s_\mu(X).
\end{equation}
Here is the cancellation in that formula. For either of these two
lengths, the product of the numerator factors $x_i+x_j$ is exactly
$\rho_d$, and the symmetrization denominator is the Vandermonde
$\prod_{i<j}(x_i-x_j)$. The remaining alternating sum has exponents
$\mu+(d-1,\ldots,0)$ and is the ordinary alternant for $s_\mu$.
The factorial $(d-\length(\lambda))!$ equals $1$, leaving exactly the
factor $2^{\length(\lambda)}$ in~\eqref{lg:eq:conductor-Schur}.
Since $Q_\lambda(X)$ is in $C_d$ by~\eqref{lg:eq:Q-specialization}, and
the $s_\mu(X)$ form a basis of $S_d$, this proves~\eqref{lg:eq:conductor}.
Finally, the factor $1-t^2z^2$ cancels in $G(z)/G(-z)$, and the pair
$t,-t$ gives a zero factor in~\eqref{lg:eq:rho}. All these are polynomial
or formal-series identities before specialization.
\end{proof}

The cancellation sequence below is an elementary consequence of the classical
$Q$-cancellation characterization~\cite[Theorem~2.11(Q)]{Pragacz}.
The divisibility and inclusion used in the proof also appear in
\cite[Lemmas~3 and~4]{CT}. We include the argument and deduce the full
conductor equality from the cancellation kernel.

\begin{proposition}[Universal cancellation and its conductor]\label{lg:prop:universal-conductor}
For $d\ge2$, cancellation of an opposite pair induces an exact sequence
of graded vector spaces
\begin{equation}\label{lg:eq:universal-cancellation}
 0\longrightarrow\rho_dS_d\longrightarrow C_d
 \xrightarrow{\ \chi\ }C_{d-2}\longrightarrow0.
\end{equation}
In fact $\rho_dS_d$ is the full conductor of $C_d\subseteq S_d$:
\[
 \{f\in S_d:fS_d\subseteq C_d\}=\rho_dS_d.
\]
For $d=0,1$, one has $C_d=S_d$.
\end{proposition}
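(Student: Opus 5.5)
We follow the pattern of Lemma~\ref{res:conductor}, with opposite pairs in place of common roots. Let $\chi:S_d\to S_{d-2}[t]$ be the substitution $G(z)\mapsto(1-t^2z^2)G^\circ(z)$. In root terms this sets $(x_{d-1},x_d)=(t,-t)$, and $S_{d-2}[t]$ is a polynomial ring over $S_{d-2}$. By Lemma~\ref{lg:lem:conductor}, the factor $1-t^2z^2$ cancels in $G(z)/G(-z)$. Hence each generating coefficient of $C_d$ maps to the corresponding coefficient of $G^\circ(z)/G^\circ(-z)$, and $\chi$ restricts to a surjection $C_d\to C_{d-2}\subseteq S_{d-2}$ that does not involve $t$. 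The same lemma gives $\chi(\rho_d)=0$ and $\rho_dS_d\subseteq C_d$. So the sequence \eqref{lg:eq:universal-cancellation} is a complex, exact on the left and on the right. The cases $d=0,1$ are contained in that lemma.

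\textbf{Exactness in the middle.} We prove the stronger claim that the kernel of $\chi$ on all of $S_d$ is $\rho_dS_d$. Regard $f\in S_d$ as a symmetric polynomial in $x_1,\dots,x_d$. If $\chi(f)=0$, then $f$ vanishes on the hyperplane $x_{d-1}=-x_d$, since setting $x_{d-1}=t$, $x_d=-t$ with the other roots free parametrizes that hyperplane. Thus $x_{d-1}+x_d$ divides $f$ in $\Q[x_1,\dots,x_d]$. By symmetry every $x_i+x_j$ with $i<j$ divides $f$. These linear forms are pairwise non-associate primes, so their product $\rho_d$ divides $f$.

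It remains to check that the quotient $f/\rho_d$ is symmetric. A transposition fixes $\rho_d$, because it only permutes its factors, and it fixes $f$. Hence $f/\rho_d\in S_d$, and $\ker(\chi|_{C_d})=C_d\cap\rho_dS_d=\rho_dS_d$.

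\textbf{Conductor.} The inclusion $\rho_dS_d\subseteq\operatorname{Cond}$ holds because $\rho_dS_d$ is an $S_d$-ideal contained in $C_d$. Conversely, let $f$ satisfy $fS_d\subseteq C_d$. Then $f\in C_d$, so $\chi(f)=c\in S_{d-2}$ is independent of $t$. Also $fg_2\in C_d$, so $\chi(fg_2)$ is independent of $t$.

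We use $g_2$ rather than $g_1$, because $g_1$ does not see the pair: $\chi(g_1)=g_1^\circ$, since $t+(-t)=0$. Instead $\chi(g_2)=g_2^\circ-t^2$, where $g_2^\circ=0$ if $d-2<2$. Therefore
\[
\chi(fg_2)=c\,g_2^\circ-c\,t^2 .
\]
This is independent of $t$ in the polynomial ring $S_{d-2}[t]$, so $c=0$. Then $f\in\ker\chi=\rho_dS_d$ by the previous paragraph.

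The main obstacle is the kernel computation. It requires recognizing that $\chi$ agrees with the specialization $x_{d-1}=t$, $x_d=-t$ on symmetric polynomials, which follows from comparing $\prod(1+x_iz)$. It also requires the symmetric-quotient step. Neither is deep. The only trap is the choice of test multiplier in the conductor step: $g_2$ works where $g_1$ fails.
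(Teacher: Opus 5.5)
Your proposal is correct and follows the paper's proof essentially step for step. The steps are the opposite-pair substitution with image $C_{d-2}$, the kernel computed by divisibility by every $x_i+x_j$ together with symmetry of the quotient, and the conductor argument with test multiplier $g_2$, where $\chi(g_2)=g_2^\circ-t^2$. Your remark that $g_1$ would not work here, unlike in the common-root case of Lemma~\ref{res:conductor}, is exactly why the paper also uses $g_2$.
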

\begin{proof}
Realize $S_d$ as the symmetric polynomials in $x_1,\ldots,x_d$.
The substitution $(x_{d-1},x_d)=(t,-t)$ sends
$G(z)$ to $(1-t^2z^2)G^\circ(z)$. Its restriction to $C_d$ has image
exactly $C_{d-2}$, embedded as the polynomials independent of $t$,
since the factors from the opposite pair cancel in $G(z)/G(-z)$.

If $f\in\ker\chi$, then $f$ vanishes when $x_{d-1}=-x_d$, so
$x_{d-1}+x_d$ divides $f$ in the root polynomial ring. Symmetry gives
divisibility by every $x_i+x_j$, $i<j$. These distinct linear
polynomials are pairwise relatively prime, so their product $\rho_d$
divides $f$. The quotient is symmetric because both $f$ and $\rho_d$
are symmetric; it therefore belongs to $S_d$. The reverse inclusion
uses Lemma~\ref{lg:lem:conductor} and the vanishing of $\rho_d$ under
cancellation. This proves~\eqref{lg:eq:universal-cancellation}.

If $fS_d\subseteq C_d$, then $f\in C_d$, and let $c=\chi(f)$.
Write $g_2^\circ=[z^2]G^\circ(z)$. The image of $g_2$ under
cancellation is $g_2^\circ-t^2$, where $g_2^\circ=0$
when $d-2<2$. Since $fg_2\in C_d$, its image
$c(g_2^\circ-t^2)$ must be independent of $t$. The coefficient of $t^2$
forces $c=0$, so $f\in\rho_dS_d$ by the kernel calculation.
Conversely $\rho_dS_d$ is an ideal of $S_d$ contained in $C_d$, and
hence is contained in the conductor. The cases $d=0,1$ follow from
the first coefficient of $G(z)/G(-z)$ and characteristic zero.
\end{proof}

For the finite quotient, we construct the coefficient sequences and
the transition matrix required by Lemma~\ref{common:ci-comparison}.
Fix integers $M\ge1$ and $d\ge1$. Define
\begin{equation}\label{lg:eq:D-U}
 D(z)=(G(z)G(-z))^{-1/2}=\sum_{v\ge0}D_vz^v,
 \qquad U(z)=\frac{zG'(z)D(z)}{G(z)}=\sum_{v\ge0}U_vz^v.
\end{equation}
Here $G'(z)=dG(z)/dz$. All inverses and square roots are formal series
with the indicated constant term; their coefficients lie in $S_d$. In particular $D$ is
even. Put $\widehat M=2\lfloor M/2\rfloor$ and define
\begin{equation}\label{lg:eq:ab-def}
 \begin{aligned}
 \aideal_{M,d}
 &=\begin{cases}
 (D_{M+2},D_{M+4},\ldots,D_{M+2d-2},\ g_dD_M),&M\text{ even},\\
 (D_{M+1},D_{M+3},\ldots,D_{M+2d-1}),&M\text{ odd},
 \end{cases}\\
 \bideal_{M,d}&=(U_{M+1},U_{M+2},\ldots,U_{M+d}).
 \end{aligned}
\end{equation}
Each displayed list defining $\aideal_{M,d}$ has exactly $d$ entries.
For $M$ even and $d=1$, the coefficient list before $g_1D_M$ is
empty. The two parities use different coefficient windows.

\begin{lemma}[Finite coefficient windows]\label{lg:lem:finite}
For every $M\ge1$ and $d\ge1$, the ideal $\aideal_{M,d}$ has finite
colength and its displayed generators form a regular sequence. In
$S_d/\aideal_{M,d}$ one has
\begin{equation}\label{lg:eq:D-truncate}
 D_v=0\quad(v>\widehat M),\qquad
 \deg_z(GD)\le M+d-1.
\end{equation}
\end{lemma}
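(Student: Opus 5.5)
The plan mirrors the ordinary argument in Lemma~\ref{res:colon}. First we propagate the coefficient window by a recurrence. Then we show that the only common complex zero is the origin, and Lemma~\ref{ca:ci} gives regularity. Write $F(z)=G(z)G(-z)=\sum_i f_{2i}z^{2i}$; this is even, of degree $2d$ in $z$, and has $f_0=1$. Since $FD^2=1$, the identity $FD'+\tfrac12F'D=0$ gives
\[
 vD_v=-\sum_{i=1}^{d}\bigl(v-i\bigr)f_{2i}D_{v-2i}\qquad(v\ge1),
\]
and only even $v$ contribute because $D$ is even. The odd-indexed generators $D_{M+1},\dots$ (for $M$ odd) and $D_{M+2},\dots$ (for $M$ even) are therefore exactly the even-index coefficients $D_v$ for $v$ in a window of $d$ or $d-1$ consecutive even values above $\widehat M$. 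When the window has length $d$ (the case $M$ odd, where $\widehat M=M-1$), the recurrence with $d$ consecutive vanishing even coefficients kills every later $D_v$ by induction on $v$. This gives $D_v=0$ for $v>\widehat M$.

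For $M$ even, the window $D_{M+2},\dots,D_{M+2d-2}$ has only $d-1$ entries, and the extra generator $g_dD_M$ must supply the missing vanishing. I would use the top coefficient $f_{2d}=(-1)^dg_d^2$ in the recurrence at $v=M+2d$. There, the term involving $D_M$ enters as $(M+d)f_{2d}D_M=\pm(M+d)g_d\cdot g_dD_M$, which lies in $\aideal_{M,d}$, while every other term involves window coefficients. Hence $D_{M+2d}\in\aideal_{M,d}$, and from then on we again have $d$ consecutive vanishing even coefficients, so induction kills all $D_v$ with $v>M=\widehat M$. I expect this bookkeeping to be the main obstacle: checking that the $g_dD_M$ term appears with a nonzero rational coefficient, and that it really closes the gap without requiring $D_M$ itself to vanish.

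Once the truncation holds, $\deg_z D\le\widehat M$ modulo $\aideal_{M,d}$. For the degree bound on $GD$, the case $M$ odd is immediate: $\deg_z(GD)\le\widehat M+d=M+d-1$. For $M$ even, the coefficient $[z^{M+d}](GD)=g_dD_M$ is itself a generator, so again $\deg_z(GD)\le M+d-1$.

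For finite colength, take a common complex zero. There $D$ is a polynomial and $G(z)G(-z)D(z)^2=1$, so $G(z)G(-z)$ is a polynomial unit and equals $1$. Comparing top coefficients gives $g_d^2=0$. I would then argue that $G$ divides $1$ among polynomials, since $G(z)\cdot G(-z)D^2=1$ with both factors polynomial, so $G=1$ and all $g_i=0$. Thus the origin is the only common zero. The generators are homogeneous of positive weight and there are exactly $d$ of them in $d$ variables, so Lemma~\ref{ca:ci} gives regularity.
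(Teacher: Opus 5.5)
Your proposal is correct and follows essentially the same argument as the paper: the recurrence from $FD'+\tfrac12F'D=0$ propagates the window, the extra generator for even $M$ enters at index $M+2d$ through the top coefficient $(-1)^dg_d^2$, and a polynomial unit identity at a common zero, followed by Lemma~\ref{ca:ci}, gives finite colength and regularity. The worry you flag is unnecessary. The $D_M$ term lies in the ideal whatever its coefficient is, so the only nonzero coefficient needed is the factor $v$ on the left side of the recurrence.
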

\begin{proof}
Write $G(z)G(-z)=\sum_{i=0}^d w_i z^{2i}$, where $w_0=1$ and
$w_d=(-1)^dg_d^2$. Logarithmic differentiation gives, for $a\ge1$,
\begin{equation}\label{lg:eq:D-recurrence}
 aD_{2a}=-\sum_{i=1}^{\min(d,a)}
                 \left(a-\frac i2\right)w_iD_{2(a-i)}.
\end{equation}
Set $r=\lfloor M/2\rfloor$. If $M$ is odd, the window kills
$D_{2a}$ for $r<a\le r+d$; the recurrence then kills all subsequent
even coefficients. If $M$ is even, it kills $D_{2a}$ for
$r<a<r+d$. At $a=r+d$ the only possibly surviving summand contains
$w_dD_M=(-1)^dg_d(g_dD_M)=0$; all later even coefficients again
vanish by induction. Odd coefficients were zero from the outset.

For odd $M$, this gives $\deg_z D\le M-1$, hence the asserted degree
bound for $GD$. For even $M$, it gives $\deg_z(GD)\le M+d$, and the
last coefficient is $g_dD_M=0$, proving the same bound.

At a common complex zero, $D$ is a polynomial satisfying
\[
 G(z)G(-z)D(z)^2=1\qquad\text{in }\C[z].
\]
Every factor is constant. Thus $G=D=1$, and the common zero is the origin.
Lemma~\ref{ca:finite-zero} proves finite colength. There are $d$
homogeneous positive-degree generators in $d$ variables, so
Lemma~\ref{ca:ci} proves regularity.
\end{proof}

\begin{theorem}[Opposite-root colon identity]\label{lg:thm:colon}
For every $M\ge1$ and $d\ge1$,
\begin{equation}\label{lg:eq:colon}
 \aideal_{M,d}:\rho_d=\bideal_{M,d},\qquad
 \Hilb(S_d/\bideal_{M,d};q)=\gb{M+d}{d}.
\end{equation}
In particular $\rho_d(S_d/\aideal_{M,d})$ has Hilbert series
$q^{\binom d2}\gb{M+d}{d}$.
\end{theorem}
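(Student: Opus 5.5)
The plan mirrors the proof of Lemma~\ref{res:colon}: exhibit a homogeneous transition matrix $M$ with $\boldsymbol f=M\boldsymbol j$, where $\boldsymbol f$ lists the $d$ generators of $\aideal_{M,d}$ and $\boldsymbol j=(U_{M+1},\ldots,U_{M+d})^{\mathsf T}$. Then show $\det M$ is a nonzero rational multiple of $\rho_d$ and apply Lemma~\ref{common:ci-comparison}.

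\textbf{Regularity.} Lemma~\ref{lg:lem:finite} already gives regularity of the generators of $\aideal_{M,d}$. Once $\aideal_{M,d}\subseteq\bideal_{M,d}$ is known, $\bideal_{M,d}$ is a $d$-generated homogeneous ideal of finite colength in $d$ variables, so Lemma~\ref{ca:ci} makes its generators regular as well. The Hilbert series $\prod_{i=1}^d(1-q^{M+i})/(1-q^i)=\gb{M+d}{d}$ then follows from the degrees.

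\textbf{Series identities.} The identities replacing~\eqref{gp:UV-identities} are
\[
 GU=zG'D,\qquad U(z)+U(-z)=-2zD'(z).
\]
The second holds because $D=(G(z)G(-z))^{-1/2}$, so
\[
 -2zD'/D=zG'(z)/G(z)-zG'(-z)/G(-z).
\]
Since $D$ is even, the even part of $U$ is $-zD'$; thus $U_{2a}=-2aD_{2a}$. The odd coefficients of $U$ are not controlled by $D$ directly; they are related to the even ones through $GU=zG'D$, which gives a recurrence with seeds. Concretely:
\begin{itemize}
\item For odd $M$, each $D_{M+2i-1}$ equals $-U_{M+2i-1}/(M+2i-1)$.
\item The coefficients $U_{M+2d-1}$ with index beyond $M+d$ are reduced, by the recurrence $U_t=-\sum_i g_iU_{t-i}+\sum_i ig_iD_{t-i}$, to combinations of $U_{M+1},\ldots,U_{M+d}$. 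The $D$ terms there are themselves even $U$ terms, so this is handled by a row-vector recursion exactly as in Lemma~\ref{res:colon}.
\item For even $M$, the last generator $g_dD_M$ must instead be expressed through the top-degree coefficient of $GU$, which is linked to $\deg_z(GD)$ in~\eqref{lg:eq:D-truncate}.
\end{itemize}
This recursion gives $M$ with entries homogeneous of the correct weights.

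\textbf{Determinant.} I would compute $\det M$ with the same triangular change of coordinates: introduce module coordinates $u_t$, express the residuals of the $G$-recurrence, and eliminate the even-index $u$'s in favour of $\delta$'s. What remains should be a $(d-1)\times(d-1)$ coefficient matrix with entries $g_{d+a-2j}$ — the odd/even interleaving of $G$'s coefficients — times nonzero scalars like $\prod(M+2i-1)^{-1}$. Lemma~\ref{lg:lem:hurwitz} then identifies that determinant with $\rho_d$.

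As a cross-check I would use degrees: $\sum\deg f_a-\sum\deg j_b$ equals $\binom d2$ in the odd case. In the even case the generator $g_dD_M$ has degree $M+d$, and one checks the total again gives $\binom d2$. So $\det M$ is homogeneous of degree $\binom d2$. It is a multiple of $\rho_d$ by the opposite-pair argument: at $x_1=-x_2$ the vector from Lemma~\ref{lg:lem:hurwitz} lies in the kernel. It is nonzero by specializing $x_d=0$ and inducting, or by checking one monomial.

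\textbf{Conclusion and obstacle.} Lemma~\ref{common:ci-comparison} gives $\aideal_{M,d}:\rho_d=\bideal_{M,d}$. Multiplication by $\rho_d$ then embeds $(S_d/\bideal_{M,d})(-\binom d2)$ onto $\rho_d(S_d/\aideal_{M,d})$, which gives the final Hilbert series. The main obstacle is the even-$M$ case: the seed $g_dD_M$ is not a coefficient of the $D$-window, so the recursion must be arranged so that the determinant still factors as a Hurwitz determinant. I would first test $d=1,2$ by hand to pin down the row structure before writing the general elimination.
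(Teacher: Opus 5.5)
Your proposal is correct and follows essentially the same route as the paper: a seed--residual transition matrix built from $GU=zG'D$ and $U_v=-vD_v$ for even $v$, a triangular coordinate change that reduces the determinant to the Hurwitz block $(g_{d+a-2j})$ of Lemma~\ref{lg:lem:hurwitz}, and then Lemma~\ref{common:ci-comparison}. The even-$M$ row you flagged works as you suspected: the $z^{M+d}$ coefficient of $GU=zG'D$, together with $U_M=-MD_M$, gives $(M+d)g_dD_M=U_{M+d}+\sum_{i<d}g_iU_{M+d-i}-\sum_{i<d}ig_iD_{M+d-i}$, which acts as the residual at $t=d$ and supplies the first row of the Hurwitz block (your kernel-vector remark applies to that $(d-1)\times(d-1)$ block, not to the $d\times d$ transition matrix itself).
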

\begin{proof}
The two coefficient identities used in both cases are
\begin{equation}\label{lg:eq:U-identities}
 GU=zG'D,\qquad U(z)+U(-z)=-2zD'(z).
\end{equation}
Thus $U_v=-vD_v$ for even $v$. We construct a matrix $A$ expressing
the ordered generators of $\aideal_{M,d}$ in terms of those of
$\bideal_{M,d}$ and compute its determinant in each parity.

\smallskip\noindent
\emph{Case 1: $M$ is even.}
Start with independent $u_1,\ldots,u_d$. Define $v_t$ for these
indices and then define the subsequent $u_t,v_t$ recursively by
\begin{equation}\label{lg:eq:u-recurrence}
 \begin{aligned}
 v_t&=\begin{cases}-u_t/(M+t),&t\text{ even},\\0,&t\text{ odd},\end{cases}\\
 u_t&=-\sum_{i=1}^d g_i u_{t-i}
          +\sum_{i=1}^d i g_i v_{t-i}
             \qquad(d<t\le2d-2).
 \end{aligned}
\end{equation}
Every index $t-i$ in the recursive line is positive and less than
$t$. Substitution of $u_t=U_{M+t}$ for $t\le d$ realizes the later
$u_t$ as $U_{M+t}$ and the $v_t$ as $D_{M+t}$, by
\eqref{lg:eq:U-identities}. In degree $M+d$, the same identity and
$U_M=-MD_M$ give
\begin{equation}\label{lg:eq:b-row}
 b:=\frac{u_d+\sum_{i=1}^{d-1}g_i u_{d-i}
                   -\sum_{i=1}^{d-1}i g_i v_{d-i}}{M+d}
       \ \longmapsto\ g_dD_M.
\end{equation}
Define $A$ by
\begin{equation}\label{lg:eq:transition-rows}
 (v_2,v_4,\ldots,v_{2d-2},b)^{\mathsf T}
       =A(u_1,\ldots,u_d)^{\mathsf T}.
\end{equation}
We claim
\begin{equation}\label{lg:eq:detA}
 \det A=\frac{(-1)^{\binom{d-1}{2}}\rho_d}
 {(M+d)\prod_{j=1}^{d-1}(M+2j)}.
\end{equation}
For $d=1$, $A=(1/(M+1))$, as asserted. For $d\ge2$, temporarily
treat $u_1,\ldots,u_{2d-2}$ as independent, and put
\[
 r_t=u_t+\sum_{i=1}^d g_i u_{t-i}
                 -\sum_{i=1}^d i g_i v_{t-i}
                   \quad(d<t\le2d-2).
\]
The map to $(u_1,\ldots,u_d,r_{d+1},\ldots,r_{2d-2})$ has determinant
$1$: each residual is $u_t$ plus earlier variables. In these
coordinates, the map to $(v_2,v_4,\ldots,v_{2d-2},b,r_{d+1},\ldots,
 r_{2d-2})$ has lower blocks $0,I$ and upper-left block $A$.
Its determinant is therefore $\det A$.

Alternatively reorder the original variables as the even ones
followed by the odd ones, and rescale the even ones to $v_{2j}$.
With $r=d-1$, the reordering has $r(r+1)/2$ inversions, and the
rescaling contributes $(-1)^r/\prod_{j=1}^{d-1}(M+2j)$.
Their combined sign is $(-1)^{\binom{d-1}{2}}$. With all $v_{2j}=0$,
remove $1/(M+d)$ from the row $b$. The remaining rows
$b,r_{d+1},\ldots,r_{2d-2}$, on the odd variables
$u_1,u_3,\ldots,u_{2d-3}$, have matrix
\[
 (g_{d+a-2j})_{1\le a,j\le d-1}.
\]
Lemma~\ref{lg:lem:hurwitz} identifies its determinant with $\rho_d$.
This proves \eqref{lg:eq:detA}, with its sign.

\smallskip\noindent
\emph{Case 2: $M$ is odd.}
Start with the same $d$ seeds, but define
\begin{equation}\label{lg:eq:odd-transition}
 \begin{aligned}
 v_t&=\begin{cases}-u_t/(M+t),&t\text{ odd},\\0,&t\text{ even},\end{cases}\\
 u_t&=-\sum_{i=1}^d g_i u_{t-i}
          +\sum_{i=1}^d i g_i v_{t-i}
                 \qquad(d<t\le2d-1).
 \end{aligned}
\end{equation}
Now $M+t$ is even precisely when $t$ is odd. Substitution of the
$U$-coefficients therefore identifies $v_t$ with $D_{M+t}$ as before.
Define $A$ by
\[
 (v_1,v_3,\ldots,v_{2d-1})^{\mathsf T}
       =A(u_1,\ldots,u_d)^{\mathsf T}.
\]
Then
\begin{equation}\label{lg:eq:detA-odd}
 \det A=\frac{(-1)^{\binom{d+1}{2}}\rho_d}
 {\prod_{j=1}^d(M+2j-1)}.
\end{equation}
Indeed, use $2d-1$ independent $u_t$ and the residuals
$r_{d+1},\ldots,r_{2d-1}$ given by the same residual formula.
The seed--residual coordinate change again has determinant $1$.
Reorder to the odd $u_t$ followed by the even $u_t$, and rescale
the odd ones to $v_t$. The permutation sign is
$(-1)^{d(d-1)/2}$ and the rescaling factor is
$(-1)^d/\prod_{j=1}^d(M+2j-1)$. When all odd $v_t$ are zero,
the residual rows have coefficients
$(g_{d+a-2j})_{1\le a,j\le d-1}$ on $u_2,u_4,\ldots,u_{2d-2}$.
The same Hurwitz determinant gives \eqref{lg:eq:detA-odd}.
For $d=1$, the residual block is empty and $A=(-1/(M+1))$.

\smallskip
In either case we have the polynomial identity
$\boldsymbol f=A\boldsymbol j$, with the displayed generator
orders from \eqref{lg:eq:ab-def}. Assigning $u_t$ weight $M+t$
shows that this comparison respects generator-degree shifts.
The sum of the degrees of the $f_i$ is
$dM+d^2$ in both cases, whereas that of the $j_i$ is
$dM+d(d+1)/2$. Their difference is $\deg\rho_d=\binom d2$.

Thus $\aideal_{M,d}\subseteq\bideal_{M,d}$. The latter ideal has
finite colength and $d$ positive-degree generators, so it too is a
complete intersection. Lemma~\ref{common:ci-comparison} and the two
nonzero determinant formulas give the colon identity. Finally,
\[
 \Hilb(S_d/\bideal_{M,d};q)
 =\frac{\prod_{t=1}^d(1-q^{M+t})}{\prod_{t=1}^d(1-q^t)}
 =\gb{M+d}{d}.
\]
Multiplication by $\rho_d$ has degree $\binom d2$, proving the
last assertion.
\end{proof}

\begin{example}[The first nontrivial elimination step]\label{lg:ex:d3}
For even $M$ and $d=3$, the seed variables are $u_1,u_2,u_3$, and the only additional
recurrence needed is
\[
 u_4=-g_3u_1-\frac{M+4}{M+2}g_2u_2-g_1u_3.
\]
The three rows in~\eqref{lg:eq:transition-rows} therefore give
\[
 A=\begin{pmatrix}
 0&-\dfrac1{M+2}&0\\[5pt]
 \dfrac{g_3}{M+4}&\dfrac{g_2}{M+2}&\dfrac{g_1}{M+4}\\[5pt]
 \dfrac{g_2}{M+3}&\dfrac{g_1}{M+2}&\dfrac1{M+3}
 \end{pmatrix},
 \qquad
 \det A=-\frac{g_1g_2-g_3}{(M+2)(M+3)(M+4)}.
\]
Since $g_1g_2-g_3=(x_1+x_2)(x_1+x_3)(x_2+x_3)$, this is the asserted
factor $\rho_3$. The relation $g_dD_M$ supplies the final row
of the transition matrix.
For $M=d=2$, replacing the window by $(D_4,D_6)$ does not impose
the required degree bound. Indeed,
\[
 D_4=\tfrac38g_1^4-\tfrac32g_1^2g_2+g_2^2,
 \qquad g_2D_2=\tfrac12g_1^2g_2-g_2^2.
\]
The second polynomial is not a multiple of the first in weight four,
while $D_6$ has weight six. Thus $g_2D_2\ne0$ modulo $(D_4,D_6)$:
the coefficient of $z^4$ in $GD$ would not vanish. The parameter
map for $n=3$ would then not have the required degree bound.
\end{example}

\section{Finite parameter images and their exact Hilbert series}
\label{lg:sec:parameters}

Fix $n\ge1$ and any $d$ with $0\le d\le n$. In this section
\begin{equation}\label{lg:eq:parameters}
 M=n-d+1\ge1.
\end{equation}
Both parities of $n-d$ are allowed. The tangency argument in
Section~\ref{lg:sec:rigidity} uses $d=n-m$ with $m$ odd, hence
$M$ even.
For $d\ge1$, put
\[
 T_{n,d}=S_d/\aideal_{M,d},
\]
and use the images of $G,D$ in this quotient without changing notation.
For $d=0$ put $T_{n,0}=\Q$ and $G=D=1$.
Lemma~\ref{lg:lem:finite} says that
\begin{equation}\label{lg:eq:E-parameter}
 E_*(z):=G(z)D(z)\in T_{n,d}[z],\qquad
 \deg_z E_*\le n,\qquad E_*(z)E_*(-z)=1.
\end{equation}
Thus it defines a graded algebra homomorphism
\begin{equation}\label{lg:eq:parameter-map}
 \phi_{n,d}:R_n\longrightarrow T_{n,d},\qquad
 e_i\longmapsto[z^i]E_*(z).
\end{equation}
Let $\B_{n,d}=\im\phi_{n,d}$.

\begin{lemma}\label{lg:lem:B-description}
The algebra $\B_{n,d}$ is also the image of $C_d$ in $T_{n,d}$.
One has
\begin{equation}\label{lg:eq:parameter-kills}
 \J_{n,d}\subseteq\ker\phi_{n,d},\qquad
 \rho_d T_{n,d}\subseteq\B_{n,d}.
\end{equation}
The second subspace is an ideal of $\B_{n,d}$.
\end{lemma}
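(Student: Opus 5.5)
We treat the three assertions in order, following the ordinary argument of Theorem~\ref{gp:theorem}. The case $d=0$ is trivial: $T_{n,0}=\Q$, $\rho_0=1$, and $C_0=\Q$. Assume $d\ge1$ from now on.

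\emph{The image.} In $T_{n,d}$ we have $E_*(z)=G(z)D(z)$ with $D=(G(z)G(-z))^{-1/2}$. Hence
\[
 E_*(z)/E_*(-z)=G(z)/G(-z)
\]
and, using $E_*(z)E_*(-z)=1$,
\[
 E_*(z)^2=G(z)/G(-z).
\]
This gives both inclusions between $\B_{n,d}$ and the image of $C_d$.
\begin{itemize}
\item Every coefficient of $G(z)/G(-z)$ is a coefficient of $E_*^2$, so it lies in $\B_{n,d}$. Thus the image of $C_d$ is contained in $\B_{n,d}$.
\item Conversely, $E_*=(G(z)/G(-z))^{1/2}$ as a formal series with constant term $1$. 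By the binomial expansion over $\Q$, each coefficient $e_i$ is a polynomial in the coefficients of $G(z)/G(-z)$, so $\B_{n,d}$ is contained in the image of $C_d$.
\end{itemize}

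\emph{The conductor inclusion.} The inclusion $\rho_dT_{n,d}\subseteq\B_{n,d}$ follows directly from Lemma~\ref{lg:lem:conductor}. Since $\rho_dS_d\subseteq C_d$, the image of $\rho_dS_d$, which is $\rho_dT_{n,d}$, lies in the image of $C_d$, which is $\B_{n,d}$. Because $\rho_dT_{n,d}$ is an ideal of $T_{n,d}$ and is contained in $\B_{n,d}$, it is an ideal of $\B_{n,d}$.

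\emph{The kernel inclusion.} This is the main step. We show that $\phi_{n,d}(Z_\lambda)=0$ whenever $\lambda$ is strict with $\length(\lambda)>d$. Consider $\phi_{n,d}\circ\pi_n\circ\Theta:\Gamma\to T_{n,d}$. It sends $\mathcal Q(z)$ to
\[
 E_*(z)^2=G(z)/G(-z).
\]
On the root-polynomial side, \eqref{lg:eq:Q-specialization} with alphabet $X=(x_1,\ldots,x_d)$ and $G(z)=\prod_i(1+x_iz)$ shows that the specialization at $X$ sends $\mathcal Q(z)$ to $G(z)/G(-z)$ as well.

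Both maps are algebra homomorphisms out of $\Gamma$, which is generated by the $\qb_r$. Therefore the composite equals $S_d\to T_{n,d}$ applied after the specialization $f\mapsto f(X)\in S_d$. One should check that $\Theta$ followed by $\pi_n$ is the algebra map determined by $\qb_r\mapsto[z^r]E(z)^2$, so that its generating series is indeed $E_*^2$; this follows from $\Theta\mathcal Q=\mathcal Q^2$. Since $Q_\lambda(X)=0$ when $\length(\lambda)>d$, we get $\phi_{n,d}(Z_\lambda)=0$.

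The only delicate point is the bookkeeping. $\pi_n$ must be applied on $\Gamma$ before $\phi_{n,d}$, as in Remark~\ref{lg:rem:no-descent}. We also need that the series identity $E_*^2=G/G(-z)$ holds in $T_{n,d}[[z]]$, which uses only $GD$ and the definition of $D$. No descent of $\Theta$ to $R_n$ is used.
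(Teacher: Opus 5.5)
Your proof is correct and takes the same route as the paper. You show that the coefficient algebra of $E_*$ equals that of $E_*^2=G(z)/G(-z)$. You use $\Theta\mathcal Q=\mathcal Q^2$ to identify $\phi_{n,d}\pi_n\Theta$ with specialization at the $d$ roots, which kills $Q_\lambda$ for $\length(\lambda)>d$, and you obtain the conductor inclusion as the image of Lemma~\ref{lg:lem:conductor} under $S_d\to T_{n,d}$. The only point you leave implicit is that $\ker\phi_{n,d}$ is an ideal, so killing the generators $Z_\lambda$ already gives $\J_{n,d}\subseteq\ker\phi_{n,d}$.
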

\begin{proof}
Equation~\eqref{lg:eq:E-parameter} gives
\[
 E_*(z)^2=\frac{G(z)}{G(-z)}.
\]
The coefficient algebra of a constant-one series is the same as that
of its square over $\Q$: squaring gives one inclusion, and the formal
square-root expansion gives the other, coefficient by coefficient.
This proves the description of $\B_{n,d}$.
By~\eqref{lg:eq:theta}, evaluating $\Theta Q_\lambda$ under $\phi_{n,d}\pi_n$
is the same as evaluating $Q_\lambda$ on the alphabet of $d$ roots in
\eqref{lg:eq:Q-specialization}. It is zero when $\length(\lambda)>d$.
This proves the first inclusion in~\eqref{lg:eq:parameter-kills}.
The second follows by specializing Lemma~\ref{lg:lem:conductor}; it is an
ideal because $\B_{n,d}$ is a subalgebra of $T_{n,d}$.
\end{proof}

\begin{lemma}[Opposite-pair cancellation over the finite ring]
\label{lg:lem:cancellation}
If $d\ge2$, there is a surjective graded homomorphism
\begin{equation}\label{lg:eq:cancellation-surjection}
 \B_{n,d}/(\rho_dT_{n,d})\longrightarrow\B_{n,d-2}.
\end{equation}
No reducedness or flat specialization assumption is needed.
\end{lemma}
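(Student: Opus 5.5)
The plan mirrors the ordinary cancellation step \eqref{gp:cancellation-specialization}, with an opposite pair in place of a common root. For $d\ge2$, parameters $(n,d)$ give $M=n-d+1$, and parameters $(n,d-2)$ give $M'=M+2$; the parity of $M$ is unchanged. Write $G=(1-t^2z^2)G^\circ$ with $\deg G^\circ=d-2$ and $\deg t=1$. Since $G(z)G(-z)=(1-t^2z^2)^2G^\circ(z)G^\circ(-z)$, we get
\[
 D=\frac{D^\circ}{1-t^2z^2},\qquad D^\circ=(G^\circ(z)G^\circ(-z))^{-1/2},
\]
and hence $GD=G^\circ D^\circ$. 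This is the exact analogue of the identity used in Theorem~\ref{gp:layer-sequence}.

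The target ring will be the free extension $T_{n,d-2}[t]/(f)$ for a monic polynomial $f$ in $t^2$. I would take $f$ to be the coefficient of $z^{\widehat M}$ in $D^\circ/(1-t^2z^2)$, namely
\[
 f=\sum_{j}t^{2j}D^\circ_{\widehat M-2j}.
\]
This is monic of degree $\widehat M$ in $t$, because $D^\circ_0=1$.

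The key step is to show that the substitution $g_i\mapsto$ coefficients of $(1-t^2z^2)G^\circ$ kills every generator of $\aideal_{M,d}$ in this target. I use the following facts.
\begin{itemize}
 \item In $T_{n,d-2}$, Lemma~\ref{lg:lem:finite} gives $D^\circ_v=0$ for $v>\widehat{M'}=\widehat M+2$.
 \item The pulled-back coefficients $f_v=[z^v]D$ satisfy $f_{v+2}=t^2f_v+D^\circ_{v+2}$.
 \item With $f=f_{\widehat M}$, this forces $f_v=0$ for all even $v\ge\widehat M$. The one exception is possibly $v=\widehat M+2$, but there $D^\circ_{\widehat M+2}$ is not killed outright when $M'$ is even.
\end{itemize}
So I treat the parities separately.

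\emph{$M$ odd.} The window of $\aideal_{M,d}$ is $D_{M+1},\dots$, and $\widehat M=M-1$. I must kill $f_v$ for $v\ge M+1=\widehat M+2$. Here $D^\circ_v=0$ for $v>M'-1=M+1$, but $D^\circ_{M+1}$ is not in the window. So instead take $f=f_{M+1}$, which is monic of degree $M+1$ in $t$. Then the recurrence kills $f_v$ for all $v\ge M+1$, because $D^\circ_{v}=0$ for $v\ge M+3$.

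\emph{$M$ even.} I need $f_{M+2},\dots,f_{M+2d-2}=0$ and $g_dD_M\mapsto0$. The generator $g_d$ maps to $-t^2g^\circ_{d-2}$, so its image is $-t^2g^\circ_{d-2}f_M$. Take $f=f_{M+2}=t^2f_M+D^\circ_{M+2}$. In $T_{n,d-2}$ we have $g^\circ_{d-2}D^\circ_{M+2}=0$; when $d=2$ this reads $D^\circ_{M+2}=0$, and the same computation covers that boundary case.
\begin{itemize}
 \item Modulo $f$, $g^\circ_{d-2}t^2f_M=-g^\circ_{d-2}D^\circ_{M+2}=0$, so the image of $g_dD_M$ vanishes.
 \item Later coefficients vanish because $D^\circ_v=0$ for $v>M+2$.
\end{itemize}
So in either parity I choose $f$ to be the first window coefficient $f_{\widehat M+2}$, which is monic in $t$ of degree $\widehat M+2$. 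This parity bookkeeping, especially the $g_dD_M$ generator and the edge case $d=2$, is the main obstacle. I expect to verify it via the even-window recurrence, exactly as in the proof of Lemma~\ref{lg:lem:finite}.

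Next, the target $T_{n,d-2}[t]/(f)$ is free over $T_{n,d-2}$ with basis $1,t,\dots,t^{\deg f-1}$, by division by a monic polynomial; no reducedness is needed. Hence $T_{n,d-2}$ embeds as the constant summand. Call the resulting homomorphism $\sigma:T_{n,d}\to T_{n,d-2}[t]/(f)$.

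Finally, I compute images. Since $\B_{n,d}$ is generated by the coefficients of $E_*=GD$ (Lemma~\ref{lg:lem:B-description}), and $\sigma(GD)=G^\circ D^\circ$, we get $\sigma(\B_{n,d})=\B_{n,d-2}$ inside the constant copy. Moreover $\sigma(\rho_d)=0$, since the pair $t,-t$ contributes the factor $t+(-t)$ (Lemma~\ref{lg:lem:conductor}). Therefore $\sigma(\rho_dT_{n,d})=0$, and $\sigma$ induces the graded surjection \eqref{lg:eq:cancellation-surjection}.
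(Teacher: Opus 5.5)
Your proposal is correct and follows the paper's proof. The paper likewise adjoins $t$ through the monic polynomial $f_{r+1}(t)$ of degree $2r+2=\widehat M+2$ (your $f_{\widehat M+2}$). It kills the window by the recurrence $f_{a+1}=t^2f_a+d^\circ_{a+1}$ together with the truncation of $D^\circ$ in $T'$. It handles $g_dD_M$ via $g_d=-t^2g^\circ_{d'}$ and the top-coefficient relation $g^\circ_{d'}D^\circ_{M+2}=0$ of $T'$, which reduces to $D^\circ=1$ when $d'=0$. It concludes from freeness, $GD\mapsto G^\circ D^\circ$, and $\rho_d\mapsto0$.

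One correction to the write-up: your first tentative choice $f=f_{\widehat M}$ does not merely fail at one index. It leaves $f_v=t^{v-\widehat M-2}D^\circ_{\widehat M+2}\neq0$ for every even $v\ge\widehat M+2$. So you should start directly from $f=f_{\widehat M+2}$ in both parities, as you eventually do.
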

\begin{proof}
Put $d'=d-2$ and $M'=M+2$, so that $n=M'+d'-1$ as required.
Write $G^\circ,D^\circ$ for the series defining $T'=T_{n,d'}$.
When $d'=0$, interpret $G^\circ=D^\circ=1$ and $g^\circ_0=1$.
Let $r=\lfloor M/2\rfloor$ and write
\[
 D^\circ(z)=\sum_{a\ge0}d^\circ_a z^{2a},\qquad
 f_a(t)=\sum_{j=0}^a t^{2j}d^\circ_{a-j}.
\]
The polynomial $f_{r+1}(t)$ is monic of degree $2r+2$, since $d^\circ_0=1$.
Set
\[
 \widetilde T=T'[t]/(f_{r+1}(t)),\qquad \deg t=1.
\]
Division by this monic polynomial makes $\widetilde T$ a free
$T'$-module with basis $1,t,\ldots,t^{2r+1}$. In particular, the natural
map $T'\to\widetilde T$ is injective even when $T'$ has nilpotents.

Specialize the coefficients of $G$ by
\begin{equation}\label{lg:eq:pair-specialization}
 G(z)=(1-t^2z^2)G^\circ(z).
\end{equation}
Then $D(z)=D^\circ(z)/(1-t^2z^2)$, so $D_{2a}$ maps to $f_a(t)$.
The identity
\[
 f_{a+1}=t^2f_a+d^\circ_{a+1}
\]
and the truncation $d^\circ_a=0$ for $a>r+1$ show that $f_a=0$ for all
$a\ge r+1$ in $\widetilde T$. Thus every coefficient generator in either branch of
$\aideal_{M,d}$ vanishes. When $M$ is even, the additional last
generator also vanishes: since $g_d=-t^2g^\circ_{d'}$, one has
\[
 g_d f_r=-g^\circ_{d'}t^2f_r=g^\circ_{d'}d^\circ_{r+1}=0.
\]
In this even-$M$ case, the final equality is the top-coefficient relation in $T'$; for
$d'=0$ it instead follows from $D^\circ=1$ and $r+1>0$.
Consequently~\eqref{lg:eq:pair-specialization} induces a homomorphism
$T_{n,d}\to\widetilde T$.

Under this map the series $GD$ becomes $G^\circ D^\circ$, independently of $t$.
The image of $\B_{n,d}$ is therefore precisely the embedded algebra
$\B_{n,d'}\subseteq T'\subseteq\widetilde T$.
Moreover $\rho_d$ maps to zero by Lemma~\ref{lg:lem:conductor}.
Restriction and passage to the quotient give
\eqref{lg:eq:cancellation-surjection}.
Writing $\sigma_{n,d}:T_{n,d}\to\widetilde T$ for this specialization,
its restriction gives the commuting square
\begin{equation}\label{lg:eq:finite-cancellation-square}
\begin{array}{ccc}
 \B_{n,d}&\lhook\joinrel\longrightarrow&T_{n,d}\\
 {\scriptstyle\delta_{n,d}}\downarrow&&\downarrow{\scriptstyle\sigma_{n,d}}\\
 \B_{n,d-2}&\lhook\joinrel\longrightarrow&\widetilde T.
\end{array}
\end{equation}
The lower inclusion passes through the constant copy of $T'$.
The map $\delta_{n,d}$ is surjective and kills $\rho_dT_{n,d}$.
Its kernel is identified after the Hilbert-series comparison in
Theorem~\ref{lg:thm:parameter-image}; see Theorem~\ref{lg:thm:layers}.
\end{proof}

\begin{theorem}[Exact parameter image]\label{lg:thm:parameter-image}
For every $n\ge1$ and $0\le d\le n$, one has
\begin{equation}\label{lg:eq:parameter-equality}
 \Hilb(\B_{n,d};q)=H_{n,d}(q),\qquad
 \ker\phi_{n,d}=\J_{n,d}.
\end{equation}
In particular $R_n/\J_{n,d}\cong\B_{n,d}$ embeds in $T_{n,d}$.
\end{theorem}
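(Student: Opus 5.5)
The plan is to copy the ordinary argument of Theorem~\ref{gp:theorem}: an upper bound from positive projection, a lower bound from the conductor layers, and an induction on $d$ in steps of two.

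\emph{Upper bound.} By Lemma~\ref{lg:lem:B-description}, $\J_{n,d}\subseteq\ker\phi_{n,d}$. Hence $\phi_{n,d}$ factors through a graded surjection $R_n/\J_{n,d}\to\B_{n,d}$. Lemma~\ref{lg:lem:positive} then gives, coefficientwise,
\[
 \Hilb(\B_{n,d};q)\le\Hilb(R_n/\J_{n,d};q)\le H_{n,d}(q).
\]
It therefore suffices to prove the reverse inequality $\Hilb(\B_{n,d};q)\ge H_{n,d}(q)$. Once that holds, both surjections are isomorphisms in every weight. In particular $\ker\phi_{n,d}=\J_{n,d}$, and the embedding into $T_{n,d}$ follows.

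\emph{Lower bound by induction on $d$.} The base cases are $d=0$ and $d=1$. For $d=0$, $T_{n,0}=\Q=\B_{n,0}$ and $H_{n,0}=1$. For $d=1$, $C_1=S_1$, so $\B_{n,1}$ is all of $T_{n,1}=S_1/\aideal_{M,1}$. That quotient is a complete intersection with one generator of weight $M+1=n+1$ in the weight-one variable $g_1$. Its series is $(1-q^{n+1})/(1-q)=1+q\gb n1=H_{n,1}$.

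For $d\ge2$, Lemma~\ref{lg:lem:cancellation} gives a surjection $\B_{n,d}/\rho_dT_{n,d}\to\B_{n,d-2}$, and Lemma~\ref{lg:lem:B-description} gives $\rho_dT_{n,d}\subseteq\B_{n,d}$. Combining these,
\[
 \Hilb(\B_{n,d})\ge\Hilb(\rho_dT_{n,d})+\Hilb(\B_{n,d-2}).
\]
Theorem~\ref{lg:thm:colon} with $M=n-d+1$ gives $\Hilb(\rho_dT_{n,d})=q^{\binom d2}\gb{n+1}{d}$. The induction hypothesis, applied with the same $n$ and $d-2$ (note $M'=M+2$, matching \eqref{lg:eq:parameters}), gives $\Hilb(\B_{n,d-2})=H_{n,d-2}$.

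\emph{The combinatorial identity.} It remains to show
\[
 H_{n,d}-H_{n,d-2}=q^{\binom{d}{2}}\gb{n}{d-1}+q^{\binom{d+1}{2}}\gb nd=q^{\binom d2}\gb{n+1}{d}.
\]
Dividing by $q^{\binom d2}$ turns this into $\gb{n}{d-1}+q^{d}\gb nd=\gb{n+1}{d}$, which is exactly the Gaussian Pascal identity. So the lower bound matches $H_{n,d}$, and the equality follows. Equality in the displayed inequality also shows $\ker\delta_{n,d}=\rho_dT_{n,d}$, though that statement is not needed here.

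\emph{Expected obstacle.} The main thing to check is bookkeeping rather than a genuinely hard step. The induction must keep $n$ fixed while $d$ drops by two, and the parameter $M$ then changes parity-consistently, since $M+2$ has the same parity as $M$. The colon theorem is valid for both parities, so every step of the induction is covered. The boundary case $d=n$ (so $M=1$) is also covered by the colon theorem. One small item to confirm is that $\gb{n+1}{d}$ really is $\gb{M+d}{d}$ with $M+d=n+1$; it is.
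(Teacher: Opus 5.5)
Your proposal is correct and follows the paper's proof essentially step for step: you get the upper bound from positive projection through the surjection $R_n/\J_{n,d}\to\B_{n,d}$, you treat $d=0,1$ as base cases, and you get the lower bound from the colon identity together with opposite-pair cancellation, closing with Gaussian Pascal and induction in steps of two at fixed $n$. The only cosmetic difference is at $d=1$: you justify $\B_{n,1}=T_{n,1}$ via $C_1=S_1$ and regularity of the single window generator, whereas the paper computes that generator directly as a nonzero multiple of $g_1^{n+1}$.
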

\begin{proof}
For $d=0$, $\B_{n,0}=\Q$ has Hilbert series $H_{n,0}=1$.
For $d=1$, one has $M=n$ and
$D(z)=(1-g_1^2z^2)^{-1/2}$. If $M$ is even, the single relation
$g_1D_M$ is a nonzero rational multiple of $g_1^{M+1}$; if $M$ is
odd, the single relation $D_{M+1}$ is such a multiple instead. Hence
\[
 T_{n,1}=\Q[g_1]/(g_1^{n+1}),\qquad \B_{n,1}=T_{n,1},
\]
because the coefficient of $z$ in $GD$ is $g_1$.
This has Hilbert series $1+q+\cdots+q^n=H_{n,1}$.

For $d\ge2$, Theorem~\ref{lg:thm:colon} and
Lemma~\ref{lg:lem:cancellation} give the coefficientwise lower bound
\begin{equation}\label{lg:eq:parameter-lower}
 \Hilb(\B_{n,d};q)\ge
 q^{\binom d2}\gb{n+1}{d}+\Hilb(\B_{n,d-2};q).
\end{equation}
The appropriate Gaussian Pascal identity gives
\begin{equation}\label{lg:eq:H-recursion}
 \begin{aligned}
 H_{n,d}-H_{n,d-2}
 &=q^{\binom d2}\left(\gb{n}{d-1}+q^d\gb{n}{d}\right)\\
 &=q^{\binom d2}\gb{n+1}{d}.
 \end{aligned}
\end{equation}
Induction in steps of two from the two base cases, with $n$ fixed,
proves $\Hilb(\B_{n,d};q)\ge H_{n,d}$.

On the other hand, Lemmas~\ref{lg:lem:B-description} and~\ref{lg:lem:positive}
give a surjection $R_n/\J_{n,d}\to\B_{n,d}$ and the independent bounds
\[
 \Hilb(\B_{n,d};q)\le\Hilb(R_n/\J_{n,d};q)\le H_{n,d}.
\]
All inequalities must be equalities degree by degree. The surjection
is therefore an isomorphism and its kernel is zero, proving both
assertions.
\end{proof}

\begin{example}\label{lg:ex:parameter}
For $n=3,m=1$, one has $d=2,M=2$ and
\[
 D_2=\frac{g_1^2-2g_2}{2},\qquad
 D_4=\frac38(g_1^2-2g_2)^2-\frac12g_2^2,
 \qquad T_{3,2}=\Q[g_1,g_2]/(D_4,g_2D_2).
\]
Here $\rho_2=g_1$ and the transition matrix in the proof of
Theorem~\ref{lg:thm:colon} is
\[
 A=\begin{pmatrix}0&-1/4\\g_1/4&1/4\end{pmatrix},
 \qquad\det A=g_1/16.
\]
The parameter ring has dimension $8$, whereas its coefficient image has
Hilbert series
$H_{3,2}=1+q+q^2+2q^3+q^4+q^5$ and dimension $7$.
Hence $\B_{3,2}$ is a proper subalgebra of $T_{3,2}$.
\end{example}

\subsection{The full ideal flag and complete-intersection layers}
The two coefficient windows determine the full ideal flag.
Cancellation identifies the layers between indices $d$ and $d-2$.

\begin{theorem}[The Lagrangian ideal flag]\label{lg:thm:layers}
The elements $Z_\lambda$, with $\lambda$ strict and $\lambda_1\le n$,
form a graded basis of $R_n$. For every $0\le d\le n$,
\begin{equation}\label{lg:eq:adapted-flag}
 \J_{n,d}=\Span_\Q\{Z_\lambda:
        \lambda_1\le n,\ \length(\lambda)>d\}.
\end{equation}
In particular $\J_{n,0}=(R_n)_{>0}$, $\J_{n,n}=0$, and
$\J_{n,n-1}=(R_n)_{\binom{n+1}{2}}$.
For $2\le d\le n$, put $M=n-d+1$ and use the ideals
$\aideal_{M,d}$ and $\bideal_{M,d}$ of
Theorem~\ref{lg:thm:colon}. There is an exact sequence of graded
$R_n$-modules
\begin{equation}\label{lg:eq:CI-layer}
 0\longrightarrow(S_d/\bideal_{M,d})(-\tbinom d2)
 \xrightarrow{\ \iota_d\ } R_n/\J_{n,d}
 \longrightarrow R_n/\J_{n,d-2}\longrightarrow0.
\end{equation}
Under the embedding $R_n/\J_{n,d}\hookrightarrow T_{n,d}$, the first
map sends $\overline f$ to $\rho_df$; the last map is the natural
quotient map. The $R_n$-action on its left term is induced by
$R_n\to T_{n,d}\to S_d/\bideal_{M,d}$. Consequently
\begin{equation}\label{lg:eq:CI-layer-Hilb}
 \Hilb(\J_{n,d-2}/\J_{n,d};q)
  =q^{\binom d2}\gb{n+1}{d}.
\end{equation}
The images of the $Z_\lambda$ with
$\lambda_1\le n$ and $\length(\lambda)\in\{d-1,d\}$ form a
basis of this layer.
\end{theorem}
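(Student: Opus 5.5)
The plan mirrors the ordinary argument of Theorems~\ref{gp:adapted-basis} and~\ref{gp:layer-sequence}, with the Lagrangian inputs substituted throughout.

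\emph{Step 1: the basis and the ideal flag.} The basis statement is already contained in the proof of Lemma~\ref{lg:lem:positive}: positive compression (Lemma~\ref{common:positive}) applied to $\Theta$ on the orthogonal $Q_\lambda$-span with $\lambda_1\le n$ shows that the $Z_\lambda$ form a basis of $R_n$.
\begin{itemize}
\item The subfamily with $\length(\lambda)>d$ lies in $\J_{n,d}$ by definition and is independent.
\item Its generating function is $\Hilb(R_n)-H_{n,d}$.
\item Theorem~\ref{lg:thm:parameter-image} gives $\Hilb(R_n/\J_{n,d})=H_{n,d}$, so the dimensions agree in every weight and \eqref{lg:eq:adapted-flag} follows.
\end{itemize}
The endpoint cases then follow directly:
\begin{itemize}
\item For $d=0$, the family consists of all nonempty strict $\lambda$, which span $(R_n)_{>0}$.
\item For $d=n$, no strict $\lambda$ with $\lambda_1\le n$ has length $>n$, so $\J_{n,n}=0$.
\item For $d=n-1$, only the staircase $(n,\dots,1)$ remains. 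It has weight $\binom{n+1}{2}$, which is the top weight, where $R_n$ is one-dimensional.
\end{itemize}

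\emph{Step 2: the layer sequence.} Fix $2\le d\le n$ and $M=n-d+1$. Use the embedding $R_n/\J_{n,d}\cong\B_{n,d}\subseteq T_{n,d}$.
\begin{itemize}
\item By Lemma~\ref{lg:lem:B-description}, $\rho_dT_{n,d}\subseteq\B_{n,d}$.
\item By Theorem~\ref{lg:thm:colon}, $\aideal_{M,d}:\rho_d=\bideal_{M,d}$. Hence multiplication by $\rho_d$ identifies $(S_d/\bideal_{M,d})(-\binom d2)$ with $\rho_dT_{n,d}$, which has Hilbert series $q^{\binom d2}\gb{M+d}{d}=q^{\binom d2}\gb{n+1}{d}$.
\item This identification is $R_n$-linear for the action through $R_n\to T_{n,d}\to S_d/\bideal_{M,d}$. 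The action descends because $\bideal_{M,d}\supseteq\aideal_{M,d}$.
\end{itemize}
Next, Lemma~\ref{lg:lem:cancellation} gives a surjection $\B_{n,d}/\rho_dT_{n,d}\to\B_{n,d-2}$. Its source has series $H_{n,d}-q^{\binom d2}\gb{n+1}{d}$, which equals $H_{n,d-2}=\Hilb(\B_{n,d-2})$ by \eqref{lg:eq:H-recursion} and Theorem~\ref{lg:thm:parameter-image}. The surjection is therefore an isomorphism in every weight, and its kernel is exactly $\rho_dT_{n,d}$.

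\emph{Step 3: identifying the maps, the main point needing care.} It remains to check that the composite $R_n\to\B_{n,d}\to\B_{n,d-2}$ coincides with $\phi_{n,d-2}$. Under the specialization \eqref{lg:eq:pair-specialization}, $GD$ becomes $G^\circ D^\circ$, independently of $t$. Hence the image of $e_i$ is $[z^i](G^\circ D^\circ)$ in the constant copy of $T'$, which is the definition of $\phi_{n,d-2}$.

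The composite therefore has kernel $\J_{n,d-2}$, by Theorem~\ref{lg:thm:parameter-image} applied at $d-2$. This also covers $d=2$, where $T'=\Q$. So the last map in \eqref{lg:eq:CI-layer} is the natural quotient, and transporting $0\to\rho_dT_{n,d}\to\B_{n,d}\to\B_{n,d-2}\to0$ through the isomorphisms yields \eqref{lg:eq:CI-layer} and \eqref{lg:eq:CI-layer-Hilb}.

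Finally, for the layer basis, take differences of the nested spanning sets in \eqref{lg:eq:adapted-flag}. Since $\J_{n,d}\subseteq\J_{n,d-2}$ are spanned by subsets of one basis, the $Z_\lambda$ with $\length(\lambda)\in\{d-1,d\}$ give a basis of $\J_{n,d-2}/\J_{n,d}$.
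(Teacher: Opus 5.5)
Your proposal is correct and follows the paper's proof essentially step for step. Positive compression together with the exact series $H_{n,d}$ from Theorem~\ref{lg:thm:parameter-image} gives the adapted flag and its endpoint cases; the conductor inclusion, the colon identity of Theorem~\ref{lg:thm:colon}, and the cancellation surjection of Lemma~\ref{lg:lem:cancellation} then give the layer sequence, once the Hilbert-series count forces that surjection to be an isomorphism and $GD\mapsto G^\circ D^\circ$ identifies the composite with $\phi_{n,d-2}$.
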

\begin{proof}
Apply Lemma~\ref{common:positive} in each homogeneous component of
$\Gamma$, using its orthogonal Schur $Q$ basis, the positive operator
$\Theta$, and the projection retaining $\lambda_1\le n$ from
Lemma~\ref{lg:lem:ambient-basis}. It gives the full transformed basis.
For every $0\le d\le n$, the displayed vectors of length greater than $d$
lie in $\J_{n,d}$ by definition. Their degree-by-degree number is
$\Hilb(R_n)-H_{n,d}$, which is exactly the Hilbert series of this
ideal by Theorem~\ref{lg:thm:parameter-image}. This proves
\eqref{lg:eq:adapted-flag}. The empty partition gives the assertion at $d=0$, and there
is no retained partition of length greater than $n$, giving $d=n$.
The only retained strict partition of length $n$ is
$(n,n-1,\ldots,1)$, giving the top-degree assertion.

The conductor inclusion puts $\rho_dT_{n,d}$ inside
$\B_{n,d}=\im(\phi_{n,d})$. The colon identity identifies this
principal ideal with
$(S_d/\bideal_{M,d})(-\binom d2)$, also with the stated $R_n$-action.
Lemma~\ref{lg:lem:cancellation} gives a surjection
\[
 \B_{n,d}/\rho_dT_{n,d}\longrightarrow\B_{n,d-2}.
\]
Theorem~\ref{lg:thm:parameter-image} and the colon Hilbert series give
\[
 \Hilb(\B_{n,d})-\Hilb(\rho_dT_{n,d})
 =H_{n,d}-q^{\binom d2}\gb{n+1}{d}
 =H_{n,d-2}=\Hilb(\B_{n,d-2}).
\]
The middle equality follows by adding the length
$d-1$ and length $d$ terms of~\eqref{lg:eq:H} and applying Gaussian
Pascal. The surjection is therefore an isomorphism degree by degree.
Under the cancellation substitution,
\[
 GD=(1-t^2z^2)G^\circ\frac{D^\circ}{1-t^2z^2}=G^\circ D^\circ.
\]
Thus the composite map from $R_n$ to the smaller image is precisely
$\phi_{n,d-2}$, whose kernel is $\J_{n,d-2}$.
Transporting the resulting short exact sequence through the two image
isomorphisms gives~\eqref{lg:eq:CI-layer}. The difference of the nested
subsets in~\eqref{lg:eq:adapted-flag} gives the asserted layer basis.
\end{proof}

\begin{proposition}[Finite conductors and the terminal exception]
\label{lg:prop:finite-conductor}
With $\rho_0=\rho_1=1$, the full conductor of the finite parameter
image is
\begin{equation}\label{lg:eq:finite-conductor}
 \{u\in T_{n,d}:uT_{n,d}\subseteq\B_{n,d}\}
 =\begin{cases}
   \rho_dT_{n,d},&0\le d<n,\\
   T_{n,n},&d=n.
  \end{cases}
\end{equation}
For $n=d\ge2$, the ideal $\rho_dT_{n,d}$ is strictly smaller than
the conductor.
\end{proposition}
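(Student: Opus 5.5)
The plan mirrors Proposition~\ref{gp:finite-conductor}, with an opposite pair in place of a common root, and the terminal case $d=n$ handled separately.

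\emph{The easy inclusion and the cases $d\le1$.} For every $d$, $\rho_dT_{n,d}$ is an ideal of $T_{n,d}$. By Lemma~\ref{lg:lem:B-description} it lies in $\B_{n,d}$, so it is contained in the conductor. For $d=0$ one has $T_{n,0}=\Q=\B_{n,0}$. For $d=1$, the proof of Theorem~\ref{lg:thm:parameter-image} gives $\B_{n,1}=T_{n,1}$. Since $\rho_0=\rho_1=1$, both cases of \eqref{lg:eq:finite-conductor} hold, including $n=d=1$.

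\emph{The case $2\le d<n$.} Let $u$ lie in the conductor. Then $u\in\B_{n,d}$ and $ug_2\in\B_{n,d}$. Apply the specialization $\sigma_{n,d}:T_{n,d}\to\widetilde T=T'[t]/(f_{r+1})$ of Lemma~\ref{lg:lem:cancellation}. By the square \eqref{lg:eq:finite-cancellation-square}, both $\sigma(u)=u'$ and $\sigma(ug_2)$ lie in the constant copy of $T'$. The substitution $G=(1-t^2z^2)G^\circ$ gives $\sigma(g_2)=g_2^\circ-t^2$. Hence $u'g_2^\circ-u't^2\in T'$.

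The target is free over $T'$ with basis $1,t,\ldots,t^{2r+1}$, where $r=\lfloor M/2\rfloor$. Since $d<n$, we have $M=n-d+1\ge2$, so $r\ge1$ and $t^2$ is a basis element distinct from $1$. The coefficient of $t^2$ therefore forces $u'=0$, with no reducedness assumption on $T'$. So $u\in\ker\delta_{n,d}$. Theorem~\ref{lg:thm:layers} identifies this kernel with $\rho_dT_{n,d}$, via the exact sequence \eqref{lg:eq:CI-layer} transported to the parameter images.

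\emph{The terminal case $d=n$.} Here $M=1$. Theorem~\ref{lg:thm:parameter-image} gives $\Hilb(\B_{n,n})=H_{n,n}=\prod_{i=1}^n(1+q^i)=\Hilb(R_n)$. The window $\aideal_{1,n}=(D_2,D_4,\ldots,D_{2n})$ is a regular sequence of degrees $2,4,\ldots,2n$ in $n$ variables of degrees $1,\ldots,n$. Hence
\[
\Hilb(T_{n,n})=\prod_{i=1}^n\frac{1-q^{2i}}{1-q^i}=\prod_{i=1}^n(1+q^i).
\]
The two series agree, so $\B_{n,n}=T_{n,n}$ and the conductor is all of $T_{n,n}$.

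It remains to show that $\rho_nT_{n,n}\ne T_{n,n}$ for $n\ge2$. The element $\rho_n$ is homogeneous of positive degree $\binom n2$. It therefore lies in the maximal homogeneous ideal, so $1\notin\rho_nT_{n,n}$. A dimension check confirms this: by Theorem~\ref{lg:thm:colon}, $\rho_nT_{n,n}$ has Hilbert series $q^{\binom n2}\gb{n+1}{n}$, which is strictly smaller than $\Hilb(T_{n,n})$.

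\emph{Main obstacle.} The delicate point is the use of the basis element $t^2$. When $d<n$, the inequality $M\ge2$ guarantees $2r+1\ge2$. At $d=n$ the monic polynomial $f_{r+1}$ has degree $2$, so $t^2$ is no longer independent of $1$ and the argument correctly breaks down. This is exactly the endpoint exception.
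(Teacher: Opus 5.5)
Your proposal is correct and follows the paper's proof. The $d\le1$ cases come from $\B_{n,d}=T_{n,d}$, and the $2\le d<n$ case from the $t^2$-coefficient of $\sigma(ug_2)$ in the free extension, combined with the kernel identification of Theorem~\ref{lg:thm:layers}; properness of $\rho_nT_{n,n}$ comes from the positive weight of $\rho_n$. The only variation is at $d=n$: you obtain $\B_{n,n}=T_{n,n}$ by comparing Hilbert series. The paper argues more directly that the window $(D_2,\ldots,D_{2n})$ forces $D=1$ by Lemma~\ref{lg:lem:finite}, so the coefficients of $GD=G$ already generate $T_{n,n}$.
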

\begin{proof}
The cases $d=0,1$ follow from $\B_{n,d}=T_{n,d}$.
Suppose $2\le d<n$. Put $M=n-d+1\ge2$ and
$r=\lfloor M/2\rfloor\ge1$.
The cancellation map of Lemma~\ref{lg:lem:cancellation} takes
$T_{n,d}$ to $\widetilde T=T'[t]/(f_{r+1}(t))$, which is free
over $T'=T_{n,d-2}$ with basis $1,t,\ldots,t^{2r+1}$.
Its restriction to $\B_{n,d}$ has image
$\B_{n,d-2}\subseteq T'$ and kernel $\rho_dT_{n,d}$ by
Theorem~\ref{lg:thm:layers}.

If $u$ is in the conductor, both $u$ and $ug_2$ are in
$\B_{n,d}$. Writing the cancellation image of $u$ as $u'\in T'$,
the image of $ug_2$ is $u'(g_2^\circ-t^2)$, where
$g_2^\circ=0$ when $d-2<2$. It must be constant in $t$.
The independence of $1,t^2$ over $T'$ forces $u'=0$.
This proves containment in $\rho_dT_{n,d}$; the reverse inclusion
holds because this $T_{n,d}$-ideal lies in $\B_{n,d}$.

Finally, if $d=n$, then $M=1$ and the defining window kills
$D_2,D_4,\ldots,D_{2d}$. Lemma~\ref{lg:lem:finite} gives $D=1$
in $T_{n,n}[[z]]$. The coefficients of $GD=G$ therefore generate
all of $T_{n,n}$, so $\B_{n,n}=T_{n,n}$ and its conductor is the
whole ring. For $n\ge2$, $\rho_n$ has positive weight, making
$\rho_nT_{n,n}$ a proper ideal. Correspondingly, the monic
extension in this case has degree two and does not make $t^2$
independent of the constant summand.
\end{proof}

\begin{remark}[The scope of the full flag]\label{lg:rem:parity-boundary}
The flag formula holds for both parities of $n-d$ and includes
both endpoints. The complete-intersection sequence connects $d$
to $d-2$; it gives no such description of the one-step quotient
$\J_{n,d-1}/\J_{n,d}$. Its left term is the colon quotient
specified in~\eqref{lg:eq:CI-layer}. Equality of Hilbert series
alone would not identify it with a Grassmannian cohomology ring.

The restriction to odd $m$ in the subsequent tangency and containment
statements is different and cannot be removed. For example, in $R_3$,
\[
 Z_{(2,1)}=2e_1^3-4e_3\in\J_{3,1},\qquad
 Z_{(2,1)}\notin\Lf_2=\Lf_1.
\]
Indeed $e_1^2=2e_2$, and the squarefree basis shows that
$e_1^3=2e_1e_2$ and $e_3$ are independent in degree three.
The degree-three component of $\Lf_1$ is the line spanned by $e_1^3$.
Thus a full auxiliary ideal flag does not imply
$\J_{n,n-m}\subseteq\Lf_m$ for even $m$.
\end{remark}
\begin{example}[Equal Hilbert series and different coefficient algebras]
\label{lg:ex:non-Grassmannian-coefficients}
The coefficient ring of a layer need not be the ordinary Grassmannian
ring suggested by its Gaussian Hilbert series. For $M=d=2$, put
$x=g_1$ and $y=g_2$, of weights $1$ and $2$. The coefficient ring
and the Grassmannian ring have presentations
\[
 \begin{aligned}
 C=S_2/\bideal_{2,2}
   &=\Q[x,y]/(3x^3-8xy,\ x^2y-2y^2),\\
 H&=\Q[x,y]/(x^3-2xy,\ x^2y-y^2).
 \end{aligned}
\]
Here $H\simeq H^{2*}(\Gr(2,4);\Q)$ and
$\Hilb(C;q)=\Hilb(H;q)=\gb42$.
The rings $C$ and $H$ are not isomorphic even as ungraded
$\Q$-algebras, but
$C\otimes_\Q\Q(\sqrt3)\simeq H\otimes_\Q\Q(\sqrt3)$
as graded algebras. Appendix~\ref{lg:appendix:coefficients}
contains the calculation and an explicit isomorphism after this
field extension.
\end{example}

\section{Lagrangian tangency and the filtration formula}
\label{lg:sec:rigidity}

\subsection{The derivative inclusion}
For odd $m$, let
\[
 P_m=\Q[c_1,c_3,\ldots,c_m],\quad \deg c_i=i,
 \qquad K_m=\ker(P_m\longrightarrow R_n),\quad c_i\longmapsto e_i.
\]
Its image is $\Lf_m$: the coefficient of $z^{2r}$ in
$E(z)E(-z)=1$ expresses $2e_{2r}$ as a polynomial in the
preceding generators.
When $m=1$, interpret $P_{m-2}=\Q$.
Define homogeneous vector spaces
\begin{equation}\label{lg:eq:W-Q}
 W_m=\partial_{c_m}K_m,\qquad
 L_m=\{F\in P_m:F(e_1,e_3,\ldots,e_m)\in\J_{n,n-m}\}.
\end{equation}
Here $L_m$ is an ideal; no ideal structure is assumed for $W_m$.
The full kernel $K_m$ is used, with no truncation by source weight.

\begin{lemma}[Tangency]\label{lg:lem:tangency}
For every odd $m$ with $1\le m\le n$, one has $W_m\subseteq L_m$.
\end{lemma}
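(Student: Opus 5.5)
We follow the ordinary proof of Lemma~\ref{tc:tangent-inclusion} and apply the first-order witness Lemma~\ref{common:witness}. The ambient generators are the odd classes $e_1,e_3,\ldots$ with weights $w_i=i$. The ideal is $J=\J_{n,d}$ with $d=n-m$, and the injection is $\phi_{n,d}:R_n/\J_{n,d}\hookrightarrow T_{n,d}$ from Theorem~\ref{lg:thm:parameter-image}. Since $m$ is odd, $M=n-d+1=m+1$ is even. By Lemma~\ref{lg:lem:finite}, $D$ is even with $D_v=0$ for $v>M$, and $\deg_z(GD)\le n$ in $T_{n,d}$.

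We give $\varepsilon$ weight $m$, work in $T_{n,d}[\varepsilon]/(\varepsilon^2)$, and define
\[
 E_\varepsilon(z)=G(z)\bigl(D(z)+\varepsilon z^m\bigr).
\]
Because $m$ is odd and $D$ is even, we obtain
\[
 E_\varepsilon(z)E_\varepsilon(-z)=G(z)G(-z)\bigl(D(z)^2+\varepsilon z^m(D(-z)-D(z))\cdot(-1)\bigr)+O(\varepsilon^2).
\]
To see this directly, note $(D(z)+\varepsilon z^m)(D(-z)-\varepsilon z^m)=D(z)D(-z)+\varepsilon z^m(D(-z)-D(z))$, and the $\varepsilon$-term vanishes since $D$ is even. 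Hence $E_\varepsilon(z)E_\varepsilon(-z)=G(z)G(-z)D(z)^2=1$. This is exactly where oddness of $m$ enters, which matches Remark~\ref{lg:rem:parity-boundary}.

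The degree bound is the step I expect to need care. We have $\deg_z(Gz^m)=m+d=n$, and $\deg_z(GD)\le n$ already holds. Therefore $E_\varepsilon$ has degree at most $n$, and its coefficients define a homomorphism $\widetilde\phi:R_n\to T_{n,d}[\varepsilon]/(\varepsilon^2)$ reducing to $\phi_{n,d}$. Since $G(0)=1$, the perturbation $\varepsilon Gz^m$ leaves the coefficients below $z^m$ unchanged and adds exactly $\varepsilon$ at $z^m$. Thus $\widetilde\phi(e_i)=\phi(e_i)$ for $i<m$ and $\widetilde\phi(e_m)=\phi(e_m)+\varepsilon$.

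It remains to match the generator setup of Lemma~\ref{common:witness}, whose generators here are the odd $e_1,e_3,\ldots,e_m$ only. For odd $i<m$ we already have $\widetilde\phi(e_i)=\phi(\bar e_i)$. Lemma~\ref{common:witness} then gives $\partial_{c_m}K_m\subseteq L_m$ directly. The argument is the coefficient of $\varepsilon$ in $\widetilde\phi(F(e_1,\ldots,e_m))=0$ for $F\in K_m$, combined with injectivity of $\phi_{n,d}$ on $R_n/\J_{n,d}$. The argument needs no reducedness of $T_{n,d}$. The edge case $d=0$ (that is, $m=n$) is trivial: there $\J_{n,0}=(R_n)_{>0}$ and $T=\Q$, and the same construction with $G=D=1$ works.
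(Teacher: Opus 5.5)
Your proposal is correct and is essentially the paper's proof: you use the same deformation $E_\varepsilon(z)=G(z)(D(z)+\varepsilon z^m)$ over $T_{n,d}[\varepsilon]/(\varepsilon^2)$, the evenness of $D$ together with the oddness of $m$, the degree bound from $\deg_z(GD)\le n$ and $d+m=n$, and Lemma~\ref{common:witness} with the injection of Theorem~\ref{lg:thm:parameter-image}. The stray factor $\cdot(-1)$ in your first display flips the sign of the $\varepsilon$-term, but this is harmless because that term vanishes, as your direct computation shows.
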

\begin{proof}
Put $d=n-m$ and give $\varepsilon$ weight $m$. In the commuting
extension $T_{n,d}[\varepsilon]/(\varepsilon^2)$, set
\begin{equation}\label{lg:eq:tangent-series}
 E_\varepsilon(z)=G(z)\bigl(D(z)+\varepsilon z^m\bigr).
\end{equation}
Since $D$ is even and $m$ is odd,
\[
 E_\varepsilon(z)E_\varepsilon(-z)
 =G(z)G(-z)(D(z)+\varepsilon z^m)(D(z)-\varepsilon z^m)=1.
\]
Its degree is at most $n$, because $\deg_z(GD)\le n$ and $d+m=n$.
Its coefficients therefore define a homomorphism from $R_n$, reducing
to $\phi_{n,d}$. Coefficients below $m$ are unchanged, and the coefficient
at $m$ increases by $\varepsilon$. Apply Lemma~\ref{common:witness}
using the injection from Theorem~\ref{lg:thm:parameter-image}.
\end{proof}

All sums over odd indices in this section are restricted to $1\le i\le n$.
For odd $m$ with $1\le m\le n$, write
$\Delta_m=\Hilb(\Lf_m/\Lf_{m-1};q)$ and set
\begin{equation}\label{lg:eq:epsilon-def}
 \epsilon_m=\Delta_m-q^m\left(H_{n,n-m}
                 -\sum_{\substack{i>m\\i\text{ odd},\ i\le n}}\Delta_i\right).
\end{equation}
Apply~\eqref{common:defect} to the successive odd generators
indexed by $m=2j-1$: the ideal at weight $m$ is $\J_{n,n-m}$,
with preimage $L_m$. Lemma~\ref{lg:lem:tangency} gives
\begin{equation}\label{lg:eq:defect-decomposition}
 q^{-m}\epsilon_m=\Hilb(L_m/W_m;q)
 +\Hilb\bigl(\J_{n,n-m}/(\J_{n,n-m}\cap \Lf_m);q\bigr).
\end{equation}
Both terms are Hilbert series of graded quotient spaces, so
$\epsilon_m$ has nonnegative coefficients.

\subsection{The triangular Gaussian identity}
Set $N=\binom{n+1}{2}$ and, for odd $m$, put
\begin{equation}\label{lg:eq:F-target}
 F_m(q)=\sum_{j=0}^{n-m}q^{\binom{j+1}{2}}\gb{m+j}{m},
 \qquad T_m(q)=q^m F_m(q).
\end{equation}
These are the summands of~\cite[Conjecture~1.4]{Polymath}.
The total identity~\eqref{lg:eq:target-total} is
\cite[Proposition~5.2]{Polymath}; we derive it together with
the triangular recursion.

\begin{lemma}\label{lg:lem:targets}
For every odd $m$ with $1\le m\le n$,
\begin{equation}\label{lg:eq:triangular-target}
 T_m=q^m\left(H_{n,n-m}-
                   \sum_{\substack{i>m\\i\text{ odd}}}T_i\right).
\end{equation}
Furthermore,
\begin{equation}\label{lg:eq:target-total}
 \sum_{\substack{1\le m\le n\\m\text{ odd}}}T_m
       =\prod_{i=1}^n(1+q^i)-1.
\end{equation}
\end{lemma}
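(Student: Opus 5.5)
Write $F_m=q^{-m}T_m$. The plan follows the ordinary Lemma~\ref{tc:target-recursion}: prove a two-step difference identity for consecutive odd indices, match it with the layer increments of~\eqref{lg:eq:H-recursion}, close the recursion by descending induction, and then deduce the total from the case $m=1$.

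The key identity is, for odd $m$ with $m+2\le n$,
\[
 F_m-(1-q^{m+1})(1+q^{m+2})F_{m+2}\;\overset{?}{=}\;H_{n,n-m}-H_{n,n-m-2}=q^{\binom{n-m}{2}}\gb{n+1}{n-m}.
\]
Indeed, the triangular relation~\eqref{lg:eq:triangular-target} at $m$ and at $m+2$, after subtraction, is equivalent to
\[
 F_m-F_{m+2}+T_{m+2}=H_{n,n-m}-H_{n,n-m-2}.
\]
This is the form I will verify. Its right side is given by~\eqref{lg:eq:H-recursion} with $d=n-m$.

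\textbf{Base case.} For the largest odd $m\le n$ the recursion reduces to $F_m=H_{n,n-m}$.
\begin{itemize}
\item If $m=n$, both sides equal $1$.
\item If $m=n-1$, then $F_m=1+q\gb{n}{n-1}=1+q\gb{n}{1}$, which is $H_{n,1}$.
\end{itemize}

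\textbf{Inductive step.} Write $d=n-m$ and expand in $j$. For $F_m-F_{m+2}$, apply Gaussian Pascal twice,
\[
 \gb{m+j}{m}=\gb{m+j+1}{m+1}-q^{m+1}\gb{m+j}{m+1}
\]
and its analogue, to convert $\gb{m+j}{m}$ into terms of the form $\gb{m+2+j'}{m+2}$. I will then reindex the triangular factors $q^{\binom{j+1}{2}}$ and telescope, as in~\eqref{tc:finite-gaussian-telescope}. The telescoping should leave boundary terms only at $j=d-1$ and $j=d$. These have the shape $q^{\binom{d}{2}}\bigl(\gb{n-1}{m}+q^{d}\gb{n}{m}\bigr)$, which Gaussian Pascal combines into $q^{\binom d2}\gb{n}{m}\cdot(\cdots)$ and ultimately into $q^{\binom d2}\gb{n+1}{d}$.

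\textbf{Main obstacle.} This telescoping is the step I expect to be hardest, because the factors $q^{\binom{j+1}{2}}$ are not geometric in $j$. If direct bookkeeping becomes messy, my first fallback is a generating-function check in $n$. Both sides have the form $\sum_j q^{\binom{j+1}{2}}(\cdot)$, and the identity could be compared coefficientwise in the rectangle-partition interpretation. There, $q^{\binom{j+1}{2}}\gb{m+j}{m}$ counts strict-staircase-plus-box configurations, so a bijective proof removing one column pair may be cleaner than the algebra.

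\textbf{Total identity.} Once the triangular recursion holds for all odd $m$, take $m=1$:
\[
 \sum_{\text{odd }i}T_i=H_{n,n-1}+(1-q^{-1})T_1 .
\]
Here $H_{n,n-1}=\prod_{i}(1+q^i)-q^{N}$ with $N=\binom{n+1}{2}$, since only the full staircase is excluded. Also
\[
 F_1=\sum_{j=0}^{n-1}q^{\binom{j+1}{2}}\frac{1-q^{j+1}}{1-q},
\]
which telescopes to $(1-q^{N})/(1-q)$ because $\binom{j+1}{2}+j+1=\binom{j+2}{2}$. Hence $(1-q^{-1})T_1=(q-1)F_1=q^{N}-1$, which gives~\eqref{lg:eq:target-total}.
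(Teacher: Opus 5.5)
Your outline matches the paper's. You reduce \eqref{lg:eq:triangular-target} to a two-step identity between $F_m$ and $F_{m+2}$ for $d=n-m\ge2$, check the terminal cases $d=0,1$, and run descending induction over odd $m$. You then obtain \eqref{lg:eq:target-total} from $m=1$ using $F_1=(1-q^N)/(1-q)$. Your base cases, your reduction $F_m-F_{m+2}+T_{m+2}=H_{n,d}-H_{n,d-2}$, and your total computation are correct.

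Your first display has the wrong factor. Since $T_{m+2}=q^{m+2}F_{m+2}$, the form you derive is $F_m-(1-q^{m+2})F_{m+2}$, not $F_m-(1-q^{m+1})(1+q^{m+2})F_{m+2}$. For $n=3$, $m=1$, the latter gives $q+2q^2+q^4+2q^5$ rather than $q\gb{4}{2}=q+q^2+2q^3+q^4+q^5$.

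More seriously, there is a genuine gap: the inductive step, which is the whole content of the lemma, is not carried out. The telescoping is only hoped for. The boundary terms you predict, $q^{\binom{d}{2}}\bigl(\gb{n-1}{m}+q^d\gb{n}{m}\bigr)$, do not produce the target. Pascal writes $q^{\binom{d}{2}}\gb{n+1}{d}$ as $q^{\binom{d}{2}}\bigl(\gb{n}{m+1}+q^d\gb{n}{m}\bigr)$, and $\gb{n-1}{m}\ne\gb{n}{m+1}$. The bijective fallback is not specified.

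The missing idea is to telescope an auxiliary sequence instead of rewriting the summands of $F_m$ as those of $F_{m+2}$. Fix $m$ and put $B_j=q^{\binom{j}{2}}\gb{m+j+1}{j}$. Combine Pascal in the form $\gb{m+j+1}{j}=q^j\gb{m+j}{j}+\gb{m+j}{j-1}$ with $\binom{j}{2}+j=\binom{j+1}{2}$ and $\binom{j}{2}-\binom{j-1}{2}=j-1$. This gives
\[
 B_j-B_{j-1}=q^{\binom{j+1}{2}}\gb{m+j}{j}
   -q^{\binom{j-1}{2}}(1-q^{j-1})\gb{m+j}{j-1}\qquad(j\ge1).
\]
The ratio identity $(1-q^{j-1})\gb{m+j}{j-1}=(1-q^{m+2})\gb{m+j}{j-2}$ then rewrites the second term.

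This is how the non-geometric exponents are handled. The linear gap $j-1$ between consecutive triangular numbers is absorbed by the factor $1-q^{j-1}$, which the ratio identity converts into the constant $1-q^{m+2}$.

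Sum over $1\le j\le d$, use $B_0=1$, and shift $j\mapsto j+2$ in the second sum; its $j=1$ term vanishes. This gives $B_d=F_m-(1-q^{m+2})F_{m+2}$, while $B_d=q^{\binom{d}{2}}\gb{n+1}{d}$ by definition. This is the paper's argument; with it in place, the rest of your proof goes through as written.
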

\begin{proof}
Write $d=n-m$. If $d\ge2$, we first establish
\begin{equation}\label{lg:eq:F-recursion}
 F_m-(1-q^{m+2})F_{m+2}
        =q^{\binom d2}\gb{n+1}{d}=H_{n,d}-H_{n,d-2}.
\end{equation}
For fixed $m$ let $B_j=q^{\binom j2}\gb{m+j+1}{j}$.
The two elementary Gaussian identities
\[
 \gb{m+j+1}{j}=q^j\gb{m+j}{j}+\gb{m+j}{j-1},
 \qquad
 (1-q^{j-1})\gb{m+j}{j-1}
       =(1-q^{m+2})\gb{m+j}{j-2}
\]
give, for $j\ge2$,
\begin{equation}\label{lg:eq:telescope}
 \begin{aligned}
 B_j-B_{j-1}
 &=q^{\binom{j+1}{2}}\gb{m+j}{j}\\
 &\quad -(1-q^{m+2})q^{\binom{j-1}{2}}\gb{m+j}{j-2}.
 \end{aligned}
\end{equation}
Also $B_0=1$ and $B_1-B_0=q\gb{m+1}{1}$.
Summing~\eqref{lg:eq:telescope} up to $d$, with the second sum reindexed
by $j-2$, proves the first equality in~\eqref{lg:eq:F-recursion}; the
second is~\eqref{lg:eq:H-recursion}.

At the final odd value of $m$, the number $d$ is $0$ or $1$.
For $d=0$, $F_m=1=H_{n,0}$; for $d=1$,
$F_m=1+q\gb{n}{1}=H_{n,1}$.
Descending induction through the odd values of $m$, using
\eqref{lg:eq:F-recursion}, now gives
\[
 F_m=H_{n,n-m}-\sum_{i>m,\,i\text{ odd}}T_i,
\]
which is~\eqref{lg:eq:triangular-target}.

To evaluate the total, the terms of
\[
 F_1=\sum_{j=0}^{n-1}q^{\binom{j+1}{2}}[j+1]_q
\]
with $[a]_q=\gb{a}{1}$ cover disjoint consecutive intervals of degrees from $0$ to $N-1$.
Thus $T_1=q(1-q^N)/(1-q)$.
There is only one strict partition of length $n$ with largest part at
most $n$, namely $(n,n-1,\ldots,1)$, so
$H_{n,n-1}=\prod_{i=1}^n(1+q^i)-q^N$.
The case $m=1$ of~\eqref{lg:eq:triangular-target} therefore yields
\[
 \sum_{m\text{ odd}}T_m
 =H_{n,n-1}+(1-q^{-1})T_1
 =\prod_{i=1}^n(1+q^i)-1.
\]
This proves~\eqref{lg:eq:target-total}.
\end{proof}

\subsection{Completion of the Lagrangian proof}
\begin{proof}[Proof of Theorem~\ref{lg:thm:main}]
Apply Theorem~\ref{common:rigidity} to the successive odd generators
of $R_n$. At an odd index $m$, the preceding algebra is $\Lf_{m-1}$,
since the even generators are redundant.
Theorem~\ref{lg:thm:parameter-image} computes the required quotients,
Lemma~\ref{lg:lem:tangency} verifies the derivative inclusions, and
Lemma~\ref{lg:lem:targets} verifies~\eqref{common:targets}.
Thus $\Delta_m=T_m$ for every odd $m$; even increments are zero.
Summing proves~\eqref{lg:eq:main}.
\end{proof}

\section{Consequences for Lagrangian Grassmannians}\label{lg:sec:consequences}

Theorem~\ref{lg:thm:main} gives the following dimensions and
stabilization thresholds. The vanishing defects also give ideal
containments and differentiation sequences.

\begin{corollary}[Dimensions and sharp high-degree stabilization]
\label{lg:cor:dimensions}
For odd $m$ with $1\le m\le n$,
\begin{equation}\label{lg:eq:dimensions}
 \dim_\Q(\Lf_m/\Lf_{m-1})=\binom{n+1}{m+1},\qquad
 \dim_\Q \Lf_m=1+\sum_{\substack{1\le i\le m\\i\text{ odd}}}
                           \binom{n+1}{i+1}.
\end{equation}
With $N=\binom{n+1}{2}$, put $b_m=N-\binom m2$.
Then
\begin{equation}\label{lg:eq:sharp-stabilization}
 (\Lf_{m-1})_t=(R_n)_t\quad(t>b_m),\qquad
 \dim_\Q(R_n/\Lf_{m-1})_{b_m}=1.
\end{equation}
In particular, the threshold is sharp.
\end{corollary}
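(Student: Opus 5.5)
Evaluating the increment formula \eqref{lg:eq:increment-intro} of Theorem~\ref{lg:thm:main} at $q=1$ gives the dimensions. Since $\gb{a}{b}$ equals $\binom ab$ at $q=1$, we get
\[
 \dim(\Lf_m/\Lf_{m-1})=\sum_{j=0}^{n-m}\binom{m+j}{m}.
\]
This sum equals $\binom{n+1}{m+1}$ by the hockey-stick identity: write $\binom{r}{m}=\binom{r+1}{m+1}-\binom{r}{m+1}$ and telescope from $r=m$ to $r=n$. The even increments are zero. Summing over odd $i\le m$ and adding $\dim\Lf_0=1$ gives the second formula in \eqref{lg:eq:dimensions}.

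For stabilization we work in the setting of the corollary, $m$ odd, and locate the top term of $T_i=q^iF_i$ from \eqref{lg:eq:F-target}. The $j$th summand $q^{i+\binom{j+1}{2}}\gb{i+j}{i}$ has leading exponent
\[
 i+\binom{j+1}{2}+ij,
\]
with coefficient one, because a Gaussian polynomial $\gb ab$ is monic of degree $b(a-b)$. These exponents strictly increase in $j$. At $j=n-i$ the exponent is $i+\binom{n-i+1}{2}+i(n-i)$. A direct simplification turns this into $\binom{n+1}{2}-\binom{i}{2}=N-\binom i2=b_i$; we will check this arithmetic carefully, since it is where a slip could enter. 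So $T_i$ has degree $b_i$ with leading coefficient one.

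Next, $\Hilb(R_n/\Lf_{m-1};q)=\sum_{i\ge m,\ i\text{ odd}}T_i$ as graded vector spaces, because the even increments vanish and $\Lf_{m-1}=\Lf_{m-2}$. The degrees $b_i$ strictly decrease as $i$ increases, for $i\ge1$. Hence only $T_m$ reaches degree $b_m$, and it does so with coefficient one. All terms vanish above $b_m$, which proves \eqref{lg:eq:sharp-stabilization}.

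The only potential obstacles are bookkeeping. One is the exponent identity above. The other is the edge case $m=1$: there $\Lf_0=\Q$ and $b_1=N$, and the conclusion is consistent with $R_n$ having one-dimensional top weight $N$.
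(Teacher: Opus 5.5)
Your proposal is correct and follows the paper's proof essentially step for step: you evaluate the increment formula at $q=1$ and use the hockey-stick sum, then note that the top exponent $i+\binom{j+1}{2}+ij$ increases in $j$ and reaches $b_i$ at $j=n-i$, so that only $T_m$ attains degree $b_m$ in $\sum_{i\ge m,\ i\text{ odd}}T_i$. The exponent identity you flagged does hold: with $a=n-i$, one has $i+\binom{a+1}{2}+ia=\binom{n+1}{2}-\binom{i}{2}$.
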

\begin{proof}
At $q=1$, the inner sum in~\eqref{lg:eq:increment-intro} is
$\sum_{j=0}^{n-m}\binom{m+j}{m}=\binom{n+1}{m+1}$.
This proves~\eqref{lg:eq:dimensions}.
The degree of the $j$th term in the same formula is
\[
 m+\binom{j+1}{2}+mj.
\]
It is strictly increasing with $j$. At $j=n-m$ it equals
$N-\binom m2$, and its leading coefficient is $1$.
All later odd increments, with index $i>m$, have strictly smaller top
degree $N-\binom i2$. The quotient $R_n/\Lf_{m-1}$ is the sum of those
increments and the $m$th increment at the level of Hilbert series.
This proves both claims in~\eqref{lg:eq:sharp-stabilization}.
\end{proof}

\begin{corollary}[Derivative ideals and auxiliary-ideal containment]
\label{lg:cor:structural}
For odd $m$ with $1\le m\le n$, one has
\begin{equation}\label{lg:eq:structural}
 \J_{n,n-m}\subseteq \Lf_m,\qquad
 \partial_{c_m}K_m=L_m.
\end{equation}
In particular, the derivative space is an ideal of $P_m$. The elements
\[
 Z_\lambda\quad
 (\lambda\text{ strict},\ \lambda_1\le n,\ \length(\lambda)>n-m)
\]
form a vector-space basis of $\J_{n,n-m}$, and all belong to $\Lf_m$.
\end{corollary}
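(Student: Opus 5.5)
This corollary records the equality-case conclusions of Theorem~\ref{common:rigidity}, read off from the proof of Theorem~\ref{lg:thm:main}. That proof applied the tangency--rigidity principle to the odd generators $e_1,e_3,\ldots$ of $R_n$, with weights $w=m$ for odd $m$ and ideals $J_m=\J_{n,n-m}$, just as Corollary~\ref{tc:descent-as-consequence} did in the ordinary case. I will first check that the abstract setup matches the present notation. The algebra generated by the first few odd generators up to $e_m$ is $\Lf_m$, because even generators are redundant. The kernel there is $K_m$, the space $W_m=\partial_{c_m}K_m$ is the derivative image, and the preimage $Q_m$ of $J_m$ is exactly $L_m$ in~\eqref{lg:eq:W-Q}. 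The hypotheses were verified in that proof: Lemma~\ref{lg:lem:tangency} gives $W_m\subseteq L_m$, Theorem~\ref{lg:thm:parameter-image} gives $H_m=H_{n,n-m}$, and Lemma~\ref{lg:lem:targets} gives both identities in~\eqref{common:targets}.

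With these hypotheses in place, the conclusions~\eqref{common:conclusions} yield $W_m=L_m$ and $\J_{n,n-m}\subseteq\Lf_m$. Equivalently, both quotients in~\eqref{lg:eq:defect-decomposition} vanish, since every $\epsilon_m$ is zero. This is~\eqref{lg:eq:structural}. The space $W_m$ is then an ideal of $P_m$ because it equals $L_m$, which is the preimage of an ideal under an algebra map.

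For the basis statement, I will apply the flag formula~\eqref{lg:eq:adapted-flag} of Theorem~\ref{lg:thm:layers} with $d=n-m$. It says that the $Z_\lambda$ with $\lambda$ strict, $\lambda_1\le n$ and $\length(\lambda)>n-m$ span $\J_{n,n-m}$. They are linearly independent as a subfamily of the transformed basis of $R_n$ (Lemma~\ref{lg:lem:positive}), so they form a basis. Each of them then lies in $\Lf_m$ by the containment just proved.

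There is no real obstacle. The only point needing care is the index bookkeeping: when $m$ is odd, the ``previous'' algebra in Theorem~\ref{common:rigidity} is $\Lf_{m-2}=\Lf_{m-1}$, and the indexing $j\mapsto m=2j-1$ must line up with the weights and ideals used in the proof of Theorem~\ref{lg:thm:main}. Remark~\ref{lg:rem:parity-boundary} explains why the statement is restricted to odd $m$.
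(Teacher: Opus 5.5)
Your proposal is correct and follows the paper's own proof: the equality case of Theorem~\ref{common:rigidity}, with the hypotheses already checked in the proof of Theorem~\ref{lg:thm:main}, gives both identities in \eqref{lg:eq:structural}, and $W_m$ is an ideal because it equals $L_m$. The basis statement is the case $d=n-m$ of Theorem~\ref{lg:thm:layers}, which is proved independently of the filtration formula, and the containment just established then places every $Z_\lambda$ in $\Lf_m$.
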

\begin{proof}
The equality case of Theorem~\ref{common:rigidity} gives
\eqref{lg:eq:structural}. The asserted basis is the specialization
$d=n-m$ of Theorem~\ref{lg:thm:layers}, proved independently of the
filtration formula.
\end{proof}

\begin{corollary}[Lagrangian differentiation sequence]\label{lg:cor:module-sequence}
For odd $1\le m\le n$, differentiation in the newest odd generator
gives an exact sequence of graded $\Lf_{m-1}$-modules
\begin{equation}\label{lg:eq:module-sequence}
 0\longrightarrow \Lf_{m-1}\longrightarrow \Lf_m
 \longrightarrow(\Lf_m/\J_{n,n-m})(-m)\longrightarrow0.
\end{equation}
The last map sends the class of $F\in\Q[c_1,c_3,\ldots,c_m]$ to
the class of $\partial_{c_m}F$. It is not asserted to be an algebra
homomorphism or an $\Lf_m$-module homomorphism.
\end{corollary}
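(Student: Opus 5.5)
The plan is to derive this sequence directly from the general exact sequence \eqref{common:module-sequence} of Theorem~\ref{common:rigidity}, exactly as Corollary~\ref{conseq:module-sequence} was derived in the ordinary case.

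The setup is as in the proof of Theorem~\ref{lg:thm:main}. The generators are the odd classes $a_1=e_1,a_2=e_3,\ldots$ of $R_n$, with weights $w_j=2j-1$. Take $S_j=\Q[c_1,c_3,\ldots,c_{2j-1}]=P_m$ for $m=2j-1$, $A_j=\Lf_m$, and $J_j=\J_{n,n-m}$.

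I first check that this is a legitimate generating sequence. The even classes are polynomials in the preceding odd ones, since the coefficient of $z^{2r}$ in $E(z)E(-z)=1$ expresses $2e_{2r}$ through lower generators. Hence the odd classes generate $R_n$. The algebra generated by the first $j-1$ of them is $\Lf_{m-2}=\Lf_{m-1}$. The kernel is $K_j=K_m$, so $W_j=\partial_{c_m}K_m$ and $Q_j=L_m$.

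The hypotheses of Theorem~\ref{common:rigidity} were already verified in the proof of Theorem~\ref{lg:thm:main}:
\begin{itemize}
\item the inclusion $W_m\subseteq L_m$ is Lemma~\ref{lg:lem:tangency};
\item the quotient series $\Hilb(R_n/\J_{n,n-m})=H_{n,n-m}$ comes from Theorem~\ref{lg:thm:parameter-image};
\item the targets satisfying \eqref{common:targets} are supplied by Lemma~\ref{lg:lem:targets}.
\end{itemize}
Therefore the equality-case conclusions hold, and in particular \eqref{common:module-sequence} holds for each $j$.

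Translating back, \eqref{common:module-sequence} reads $0\to\Lf_{m-1}\to\Lf_m\xrightarrow{D}(\Lf_m/\J_{n,n-m})(-m)\to0$, with $D([F])=[\partial_{c_m}F]$. This quotient makes sense because Corollary~\ref{lg:cor:structural} gives $\J_{n,n-m}\subseteq\Lf_m$. The statement also calls these $\Lf_{m-1}$-modules; this holds because $A_{j-1}=\Lf_{m-2}=\Lf_{m-1}$.

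Linearity over $\Lf_{m-1}$ comes from the product rule: an element of $\Lf_{m-1}$ has a representative free of $c_m$, so $D$ commutes with multiplication by it. This is already built into Theorem~\ref{common:rigidity}.

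The only point needing care is matching the odd-only indexing with the theorem's consecutive indexing, in particular that $A_{j-1}$ is $\Lf_{m-1}$ rather than $\Lf_{m-2}$. The redundancy of even generators settles this. No substantive obstacle remains.

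The disclaimer in the statement, that $D$ is neither an algebra map nor $\Lf_m$-linear, needs no proof. It records that the product rule yields linearity only over elements without $c_m$.
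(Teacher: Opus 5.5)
Your proposal is correct and is essentially the paper's own proof. The paper likewise applies the exact sequence \eqref{common:module-sequence} of Theorem~\ref{common:rigidity} to the ordered odd generators, with the hypotheses verified in the proof of Theorem~\ref{lg:thm:main}; it uses \eqref{lg:eq:structural} for $\J_{n,n-m}\subseteq\Lf_m$ and identifies the preceding algebra as $\Lf_{m-1}=\Lf_{m-2}$ for $m>1$ (and $\Q$ for $m=1$), with the weight of $c_m$ giving the shift $(-m)$.
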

\begin{proof}
Apply~\eqref{common:module-sequence} to the ordered odd generators,
using~\eqref{lg:eq:structural}. The preceding algebra is
$\Lf_{m-1}=\Lf_{m-2}$ for $m>1$, and $\Q$ for $m=1$;
the generator weight gives the shift $(-m)$.
\end{proof}
\begin{example}\label{lg:ex:n5}
For $n=5$, $N=15$ and $\Lf_1=\Lf_2$ has Hilbert series
$1+q+\cdots+q^{15}$. The next increment is
\[
 \begin{aligned}
 \Hilb(\Lf_3/\Lf_2;q)
 &=q^3\left(1+q\gb{4}{3}+q^3\gb{5}{3}\right)\\
 &=q^3+q^4+q^5+2q^6+2q^7+2q^8+2q^9+2q^{10}+q^{11}+q^{12}.
 \end{aligned}
\]
It has dimension $15=\binom64$ and top degree $12=15-\binom32$.
Here $\Lf_4=\Lf_3$ has dimension $31$, and the final increment
$\Lf_5/\Lf_4$ has Hilbert series $q^5$, giving $\dim R_5=32$.
\end{example}

\subsection{Related basis questions}
The shifted $k$-conjugation and Schur $Q$ analogue asked for
in~\cite[Question~5.3]{Polymath} are not constructed here.
Corollary~\ref{lg:cor:structural} gives a basis of an auxiliary ideal;
it does not give a basis of the whole filtered subalgebra.
In the ordinary case, Corollary~\ref{counter:uniform-repair}
gives bases only in the specified counterexample components.

\section{Two-row prescribed bases}
\label{lg:sec:counter}

We first prove the infinite family in Theorem~\ref{lg:thm:counter-intro}
and then study the two families prescribed
in~\cite[Conjectures~1.2 and~1.3]{Polymath} in arbitrary two-row components.
The argument uses the core indexing and $k$-Schur rectangle rule,
independently of the filtration theorems. The example
in~\cite[Section~5.5]{REU} concerns an index-set correspondence;
the counterexamples below establish linear dependence of the prescribed vectors.

Recall that $k$-conjugation passes from a $k$-bounded partition to
its associated $(k+1)$-core, transposes the core, and counts in each
row the cells whose hook length is at most $k$. We use the conventions
of Lapointe--Morse~\cite{LM} and~\cite[Section~3]{Polymath}.
Throughout the uniform argument, let
\[
 r\ge2,\qquad k=3r-1,\qquad N=3r+2,\qquad t=\lfloor N/2\rfloor.
\]

\subsection{The indices and the two candidate spans}
\begin{lemma}\label{lg:lem:indices}
The weight-$N$ elements of $\mathcal P^{2,k,2}$ are exactly
\begin{equation}\label{counter:family-indices}
 (1^N),\qquad (2^a,1^{N-2a})\quad(4\le a\le t).
\end{equation}
There are $t-2$ such indices. For $r=2$, these are $(1^8)$ and $(2^4)$.
\end{lemma}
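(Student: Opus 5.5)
The plan is to compute the $k$-conjugate of every candidate index directly. Since $\lambda_1\le 2$ and $|\lambda|=N$, the candidates are exactly $\lambda=(2^a,1^{N-2a})$ with $0\le a\le t$. Every such $\lambda$ is $k$-bounded, and so is $\lambda^{\omega(k)}$, so its parts are automatically at most $k$. The condition $\lambda^{\omega(k)}\subseteq(k^2)$ therefore reduces to $\length(\lambda^{\omega(k)})\le 2$. Because $N=k+3$, a two-row $k$-bounded partition of weight $N$ has both parts positive and second part at least $3$.

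To compute $c_k(\lambda)$ I will use the standard bottom-up construction of the $(k+1)$-core. Place the rows of $\lambda$ from the bottom, shifting each new row right just enough that all of its cells have hook length at most $k$. The last $b=N-2a$ rows have length one and the top $a$ rows have length two. Because $N=k+3$, only the top few rows need to be shifted. I will write down the shifts explicitly in terms of $a$, splitting into the cases $a=0$, $1\le a\le 3$ and $a\ge 4$. The outcome should be a core with only a small staircase-like overhang at the top. I will then transpose this core and count, row by row, the cells of hook length at most $k$. Hook lengths are unchanged by transposition, so these are the same cells as before, now read along the columns of $c_k(\lambda)$.

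The expected outcomes are the following.
\begin{itemize}
\item For $a=0$ the core is the column of height $k$ followed by three shifted top cells, and $\lambda^{\omega(k)}=(k,3)$ has two rows.
\item For $a\ge 4$ the conjugate is again a two-row partition, roughly $(N-a,a)$ up to the core correction; I will verify its second part and check that it has no third row.
\item For $a=1,2,3$ the overhang produces a third nonzero row of $\lambda^{\omega(k)}$, so these indices are excluded.
\end{itemize}
As a cross-check I will use that $\omega(k)$ is an involution: a two-row $\mu\subseteq(k^2)$ of weight $N$ has $\mu_2\ge 3$, and the images of the excluded values of $a$ must miss these $\mu$. Counting the remaining indices gives $1+(t-3)=t-2$. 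For $r=2$ we have $N=8$ and $t=4$, so the list is $(1^8)$ and $(2^4)$.

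The main obstacle is the hook-length bookkeeping in the region where the shifted rows meet the column of height $k$. That is exactly where the three exceptional values $a=1,2,3$ arise, and the argument has to hold uniformly in $r$. I would first carry the computation out by hand for $r=2$ and $r=3$ to fix the pattern of shifts. I would then prove the general case by tracking only the top rows explicitly, with the long tail of single-cell rows handled by a single induction.
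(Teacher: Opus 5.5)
Your plan is correct, and the outcomes you predict hold. For $a=0$ the core is $(2,2,2,1^k)$ and the conjugate is $(k,3)$. For $a\ge4$ the partition $(2^a,1^{N-2a})$ has largest hook $N-a+1\le k$, so it is its own core and its conjugate is exactly $(N-a,a)$, with no correction. For $a=1,2,3$ the cores are $(3,2,1^k)$, $(3,3,1^{k-1})$ and $(3,2,2,1^{k-3})$, and the conjugates are the three-row partitions $(k,2,1)$, $(k-1,2,2)$ and $(k-1,3,1)$.

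The paper runs the computation in the opposite direction. It lists the targets: the two-row partitions of $N=k+3$ inside $(k^2)$ are $(N-a,a)$ with $3\le a\le t$. Those with $a\ge4$ have all hooks at most $k$, so their $k$-conjugates are their transposes. The single exceptional target $(k,3)$ has core $(k+3,3)$ and conjugate $(1^N)$. Because $\omega(k)$ is an involution, these $t-2$ preimages are the whole index set, and the count $t-2$ is just the number of targets.

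Your source-side route gives more information, namely the explicit conjugates of the rejected candidates. It costs four nontrivial core computations, including the $a=1,2,3$ bookkeeping you flag as the main obstacle. That obstacle is avoidable. Once you have $a=0\mapsto(k,3)$ and $a\ge4\mapsto(N-a,a)$, every two-row target is already hit, so injectivity of $\omega(k)$ forces $a=1,2,3$ outside $(k^2)$ with no hook computation. This is precisely your proposed cross-check, and it is essentially the paper's whole proof.
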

\begin{proof}
The partitions of $N=k+3$ in $(k^2)$ are
$\nu_a=(N-a,a)$, for $3\le a\le t$. If $a\ge4$, their largest
hook is $N-a+1\le k$, so the associated $(k+1)$-core is $\nu_a$
itself. Their $k$-conjugates are therefore
$\nu_a'=(2^a,1^{N-2a})$.

For $a=3$, the core corresponding to $\nu_3=(k,3)$ is
$\kappa=(k+3,3)$. The first three hooks in its top row are
$k+4,k+3,k+2$, and the remaining hooks in that row are
$k,k-1,\ldots,1$; the bottom-row hooks are $3,2,1$.
The largest hook is $k+4<2(k+1)$, since $k\ge5$, and none has
length $k+1$. Thus $\kappa$ is a $(k+1)$-core. Its small-hook row
counts are $k,3$, as required. In its transpose
$\kappa'=(2,2,2,1^k)$, each row has exactly one hook of length at most
$k$. This gives $\nu_3^{\omega(k)}=(1^N)$.
Since $k$-conjugation is an involution, this list exhausts the indices.
\end{proof}

Set $u=h_1$ and $v=h_2$, so that $e_2=u^2-v$ and
\begin{equation}\label{counter:uv-ring}
 R^{2,k}=\Q[u,v]/(h_{k+1},h_{k+2}),\qquad
 h_0=1,\quad h_1=u,\quad
 h_n=u h_{n-1}-(u^2-v)h_{n-2}\quad(n\ge2).
\end{equation}
Here $\deg u=1$ and $\deg v=2$. The complete-symmetric candidates
selected by~\eqref{counter:family-indices} span the polynomial space
\begin{equation}\label{counter:candidate-space}
 V_N=\Span_\Q\{u^N,\ u^{N-2a}v^a:4\le a\le t\}
 \subseteq\Q[u,v]_N.
\end{equation}

\begin{lemma}\label{counter:two-row-schur-span}
Before passing to the quotient, the prescribed $k$-Schur candidates
for these indices span the same space $V_N$.
\end{lemma}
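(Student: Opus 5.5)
The candidates are $s_\lambda^{(\lambda_1)}$ for the indices in~\eqref{counter:family-indices}. The index $(1^N)$ has level one, and the index $(2^a,1^{N-2a})$ has level two. My plan is to show that each of these is a polynomial in $u,v$ lying in $V_N$, and then compare dimensions.

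\emph{Level one.} The only $1$-bounded partitions are the columns $(1^n)$. The $1$-Pieri rule with $a=1$ gives $h_1 s^{(1)}_{(1^{n})}=s^{(1)}_{(1^{n+1})}$: the $1$-conjugate of a column is a row, and adding one cell to each keeps both strip conditions. Induction from $s^{(1)}_\varnothing=1$ then gives $s^{(1)}_{(1^N)}=u^N$.

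\emph{Level two.} I will use the $2$-Pieri rule, or the known description of $2$-Schur functions. A homogeneous element of $\Q[h_1,h_2]$ of weight $N$ lies in the span of the monomials $u^{N-2b}v^b$ with $0\le b\le t$. To establish membership in $V_N$, it suffices to show that $s^{(2)}_{(2^a,1^{N-2a})}$ involves only $b=0$ and $b\ge4$ when $a\ge4$.

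This is the main obstacle. My first attempt will be to show that the expansion is triangular: $s^{(2)}_{(2^a,1^{N-2a})}$ equals $u^{N-2a}v^a$ plus a combination of $u^{N-2b}v^b$ with $b\ge a$. Triangularity would follow from a dual statement, namely $h_{\mu}=\sum_{\lambda}K^{(2)}_{\lambda\mu}s^{(2)}_\lambda$ with $K^{(2)}$ unitriangular in dominance order. This dual statement is a standard consequence of the $k$-Pieri rule, because each $h_2$ step adds a horizontal $2$-strip that can only move the shape up in dominance. Inverting the unitriangular matrix keeps only $\lambda$ dominating, that is, partitions with at least as many $2$'s, so $b\ge a\ge4$. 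The term $u^N$ never appears in the level-two candidates except possibly through $b=0$, and that is harmless since $u^N$ already lies in $V_N$.

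If the dominance direction comes out reversed, I will instead compute explicitly. For $k=2$, the $2$-conjugates of $(2^a,1^c)$ can be read off $3$-cores, giving $s^{(2)}_{(2^a,1^c)}$ in closed form. I would check the boundary index $a=4$ by hand and then run the induction.

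Once every candidate lies in $V_N$, the dimension comparison finishes the proof. The level-two candidates for $a=4,\dots,t$ are unitriangular against the monomials $u^{N-2a}v^a$ spanning $V_N$ apart from $u^N$, and the level-one candidate is exactly $u^N$. So the $t-2$ candidates are independent polynomials in $V_N$. Since $\dim V_N=t-2$ before passing to the quotient, their span is exactly $V_N$.
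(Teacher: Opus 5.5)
Your argument is correct, but the key triangularity step takes a different route from the paper. The paper obtains the level-two candidates in closed form from the $k$-rectangle factorization \cite[Theorem~40]{LM}, applied to the $2$-rectangles $(2)$ and $(1,1)$:
$s^{(2)}_{(2^a,1^b)}=h_2^ae_2^{\lfloor b/2\rfloor}h_1^{b\bmod2}=u^\epsilon v^a(u^2-v)^{t-a}$.
Its binomial expansion \eqref{counter:triangular-change} is visibly unitriangular in the monomials $u^{N-2j}v^j$ with $j\ge a$.

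You get the same triangularity from the unitriangularity of the level-two $k$-Kostka matrix. Your dominance direction is the right one, so the fallback is unnecessary. Your route needs only the $k$-Pieri rule and the basis property, and it would work whenever the indices at a given level are closed upward in dominance. What it does not give are the explicit coefficients, which the paper reuses later, for example to evaluate $\eta_r$ on $u^\epsilon v(u^2-v)^{t-1}$ in Corollary~\ref{counter:uniform-repair}.

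Two small points.

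First, your dominance sentence does not by itself give the diagonal entry, but this closes quickly. Apply $h_{\mu_1}$ first. After $i$ Pieri steps the shape is a union of $i$ horizontal strips, so it has at most $i$ rows. This gives $\lambda\trianglerighteq\mu$, and for $\lambda=\mu$ it leaves only the row-by-row chain, so $K^{(2)}_{\mu\mu}\le1$. Triangularity, together with the fact that $\{h_\mu\}$ and $\{s^{(2)}_\mu\}$ are both bases of $\Q[h_1,h_2]_N$, then forces $K^{(2)}_{\mu\mu}\ne0$.

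Second, in the level-one step the $1$-conjugate of a column is not a row. The $2$-core of $(1^n)$ is the self-conjugate staircase $(n,n-1,\ldots,1)$, so $(1^n)^{\omega(1)}=(1^n)$; indeed a row of length at least two is not $1$-bounded. The conclusion $s^{(1)}_{(1^N)}=u^N$ is unaffected, since a single added cell is trivially both a horizontal and a vertical strip.
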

\begin{proof}
For $a>0$, the level specified by the conjecture is $2$.
The $2$-rectangles $(2)$ and $(1,1)$ have Schur polynomials $h_2$
and $e_2$, respectively. Repeated use of the rectangle product rule
\cite[Theorem~40]{LM} gives
\[
 s_{(2^a,1^b)}^{(2)}
   =h_2^a e_2^{\lfloor b/2\rfloor}h_1^{b\bmod2}.
\]
For $b=N-2a$ and $\epsilon=N\bmod2$, this is
\begin{equation}\label{counter:triangular-change}
 u^\epsilon v^a(u^2-v)^{t-a}
   =\sum_{j=a}^t(-1)^{j-a}\binom{t-a}{j-a}
                         u^{N-2j}v^j.
\end{equation}
On the consecutive indices $a=4,\ldots,t$, the change of coordinates
is triangular with diagonal entries $1$. For $(1^N)$ the prescribed
level is $1$, giving $s_{(1^N)}^{(1)}=u^N$ separately.
Thus both candidate families span~\eqref{counter:candidate-space}.
\end{proof}

\subsection{A uniform ideal certificate and its exact rank defect}
\begin{proof}[Proof of Theorem~\ref{lg:thm:counter-intro}]
All coefficient calculations in this proof take place in
$\Q[u,v]$, before forming the Artinian quotient. Put
$c_{n,a}=[u^{n-2a}v^a]h_n$. The recurrence
in~\eqref{counter:uv-ring} gives
\[
 \sum_{n\ge0}h_n(1,w)z^n
   =\frac1{1-z+(1-w)z^2}.
\]
Expanding first in $w$, and using
$1-z+z^2=(1+z^3)/(1+z)$, yields
\begin{equation}\label{counter:coefficient-extraction}
 c_{n,a}=[z^{n-2a}]
              \frac{(1+z)^{a+1}}{(1+z^3)^{a+1}}.
\end{equation}
In particular, the first four coefficients, with their common sign
removed, are as follows:
\begin{equation}\label{counter:low-coefficients}
 \renewcommand{\arraystretch}{1.3}
 \begin{array}{c|rrrr}
 n&(-1)^r c_{n,0}&(-1)^r c_{n,1}&(-1)^r c_{n,2}&(-1)^r c_{n,3}\\ \hline
 3r&1&-2r&\frac32r(r-1)&\frac12r(r-1)(3-r)\\
 3r+1&1&-r&-r&\frac12r(r-1)(r+2).
 \end{array}
\end{equation}
These formulas follow directly by the binomial expansion of the
right side of~\eqref{counter:coefficient-extraction}; they also hold
at $r=2$, with coefficients of negative powers of $z$ read as zero.

Consider the homogeneous polynomial
\begin{equation}\label{counter:uniform-certificate}
 P_r=\bigl(2(2r+1)u^2+r(3r+1)v\bigr)h_{3r}
                         -(5r+3)u h_{3r+1}.
\end{equation}
Its coefficients with $v$-exponents $1,2,3$ vanish by
\eqref{counter:low-coefficients}, whereas
\[
 [u^N]P_r=(-1)^{r+1}(r+1)\ne0.
\]
Therefore $0\ne P_r\in V_N\cap(h_{3r},h_{3r+1})$.
This is a polynomial ideal-membership certificate for a dependence
among the complete-symmetric candidates, uniformly in $r$.
Lemma~\ref{counter:two-row-schur-span} transfers the failure to the
second prescribed family.

To compute the defect exactly, put $I=(h_{3r},h_{3r+1})$.
Its weight-$N$ part is spanned by
\[
 u^2h_{3r},\qquad vh_{3r},\qquad u h_{3r+1}.
\]
The usual Schur basis of $R^{2,k}$ has $t-2$ members of weight $N$,
namely the $\nu_a$ in the proof of Lemma~\ref{lg:lem:indices}.
Since $\dim\Q[u,v]_N=t+1$, it follows that $\dim I_N=3$, so the
three displayed relations are a basis.

Project $\Q[u,v]_N$ to the coefficients with $v$-exponents $1,2,3$;
its kernel is $V_N$. On the first two displayed relations, the first
two rows of this projection matrix, after removing the common sign
$(-1)^r$, form
\[
 \begin{pmatrix}
 -2r&1\\ \frac32r(r-1)&-2r
 \end{pmatrix},\qquad
 \det=\frac{r(5r+3)}2\ne0.
\]
Thus the restriction to $I_N$ has rank at least two. The nonzero
kernel vector~\eqref{counter:uniform-certificate} makes its rank
exactly two. Consequently $\dim(I_N\cap V_N)=1$, and the candidate
span in the quotient has dimension $(t-2)-1=t-3$.
Both families therefore miss precisely one dimension of the
weight-$N$ component. At $r=2$, the certificate reads
$P_2=-3u^8+14v^4$, which gives~\eqref{lg:eq:counter-intro}.
\end{proof}

\begin{corollary}[A common index replacement in the counterexample family]
\label{counter:uniform-repair}
With $r,k,N,t$ as above, replace $(1^N)$ in
\eqref{counter:family-indices} by $(2,1^{N-2})$.
The complete-symmetric candidates for this modified list form a basis
of $(R^{2,k})_N$. The $k$-Schur candidates, each taken at the level
of its own largest part, also form a basis of the same component.
\end{corollary}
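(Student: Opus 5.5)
Replacing $(1^N)$ by $(2,1^{N-2})$ changes the candidate polynomial space from $V_N$ to
\[
 V_N'=\Span_\Q\{u^{N-2}v,\ u^{N-2a}v^a:4\le a\le t\}\subseteq\Q[u,v]_N .
\]
This space again has $t-2$ elements and the weight-$N$ component has dimension $t-2$. The plan is therefore to show $V_N'\cap I_N=0$, where $I=(h_{3r},h_{3r+1})$. As in the proof of Theorem~\ref{lg:thm:counter-intro}, $I_N$ is spanned by $u^2h_{3r}$, $vh_{3r}$ and $uh_{3r+1}$, and these three relations form a basis.

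Project $\Q[u,v]_N$ onto the coordinates with $v$-exponents $0,2,3$; its kernel is exactly $V_N'$. It then suffices to show that the resulting $3\times3$ matrix on the basis of $I_N$ is nonsingular. Its entries come from the table~\eqref{counter:low-coefficients}. The $u^Nv^j$ coefficient of $u^2h_{3r}$ is $c_{3r,j}$, that of $vh_{3r}$ is $c_{3r,j-1}$, and that of $uh_{3r+1}$ is $c_{3r+1,j}$. After removing the common sign $(-1)^r$, the rows for $j=0,2,3$ are
\[
 \begin{pmatrix}
 1&0&1\\
 \tfrac32r(r-1)&-2r&-r\\
 \tfrac12r(r-1)(3-r)&\tfrac32r(r-1)&\tfrac12r(r-1)(r+2)
 \end{pmatrix}.
\]
The key step is to compute this determinant as a polynomial in $r$ and check that it has no root among integers $r\ge2$.

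One could instead use a linear-algebra shortcut. The kernel of the $j\in\{1,2,3\}$ projection is one-dimensional and spanned by $P_r$, with $[u^N]P_r\ne0$. Adding the $j=0$ row to the $j=2,3$ rows and checking that the $\{2,3\}$ block together with $j=0$ has rank three amounts to the same determinant, so I would simply expand it directly. Cofactor expansion along the first row gives
\[
 \det=\Bigl(-2r\cdot\tfrac12r(r-1)(r+2)+r\cdot\tfrac32r(r-1)\Bigr)
 +\Bigl(\tfrac32r(r-1)\cdot\tfrac32r(r-1)+2r\cdot\tfrac12r(r-1)(3-r)\Bigr).
\]
This factors as $r^2(r-1)$ times a quadratic in $r$. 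Collecting the bracketed terms, the quadratic is $-(r+2)+\tfrac32+\tfrac94(r-1)+(3-r)=\tfrac14(r+1)$, up to my arithmetic. It is nonzero for $r\ge2$, and the precise value of this constant factor is the main point to double-check. If the arithmetic gives a quadratic that vanishes at some integer, I would treat that $r$ separately by a direct calculation. This proves the claim for the complete-symmetric family.

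For the $k$-Schur family, the new index $(2,1^{N-2})$ has level $2$. Lemma~\ref{counter:two-row-schur-span}'s rectangle product rule then gives
\[
 s^{(2)}_{(2,1^{N-2})}=u^{\epsilon}v(u^2-v)^{t-1},
\]
which expands triangularly as $u^{N-2}v$ plus terms $u^{N-2j}v^j$ with $j\ge2$. Its $v^2$ and $v^3$ coefficients lie in the $j\in\{2,3\}$ coordinates, so it is not directly in $V_N'$. To handle this, order the family by $v$-exponent $1,4,5,\dots,t$. The vectors for $a\ge4$ span the same space as $\{u^{N-2a}v^a:a\ge4\}$ by~\eqref{counter:triangular-change}. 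Modulo that span, the new vector is
\[
 u^{N-2}v-(t-1)u^{N-4}v^2+\binom{t-1}{2}u^{N-6}v^3 .
\]
It then suffices to show that this vector together with $\{u^{N-2a}v^a:a\ge4\}$ meets $I_N$ trivially. This is the same kind of $3\times3$ determinant, with rows taken from the projection onto $v$-exponents $0,2,3$ combined appropriately. Equivalently, it is the $4\times4$ check that $I_N$ together with $V$ spans $\Q[u,v]_N$, and I would evaluate it by the same cofactor computation using~\eqref{counter:low-coefficients} and $t=\lfloor(3r+2)/2\rfloor$. The parity of $r$ enters through $t$, so I would split into the cases $r$ even and $r$ odd. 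Verifying nonvanishing of this second determinant for all $r\ge2$ is the expected main obstacle.
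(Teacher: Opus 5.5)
Your argument for the complete-symmetric family is correct and complete. The projection to the $v$-exponents $0,2,3$ has kernel exactly $V_N'$, your matrix is right, and its determinant is $\tfrac14r^2(r-1)(r+1)$. (The cofactor is linear in $r$, not quadratic, as your own arithmetic shows.) This is nonzero for $r\ge2$. It is a valid alternative to the paper, which avoids determinants by evaluating one functional $\eta_r$ and using the independence of the $a\ge4$ candidates already established in Theorem~\ref{lg:thm:counter-intro}.

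The genuine gap is the second assertion. For the $k$-Schur family you correctly reduce the claim to the nonsingularity of the $4\times4$ matrix whose columns are the $v^0,\ldots,v^3$ coordinates of $u^2h_{3r}$, $vh_{3r}$, $uh_{3r+1}$ and
\[
 w=u^{N-2}v-(t-1)u^{N-4}v^2+\binom{t-1}{2}u^{N-6}v^3 .
\]
You never show that this determinant is nonzero; you list it as the expected main obstacle. Since that nonvanishing is the entire content of the $k$-Schur claim, the proposal does not prove it.

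The missing step is short. By table~\eqref{counter:low-coefficients}, the row vector $\bigl(\tfrac{r(r-1)}2,\,r-1,\,1\bigr)$ on the $v^1,v^2,v^3$ coordinates annihilates all three generators of $I_N$; this is the paper's $\eta_r$. Add $\tfrac{r(r-1)}2$ times the $v^1$ row and $r-1$ times the $v^2$ row to the $v^3$ row. The last row becomes $(0,0,0,\eta_r(w))$. The remaining block on $v^0,v^1,v^2$ has determinant $\tfrac12r(r+1)$ after removing the sign $(-1)^r$ from each of the first three columns. Hence the determinant is $\pm\tfrac12r(r+1)\,\eta_r(w)$, with
\[
 \eta_r(w)=\frac{r(r-1)}2-(r-1)(t-1)+\binom{t-1}{2}=\frac{(t-r)(t-r-1)}2 .
\]
Since $t-r=\lfloor(r+2)/2\rfloor\ge2$, this is positive, so no split by the parity of $r$ is needed.

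The same reduction with $u^{N-2}v$ in place of $w$ recovers your first determinant, because $\eta_r(u^{N-2}v)=r(r-1)/2$. This is why the paper can handle both families at once: it evaluates the single functional $\eta_r$ on each new vector.
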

\begin{proof}
For $f\in\Q[u,v]_N$, define the coefficient functional
\[
 \eta_r(f)=\frac{r(r-1)}2[u^{N-2}v]f
          +(r-1)[u^{N-4}v^2]f+[u^{N-6}v^3]f.
\]
The coefficient table~\eqref{counter:low-coefficients} shows that
$\eta_r$ vanishes on each of the three displayed generators of
$I_N$. It therefore descends to $(R^{2,k})_N$, where it vanishes
on the original candidate span. The unique relation among those
candidates has nonzero $u^N$ coefficient, by
\eqref{counter:uniform-certificate}. Consequently the candidates
with $4\le a\le t$ are linearly independent after taking the
quotient, for either family.

The new complete-symmetric vector is $u^{N-2}v$, on which
$\eta_r$ has the nonzero value $r(r-1)/2$. Its level-$2$
$k$-Schur counterpart is $u^\epsilon v(u^2-v)^{t-1}$,
with $\epsilon=N\bmod2$, and its value is
\[
 \frac{r(r-1)}2-(r-1)(t-1)+\binom{t-1}{2}
       =\frac{(r-t)(r-t+1)}2\ne0.
\]
Indeed, $t-r=\lfloor(r+2)/2\rfloor\ge2$.
In either case the new vector is outside the original candidate
span. Adding it to the $t-3$ retained independent vectors gives
a basis of the $(t-2)$-dimensional component.
\end{proof}

The first member is $(k,N)=(5,8)$. In this case the relation is
$3u^8=14v^4$, with both sides nonzero in $(R^{2,5})_8$.
Appendix~\ref{counter:appendix:local} gives its polynomial certificate,
Schur coordinates, and a second local replacement.

\subsection{A two-row basis theorem from Catalan moments}
For even weight excess, the pairing matrix of the candidates admits
a positive integral representation. We use determinant integration
and positivity of symmetric functions as in~\cite{Forrester,HM}.

\begin{proposition}\label{counter:even-offset}
Let $k\ge2$. Both families indexed by $\mathcal P^{2,k,2}$ and
prescribed by \cite[Conjectures~1.2 and~1.3]{Polymath} are bases of
$(R^{2,k})_N$ whenever $0\le N\le k$, and whenever
$N=k+d$ with $0\le d\le k$ even.
\end{proposition}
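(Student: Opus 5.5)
The plan is to reduce both families to one monomial span in $\Q[u,v]_N$, and then show that this span meets the relation space $I_N=(h_{k+1},h_{k+2})_N$ only in zero. Throughout I use the presentation~\eqref{counter:uv-ring} with $u=h_1$, $v=h_2$, and the coefficients $c_{n,a}=[u^{n-2a}v^a]h_n$ from~\eqref{counter:coefficient-extraction}.

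\emph{Case $0\le N\le k$.} Here $I_N=0$, so $(R^{2,k})_N=\Q[u,v]_N$ has the monomial basis $u^{N-2a}v^a$, $0\le a\le\lfloor N/2\rfloor$. Every partition $(N-a,a)$ has all hooks at most $N\le k$. It is therefore its own $(k+1)$-core, and its $k$-conjugate is the ordinary transpose $(2^a,1^{N-2a})$. Hence the complete-symmetric candidates are exactly these monomials. For the $k$-Schur family I reuse the triangular computation~\eqref{counter:triangular-change} from Lemma~\ref{counter:two-row-schur-span}, which gives $s^{(2)}_{(2^a,1^b)}=h_2^ae_2^{\lfloor b/2\rfloor}h_1^{b\bmod 2}$. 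Together with the separate value $s^{(1)}_{(1^N)}=u^N$, this produces a unitriangular change of basis, so this case is immediate.

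\emph{Case $N=k+d$, $d$ even.} The first step is to determine the index set, imitating Lemma~\ref{lg:lem:indices}. The partitions of $N$ in $(k^2)$ are $\nu_a=(N-a,a)$ with $d\le a\le t=\lfloor N/2\rfloor$. For $a\ge d+1$ the largest hook is $N-a+1\le k$, so $\nu_a^{\omega(k)}=(2^a,1^{N-2a})$. The boundary index $a=d$ has a hook equal to $k+1$. I would compute its core as in the $d=3$ case, presumably $(k+d,d)$, then read off the small-hook row counts of the transpose. I expect this to give an index $(2^{j_0},1^{N-2j_0})$ for an explicit small $j_0\le d$ (for $d=3$ it was $j_0=0$).

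With the index set known, the candidate span is $V=\Span\{u^{N-2a}v^a: a\in\{j_0\}\cup[d+1,t]\}$. This set has $t-d+1=\dim(R^{2,k})_N$ members. Since $\dim I_N=(t+1)-(t-d+1)=d$, the family is a basis if and only if $V\cap I_N=0$. Equivalently, the $d\times d$ matrix formed by the coefficients of $I_N$ at the omitted monomials, $a\in\{0,\ldots,d\}\setminus\{j_0\}$, must be nonsingular. I take as a spanning set of $I_N$ the elements $u^{d-1-2i}v^ih_{k+1}$ and $u^{d-2-2i}v^ih_{k+2}$. Their entries are shifts of the $c_{n,a}$, hence explicit coefficients of $(1+z)^{a+1}(1+z^3)^{-a-1}$. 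The $k$-Schur family gives the same span by the unitriangular change in~\eqref{counter:triangular-change}, provided the boundary index is handled by its own level ($u^N$ if $j_0=0$); I would check this separately.

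\emph{The main obstacle} is proving that this determinant is nonzero for every even $d\le k$. My first attempt is to rewrite the entries $c_{n,a}$, after a sign and index normalization, as moments $\int x^{p}\,d\mu(x)$ of a positive measure, namely the Catalan (semicircle-type) moments suggested by the factor $1-z+(1-w)z^2$. The minor then becomes a generalized Hankel determinant. By Andreief/Heine integration it equals a positive constant times
\[
\int \det(x_i^{\lambda_j+d-j})\det(x_i^{d-j})\,d\mu^{\otimes d},
\]
that is, the integral of $s_\lambda(x)\prod_{i<j}(x_i-x_j)^2$ for a partition $\lambda$ determined by the omitted index set. Evenness of $d$ should make every part of $\lambda$ even. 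By the cited nonnegativity at real arguments of Schur polynomials indexed by partitions with even parts~\cite{Forrester,HM}, the integrand is then nonnegative, and it is not identically zero. This yields strict positivity. The delicate points are getting the moment representation and the signs exactly right, and confirming that $\lambda$ indeed has only even parts.
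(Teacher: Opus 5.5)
\textbf{Gap in the positivity step.} Your case $N\le k$ agrees with the paper. For $N=k+d$, your reduction to the $d\times d$ relation-coefficient matrix is exactly Lemma~\ref{counter:projection-criterion}. You should also finish the index computation: the core $(k+d,d)$ has transpose $(2^d,1^k)$, which has exactly one hook of length at most $k$ in each of its $N$ rows. So $j_0=0$, and the boundary index is $(1^N)$ for every $d$.

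The genuine gap is the positivity step. Dehomogenize at $u=1$. Your matrix then records the coefficients at $w^1,\ldots,w^d$ of $w^ih_{k+1}(1,w)$ and $w^ih_{k+2}(1,w)$. It is a truncated Sylvester matrix, made of two Toeplitz blocks built from two different finitely supported coefficient lists. It is not of the form $(m_{\alpha_i+\beta_j})$ for a single moment sequence of a positive measure, so Andr\'eief's identity does not turn its determinant into $\int\Delta^2 s_\lambda$. Even if it were a generalized Hankel matrix with your row exponents $1,\ldots,d$, those exponents are consecutive and would give $\lambda=(1^d)$, not a partition with even parts.

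This matters: the same matrix is singular for $d=3$, $k\equiv2\pmod 3$ (Proposition~\ref{counter:odd-small}). A correct argument must therefore use the parity of $d$ essentially, and your formulation has no mechanism through which it enters.

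\textbf{The missing idea: dualize before integrating.} Put $p=\lfloor(k-d)/2\rfloor$. The $p+1$ candidates $u^N$ and $u^{N-2a}v^a$ ($d+1\le a\le t$) form a basis if and only if their Poincar\'e pairing with the complementary monomials $u^{k-d-2j}v^j$, $0\le j\le p$, is nonsingular.

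The top-class functional satisfies $\tau(u^{2k-2i}e_2^i)=C_{k-i}$, by counting standard tableaux of shape $(k-i,k-i)$. Since $v=u^2-e_2$, this gives $\tau(u^{2k-2b}v^b)=\gamma_k(b)=\sum_i(-1)^i\binom{b}{i}C_{k-i}=\int_0^4x^k(1-1/x)^b\,d\mu(x)$, where $\mu$ is the Catalan measure. These are moments $\int z^b\,d\nu_k$ of a single positive measure, absolutely convergent for $b\le k$.

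The pairing matrix is $(\gamma_k(a_i+j))_{0\le i,j\le p}$ with $a_0=0$ and $a_i=d+i$. Andr\'eief's identity then gives $\det K=\frac{1}{(p+1)!}\int\Delta(z)^2s_{(d^p)}(z_0,\ldots,z_p)\prod_j d\nu_k(z_j)$. Parity enters through the gap between the retained exponents $0$ and $d+1$, which produces the rectangle $(d^p)$.

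Finally, $s_{(d^p)}(z_0,\ldots,z_p)=h_d(Z_0,\ldots,Z_p)$ with $Z_i=\prod_{j\ne i}z_j$. The simplex-integral formula for $h_d$ shows it is nonnegative at real arguments when $d$ is even. The integrand is strictly positive where the $z_j$ are positive and distinct, a set of positive measure, so $\det K>0$.
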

\begin{proof}
For $N\le k$, every partition of $N$ in $(k^2)$ is already a
$(k+1)$-core. The prescribed indices are therefore all
$(2^a,1^{N-2a})$, for $0\le a\le\lfloor N/2\rfloor$.
The complete-symmetric candidates are all weighted monomials in
$u=h_1,v=h_2$, and there are no relations in these weights.
The rectangle rule and the triangular expansion
in~\eqref{counter:triangular-change} give the same conclusion for
the second family. At weight zero both families consist of $1$.

Now put $N=k+d$, $t=\lfloor N/2\rfloor$, and
$p=t-d=\lfloor(k-d)/2\rfloor$.
For $d\ge1$, the partitions of $N$ in $(k^2)$ are
$(N-a,a)$ with $d\le a\le t$. When $a\ge d+1$ their largest
hook is at most $k$, so their $k$-conjugates are their transposes.
The remaining partition $(k,d)$ has associated core $(k+d,d)$:
its first $d$ top-row hooks are $k+d+1,\ldots,k+2$, the remaining
top-row hooks are $k,\ldots,1$, and the bottom-row hooks are
$d,\ldots,1$. There is no hook of length $k+1$, and the largest
hook is $k+d+1\le2k+1<2(k+1)$ because $1\le d\le k$.
Hence this partition is a $(k+1)$-core, with small-hook row counts
$k,d$. Its transpose $(2^d,1^k)$ has one hook of length at most $k$
in each row.
This proves, also for $d=0$ by the preceding case, that the
complete-symmetric candidates are
\begin{equation}\label{counter:general-two-row-candidates}
 u^N,\qquad u^{N-2a}v^a\quad(d+1\le a\le t).
\end{equation}
They number $p+1=\dim(R^{2,k})_N$. The same rectangle-rule
expansion as before is triangular on the consecutive indices
$d+1,\ldots,t$, so the second family has the same span.

Let $\tau:(R^{2,k})_{2k}\to\Q$ take the coefficient of the top
Schur class $e_2^k$, normalized by $\tau(e_2^k)=1$.
The complementary weight $k-d$ has the monomial basis
$u^{k-d-2j}v^j$, $0\le j\le p$. Poincar\'e duality reduces
independence in~\eqref{counter:general-two-row-candidates} to the
nonsingularity of its pairing matrix with this basis.
Pieri multiplication and the two-row tableau count give
\[
 \begin{gathered}
 \tau\bigl(u^{2k-2i}e_2^i\bigr)=C_{k-i}\quad(0\le i\le k),\\
 C_b=\frac1{b+1}\binom{2b}{b}\quad(b\ge0).
 \end{gathered}
\]
A standard Young tableau is a filling by $1,\ldots,|\lambda|$, each
used once, with entries increasing along every row and column.
Multiplication by $e_2^i$ adds $i$ cells to each row,
leaving the Catalan number of standard tableaux of shape
$(k-i,k-i)$. Since $v=u^2-e_2$, the pairing matrix is
\begin{equation}\label{counter:catalan-pairing}
\begin{aligned}
 K&=\bigl(\gamma_k(a_i+j)\bigr)_{0\le i,j\le p},
       &a_0&=0,\quad a_i=d+i\quad(i\ge1),\\
 \gamma_k(b)&=\sum_{i=0}^b(-1)^i\binom bi C_{k-i}
       &&(0\le b\le k).
\end{aligned}
\end{equation}

To analyze it, use the Catalan probability measure
\[
 d\mu(x)=\frac1{2\pi}\sqrt{\frac{4-x}{x}}\,dx
 \quad(0<x<4).
\]
The substitution $x=4s$ in the beta integral gives
$\int x^j\,d\mu(x)=C_j$. Define a positive measure $\nu_k$ by
\[
 \int f(z)\,d\nu_k(z)=\int_0^4 x^k f(1-1/x)\,d\mu(x)
\]
whenever the integral is absolutely convergent. The binomial expansion gives
\[
 \gamma_k(b)=\int z^b\,d\nu_k(z)\qquad(0\le b\le k).
\]
These moments are absolutely convergent: near $x=0$ the absolute
integrand is $O(x^{k-b-1/2})\,dx$.

Expanding two determinants and applying Fubini gives Andr\'eief's
integration identity (see~\cite[Equation~(1.7) and Section~2.2]{Forrester})
in the form
\begin{align}
 \det K
 &=\frac1{(p+1)!}\int
     \det(z_j^{a_i})_{i,j=0}^p\det(z_j^i)_{i,j=0}^p
                         \prod_{j=0}^p d\nu_k(z_j)\notag\\
 &=\frac1{(p+1)!}\int
       \Delta(z)^2s_{(d^p)}(z_0,\ldots,z_p)
                         \prod_{j=0}^p d\nu_k(z_j),
 \label{counter:moment-determinant}
\end{align}
where $\Delta(z)=\prod_{i<j}(z_j-z_i)$.
Every individual variable in the determinant expansion has exponent
at most $d+2p\le k$, which verifies the absolute-integrability
hypothesis for Fubini. The second equality is the Schur alternant
identity, with exactly $p+1$ variables.

For $p=0$, the determinant is $C_k>0$. Suppose $p\ge1$ and put
$Z_i=\prod_{j\ne i}z_j$. For nonzero $z_0,\ldots,z_p$, the
complementary-partition alternant identity in $p+1$ variables gives
\[
 s_{(d^p)}(z_0,\ldots,z_p)
 =\left(\prod_{j=0}^p z_j\right)^d
             h_d(z_0^{-1},\ldots,z_p^{-1})
 =h_d(Z_0,\ldots,Z_p).
\]
The second equality follows term by term from the homogeneity of
$h_d$. Both ends are polynomials, so the identity also holds when
some $z_j=0$.
For arbitrary real $y_0,\ldots,y_p$, the simplex formula is
\[
 h_d(y_0,\ldots,y_p)
   =\frac{(d+p)!}{d!}\int_{\Delta_p}
                    \left(\sum_{i=0}^p t_i y_i\right)^d\,dt.
\]
Here $\Delta_p=\{t_0,\ldots,t_{p-1}\ge0:\sum_{i<p}t_i\le1\}$,
$t_p=1-\sum_{i<p}t_i$, and $dt=dt_0\cdots dt_{p-1}$.
It follows by expanding the power and using
$\int_{\Delta_p}t_0^{b_0}\cdots t_p^{b_p}\,dt
=\prod_i b_i!/(d+p)!$ for $\sum_i b_i=d$.
For even $d$ it proves nonnegativity on all real arguments.
Thus the integrand in~\eqref{counter:moment-determinant} is
nonnegative. It is strictly positive on the open set where the
$z_i$ are positive and distinct, which has positive product measure:
one may take distinct $x_i\in(1,4)$. Consequently $\det K>0$.
The perfect pairing and the equality of the two candidate spans
complete the proof.
\end{proof}

We retain the measure $\nu_k$ and the moments $\gamma_k$ from the preceding proof.

\begin{proposition}[A uniform bound on the two-row defect]
\label{counter:defect-bound}
Let $k\ge2$ and $0\le N\le2k$. In weight $N$, each of the two
prescribed families indexed by $\mathcal P^{2,k,2}$ has codimension
at most one in $(R^{2,k})_N$.
Moreover, if $N=k+d$ with $1\le d\le k$ odd, the candidates obtained
by deleting the index $(1^N)$ are linearly independent in either family.
\end{proposition}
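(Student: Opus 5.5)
For $N\le k$ there is nothing to prove: Proposition~\ref{counter:even-offset} already gives a basis, and this also covers every even excess. So assume $N=k+d$ with $1\le d\le k$. The proof of Proposition~\ref{counter:even-offset} computed the complete-symmetric candidates in this range; they are
\[
u^N,\qquad u^{N-2a}v^a\quad(d+1\le a\le t),
\]
with $t=\lfloor N/2\rfloor$ and $p=t-d=\lfloor(k-d)/2\rfloor$. There are $p+1=\dim(R^{2,k})_N$ of them. The rectangle-rule expansion~\eqref{counter:triangular-change} is unitriangular on the consecutive indices $d+1,\dots,t$, and it sends $(1^N)$ to $u^N$ at level one. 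Hence the two families span the same space, and the same holds for the subfamilies obtained by deleting $(1^N)$. It therefore suffices to treat the monomials. Codimension at most one follows once the $p$ monomials $u^{N-2a}v^a$ ($d+1\le a\le t$) are shown to be independent in $(R^{2,k})_N$. That is exactly the second assertion, now stated for every $d\ge1$ and not only odd $d$.

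For independence I would use Poincar\'e duality with $\tau$, exactly as in the proof of Proposition~\ref{counter:even-offset}. Take the pairing matrix of these $p$ monomials against the basis $u^{k-d-2j}v^j$ of weight $k-d$. Since $p+1$ basis vectors are available, I would discard the last one, $j=p$, and keep a $p\times p$ minor. That minor is
\[
K'=\bigl(\gamma_k(d+i+j)\bigr)_{1\le i\le p,\ 0\le j\le p-1}
=\bigl(\gamma_k(d+1+i+j)\bigr)_{0\le i,j\le p-1}.
\]
It is a Hankel matrix of the moments of $\nu_k$ shifted by $d+1$, that is, a moment matrix for the signed measure $z^{d+1}\,d\nu_k(z)$. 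The exponents are at most $d+1+2(p-1)\le k-1$, so the absolute-integrability check from the earlier proof applies.

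Andr\'eief's identity~\eqref{counter:moment-determinant}, with $p$ variables, gives
\[
\det K'=\frac1{p!}\int\Delta(z)^2\prod_{j=0}^{p-1}z_j^{\,d+1}\prod_j d\nu_k(z_j).
\]
When $d$ is odd, the factor $z_j^{d+1}$ is an even power, so the integrand is nonnegative. It is strictly positive on the open set of distinct positive $z_j$, which has positive product measure, so $\det K'>0$. This proves the second assertion for odd $d$ and hence codimension at most one there; the case $p=0$ is trivial.

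For odd excess this settles the statement, because even excess was already handled. The main obstacle I expect is bookkeeping rather than analysis. I have to check that the chosen rows and columns of $K$ in~\eqref{counter:catalan-pairing} really give the shifted Hankel minor above, since $a_i=d+i$ for $i\ge1$, and that the moment formula $\gamma_k(b)=\int z^b\,d\nu_k$ stays valid at the largest exponent used. If the index shift turns out to produce $z^{d}$ instead of $z^{d+1}$, I would instead delete the column $j=0$, which restores an even power for odd $d$.
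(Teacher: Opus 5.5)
Your proposal is correct and matches the paper's proof. It uses the same reduction to the $p$ tail monomials and the same shifted Hankel minor $\bigl(\gamma_k(d+1+i+j)\bigr)_{0\le i,j<p}$ against the first $p$ complementary monomials. Positivity again comes from the nonnegative weight $z^{d+1}\,d\nu_k$ when $d$ is odd. The only difference is that the paper shows positive definiteness through $\int z^{d+1}f(z)^2\,d\nu_k>0$, whereas you evaluate the determinant by Andr\'eief's identity. The index shift is indeed $d+1$, so your fallback of deleting the column $j=0$ is not needed.
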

\begin{proof}
The ranges $N\le k$ and $d=N-k$ even are covered by
Proposition~\ref{counter:even-offset}. Suppose $d$ is positive and odd,
and put $p=\lfloor(k-d)/2\rfloor$. If $p=0$, the sole candidate is
$u^N$, and its pairing with $u^{k-d}$ is $C_k>0$.
For $p\ge1$, the complete-function tail in
\eqref{counter:general-two-row-candidates} has pairing matrix, against
the first $p$ complementary monomials,
\[
 H=\bigl(\gamma_k(d+1+i+j)\bigr)_{0\le i,j<p}.
\]
For any nonzero real polynomial $f(z)=\sum_{i=0}^{p-1}f_i z^i$, the
Catalan measure representation gives
\[
 \sum_{i,j=0}^{p-1}f_i f_j\gamma_k(d+1+i+j)
  =\int z^{d+1}f(z)^2\,d\nu_k(z)>0.
\]
Indeed, $d+1$ is even, the measure has positive mass on intervals,
and the largest moment exponent is $d+2p-1\le k-1$, so the integral
is absolutely convergent. Thus $H$ is positive definite and the $p$
tail vectors are independent. There are $p+1$ candidates in a
$(p+1)$-dimensional component, proving the codimension bound.
The triangular change in~\eqref{counter:triangular-change} involves
only the tail indices and has diagonal entries one. Hence the second
tail is independent as well.
\end{proof}

\subsection{A coefficient criterion and small odd excesses}

Recall that $d=N-k$ is the weight excess over the two-row rectangle width.
For $k\ge2$ and $1\le d\le k$, put $N=k+d$ and $t=\lfloor N/2\rfloor$.
Write
\[
 h_n(u,v)=\sum_{a=0}^{\lfloor n/2\rfloor}c_{n,a}u^{n-2a}v^a
 \qquad(n\ge0),
\]
and set $c_{n,a}=0$ if $a<0$ or $2a>n$.
Define a $d$ by $d$ matrix $M_{k,d}$, with rows indexed by
$a=1,\ldots,d$, by concatenating the following two blocks of columns:
\begin{equation}\label{counter:projection-matrix}
 M_{k,d}=\left(
    (c_{k+1,a-j})_{\substack{1\le a\le d\\0\le j\le\lfloor(d-1)/2\rfloor}}
   \ \middle|\
    (c_{k+2,a-j})_{\substack{1\le a\le d\\0\le j\le\lfloor(d-2)/2\rfloor}}
           \right).
\end{equation}
For $d=1$ the second block is empty.

\begin{lemma}[A two-row coefficient criterion]
\label{counter:projection-criterion}
In weight $N=k+d$, the span of each prescribed family has codimension
\[
 d-\operatorname{rank}_{\Q}M_{k,d}
\]
in $R^{2,k}_N$. In particular, each is a basis exactly when
$\det M_{k,d}\ne0$.
\end{lemma}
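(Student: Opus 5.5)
Both prescribed families span the same subspace of $\Q[u,v]_N$: namely the span $V$ of the monomials $u^N$ and $u^{N-2a}v^a$ with $d+1\le a\le t$. For the complete-symmetric family this is \eqref{counter:general-two-row-candidates}, obtained from the index computation in the proof of Proposition~\ref{counter:even-offset}. For the $k$-Schur family it follows from the triangular change \eqref{counter:triangular-change}. The codimension of the image of $V$ in $R^{2,k}_N$ therefore equals $\dim(V\cap I_N)$, where $I=(h_{k+1},h_{k+2})$. Indeed, $\dim V=t-d+1=\dim R^{2,k}_N$, so the image has dimension $\dim V-\dim(V\cap I_N)$. The plan is to identify $V\cap I_N$ with the kernel of $M_{k,d}$.

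First, describe $I_N$. It is spanned by $u^{d-1-2j}v^jh_{k+1}$ for $0\le j\le\lfloor(d-1)/2\rfloor$ and by $u^{d-2-2j}v^jh_{k+2}$ for $0\le j\le\lfloor(d-2)/2\rfloor$. These are $d$ spanning vectors in total. Since $\dim\Q[u,v]_N=t+1$ and $\dim R^{2,k}_N=t-d+1$ (the Schur count $(N-a,a)$, $d\le a\le t$), we get $\dim I_N=d$. Hence these $d$ products are a basis of $I_N$. This is the same counting argument used for $N=3r+2$ in the proof of Theorem~\ref{lg:thm:counter-intro}.

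Next, project $\Q[u,v]_N$ onto the coefficients of $u^{N-2a}v^a$ for $1\le a\le d$. The kernel of this projection is exactly $V$. The coefficient of $u^{N-2a}v^a$ in $u^{\ast}v^jh_{k+i}$ is $c_{k+i,a-j}$. So the matrix of the projection restricted to $I_N$, written in the above basis, is precisely $M_{k,d}$, with rows $a=1,\dots,d$ and the two column blocks. Consequently $V\cap I_N$ is isomorphic to $\ker M_{k,d}$, whose dimension is $d-\operatorname{rank}M_{k,d}$. This is the asserted codimension, and the family is a basis if and only if $\det M_{k,d}\ne0$.

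The only point needing care is the bookkeeping. We must check that the monomials with $a\ge d+1$ (together with $a=0$) are exactly the candidate monomials, which the index lemma supplies, and that the convention $c_{n,a}=0$ outside the range matches the entries of $M_{k,d}$. The case $d=1$, where the second block is empty, is handled by the same argument.
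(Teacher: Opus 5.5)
Your proposal is correct and follows the paper's proof. Both families span $V_{k,d}$, the $d$ products $u^{d-1-2j}v^jh_{k+1}$ and $u^{d-2-2j}v^jh_{k+2}$ form a basis of $I_N$, and projecting onto the coordinates with $1\le a\le d$ identifies $I_N\cap V_{k,d}$ with $\ker M_{k,d}$. The one difference is how you get independence of those $d$ products. You deduce it from the count $\dim I_N=(t+1)-(t-d+1)$ via the Schur basis, whereas the paper proves it directly from the syzygy degree $2k+3>N$ of the regular sequence $h_{k+1},h_{k+2}$ and uses the count only as an alternative way to compute $\dim R^{2,k}_N$; both arguments are valid.
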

\begin{proof}
The index calculation in the proof of
Proposition~\ref{counter:even-offset} did not use the parity of $d$.
It gives the indices in~\eqref{counter:general-two-row-candidates}:
the exceptional partition $(k,d)$ has $(k+1)$-core $(k+d,d)$
and $k$-conjugate $(1^N)$; all other two-row partitions have
largest hook at most $k$ and are conjugated by transposition.
The triangular change~\eqref{counter:triangular-change} also has
no parity restriction. Thus both polynomial families are bases of
\[
 V_{k,d}=\Span_{\Q}\{u^N,\ u^{N-2a}v^a:d+1\le a\le t\}.
\]
Put $I=(h_{k+1},h_{k+2})\subseteq\Q[u,v]$. The weight-$N$ part
of $I$ is spanned by
\[
 u^{d-1-2j}v^jh_{k+1}
   \quad(0\le j\le\lfloor(d-1)/2\rfloor),\qquad
 u^{d-2-2j}v^jh_{k+2}
   \quad(0\le j\le\lfloor(d-2)/2\rfloor).
\]
These $d$ vectors are independent. Indeed $h_{k+1},h_{k+2}$
form a regular sequence by the ordinary cohomology presentation,
so their first syzygy has weight $(k+1)+(k+2)=2k+3>N$.
Equivalently, their coprimality in $\Q[u,v]$ forces the coefficient
of $h_{k+1}$ in a syzygy to be divisible by $h_{k+2}$, which is
impossible in weight $d-1\le k-1$ unless that coefficient is zero.
The same then holds for the other coefficient.

Projection onto the coordinates of
$u^{N-2a}v^a$ for $1\le a\le d$ has kernel $V_{k,d}$.
Its restriction to the displayed basis of $I_N$ is precisely
$M_{k,d}$. Hence
\[
 \dim(I_N\cap V_{k,d})=d-\operatorname{rank}M_{k,d}.
\]
Both $V_{k,d}$ and $R^{2,k}_N$ have dimension $t-d+1$:
for the latter, count $(N-a,a)$ with $d\le a\le t$, or subtract
$\dim I_N=d$ from $\dim\Q[u,v]_N=t+1$.
Passing $V_{k,d}$ to the quotient proves the assertion.
\end{proof}

\begin{proposition}[Excesses one and three]\label{counter:odd-small}
For every $k\ge2$, both prescribed families are bases in weight
$k+1$ of $R^{2,k}$. For $k\ge3$, both are bases in weight $k+3$
if and only if $k\not\equiv2\pmod3$. If $k\equiv2\pmod3$,
each has codimension exactly one. Thus
Theorem~\ref{lg:thm:counter-intro} gives all failures in the
excess-three range, not just a subsequence of failures there.
\end{proposition}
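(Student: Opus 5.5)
By Lemma~\ref{counter:projection-criterion}, the whole statement comes down to computing $\det M_{k,d}$ for $d=1$ and $d=3$. The main tool is the generating-function formula~\eqref{counter:coefficient-extraction}, which holds for every $n$ (its derivation never used $n=3r$):
\[
 c_{n,a}=[z^{n-2a}]\,(1+z)^{a+1}(1+z^3)^{-(a+1)} .
\]

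\emph{Excess one.} Here $M_{k,1}$ is the $1\times1$ matrix $(c_{k+1,1})$. From the formula,
\[
 c_{k+1,1}=[z^{k-1}](1+z)^2(1+z^3)^{-2}.
\]
The series $(1+z^3)^{-2}$ has coefficient $(-1)^j(j+1)$ at $z^{3j}$. Multiplying by $1+2z+z^2$ gives a nonzero coefficient at every exponent, namely $(-1)^j(j+1)$, $2(-1)^j(j+1)$ or $(-1)^j(j+1)$ at exponents $3j$, $3j+1$, $3j+2$. So $c_{k+1,1}\neq0$ and both families are bases in weight $k+1$.

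\emph{Excess three.} Here $M_{k,3}$ is a $3\times3$ matrix. Its rows are indexed by $a=1,2,3$. Its columns are $c_{k+1,a}$, $c_{k+1,a-1}$ and $c_{k+2,a}$. I would write $k+1=n$ and split into the three residues of $n$ modulo $3$. For each residue, I compute the six needed coefficients $c_{n,0},\dots,c_{n,3}$ and $c_{n+1,1},\dots,c_{n+1,3}$ as explicit polynomials in $r=\lfloor n/3\rfloor$. This is the same binomial expansion that produced table~\eqref{counter:low-coefficients}: expand $(1+z)^{a+1}$ and pick the terms of $(1+z^3)^{-(a+1)}$ whose exponents land in the right residue class.

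When $k\equiv2\pmod3$, we have $n=3r$ and $n+1=3r+1$, which is exactly the table. Theorem~\ref{lg:thm:counter-intro} then already gives rank exactly $2$, so the codimension is one. Its proof used $r\ge2$. The case $k=2$ lies outside the hypothesis $k\ge3$, since excess three requires $d\le k$; for $k=5$ we get $r=2$, which is covered. The point to check is that the rank-two minor computed there, $\det=\tfrac12 r(5r+3)$, sits inside $M_{k,3}$. It does, because the proof's projection matrix is $M_{k,3}$ up to the ordering of columns.

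For the other two residues, $k\equiv0$ and $k\equiv1\pmod3$, I would evaluate $\det M_{k,3}$ symbolically. The result should be a polynomial in $r$, times a sign, with no nonnegative integer roots in the admissible range. I would then check the small boundary cases separately: $k=3$ and $k=4$, where some coefficients are read as zero because $2a>n$ or because the exponent is negative.

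The main obstacle is this determinant computation for the two nonfailing residues. I would first try to simplify it with column operations that mirror the certificate~\eqref{counter:uniform-certificate}: combine $u^2h_{n}$, $vh_{n}$ and $uh_{n+1}$ so as to clear two entries, leaving a product of a $2\times2$ minor and a single entry. I would then show that each factor is a nonvanishing polynomial in $r$, for example by factoring it and checking its roots, and finally combine the cases.
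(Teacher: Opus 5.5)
Your reduction via Lemma~\ref{counter:projection-criterion}, reading the entries of $M_{k,1}$ and $M_{k,3}$ off \eqref{counter:coefficient-extraction} one residue class at a time, is exactly the paper's route. Two parts are complete:
\begin{itemize}
\item The excess-one argument. Your nonvanishing coefficients of $(1+z)^2(1+z^3)^{-2}$ are the paper's values $-r$, $r+1$, $2(r+1)$ for $k=3r,3r+1,3r+2$, up to the sign $(-1)^r$.
\item The case $k\equiv2\pmod3$. Citing Theorem~\ref{lg:thm:counter-intro} is legitimate, because its proof shows that this very matrix, $M_{k,3}$ with the same column order, has rank exactly two. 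The paper instead recomputes $\det M_{k,3}=0$ and the minor $\tfrac12(r+1)(5r+8)$ with $k=3r+2$, which is your $\tfrac12r(5r+3)$ after $r\mapsto r+1$.
\end{itemize}

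The gap is the ``if'' direction for $k\equiv0,1\pmod3$, which is the entire content of the statement in those classes. You only predict that the determinant ``should'' be nonvanishing; that still has to be computed, though the computation is short. Take $k=3r$ or $k=3r+1$, with your $r$ and $r\ge1$. For $k=3r+1$ the last column uses $c_{3(r+1),a}$, that is, the $3r$ row of the coefficient table with $r\mapsto r+1$ and an extra sign. The normalized matrices $(-1)^rM_{k,3}$ are then, respectively,
\[
 \begin{pmatrix}-r&1&r+1\\ -r&-r&-\tfrac32r(r+1)\\ \tfrac12r(r-1)(r+2)&-r&r(r-1)(r+1)\end{pmatrix},
 \qquad
 \begin{pmatrix}r+1&0&2(r+1)\\ -\tfrac32r(r+1)&r+1&-\tfrac32r(r+1)\\ r(r-1)(r+1)&-\tfrac32r(r+1)&\tfrac12r(r-2)(r+1)\end{pmatrix}.
\]
Pull $r$ out of the last two rows of the first matrix, and $r+1$ out of every row of the second. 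The remaining determinants are $\tfrac14(r+1)^2(3r+2)$ and $\tfrac14r(3r+4)$, so
\[
 \det\bigl((-1)^rM_{3r,3}\bigr)=\tfrac14r^2(r+1)^2(3r+2),\qquad
 \det\bigl((-1)^rM_{3r+1,3}\bigr)=\tfrac14r(r+1)^3(3r+4).
\]
Both are nonzero for $r\ge1$.

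Two smaller points simplify your plan:
\begin{itemize}
\item The polynomial entries are already correct at $r=1$: the entries that must vanish there do vanish, matching $c_{4,3}=c_{5,3}=0$. So $k=3,4$ need no separate boundary check.
\item The certificate-style column operations you propose have nothing to mirror in these classes, since there is no kernel vector. Factoring out the common row factors is all that is needed.
\end{itemize}
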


\begin{proof}
Lemma~\ref{counter:projection-criterion} reduces the assertions to
matrices of orders one and three. The first is nonzero for every
$k\ge2$. For $k=3r+c\ge3$, $c\in\{0,1,2\}$, the order-three matrix
has full rank for $c=0,1$ and rank two for $c=2$; the coefficient
tables and determinants are given in
Appendix~\ref{counter:appendix:small-excesses}. Thus the failures are
exactly $k=3r+2$, $r\ge1$, the family of
Theorem~\ref{lg:thm:counter-intro}.
\end{proof}

\begin{proposition}[Excess five]
\label{counter:excess-five}
For every $k\ge5$, both prescribed families are bases of
$(R^{2,k})_{k+5}$. For $2\le k<5$, this component and its
weight-$k+5$ index set are empty.
\end{proposition}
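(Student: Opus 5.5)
The plan is to apply Lemma~\ref{counter:projection-criterion} with $d=5$. The empty-range assertion comes first. For $2\le k<5$ we have $N=k+5>2k$, the top weight of $R^{2,k}$, so the component is zero. Every index in $\mathcal P^{2,k,2}$ satisfies $\lambda^{\omega(k)}\subseteq(k^2)$, and $k$-conjugation preserves size, so no index has weight exceeding $2k$; the index set is therefore empty as well. For $k\ge5$ we have $d=5\le k$, so the lemma applies. It reduces the assertion to $\det M_{k,5}\ne0$. Here $M_{k,5}$ is the $5\times5$ matrix with rows $a=1,\dots,5$ and columns
\[
 c_{k+1,a},\ c_{k+1,a-1},\ c_{k+1,a-2},\ c_{k+2,a},\ c_{k+2,a-1}.
\]

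The next step is to make the entries explicit. The extraction formula~\eqref{counter:coefficient-extraction} gives
\[
 c_{n,a}=\sum_{i\ge0}(-1)^i\binom{a+i}{i}\binom{a+1}{n-2a-3i}.
\]
For fixed $a\le5$, only the $i$ with $0\le n-2a-3i\le a+1$ contribute, which is at most two or three consecutive values of $i$. I write $k=3r+c$ with $c\in\{0,1,2\}$. Then each $c_{k+1,a}$ and $c_{k+2,a}$ with $0\le a\le5$ becomes $(-1)^{r}$ or $(-1)^{r+1}$ times an explicit polynomial in $r$ of degree at most $a$. This extends table~\eqref{counter:low-coefficients} to $a\le5$ and to all three residues. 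Common signs can be pulled out of rows and columns, so the determinant is $\pm$ a polynomial $\Phi_c(r)\in\Q[r]$ for each residue class.

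Then I would compute $\Phi_c(r)$ for $c=0,1,2$ and factor it over $\Q$. The matrix is banded: column $c_{k+1,a-j}$ vanishes for $a<j$, and likewise for the $c_{k+2}$ columns. So a Laplace expansion along the first rows, which contain few nonzero entries, should keep the computation manageable. The goal is to show that no root of $\Phi_c$ is an integer $r$ with $3r+c\ge5$. Concretely, I would list the rational roots using the rational-root test, or show that the factors are positive for the relevant $r$, for example by rewriting them as polynomials in $r-2$ with positive coefficients.

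The main obstacle is the bookkeeping in the determinant. I also have to check separately the small values $k=5,6,7$, where the generic polynomial formulas may fail because some coefficients of negative powers of $z$ must be read as zero. For those finitely many $k$ I would evaluate $M_{k,5}$ directly from the recurrence~\eqref{counter:uv-ring}. The symbolic determinant I would verify with the computational supplement and record the tables in the appendix, as the paper does for excesses one and three.
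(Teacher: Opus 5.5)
Your proposal follows the paper's proof essentially step for step. The paper also applies Lemma~\ref{counter:projection-criterion} with $d=5$ and the same five columns. It writes $(-1)^rM_{k,5}$ for $k=3r+c$ with polynomial entries in $r$, and obtains fully factored determinants; for instance, $\det A_0(r)=-r^3(r-1)(r+1)^3(r+2)(3r+2)(3r+4)(3r+5)/5760$. In all three residue classes the integer roots lie in $r\in\{0,\pm1,-2\}$, outside the legal ranges $r\ge2$ for $c=0,1$ and $r\ge1$ for $c=2$. The paper certifies these identities by exact interpolation, since each determinant has degree at most eleven in $r$.

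One small simplification: your polynomial entry formulas are already exact for every $r\ge0$. A would-be negative-exponent term carries the binomial $\binom{i+a}{a}$ with $-a\le i<0$, which vanishes. So your separate direct check of $k=5,6,7$ is harmless but unnecessary.
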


\begin{proof}
For $k\ge5$, write $k=3r+c$, $c\in\{0,1,2\}$. The factorized
determinants in Appendix~\ref{counter:appendix:excess-five} are nonzero
in the legal ranges $r\ge2$ for $c=0,1$ and $r\ge1$ for $c=2$.
Lemma~\ref{counter:projection-criterion} applies. For $k<5$, the weight
$k+5$ exceeds $2k$, and weight preservation of $k$-conjugation makes
the index set empty as well.
\end{proof}

\begin{proposition}[The six highest weights in two rows]
\label{counter:near-top}
For $k\ge2$ and $0\le N\le2k$ with $N\ge2k-5$, the weight-$N$ candidates in
each family indexed by $\mathcal P^{2,k,2}$ form a basis of
$(R^{2,k})_N$, except when
$(k,N)=(5,8)$ or $(8,11)$. In either component each span has codimension one.
\end{proposition}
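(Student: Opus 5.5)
Write $N=2k-\delta$ with $0\le\delta\le5$, and put $d=N-k=k-\delta$. This is the excess of the weight over the rectangle width. The plan is to split on the parity of $d$ and to use the results already proved for each case.

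First, if $d\le0$ (so $N\le k$) or $d$ is even with $0\le d\le k$, Proposition~\ref{counter:even-offset} already gives bases. It remains to treat odd $d$, that is, $\delta\equiv k+1\pmod2$, with $1\le d\le k$. Here $p=\lfloor(k-d)/2\rfloor=\lfloor\delta/2\rfloor\in\{0,1,2\}$, so the component has dimension $p+1\le3$. By Proposition~\ref{counter:defect-bound}, the tail candidates $u^{N-2a}v^a$ for $d+1\le a\le t$ are independent, and the codimension is at most one. Thus the family is a basis exactly when $u^N$ lies outside the tail span. By Poincar\'e duality, as in the proof of Proposition~\ref{counter:even-offset}, this is equivalent to the nonvanishing of the $(p+1)\times(p+1)$ Catalan-moment determinant $\det K$ from~\eqref{counter:catalan-pairing}. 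Its entries are $\gamma_k(a_i+j)$ with $a_0=0$ and $a_i=d+i$. Lemma~\ref{counter:two-row-schur-span} and the triangular change~\eqref{counter:triangular-change} transfer the conclusion to the $k$-Schur family.

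I will then compute in the three cases. For $p=0$ ($\delta\in\{0,1\}$), the matrix is $(C_k)$, which is positive, so these weights always give bases. For $p=1$ ($\delta\in\{2,3\}$), and for $p=2$ ($\delta\in\{4,5\}$), I will evaluate $\det K$ as an explicit function of $k$. The moments $\gamma_k(b)=\sum_i(-1)^i\binom bi C_{k-i}$ for $b$ close to $k$ are finite alternating sums of Catalan numbers $C_{k-i}$ with $i$ near $k$. So I will rewrite them using $C_{j}$ for small $j$ via the reindexing $b=k-\delta+\cdots$. Better: express $\gamma_k(b)$ through the reversed moments $\int x^{k}(1-1/x)^b\,d\mu=\int x^{k-b}(x-1)^b\,d\mu$. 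When $k-b=c$ is a small fixed number, this becomes $\sum_j\binom{b}{j}(-1)^{b-j}C_{c+j}$, a polynomial-type expression in $b$ times a Catalan factor. Then $\det K$ for each $\delta$ and each parity becomes a rational function of $k$ times products of central binomial coefficients. Its zeros among integers $k$ must be found exactly.

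The main obstacle is this exact evaluation and root-finding for $p=2$, which corresponds to excess $d=k-4$ or $k-5$. As a cross-check, and probably the cleaner route, I would instead use Lemma~\ref{counter:projection-criterion} in the dual form. The codimension equals the rank deficiency of a small matrix built from top coefficients. Equivalently, I would pair $I_N$ against the complementary-weight data, using the closed form~\eqref{counter:coefficient-extraction} of $c_{n,a}$ near the top index $a\approx n/2$, where only a few binomial terms survive. I expect to find that the determinant factors as a nonzero constant times a polynomial in $k$ whose only admissible roots give $(k,N)=(5,8)$, which is $\delta=2$, and $(8,11)$, which is $\delta=5$. Both are consistent with Theorem~\ref{lg:thm:counter-intro} at $r=2,3$. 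Those two components have codimension exactly one by Proposition~\ref{counter:defect-bound}, and the cases with $k$ too small for the given $\delta$ ($d<1$) were already covered. I would verify the finitely many small $k$ directly from the appendix tables.
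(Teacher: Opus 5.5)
Your reduction matches the paper's. Even excess and $N\le k$ go to Proposition~\ref{counter:even-offset}. For odd $d$, tail independence from Proposition~\ref{counter:defect-bound} reduces everything to the nonvanishing of the Catalan pairing determinant $\det K$, of order $p+1\le3$. The case $p=0$ is $C_k>0$, and any singular case automatically has codimension one.

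\textbf{The gap.} It lies in the step you flag as the main obstacle, and the mechanism you propose there cannot work.

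\emph{No closed form in $k$.} The reversed moments $\int_0^4x^{c}(x-1)^b\,d\mu(x)$ are not hypergeometric in $b$.
\begin{itemize}
\item For $c=0$ they are the Riordan numbers $1,0,1,1,3,6,15,36,91,\ldots$.
\item For $c=1$ they are the Motzkin numbers $1,1,2,4,9,21,51,\ldots$.
\end{itemize}
Their only usable structure is the three-term Pearson recurrence $(j+2+\sigma)\mathcal R_{j+1}=(2j+3\sigma)\mathcal R_j+3j\mathcal R_{j-1}$. So $\det K$ is not a rational function of $k$ times products of central binomials, and there is no polynomial in $k$ whose roots could be listed.

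\emph{The fallback does not help.} The matrix $M_{k,d}$ of Lemma~\ref{counter:projection-criterion} has order $d=k-\delta$, which grows with $k$ in the near-top range. That criterion yields small matrices only at small excess, which is why the paper uses it for $d=1,3,5$.

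\emph{Consistency is not proof.} Agreement with Theorem~\ref{lg:thm:counter-intro} at $r=2,3$ confirms that $(5,8)$ and $(8,11)$ fail. It gives no reason why no other $k$ fails, and that is the content of the proposition.

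\textbf{What is needed.} You need an argument controlling these non-closed-form moments for every even $m=d+1$. The paper uses the Pearson recurrence to collapse $\det K$ to a single quantity.

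\emph{Case $p=1$.} Here $\det K$ is a positive multiple of $S_\sigma(m)=3\mathcal R^{(\sigma)}_{m-1}-\mathcal R^{(\sigma)}_m$. Write this as an integral over $[-1,3]$ and fold $[-1,0]$ onto $[0,1]$, using that $m-1$ is odd.
\begin{itemize}
\item For $\sigma=1$ the folded integrand is positive.
\item For $\sigma=0$ one has $S_0(2)=-1$ and $S_0(4)=0$. Subtracting the $m=4$ instance gives $S_0(m)>0$ for $m\ge6$.
\end{itemize}
Hence the only zero is $(k,N)=(5,8)$.

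\emph{Case $p=2$.} After dividing by $C_sT_m^2$, the determinant is a nonzero rational multiple of a quadratic $Q_\sigma(t)$ in the \emph{rational} ratio $t=\mathcal R^{(\sigma)}_{m-1}/\mathcal R^{(\sigma)}_m$. Its discriminant is $81(m+2)^2(16m^2+32m+1)$ for $\sigma=0$ and $144(m+3)^2(4m^2+14m+1)$ for $\sigma=1$. This is a perfect square only for $\sigma=1$, $m=4$. There $t=4/9$ is indeed a root, giving $(8,11)$.

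This irrationality argument, not a factorization in $k$, is the missing step in your proposal.
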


\begin{proof}
Only positive odd $d=N-k$ remains after
Proposition~\ref{counter:even-offset}. The complementary weight
$2k-N\le5$ gives a pairing matrix of order at most three in
\eqref{counter:catalan-pairing}. Appendix~\ref{counter:appendix:near-top}
uses a Pearson recurrence for shifted Catalan moments to show that
the order-two determinant vanishes only at $(k,N)=(5,8)$, and the
order-three determinant only at $(k,N)=(8,11)$. The order-one matrix
is $(C_k)$. In either exceptional case, singularity together with
Proposition~\ref{counter:defect-bound} gives codimension one.
\end{proof}

\begin{remark}[Scope of the two-row classification]
Lemma~\ref{counter:projection-criterion} applies to every
$1\le d\le k$. Proposition~\ref{counter:even-offset} determines the even excesses;
Proposition~\ref{counter:odd-small} treats excesses one and three,
and Proposition~\ref{counter:excess-five} treats excess five. Proposition~\ref{counter:defect-bound}
shows that every remaining failure has codimension one.
Proposition~\ref{counter:near-top} settles the six highest weights.
At top weight $N=2k$, both families consist of $u^{2k}$ and are
bases for every $k\ge2$: its top coefficient is $C_k>0$. Above top weight
the components and index sets are empty. A general classification
of the remaining odd excesses $d\ge7$ outside the near-top range
is not obtained here.
\end{remark}

\appendix
\section{Finite complete intersections}
\label{ca:section}
We record the standard graded facts used in Lemma~\ref{common:ci-comparison}.
Let $S=\Q[x_1,\ldots,x_h]$, with positive weights $w_i$, and put
$W=\sum_iw_i$. The graded dual of a finite-dimensional module $M$ is
$(M^\vee)_t=\operatorname{Hom}_\Q(M_{-t},\Q)$, with
$(x_i\phi)(v)=\phi(x_iv)$.
The Koszul complex on homogeneous $f_i$ of degrees $d_i$ has term
$\bigoplus_{|J|=j}S(-\sum_{i\in J}d_i)$ in homological degree $j$;
its differential is the alternating sum of multiplication by the
$f_i$, deleting the corresponding exterior generator.
Internally graded Hom is the direct sum of spaces of homogeneous
module maps of each degree,
and $\Ext_S^i$ is computed by applying Hom to a graded free resolution.

\begin{lemma}[A finite quotient from a single common zero]\label{ca:finite-zero}
If polynomials in $S$ have only the origin as their common complex zero,
their quotient is finite dimensional over $\Q$.
\end{lemma}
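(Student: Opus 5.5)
The plan is to use the graded Nullstellensatz together with Nakayama's lemma for positively graded modules. Let $\mathfrak m=(x_1,\ldots,x_h)$ and let $\mathfrak c$ be the ideal generated by the given polynomials. First we reduce to homogeneous generators. In the applications the polynomials are homogeneous of positive weight, and we state the lemma for that case; if they are not homogeneous, the assertion still follows from the ungraded Nullstellensatz below, which does not use the grading.

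\emph{Step 1 (radical).} We extend scalars to $\C$: $S_\C=\C[x_1,\ldots,x_h]$ and $\mathfrak c_\C=\mathfrak cS_\C$. By hypothesis the zero set $V(\mathfrak c_\C)$ is $\{0\}$. Hilbert's Nullstellensatz then gives $\sqrt{\mathfrak c_\C}=\mathfrak m_\C$. Since $\mathfrak m_\C$ is finitely generated, some power lies inside the ideal: $x_i^{N}\in\mathfrak c_\C$ for every $i$ and some fixed $N$.

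\emph{Step 2 (descent to $\Q$).} Ideal membership over a field extension descends to $\Q$, because $S_\C=S\otimes_\Q\C$ is faithfully flat over $S$. Concretely, write $x_i^N=\sum_j a_jf_j$ with $a_j\in S_\C$. Choose a $\Q$-basis of $\C$ containing $1$ and project each coefficient of the $a_j$ onto the coordinate of $1$. This yields an expression $x_i^N=\sum_j a_j'f_j$ with $a_j'\in S$, so $x_i^N\in\mathfrak c$.

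\emph{Step 3 (finiteness).} Since $x_i^N\in\mathfrak c$ for all $i$, the quotient $S/\mathfrak c$ is spanned by the images of the monomials $x^\alpha$ with $\alpha_i<N$ for every $i$. There are $N^h$ such monomials, so $\dim_\Q S/\mathfrak c\le N^h<\infty$.

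No step is a real obstacle. The only point needing care is Step 2, where the hypothesis concerns complex zeros while the conclusion is over $\Q$. The linear-projection argument handles this without invoking flatness theory. Alternatively, one can argue that $\dim_\Q S/\mathfrak c=\dim_\C S_\C/\mathfrak c_\C$, since forming a quotient commutes with the exact base change $-\otimes_\Q\C$.
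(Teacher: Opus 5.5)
Your argument is correct and is essentially the paper's proof: extend to $\C$, use the Nullstellensatz to put a power of each variable in the extended ideal, and conclude that bounded-exponent monomials span. The paper descends to $\Q$ by noting that scalar extension preserves linear independence, which is your alternative dimension argument, while your projection onto the coordinate of $1$ descends the membership $x_i^N\in\mathfrak c$ directly and is equally valid. Your opening mention of a graded Nakayama argument and of a reduction to homogeneous generators is never used, and you can drop it.
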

\begin{proof}
After extending to $\C$, the Nullstellensatz puts a power of each variable
in the ideal~\cite[Tag~00FS]{Stacks}. Bounded-exponent monomials span the
quotient, and extension of scalars preserves linear independence.
\end{proof}

Finite-length graded duality gives the natural identifications
\begin{equation}\label{ca:finite-duality}
 \Ext_S^i(M,S(-W))=0\quad(i\ne h),\qquad
 \Ext_S^h(M,S(-W))\simeq M^\vee
\end{equation}
for finite-dimensional graded $M$.
The positive-weight form of local duality in~\cite[pp.~501, 503]{Symonds}
uses the canonical shift $S(-W)$.
Here local cohomology means the right derived functors of the submodule
consisting of elements annihilated by some power of $(x_1,\ldots,x_h)$.
For finite-length $M$, the zeroth local cohomology is $M$ and the
higher local cohomology vanishes, giving~\eqref{ca:finite-duality}.
In particular, top Ext is exact and faithful as a contravariant
functor on these modules.

\begin{lemma}[Finite complete intersections]\label{ca:ci}
Let $f_1,\ldots,f_h\in S$ be homogeneous of positive degrees $d_i$.
If their only common complex zero is the origin, they form a regular
sequence in the stated order, their Koszul complex resolves
$T=S/(f_1,\ldots,f_h)$, and
\begin{equation}\label{ca:hilbert}
 \Hilb(T;q)=\frac{\prod_{i=1}^h(1-q^{d_i})}
                  {\prod_{i=1}^h(1-q^{w_i})}.
\end{equation}
\end{lemma}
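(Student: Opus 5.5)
We argue by induction on $h$, using the Nullstellensatz input of Lemma~\ref{ca:finite-zero} together with the standard graded theory of depth. By Lemma~\ref{ca:finite-zero}, the quotient $T$ is finite dimensional, so the ideal $\mathfrak a=(f_1,\ldots,f_h)$ has radical equal to the irrelevant ideal $\mathfrak m=(x_1,\ldots,x_h)$. Hence $\mathfrak a$ has height $h$ in the $h$-dimensional graded polynomial ring $S$. Since $S$ is Cohen--Macaulay, any $h$ homogeneous elements of positive degree generating an ideal of height $h$ form a regular sequence, in any order. To keep the argument self-contained in the graded setting, I would verify this order by order. Each partial ideal $(f_1,\ldots,f_i)$ has height exactly $i$, since adjoining $h-i$ further elements can raise height by at most $h-i$ by Krull's height theorem. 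The unmixedness theorem for the Cohen--Macaulay ring $S/(f_1,\ldots,f_{i})$ then shows that every associated prime has height $i$. In particular $f_{i+1}$ lies in no associated prime, since otherwise $(f_1,\ldots,f_{i+1})$ would have height $i$. This gives injectivity of multiplication by $f_{i+1}$. Properness of the ideal follows from positivity of the degrees.

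Next, I would invoke the standard fact that the Koszul complex on a regular sequence is acyclic, proved by induction on $h$ via the mapping-cone description $K(f_1,\ldots,f_{i+1})=\operatorname{Cone}(f_{i+1}:K(f_1,\ldots,f_i)\to K(f_1,\ldots,f_i))$. The long exact homology sequence, together with the inductive hypothesis that the smaller complex resolves $S/(f_1,\ldots,f_i)$, reduces acyclicity to the injectivity of $f_{i+1}$ on that quotient. This shows that the Koszul complex is a graded free resolution of $T$, with the grading shifts recorded before the lemma.

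Finally, the Hilbert series follows by additivity along the graded resolution. We have $\Hilb(S;q)=\prod_i(1-q^{w_i})^{-1}$, and $\Hilb(S(-a);q)=q^a\Hilb(S;q)$. The alternating sum over subsets $J$ of $(-1)^{|J|}q^{\sum_{i\in J}d_i}$ factors as $\prod_i(1-q^{d_i})$, which gives \eqref{ca:hilbert}.

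The only delicate point is the regularity in the prescribed order, since regularity can depend on order outside the local or graded setting. Here homogeneity with positive weights makes the graded ring behave like a local ring, and it is exactly the step where unmixedness, or equivalently the graded Nakayama lemma, is needed. If I wanted to avoid citing unmixedness, I would first try the alternative route of proving that $\dim T$ equals the coefficient sum predicted by the Koszul Euler characteristic. Acyclicity would then follow from the vanishing of higher Koszul homology, which holds because depth equals $h$ when $\mathfrak a$ is $\mathfrak m$-primary, by \cite[Tag~00LP]{Stacks}-type statements.
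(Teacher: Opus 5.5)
Your overall strategy works, but the one step you justify loosely is exactly the order-sensitive one. The claim that $(f_1,\ldots,f_i)$ has height $i$ "because adjoining $h-i$ further elements can raise the height by at most $h-i$" does not follow from Krull's height theorem, and it is false in general. In $\Q[x,y,z]$ the ideal $(y(1-x),z(1-x))$ has height one, since it lies in $(1-x)$, while adjoining $x$ gives $(x,y,z)$, of height three. Correspondingly, $y(1-x),z(1-x),x$ has only the origin as common zero but is not a regular sequence.

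What rescues the step here is homogeneity. Each minimal prime $P$ of $(f_1,\ldots,f_i)$ is homogeneous, hence contained in $\mathfrak m$. Since $\mathfrak m$ is the only prime containing $\mathfrak a$, the ideal $\mathfrak m/P$ is minimal over the images of $f_{i+1},\ldots,f_h$ in $S/P$. Krull's theorem in $S/P$ then gives $\operatorname{ht}(\mathfrak m/P)\le h-i$, and the dimension formula $\operatorname{ht}\mathfrak m=\operatorname{ht}P+\operatorname{ht}(\mathfrak m/P)$ in the polynomial ring gives $\operatorname{ht}P\ge i$. The unmixedness step, by contrast, needs no grading, because the unmixedness theorem holds globally in the Cohen--Macaulay ring $S$. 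Your closing remark therefore puts the graded input at the wrong step.

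With this repair your argument is valid, and it takes a genuinely different route from the paper. The paper never computes heights. It localizes at $\mathfrak m$, where the $f_i$ form a system of parameters of the regular local ring $S_{\mathfrak m}$, so they are regular in every order by the local parameter theorem. It then descends to $S$: the kernel of multiplication by $f_{i+1}$ on $S/(f_1,\ldots,f_i)$ is graded, and a nonzero homogeneous element cannot be annihilated by a polynomial with nonzero constant term. That confines the whole use of homogeneity to one line and avoids catenarity and unmixedness. Your approach stays inside $S$ but spends more machinery at precisely the delicate point. Your Koszul mapping-cone argument and the alternating Hilbert-series computation match the paper's final step.
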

\begin{proof}
By Lemma~\ref{ca:finite-zero}, the $f_i$ form a homogeneous system of
parameters, that is, an $h$-tuple generating an ideal of finite colength.
The parameter theorem gives regularity after localization at the
homogeneous maximal ideal~\cite[Tags~00ND and~02JN]{Stacks}.
A homogeneous kernel element cannot vanish after this localization:
a polynomial with nonzero constant term could not annihilate its
lowest homogeneous component. Thus regularity holds already in $S$. The Koszul resolution of a regular
sequence gives~\eqref{ca:hilbert} by its alternating Hilbert series.
\end{proof}
Empty variable and generator lists give $S=T=\Q$, with determinant
and product equal to $1$; the formulas also apply when $h=0$.

\section{Two-row determinant calculations}
\label{counter:appendix}
The proofs here complete the coefficient and top-pairing calculations
used in Section~\ref{lg:sec:counter}. The coefficient and determinant
identities are over $\Q$; they use the recurrence~\eqref{counter:uv-ring}
and the Catalan moments defined in~\eqref{counter:catalan-pairing}.

\subsection{The first member and a local replacement}
\label{counter:appendix:local}
For the smallest member $r=2$ of the family in
Theorem~\ref{lg:thm:counter-intro}, put $x=e_1$ and $y=e_2$.
Then $u=x$, $v=x^2-y$, and
\[
 R^{2,5}=\Q[x,y]/(h_6,h_7),\qquad h_2=x^2-y,
\]
where
\begin{align*}
 h_6&=x^6-5x^4y+6x^2y^2-y^3,\\
 h_7&=x^7-6x^5y+10x^3y^2-4xy^3.
\end{align*}
The following equivalent form of $-P_2$ is an integral polynomial
identity:
\begin{equation}\label{lg:eq:certificate}
 3x^8-14(x^2-y)^4=(14y-24x^2)h_6+13x h_7.
\end{equation}
It verifies the dependence without a rank computation.
Independently, the Pieri rule or Jacobi--Trudi reduction gives
\begin{equation}\label{lg:eq:schur-counter}
 \begin{aligned}
 h_1^8&=28s_{(5,3)}+14s_{(4,4)},\\
 h_2^4&=6s_{(5,3)}+3s_{(4,4)}.
 \end{aligned}
\end{equation}
The two Schur classes on the right form a basis of $(R^{2,5})_8$,
so the candidates are nonzero and span a single line.
Also $y=h_1^2-h_2$, showing directly that $R^{2,5,2}=R^{2,5}$.
The prescribed levels give $s_{(1^8)}^{(1)}=h_1^8$ and
$s_{(2^4)}^{(2)}=h_2^4$.

\begin{remark}\label{lg:rem:local-repair}
In this one degree, replacing $h_2^4$ by $h_2h_1^6$ repairs the basis:
\[
 h_2h_1^6=19s_{(5,3)}+9s_{(4,4)},\qquad
 \det\begin{pmatrix}28&19\\14&9\end{pmatrix}=-14.
\]
Thus $h_1^8,h_2h_1^6$ form a basis of $(R^{2,5})_8$.
Corollary~\ref{counter:uniform-repair} gives a different replacement
for the entire counterexample family. These replacements apply only
to the stated components; they do not give bases for arbitrary
parameters and weights.
\end{remark}

\subsection{Excesses one and three}
\label{counter:appendix:small-excesses}
\begin{proof}[Proof of Proposition~\ref{counter:odd-small}]

The coefficient identity~\eqref{counter:coefficient-extraction}
gives the following table, in which every entry is divided by
$(-1)^r$:
\begin{equation}\label{counter:three-residue-coefficients}
\begin{array}{c|rrrr}
 n & c_{n,0} & c_{n,1} & c_{n,2} & c_{n,3}\\ \hline
 3r   &1&-2r&\frac32r(r-1)&-\frac12r(r-3)(r-1)\\
 3r+1 &1&-r&-r&\frac12r(r-1)(r+2)\\
 3r+2 &0&r+1&-\frac32r(r+1)&r(r-1)(r+1)
\end{array}
\end{equation}
For $n=3r+c$ and $a\le3$, expand the numerator in
\eqref{counter:coefficient-extraction}; a numerator term $z^j$
contributes only when $c-2a-j$ is divisible by three. Put
$b=(c-2a-j)/3$. Its contribution, after division by $(-1)^r$,
is
\[
 \binom{a+1}{j}(-1)^b\binom{r+b+a}{a}.
\]
Summing over $0\le j\le a+1$ gives the table. For the small boundary
values, coefficients with negative series exponent are zero; the
same values follow either from the displayed products defining the
binomial coefficients or from $h_0=1$, $h_1=u$, and the recurrence
in~\eqref{counter:uv-ring}.

For $d=1$, the matrix is $(c_{k+1,1})$. Writing $k=3r+c$,
$c=0,1,2$, its entry divided by $(-1)^r$ is respectively
$-r$, $r+1$, and $2(r+1)$. These do not vanish for $k\ge2$,
including $k=2$ separately with $r=0$.

For $d=3$ write $k=3r+c$, where $r\ge1$ and $c\in\{0,1,2\}$,
and use the column order
$u^2h_{k+1},vh_{k+1},uh_{k+2}$.
Put $A_c=(-1)^rM_{k,3}$. The coefficient table
\eqref{counter:three-residue-coefficients} gives
\[
 A_0=\begin{pmatrix}
 -r&1&r+1\\
 -r&-r&-\frac32r(r+1)\\
 \frac12r(r-1)(r+2)&-r&r(r-1)(r+1)
 \end{pmatrix},
\]
\[
 A_1=\begin{pmatrix}
 r+1&0&2(r+1)\\
 -\frac32r(r+1)&r+1&-\frac32r(r+1)\\
 r(r-1)(r+1)&-\frac32r(r+1)&\frac12r(r-2)(r+1)
 \end{pmatrix},
\]
\[
 A_2=\begin{pmatrix}
 2(r+1)&-1&r+1\\
 -\frac32r(r+1)&2(r+1)&r+1\\
 \frac12r(r-2)(r+1)&-\frac32r(r+1)&-\frac12r(r+1)(r+3)
 \end{pmatrix}.
\]
Expanding the determinants gives
\begin{equation}\label{counter:three-determinants}
 \det A_0=\frac{r^2(r+1)^2(3r+2)}4,\qquad
 \det A_1=\frac{r(r+1)^3(3r+4)}4,\qquad
 \det A_2=0.
\end{equation}
The first two are nonzero for $r\ge1$. The upper left $2$ by $2$
minor of $A_2$ is $(r+1)(5r+8)/2$, so $A_2$ has rank exactly
two. Lemma~\ref{counter:projection-criterion} now proves all the
statements, including the codimension in the exceptional case.
\end{proof}

\subsection{Excess five}
\label{counter:appendix:excess-five}
\begin{proof}[Proof of Proposition~\ref{counter:excess-five}]

Use Lemma~\ref{counter:projection-criterion}, with the column order
\[
 u^4h_{k+1},\quad u^2vh_{k+1},\quad v^2h_{k+1},\quad
 u^3h_{k+2},\quad uvh_{k+2}.
\]
Write $k=3r+c$, $c\in\{0,1,2\}$, and let
$A_c(r)=(-1)^rM_{k,5}$. For $s\in\{1,2,3,4\}$ and $b\ge0$, put
\[
 \eta_{s,b}(r)=
 \sum_{\substack{0\le j\le b+1\\s-2b-j\equiv0\pmod3}}
  \binom{b+1}{j}(-1)^{(s-2b-j)/3}
  \binom{r+b+(s-2b-j)/3}{b},
 \qquad \eta_{s,b}=0\quad(b<0).
\]
For an indeterminate $x$, the polynomial convention is
$\binom{x}{b}=x(x-1)\cdots(x-b+1)/b!$, with $\binom{x}{0}=1$.
The coefficient generating function gives
\[
 (-1)^r c_{3r+s,b}=\eta_{s,b}(r),\qquad
 c_{n,b}=[z^{n-2b}]\frac{(1+z)^{b+1}}{(1+z^3)^{b+1}}.
\]
For $r\ge0$, terms with negative series exponent also vanish in
$\eta_{s,b}$, since their last binomial has nonnegative top entry smaller
than $b$. Consequently, with $f_b=\eta_{c+1,b}(r)$ and
$g_b=\eta_{c+2,b}(r)$,
\[
 A_c(r)=
 \begin{pmatrix}
 f_1&f_0&0&g_1&g_0\\
 f_2&f_1&f_0&g_2&g_1\\
 f_3&f_2&f_1&g_3&g_2\\
 f_4&f_3&f_2&g_4&g_3\\
 f_5&f_4&f_3&g_5&g_4
 \end{pmatrix}.
\]
Its determinants are the following polynomial identities:
\begin{align*}
 \det A_0(r)
 &=-\frac{r^3(r-1)(r+1)^3(r+2)(3r+2)(3r+4)(3r+5)}{5760},\\
 \det A_1(r)
 &=\frac{r^2(r-1)(r+1)^4(r+2)^2(3r+4)(3r+5)}{1920},\\
 \det A_2(r)
 &=\frac{r^2(r+1)^3(r+2)^3(3r+4)(3r+5)(3r+7)}{2880}.
\end{align*}
These identities follow by substitution in the displayed matrix and
determinant expansion. Appendix~\ref{comp:excess-five-certificate}
gives an exact interpolation certificate.
The legal ranges for $k\ge5$ are $r\ge2$ for $c=0,1$, and $r\ge1$
for $c=2$, so none of the factors vanish. In particular, the smallest
widths $k=5,6,7$ have normalized determinants $42,-132,297$,
respectively. Since $\det M_{k,5}=(-1)^r\det A_c(r)$, the coefficient
criterion proves the assertion. For $2\le k<5$, the weight $k+5$
is above $2k$; weight preservation of $k$-conjugation also makes
the index set empty.
\end{proof}

\subsection{An interpolation certificate for excess five}
\label{comp:excess-five-certificate}
In the notation of Proposition~\ref{counter:excess-five}, let $P_c(r)$
denote the displayed product for $\det A_c(r)$, and set
$E_c(r)=\det A_c(r)-P_c(r)$. All degrees below are taken in $r$.
The coefficient formula gives $\deg\eta_{s,b}\le b$.
Each nonzero determinant term has degree at most
\[
 (1+2+3+4+5)-(0+1+2+0+1)=11,
\]
since the five column shifts are $0,1,2,0,1$.
Thus $\deg E_c\le11$. Substitution in the explicit coefficient
formula gives the residual certificate
\[
 \bigl(E_c(2),E_c(3),\ldots,E_c(13)\bigr)
       =(0,0,0,0,0,0,0,0,0,0,0,0)\qquad(c=0,1,2).
\]
A polynomial of degree at most eleven with these twelve distinct
zeros is identically zero. This proves the determinant identities
for every $r$. The supplement records the integer values of both
sides and reconstructs each difference polynomial from them, using
exact recurrence coefficients and fraction-free determinants. It also checks
$\det A_2(1)=42$, the smallest legal width $k=5$.

\subsection{The six highest weights}
\label{counter:appendix:near-top}
\begin{proof}[Proof of Proposition~\ref{counter:near-top}]

Put $d=N-k$. Proposition~\ref{counter:even-offset} covers $N\le k$
and even $d$, so suppose $d\ge1$ is odd. Set
\[
 p=\lfloor(k-d)/2\rfloor,\qquad \sigma=k-d-2p\in\{0,1\}.
\]
Since $k-d=2k-N\le5$, we have $p\le2$. For $p=0$, the sole
candidate $u^N$ pairs with $u^{2k-N}$ to $C_k>0$.

For $p\ge1$, put $m=d+1$, an even integer at least two.
Use the Catalan measure $\mu$ from the proof of
Proposition~\ref{counter:even-offset}. Put $y=x-1$ and, for $j\ge0$, define
\[
 \mathcal R_j^{(\sigma)}
 =\int_0^4x^\sigma(x-1)^j\,d\mu(x)
 =\int_{-1}^3y^jw_\sigma(y)\,dy,\qquad
 w_\sigma(y)=\frac{(1+y)^\sigma}{2\pi}
                         \sqrt{\frac{3-y}{1+y}}.
\]
For $A(y)=(1+y)(3-y)$, differentiation with respect to $y$ gives
\[
 (Aw_\sigma)'=(3\sigma-(2+\sigma)y)w_\sigma.
\]
The function $Aw_\sigma$ vanishes at $-1,3$, and its derivative
is integrable there. Integrating $(y^jAw_\sigma)'$ therefore gives
\begin{equation}\label{counter:near-top-pearson}
 (j+2+\sigma)\mathcal R_{j+1}^{(\sigma)}
 =(2j+3\sigma)\mathcal R_j^{(\sigma)}
                       +3j\mathcal R_{j-1}^{(\sigma)}\qquad(j\ge1).
\end{equation}
The moments $\mathcal R_j^{(\sigma)}$ are integers, by binomial
expansion and the Catalan moment formula.

For $p=1$, $k=m+1+\sigma$ and $N=2m+\sigma$.
By~\eqref{counter:catalan-pairing}, the candidates
$u^N,u^\sigma v^m$ pair with $u^{2+\sigma},u^\sigma v$ through
\[
 K=\begin{pmatrix}C_k&C_k-C_{k-1}\\
                   \gamma_k(m)&\gamma_k(m+1)\end{pmatrix}.
\]
Thus the lower row of $K$ is
$(\mathcal R_m^{(\sigma)}+\mathcal R_{m+1}^{(\sigma)},
  \mathcal R_{m+1}^{(\sigma)})$.
Equation~\eqref{counter:near-top-pearson} and
$C_{j+1}/C_j=4-6/(j+2)$ give
\[
 \det K=\frac{mC_{m+\sigma}}{m+2+\sigma}S_\sigma(m),\qquad
 S_\sigma(m)=3\mathcal R_{m-1}^{(\sigma)}-\mathcal R_m^{(\sigma)}.
\]
The prefactor is positive. Put
\[
 B_\sigma(y)=(3-y)w_\sigma(y)-(3+y)w_\sigma(-y)
                    \qquad(0<y<1).
\]
Because $m-1$ is odd, folding the negative interval gives
\begin{equation}\label{counter:near-top-folding}
 S_\sigma(m)=\int_0^1y^{m-1}B_\sigma(y)\,dy
       +\int_1^3y^{m-1}(3-y)w_\sigma(y)\,dy.
\end{equation}
For $\sigma=1$, the positive terms in $2\pi B_1(y)$ have squared difference
\[
 (1+y)(3-y)^3-(1-y)(3+y)^3=16y^3>0.
\]
Hence $B_1>0$ and $S_1(m)>0$ for every even $m\ge2$.
For $\sigma=0$,
\[
 2\pi B_0(y)=\frac{(3-y)^{3/2}}{\sqrt{1+y}}
             -\frac{(3+y)^{3/2}}{\sqrt{1-y}}<0.
\]
The Catalan moments give $S_0(2)=-1$ and $S_0(4)=0$.
For even $m\ge6$, subtract the $m=4$ instance of
\eqref{counter:near-top-folding}. The two integrands are
\[
 y^3(y^{m-4}-1)B_0(y)\quad(0<y<1),\qquad
 y^3(y^{m-4}-1)(3-y)w_0(y)\quad(1<y<3),
\]
and both are positive. Thus $S_0(m)>0$.

The matrix is singular only for $\sigma=0,m=4$, namely $(k,N)=(5,8)$.
Its upper left entry is nonzero, so its rank there is one.

For $p=2$, we have $k=m+3+\sigma$ and $N=2m+2+\sigma$. The candidates correspond to
row indices $a=0,m,m+1$, and the complementary monomial basis to
column indices $j=0,1,2$. Thus their pairing matrix is
\[
 K=\bigl(\gamma_k(a+j)\bigr)_{a\in\{0,m,m+1\},\ 0\le j\le2}.
\]
Write $T_j=\mathcal R_j^{(\sigma)}$ and $s=m+1+\sigma$. Since
$z=1-1/x$, we have the pointwise identity
\[
 (1,z,z^2)=x^{-2}\bigl(x^2,x(x-1),(x-1)^2\bigr).
\]
Absorbing $x^{-2}$ into each row functional therefore replaces the
column polynomials $1,z,z^2$ by
\[
 x^2=(1+y)^2,\qquad x(x-1)=y(1+y),\qquad (x-1)^2=y^2.
\]
Their coefficient matrix in $1,y,y^2$ is triangular with diagonal entries
one, so passing to that column basis preserves the determinant.
Subtracting the last row from the middle row also has
determinant one and gives
\begin{equation}\label{counter:near-top-p2-transform}
 \det K=\det\begin{pmatrix}
 C_s&C_{s+1}-C_s&C_{s+2}-2C_{s+1}+C_s\\
 T_m&T_{m+1}&T_{m+2}\\
 T_{m+1}&T_{m+2}&T_{m+3}
 \end{pmatrix}.
\end{equation}
Indeed the two original tail functionals have respectively the weights
$x(x-1)^m x^\sigma d\mu$ and $(x-1)^{m+1}x^\sigma d\mu$;
their difference has weight $(x-1)^m x^\sigma d\mu$.
The top functional has weight $x^{m+1}x^\sigma d\mu$.

Since $m$ is even, $T_m>0$. Put $t=T_{m-1}/T_m\in\Q$ and
$r_j=T_{m+j}/T_m$ for $j\ge-1$, so $r_0=1$ and $r_{-1}=t$.
The Pearson recurrence~\eqref{counter:near-top-pearson} gives
\[
 r_{j+1}=\frac{(2m+2j+3\sigma)r_j+3(m+j)r_{j-1}}
                  {m+j+2+\sigma}\qquad(j=0,1,2).
\]
Also put
\[
 \kappa_1=\frac{C_{s+1}}{C_s}=4-\frac6{s+2},\qquad
 \kappa_2=\frac{C_{s+2}}{C_{s+1}}=4-\frac6{s+3}.
\]
Dividing~\eqref{counter:near-top-p2-transform} by $C_sT_m^2$ therefore gives the explicit determinant
\[
 \frac{\det K}{C_sT_m^2}
 =\det\begin{pmatrix}
 1&\kappa_1-1&\kappa_1\kappa_2-2\kappa_1+1\\
 1&r_1&r_2\\r_1&r_2&r_3
 \end{pmatrix}
 =\begin{cases}
 \displaystyle\frac{2m(m+1)Q_0(t)}{(m+2)^2(m+3)^2(m+4)},&\sigma=0,\\[5pt]
 \displaystyle\frac{3m(m+1)Q_1(t)}{(m+3)^2(m+4)^2(m+5)},&\sigma=1,
 \end{cases}
\]
where
\[
 \begin{aligned}
 Q_0(t)&=9m(3m+7)t^2+3(6m^2+13m-6)t-(9m^2+38m+48),\\
 Q_1(t)&=9m(2m+7)t^2+6(2m^2+9m+6)t-(6m^2+37m+60).
 \end{aligned}
\]
These identities follow by substituting the three displayed recurrences
for $r_1,r_2,r_3$ in the $3$ by $3$ determinant; all denominators and
all factors outside $Q_\sigma$ are nonzero. Both $Q_\sigma$ are genuine
quadratics in $t$. The discriminant of $at^2+bt+c$ is $b^2-4ac$; here it gives
\[
 \operatorname{disc}_t Q_0=81(m+2)^2(16m^2+32m+1),\qquad
 \operatorname{disc}_t Q_1=144(m+3)^2(4m^2+14m+1).
\]
A quadratic over $\Q$ with nonzero leading coefficient can have a
rational root only if its discriminant is a rational square. An integer
that is a rational square is an integer square. For every $m\ge2$,
\[
 (4m+3)^2<16m^2+32m+1<(4m+4)^2,
\]
so $Q_0$ cannot vanish at the rational number $t$. For $Q_1$, the
remaining discriminant factor is $45$ when $m=2$, $121$ when $m=4$,
and for $m\ge6$ satisfies
\[
 (2m+3)^2<4m^2+14m+1<(2m+4)^2.
\]
Thus the only possible zero for even $m$ is $m=4$, $\sigma=1$.
Here $t=\mathcal R_3^{(1)}/\mathcal R_4^{(1)}=4/9$ and
\[
 Q_1(t)=4(9t-4)(15t+19)=0.
\]
This gives $k=8$, $d=3$, and $N=11$.
The lower-left $2$ by $2$ block in~\eqref{counter:near-top-p2-transform} is a positive Gram matrix
for $1,y$ with respect to the positive measure $y^m w_\sigma(y)\,dy$,
so it is nonsingular. The full matrix therefore has rank exactly two
in the exceptional case, giving codimension one.

Poincar\'e duality now gives the assertion in all three cases.
\end{proof}

\section{Exact computations}\label{comp:section}
The computational supplement is supplied as three UTF-8 text files:
\begin{enumerate}[label=(\arabic*),leftmargin=*]
\item \path{ESM_1_README_RESTORE.txt}: documentation and a Python
restoration utility;
\item \path{ESM_2_SOURCE_CODE.txt}: all source programs;
\item \path{ESM_3_DATA_RECORDS.txt}: verification data, execution logs,
and the original documentation and manifests.
\end{enumerate}
These files preserve all 174 files of the original computational package,
including their relative paths and byte contents. The restoration utility
checks the recorded SHA-256 hashes and reconstructs the original directory
without running the computations. The recovered \texttt{README.md} lists
the programs, parameter ranges, dependencies, and reproduction commands.
The execution records include source hashes. All ranks
and polynomial identities are computed over $\Q$ or by exact integer
arithmetic. The programs construct the actual subalgebras and ideals
from their generators before comparing dimensions with the formulas.
They provide finite checks of the preceding proofs.

\subsection{Quotients, images, and derivatives}
Ordinary multiplication is implemented in the rectangular Schur basis
by the Pieri rule. For $(k,\ell)=(4,4),(4,5),(5,5),(5,6)$, the programs
compute every filtered subalgebra and the ideals generated by the actual
Jacobian-tail minors, including their multiplication closure. A separate
polynomial calculation checks the finite parameter maps and
resultant--colon identities. The two ordinary Pieri programs share
one multiplication implementation.

For $1\le n\le10$, quadratic squarefree reduction computes the
Lagrangian subalgebras and the full transformed-$Q$ ideal flag.
Every potentially nonzero strict partition through the top weight is
included among the ideal generators. Parts greater than $2n$ give a
zero Pfaffian row after the transform, since $\pi_n(\Theta\qb_r)=[z^r]E(z)^2=0$
for $r>2n$; only this zero bound is used in the larger calculation.
The retained-basis bound $\lambda_1\le n$ is not imposed on the list. An independent implementation for $n\le5$
checks $195$ flag components, $155$ subalgebra components and $90$
derivative components, using all $201$ strict-partition candidates.
The source weight for a derivative of weight $t$ is $t+m$, also when
it exceeds the top weight of the ambient ring.

Gr\"obner quotient calculations check the Lagrangian coefficient
images, their defining windows and colon quotients for
$1\le d\le4$, $1\le M\le5$. Further symbolic calculations construct
the two transition matrices for $d\le5$, checking their determinants
with $M$ indeterminate; the generator identities are checked before
quotienting for $d\le4$, $M\le6$. To compute a finite conductor,
the programs impose $uv\in\mathcal B$ for every ambient standard
monomial $v$, rather than only for the coefficient generators.
These checks include the failure of the ordinary conductor formula at $m=0$ and the
Lagrangian terminal exception. They also verify the two coefficient
algebra examples, including the isomorphism over $\Q(\sqrt3)$.

\subsection{Two-row families}
Exact Pieri coordinates compare both candidate spans in every weight
for $2\le k\le30$. Recurrence-generated relation coefficients give a
separate check of Lemma~\ref{counter:projection-criterion} for all
$2\le k\le32$, $1\le d\le k$, comprising $527$ parameter pairs.
The three residue-class determinants and the rank-two minor in
Proposition~\ref{counter:odd-small} are also checked symbolically.
The Catalan program checks $117$ top-pairing entries and $62$
positive determinants, including $13$ one-dimensional cases.
It verifies the rectangular Schur identity on $60$ samples with
zero, negative or repeated coordinates. A separate integer-arithmetic
program certifies the excess-five determinants by exact interpolation,
as detailed in Appendix~\ref{comp:excess-five-certificate}. The supplement computes the
defining-ideal ranks in all $113$ nonnegative near-top cases for
$2\le k\le20$ and verifies the two $3$ by $3$ determinant
identities in Proposition~\ref{counter:near-top} symbolically.

\subsection{Intersection pairings}\label{comp:poincare}
Here $S$ denotes the tautological subbundle: its fibre at a plane is
that plane. We use $e_i=c_i(S^*)$. The integral of a top cohomology
class means its evaluation on the fundamental class. For a finite list
$\alpha$ of positive integers, put $|\alpha|=\sum_i\alpha_i$ and
$h_\alpha=\prod_i h_{\alpha_i}$. The Kostka number is
$K_{\lambda,\alpha}=\langle s_\lambda,h_\alpha\rangle$.

\begin{lemma}[Classical intersection numbers]\label{comp:intersection}
For $|\alpha|=k\ell$ and $|\beta|=n(n+1)/2$, respectively,
\begin{equation}\label{comp:kostka-integrals}
 \int_{\Gr(k,k+\ell)}\prod_i e_{\alpha_i}=K_{(k^\ell),\alpha},
 \qquad
 \int_{\LG(n,\C^{2n})}\prod_i e_{\beta_i}=K_{\delta_n,\beta},
 \quad\delta_n=(n,n-1,\ldots,1).
\end{equation}
\end{lemma}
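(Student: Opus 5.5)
We reduce both identities to the classical Schubert calculus, namely the Pieri rules together with the evaluation of a top Schubert class. In the ordinary case the plan has three steps.

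First, identify $e_i=c_i(S^*)$ with the special Schubert class $\sigma_{1^i}$. With the present convention this class is the image of $e_i$ under the presentation $R\simeq H^{2*}(\Gr(k,k+\ell);\Q)$. Since $e_i=s_{(1^i)}$, the classes $s_\lambda$ for $\lambda\subseteq(\ell^k)$ are the Schubert classes.

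Second, compute $\prod_i e_{\alpha_i}$ by applying the dual Pieri rule repeatedly. Each factor $e_a$ adds a vertical $a$-strip, and the diagrams leaving the rectangle die by Lemma~\ref{sf:rectangle}. The coefficient of the top class $s_{(\ell^k)}$ is the number of chains of vertical strips filling the rectangle. Transposing, this is the number of chains of horizontal strips filling $(k^\ell)$. That number is the Kostka number $K_{(k^\ell),\alpha}=\langle s_{(k^\ell)},h_\alpha\rangle$, because $h_\alpha=\sum_\lambda K_{\lambda,\alpha}s_\lambda$ by the Pieri rule.

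Third, note that the truncation to the rectangle is harmless. Strips that leave the rectangle can never return, since diagrams only grow, so the count inside $R$ agrees with the count in $\Lambda$. The top Schubert class is the point class and integrates to $1$, which gives the first formula.

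For $\LG(n,\C^{2n})$ we run the same argument in $R_n$. The special classes $e_i$ correspond to the Schubert classes indexed by the one-part strict partitions $(i)$; in Kresch--Tamvakis \cite{KT} these are the $\tilde Q$-polynomials. The Schubert classes are indexed by strict partitions inside the staircase $\delta_n$. We use Hiller--Boe's Pieri rule for $\LG$: multiplication by $e_a$ adds a horizontal strip of $a$ boxes in shifted diagrams, with coefficient $2^{N(\mu/\lambda)-1}$ for the $\tilde Q$ normalization. Here $N$ counts the connected components not meeting the diagonal, which gives the usual power of $2$ shift between $P$ and $Q$ normalizations. The top class $\delta_n$ is the point class.

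Iterating, the coefficient of the point class is a weighted count of chains of strict partitions from $\varnothing$ to $\delta_n$. Each step adds a shifted horizontal strip of size $\beta_i$ and carries a power-of-two weight. We then must show that this weighted count equals the ordinary Kostka number $K_{\delta_n,\beta}$, the number of semistandard tableaux of unshifted staircase shape $\delta_n$ and content $\beta$.

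The cleanest route to this equality avoids tableau bijections and argues by duality. The integral defines a linear functional on the weight-$N$ component of $R_n$, with $N=\binom{n+1}2$. We evaluate it on $\prod e_{\beta_i}$ through the ring map $\pi_n\colon\Gamma\to R_n$, which sends $\qb_i\mapsto e_i$. The point class corresponds to $Q_{\delta_n}$ up to the normalization factor $2^{-n}$. By the orthogonality \eqref{lg:eq:scalar}, the coefficient of $Q_{\delta_n}$ in $\qb_\beta$ is $2^{-n}\langle \qb_\beta,Q_{\delta_n}\rangle$. Since the $\qb_r$ are dual to the $P_\lambda$, this coefficient reduces to the coefficient of $m_\beta$ in $P_{\delta_n}$ (equivalently of $x^\beta$).

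The key identity is then $P_{\delta_n}=s_{\delta_n}$, the classical staircase coincidence. It follows from \eqref{lg:eq:conductor-Schur} with $\mu=\varnothing$ and $d=n$: the product $\rho_n=\prod_{i<j}(x_i+x_j)$ equals $s_{\delta_{n-1}}$ by Lemma~\ref{lg:lem:hurwitz}, and the remaining factor is checked in enough variables. Hence the coefficient is $K_{\delta_n,\beta}$.

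The main obstacle is bookkeeping of normalizations. We must check that the image of $\pi_n(Q_{\delta_n})$ is exactly the point class with the correct power of $2$, consistent with $e_i=c_i(S^*)$ and with Lemma~\ref{lg:lem:ambient-basis}. The check is the case $\beta=\delta_n$ with $n=1$, where $e_1$ is the point class of $\mathbb P^1$ and $K_{(1),(1)}=1$. We then verify that the $2^{\ell}$ factors cancel in general.
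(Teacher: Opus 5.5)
Your ordinary argument is the paper's argument in combinatorial form. The paper applies $\omega$ to $h_\alpha=\sum_\lambda K_{\lambda,\alpha}s_\lambda$ and reads off the coefficient of $s_{(\ell^k)}$, which is what your iterated dual Pieri count computes.

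\textbf{The gap is in the Lagrangian part}, at the step you defer as bookkeeping. Your algebra is sound as far as it goes. In the one-dimensional top component, $\pi_n(\qb_\beta)=b\,\pi_n(Q_{\delta_n})$, where $b=2^{-n}\langle\qb_\beta,Q_{\delta_n}\rangle=\langle\qb_\beta,P_{\delta_n}\rangle=[m_\beta]P_{\delta_n}$. The last equality follows from the Cauchy identity $\sum_\mu\qb_\mu(x)m_\mu(y)=\sum_\lambda Q_\lambda(x)P_\lambda(y)$.

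This only shows $\int_{\LG(n,\C^{2n})}\prod_ie_{\beta_i}=c_nK_{\delta_n,\beta}$ with $c_n=\int_{\LG(n,\C^{2n})}\pi_n(Q_{\delta_n})$. The geometric content of the lemma is precisely $c_n=1$, and you do not prove it. Read literally, your claim that the point class is $Q_{\delta_n}$ up to the factor $2^{-n}$ is false and would give $2^nK_{\delta_n,\beta}$. For $n=2$, $\LG(2,\C^4)$ is a quadric threefold with hyperplane class $e_1$, so $\int e_1^3=2$. Since $e_1^3=2e_1e_2$ in $R_2$, the class $e_1e_2=\pi_2(Q_{(2,1)})$ integrates to $1$, not $4$.

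A check at $n=1$ cannot fix a constant that depends on $n$. The cancellation of $2^{\ell(\lambda)}$ between $Q$ and $P$ also says nothing about $c_n$. Some geometric input is unavoidable, and there are two ways to supply it:
\begin{enumerate}
\item Invoke Pragacz's theorem that the Schubert classes of $\LG(n,\C^{2n})$ are $\tilde Q_\lambda(S^*)=\pi_n(Q_\lambda)$. Then $\pi_n(Q_{\delta_n})$ is exactly the point class.
\item Use the paper's argument. $\LG(n,\C^{2n})$ is the zero locus of a regular section of $\bigwedge^2S^*$ on $\Gr(n,2n)$, so $[\LG]=\prod_{i<j}(x_i+x_j)=s_{(n-1,\ldots,1)}$. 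By Schubert duality in the $n\times n$ box, $\int_{\Gr(n,2n)}e_\beta\,s_{(n-1,\ldots,1)}$ is the coefficient of $s_{\delta_n}$ in $e_\beta$, namely $K_{\delta_n',\beta}=K_{\delta_n,\beta}$.
\end{enumerate}
The paper's route reduces the Lagrangian case to the ordinary one with the normalization built in, and needs no $Q$-functions. Your route, once completed by Pragacz's theorem, stays inside the $\Gamma\to R_n$ framework but rests on the deeper Schubert-class identification.

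\textbf{A smaller gap concerns the staircase identity.} Since $\beta$ may have up to $\binom{n+1}{2}$ parts, you need $P_{\delta_n}=s_{\delta_n}$ in $\Lambda$, not in a bounded number of variables. The identity \eqref{lg:eq:conductor-Schur} concerns strict partitions of length $d-1$ or $d$ in $d$ variables. For $\lambda=\delta_n$ it therefore applies only with $d=n+1$ (your choice $d=n$ concerns $\delta_{n-1}$), and gives the identity only in $n+1$ variables. The phrase ``checked in enough variables'' does not follow from it. You can either cite the classical identity, or finish it as follows:
\begin{itemize}
\item $P_{\delta_n}-s_{\delta_n}$ is $\omega$-invariant, since $\omega$ fixes the odd power sums and $\delta_n'=\delta_n$.
\item Its Schur components $s_\mu$ satisfy $\mu\le\delta_n$ in dominance, hence $\mu_1\le n$.
\item Vanishing in $n+1$ variables forces $\ell(\mu)\ge n+2$ for every such component.
\item $\omega$-invariance then yields a component $s_{\mu'}$ with $\mu'_1\ge n+2$, contradicting the second point.
\end{itemize}

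Finally, in the Hiller--Boe route you set aside, the $\tilde Q$-normalized coefficient is $2^{N(\mu/\lambda)}$, with $N$ counting components off the diagonal, not $2^{N(\mu/\lambda)-1}$. Already $e_1^2=2e_2$ in $R_2$ shows this.
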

\begin{proof}
The Hall involution $\omega$ interchanges $e_i$ and $h_i$ and sends
$s_\lambda$ to $s_{\lambda'}$. Integration on the ordinary Grassmannian
extracts the coefficient of $s_{(\ell^k)}$, giving the first identity
\cite{Fulton,Macdonald}.

The symplectic form defines a section of $\bigwedge^2S^*$ on
$\Gr(n,2n)$ whose zero locus is $\LG(n,\C^{2n})$. This locus is smooth
of codimension $\binom n2$, the rank of the bundle, so the section is
regular. Its fundamental class is
\[
 c_{\binom n2}(\bigwedge^2S^*)
 =\prod_{i<j}(x_i+x_j)=s_{(n-1,n-2,\ldots,1)}(x),
\]
where the $x_i$ are formal Chern roots of $S^*$: their elementary
symmetric polynomials are the classes $e_i$. This staircase identity
and the relation between the two integrals are classical
\cite{Macdonald,Pragacz88,HT}.
The rectangular complement of $(n-1,n-2,\ldots,1,0)$ in $(n^n)$,
rotated by $180$ degrees, is the self-transpose partition $\delta_n$.
Schubert duality thus reduces the second integral to the
$s_{\delta_n}$ coefficient. Applying $\omega$ proves the formula.
\end{proof}

This gives a check using a different construction of the multiplication
pairing. In weight $t$, pair monomials in the allowed generators with
all monomials of complementary weight. Poincar\'e duality identifies
the matrix rank with the dimension of the generated subspace, because
the complementary monomials span the full component.
The supplement verifies every top-weight elementary monomial for
$1\le n\le6$ and for the ordinary pairs
\[
 (1,4),(2,2),(2,5),(3,3),(3,4),(4,4),(4,5).
\]
Direct reduction by the defining ring relations agrees with an
independently implemented integer Pieri recursion on all $661$ integrals;
a third hook-length calculation checks the $13$ values of $\int e_1^D$,
where $D$ is the top weight. These are intersection-value checks,
rather than a claim that all filtration ranks were recomputed by this method.

\section{A coefficient-algebra comparison}
\label{lg:appendix:coefficients}
We verify the assertions in
Example~\ref{lg:ex:non-Grassmannian-coefficients}.
For $M=d=2$, equation~\eqref{lg:eq:D-U} gives
\[
 U_3=\tfrac32x^3-4xy,\qquad
 U_4=-\tfrac32x^4+6x^2y-4y^2.
\]
The ideal $(U_3,U_4)$ is
$(3x^3-8xy,\ x^2y-2y^2)$, giving the stated presentation of $C$.

In either ring $A\in\{C,H\}$ of the example, the relations give the basis
$1,x,y,x^2,xy,y^2$, with $xy^2=y^3=0$. Its positive-degree ideal
$\mathfrak m=A_{>0}$ is the unique maximal ideal. For an ideal $J$,
write $\Ann_A(J)=\{a\in A:aJ=0\}$; the \emph{socle} is
$\operatorname{Soc}(A)=\Ann_A(\mathfrak m)$. One computes
\[
 \mathfrak m^3=\Span_\Q\{xy,y^2\},\qquad
 \operatorname{Soc}(A)=\Q y^2,\qquad
 I:=\Ann_A(\mathfrak m^3)=\Span_\Q\{y,x^2,xy,y^2\}.
\]
Since $I^2\subseteq\operatorname{Soc}(A)$ and $I\mathfrak m^3=0$,
multiplication induces an intrinsic symmetric pairing
$(I/\mathfrak m^3)^{\otimes2}\to\operatorname{Soc}(A)$.
In the basis $(\overline y,\overline{x^2})$, using $y^2$ for the
socle generator, its matrices for $C$ and $H$ are, respectively,
\[
 \begin{pmatrix}1&2\\2&16/3\end{pmatrix},\qquad
 \begin{pmatrix}1&1\\1&2\end{pmatrix},
 \qquad\text{with determinants }4/3\text{ and }1.
\]
An algebra isomorphism preserves $\mathfrak m$, its powers, their
annihilators, and the socle. A basis change in the two-dimensional
source and a rescaling of the one-dimensional target alter the
pairing determinant only by a rational square. Since $4/3$ is not
a square in $\Q$, such an isomorphism cannot exist. This obstruction
concerns the coefficient algebras themselves, independently of their
grading and specified ambient-ring actions.
The restriction to $\Q$ is essential. Over $\Q(\sqrt3)$, the assignments
\[
 x\longmapsto x,\qquad
 y\longmapsto\frac{3+\sqrt3}{8}x^2-\frac{\sqrt3}{4}y
\]
define a graded isomorphism
$C\otimes_\Q\Q(\sqrt3)\to H\otimes_\Q\Q(\sqrt3)$:
substitution verifies the two
relations of $C$, and the nonzero coefficient of $y$ makes the map
surjective; both rings have dimension six.

\section*{Declaration of generative AI and AI-assisted technologies
in the manuscript preparation process}

During the preparation of this work, the authors used AI-assisted tool Codex to assist with language editing, manuscript organization, and exploration of some of the proof strategies and calculations. The authors reviewed and edited the resulting material and take full responsibility for the contents of the manuscript.

\end{document}